\documentclass[12pt,a4paper]{article}
\usepackage[left=1in,right=1in,top=1.2in,bottom=1.2in]{geometry}
\usepackage{setspace}
\usepackage{changepage}
\usepackage{array}
\usepackage{color}
\usepackage{amsmath, amsthm, amssymb}
\usepackage{fmtcount}
\usepackage[english]{babel}
\usepackage[utf8]{inputenc}
\usepackage{tikz}
\usepackage{kotex}
\usepackage{amsfonts}
\usepackage{mathrsfs}
\usepackage{mathtools}
\usepackage{enumitem}
\usepackage{algorithm}
\usepackage{algorithmic}
\usepackage{adjustbox}
\allowdisplaybreaks 
\usepackage{multicol}
\usepackage{multirow}
\usepackage{arydshln}
\usepackage{url}
\usepackage{mathabx}
\usepackage{subfiles}
\usepackage[round]{natbib}    
\usepackage{accents}  
\usepackage{tocloft}
\usepackage{csvsimple, booktabs}
\usepackage{bm}
\usepackage{caption}
\usepackage{subcaption}
\usepackage[T1]{fontenc}  
\usepackage[all]{xy}
\usepackage{tikz-cd}
\usepackage{xcite}
\usepackage{xr}     
\usepackage{xr-hyper}       
\usepackage[
    colorlinks=true,
    linkcolor=blue,
    citecolor=blue,
    urlcolor=blue,
    filecolor=blue
]{hyperref} 
\makeatletter
\newcommand*{\addFileDependency}[1]{
  \typeout{(#1)}
  \@addtofilelist{#1}
  \IfFileExists{#1}{}{\typeout{No file #1.}}
}
\makeatother

\usepackage{scalerel,stackengine}
\stackMath
\newcommand\reallywidehat[1]{%
\savestack{\tmpbox}{\stretchto{%
  \scaleto{%
    \scalerel*[\widthof{\ensuremath{#1}}]{\kern-.6pt\bigwedge\kern-.6pt}%
    {\rule[-\textheight/2]{1ex}{\textheight}}
  }{\textheight}%
}{0.5ex}}%
\stackon[1pt]{#1}{\tmpbox}%
}
\usetikzlibrary{calc,arrows.meta,positioning,shapes.geometric}
\newcolumntype{L}[1]{>{\raggedright\let\newline\\\arraybackslash\hspace{0pt}}m{#1}}
\newcolumntype{C}[1]{>{\centering\let\newline\\\arraybackslash\hspace{0pt}}m{#1}}
\newcolumntype{R}[1]{>{\raggedleft\let\newline\\\arraybackslash\hspace{0pt}}m{#1}}

\newcommand{\ie}{{\it i.e.}}

\newcommand{\iid}{{\it i.i.d.}}

\newcommand{\Ac}{\mathcal{A}}
\newcommand{\Bc}{\mathcal{B}}
\newcommand{\Cc}{\mathcal{C}}

\newcommand{\Fc}{\mathcal{F}}

\newcommand{\Hc}{\mathcal{H}}

\newcommand{\Kc}{\mathcal{K}}

\newcommand{\Mc}{\mathcal{M}}

\newcommand{\Pc}{\mathcal{P}}

\newcommand{\Rc}{\mathcal{R}}
\newcommand{\Sc}{\mathcal{S}}
\newcommand{\Tc}{\mathcal{T}}

\newcommand{\Xc}{\mathcal{X}}
\newcommand{\Yc}{\mathcal{Y}}

\newcommand{\Db}{\mathbb{D}}
\newcommand{\Eb}{\mathbb{E}}

\newcommand{\Ib}{\mathbb{I}}

\newcommand{\Mb}{\mathbb{M}}
\newcommand{\Nb}{\mathbb{N}}

\newcommand{\Pb}{\mathbb{P}}

\newcommand{\Rb}{\mathbb{R}}
\newcommand{\Sb}{\mathbb{S}}

\newcommand{\Ub}{\mathbb{U}}

\newcommand{\Xb}{\mathbb{X}}
\newcommand{\Yb}{\mathbb{Y}}
\newcommand{\Zb}{\mathbb{Z}}

\def\cX{\mathcal X}

\newcommand{\bx}{{\bf x}}

\newcommand{\bz}{{\bf z}}

\newcommand{\E}{\mathbb{E}}

\newcommand{\V}{\mbox{{\rm Var}}}

\newcommand{\bc}{\begin{center}}
\newcommand{\ec}{\end{center}}
\newcommand{\be}{\begin{equation}}
\newcommand{\ee}{\end{equation}}
\newcommand{\been}{\begin{equation*}}
\newcommand{\eeen}{\end{equation*}}
\newcommand{\ba}{\begin{array}}
\newcommand{\ea}{\end{array}}
\newcommand{\bean}{\setlength\arraycolsep{2pt}\begin{eqnarray*}}
\newcommand{\eean}{\end{eqnarray*}}
\newcommand{\bea}{\setlength\arraycolsep{2pt}\begin{eqnarray}}
\newcommand{\eea}{\end{eqnarray}}
\newcommand{\ben}{\begin{enumerate}}
\newcommand{\een}{\end{enumerate}}
\newcommand{\bed}{\begin{itemize}}
\newcommand{\eed}{\end{itemize}}

\newtheorem{thm}{Theorem}
\numberwithin{thm}{section}
\newtheorem{cor}[thm]{Corollary}
\newtheorem{lem}[thm]{Lemma}
\newtheorem{prop}[thm]{Proposition}
\newtheorem{defn}{Definition}
\newtheorem*{claim*}{Claim}
\newtheorem{claim}{Claim}

\newtheorem{rmk}{Remark}[section]

\usepackage{graphicx,psfrag,epsf}

\usepackage{url} 

\newcommand{\blind}{0}

\def\spacingset#1{\doublespacing}

\begin{document}

\def\spacingset#1{\doublespacing}


\if0\blind
{
  \title{ A Two-Sample Test for Random Persistence Diagrams Against Alternatives Characterized by Persistence Intensity Differences}
\author{
Yeongung Han\\
Institute of Basic Sciences, Seoul National University
\and
Ilmun Kim\\
\parbox{0.8\textwidth}{\centering
Department of Mathematical Sciences, Korea Advanced Institute of Science and Technology}
\and
Jisu Kim \thanks{Jisu Kim  (jkim82133@snu.ac.kr) is the corresponding author.}\\
Department of Statistics, Seoul National University
}

\date{}
\maketitle

} \fi

\if1\blind
{
  \bigskip
  \bigskip
  \bigskip
  \begin{center}
    {\LARGE A Two-Sample Test for Random Persistence Diagrams Against Alternatives Characterized by Persistence Intensity Differences}
\end{center}
  \medskip
} \fi

\bigskip
\let\arxivoriginalnewpage\newpage
\newif\ifarxivmainfrontmatter
\arxivmainfrontmattertrue

\renewcommand{\newpage}{%
  \ifarxivmainfrontmatter
    \arxivmainfrontmatterfalse
    \arxivoriginalnewpage
    {\fontsize{12pt}{15.5pt}\selectfont
  \hypersetup{hidelinks}

  \setlength{\cftbeforesecskip}{20pt}
  \setlength{\cftbeforesubsecskip}{15pt}
  \setlength{\cftbeforesubsubsecskip}{15pt}

  \tableofcontents

  \hypersetup{linkcolor=blue}
}
    \arxivoriginalnewpage
    \let\newpage\arxivoriginalnewpage
  \else
    \arxivoriginalnewpage
  \fi
}

\begin{abstract}
Persistence intensity functions provide informative first-order summaries of random persistence diagram distributions.
We study the two-sample comparison of persistence diagram distributions through their persistence intensity functions.
In particular, for persistence diagram distributions $P$ and $Q$ with respective intensities $p$ and $q$, we consider the two-sample testing problem:
$
\mathrm{H}_0: P=Q
\ \text{versus}\ 
\mathrm{H}_1 :p\neq q.
$

We propose a kernel-based permutation test for this problem. The test has finite-sample validity under the distributional null, and its power is characterized in terms of the $L^2$ discrepancy between the weighted persistence intensity functions. To accommodate persistence diagrams with possibly unbounded cardinality, we introduce regularity conditions that control the effect of cardinality variation and yield a sharp variance bound for the test statistic. We further show that every probability density on $\{(x,y)\in\mathbb{R}^2:y>x > 0\}$ can be realized as the intensity function of a random  diagram. Using these results, we establish that the proposed test attains minimax-optimal separation rates over anisotropic Sobolev balls. 

Since the optimal bandwidth is not directly accessible in practice, we adopt a bandwidth aggregation framework. Simulations demonstrate validity under the distributional null and high empirical power against intensity-separated alternatives.
\end{abstract}

\noindent%
{\it Keywords:} Anisotropic Sobolev ball, Kernel-based statistical inference, Minimax optimality, Permutation test, Persistence diagram, Random measure.
\vfill

\newpage
\spacingset{1.8} 

\section{Introduction}\label{sec::intro}
Topological data analysis (TDA) refers to a collection of statistical and mathematical methods for identifying topological features in data and utilizing these features for statistical inference. TDA has been successfully applied in atomic analysis \citep{Nakamura_2015,amorphoussolids}, materials science \citep{Herring2019TopologicalPersistence,Jiang2018TopologicalPersistence,Kimura2018Nonempirical}, and medical image analysis \citep{Bendich2016,Lawson2019PersistentHomology}, as well as in many other fields. In recent years, TDA has experienced rapid growth, which has in turn motivated increasing interest in the development and application of statistical methodologies within the TDA framework.

A persistence intensity function is one of several notions developed for statistical or machine learning applications in TDA.
From a statistical perspective, a persistence diagram can be interpreted as a random measure. The expectation of a persistence diagram is a deterministic measure. If the expectation measure is absolutely continuous with respect to the Lebesgue measure, then it admits a corresponding density function. This density function is referred to as the \textbf{persistence intensity function}. The persistence intensity function serves as a first-order functional summary of the distribution of a random persistence diagram and has attracted considerable research interest \citep{Chazal2019,conf/icml/DivolL21,wu2024estimation}. Moreover, the potentially complex structure of random persistence diagram distributions motivates their comparison through their persistence intensity functions that summarize the presence and expected scales of topological features.

However, relatively little is known about two-sample testing for random persistence diagram distributions using persistence intensity functions. To address this gap, we propose a permutation test that is finite-sample valid under the distributional null and is designed to detect alternatives distinguishable through their persistence intensity functions. Such finite-sample validity is particularly relevant in TDA, where each persistence diagram typically constitutes a single observational unit, even though it may be constructed from a large underlying point cloud. Consequently, the number of independent persistence diagrams available for inference can be modest, making non-asymptotic Type I error control practically important. At the same time, persistence intensity functions capture the presence of topological features and the expected scales at which they occur and disappear. Since these aspects are often of primary interest in TDA, targeting alternatives characterized by differences in persistence intensity functions provides a natural and practically relevant choice of alternative class. Accordingly, the proposed procedure should be interpreted as a finite-sample valid test of the distributional null whose power guarantees are established for intensity-separated alternatives; no inferential guarantee is claimed for the indifference region where the distributions differ but their persistence intensity functions coincide.

The proposed test statistic measures discrepancies between weighted persistence intensity functions through a kernel. To select the kernel bandwidth in a principled manner, we study the separation rate over anisotropic Sobolev classes of weighted intensity discrepancies.
Specifically, we derive an upper bound on the separation rate and optimize it with respect to the bandwidth, yielding a rate-optimal bandwidth choice. We then establish a matching minimax lower bound, showing that the resulting separation rate is optimal.
Since this bandwidth depends on unknown smoothness parameters, it cannot be implemented directly in practice. We therefore adopt a bandwidth aggregation framework that combines tests across multiple bandwidths.

Finally, we compare the proposed test with existing two-sample tests for persistence diagrams through numerical simulations. Across the considered settings, the proposed test exhibits higher empirical power than the competing methods.

\subsection{Related work}
In this subsection, we briefly review existing works related to this paper. Our work lies at the intersection of the following three fields
\paragraph*{Analysis of persistence intensity functions} A persistence diagram can be viewed as a random measure on $\Rb^2$. The persistence intensity function quantifies the expected number of diagram points contained in a given subset of $\Rb^2$. Therefore, analyzing and estimating the intensity function plays a crucial role in understanding the expected behavior of persistence diagrams. Sufficient conditions for the existence and the smoothness of intensity functions are studied in \citet{Chazal2019}. Estimation for the intensity function of persistence diagrams has been studied in \citet{conf/icml/DivolL21,wu2024estimation}, which focus on statistical consistency and convergence properties.  \citet{chen2015statisticalanalysispersistenceintensity} also consider a two-sample comparison based on persistence intensity estimates with permutation calibration. However, equality of persistence intensities does not ensure exchangeability of persistence diagrams, so permutation validity is not generally guaranteed under their null. Moreover, theoretical guarantees for the power of the test were not developed.
\paragraph*{Testing methods using persistence diagrams} 
In TDA, several methods have been proposed for two-sample testing of persistence diagrams \citep{Bubenik2015,RobinsonTurner2017,kusano2018expectation,you2022comparingmultiplelatentspace,MoonLazar2023}. 
These approaches typically focus on specific representations of diagrams. 
In \citet{MoonLazar2023}, the persistence image representation is employed, where each pixel of the image is treated as a random variable, and a two-stage (filtering followed by testing) multiple testing procedure is studied. 
The work of \citet{kusano2018expectation} investigates a vectorization method for persistence diagrams using kernels, and proposes a two-sample test based on a kernel-induced distance between diagrams. 
Permutation-based two-sample tests are suggested in \citet{Bubenik2015,RobinsonTurner2017,you2022comparingmultiplelatentspace}, where \citet{Bubenik2015,you2022comparingmultiplelatentspace} employ the distance based on persistence landscapes, while \citet{RobinsonTurner2017} uses the distance defined directly on persistence diagrams.

\paragraph*{Kernel-based two-sample tests} 
Kernel-based methods, especially using the Maximum Mean Discrepancy (MMD), have been widely used in nonparametric two-sample testing \citep{NIPS2006_e9fb2eda,JMLR:v13:gretton12a,schrab2023mmdagg}. Kernel-based hypothesis testing methods use the distance between two distributions of interest that are embedded in a reproducing kernel Hilbert space (RKHS) generated by a kernel. 

Recent work in this field has incorporated bandwidth or kernel aggregation to improve robustness with respect to tuning-parameter choices \citep{SchrabKSD,schrab2023mmdagg}, and has been extended to non-Euclidean data, such as manifold data \citep{Cheng2024}, functional data \citep{WynneDuncan2022}, and persistence diagrams \citep{kusano2018expectation}. 

\subsection{Our contributions} 
The intensity function serves as a first-order summary representation of the distribution of a random persistence diagram. Although several two-sample procedures for persistence diagrams have been proposed \citep{chen2015statisticalanalysispersistenceintensity,RobinsonTurner2017,kusano2018expectation,MoonLazar2023}, including the intensity-based comparison of \citet{chen2015statisticalanalysispersistenceintensity}, their theoretical power properties remain largely unexplored.

In this paper, we propose a permutation test that is finite-sample valid under the distributional null and analyze its power against alternatives distinguishable through their persistence intensity functions. Our method builds on the kernel-based two-sample testing framework \citep{JMLR:v13:gretton12a,schrab2023mmdagg} and integrates it with tools from topological data analysis \citep{Chazal2019,chazal2017introduction,wu2024estimation}.
We derive an upper bound on the separation rate and optimize it with respect to the kernel bandwidth. We then establish a matching minimax lower bound, showing that the resulting separation rate is optimal. Although the overall proof strategy is related to those of \citet{kim2022minimax,schrab2023mmdagg}, substantial modifications are required to accommodate the distinctive properties of random persistence diagrams. In particular, the possibly unbounded cardinality of persistence diagrams necessitates new regularity conditions and arguments for deriving a sharp variance bound, while the minimax lower bound requires a construction adapted to the geometry and probabilistic structure of persistence diagrams.


\subsection{Organization} In Section \ref{Sec::Preliminaries}, we introduce the basic notions of TDA and collect the assumptions and notation used throughout the paper. Section \ref{sec::Test Statistic} defines a distance between weighted intensity functions and its unbiased estimator. Section \ref{Sec::Single Permutation Two Sample Test} discusses the choice of hypotheses, introduces the permutation test and analyzes its power. Section \ref{sec::Topological properties of cech on the circle} studies the \v{C}ech complex on the circle for the minimax lower bound. Sections \ref{Sec:: Bandwidth-Aggregated Two Sample Test} and \ref{Sec::Simulation Study} present the bandwidth-aggregated test and simulation studies, respectively.

\section{Preliminaries}\label{Sec::Preliminaries}
In this section, we introduce the necessary preliminaries, notation, and assumptions.
\subsection{Persistent homology}

Homology provides an algebraic description of structural features such as
connected components and higher-dimensional holes. In TDA, homology is
typically applied to simplicial complexes constructed from observed data,
often over a range of scale parameters. By tracking how the homology of these
complexes changes with the scale, one can capture not only topological
features of the data but also geometric information encoded in the scales at
which those features appear and disappear.
We briefly introduce the notions needed in this
paper; additional background on homology and persistent homology is provided
in Section~\ref{sec::persistent homology} of the Supplementary Material.

Let $\Xc$ be a subset of a metric space $(\Mc,d)$. A common way to construct a
simplicial complex from $\Xc$ is to connect points that are sufficiently close
to one another. For example, for a scale parameter $r>0$, the \v{C}ech complex
of $\Xc$ is defined by
\[
\text{\v{C}ech}_r(\Xc)
:=
\left\{
\sigma\subseteq\Xc:
\bigcap_{x\in\sigma} B(x,r)\neq\varnothing
\right\},
\]
where $\Bc(x,r) =\left\{y\in \Mc  : d(x,y)\leq r \right\}$.
The Vietoris--Rips complex is another commonly used construction in TDA and
is employed in our numerical experiments.

The topology of the resulting simplicial complex depends on the choice of
the scale parameter $r$. At small scales, only nearby points are connected,
whereas increasing $r$ gradually creates and fills topological features.
Persistent homology tracks these changes across scales rather than examining
the topology at a single fixed value of $r$.

More generally, let $\Kc(\Xc,r)$ denote a simplicial complex constructed from
$\Xc$ at scale $r$. The collection $
\Kc(\Xc):=\{\Kc(\Xc,r):r>0\}
$
is called a filtration if
$
\Kc(\Xc,r_1)\subseteq\Kc(\Xc,r_2)
$ whenever $ r_1\le r_2.
$
For $k\geq0$, we denote by
$
\mathrm{PH}_k(\Kc(\Xc))
$
the $k$-th persistent homology associated with this filtration. It records
how $k$-dimensional homological features emerge and disappear as the scale
parameter varies. For each such feature, the scale at which it appears is
called its \emph{birth time}, and the scale at which it disappears is called
its \emph{death time}.

\subsection{Persistence diagram}\label{sec::Persistence diagram}
A persistence diagram is a useful tool to describe persistent homology. For a fixed $k\in\{0,1,2,\dots\}$, consider the $k$-th dimensional topological feature represented by an element of $\mathrm{PH}_k(\Kc(\Xc))$ with birth and death times \(b\) and \(d\), respectively. This feature can be identified with the point $(b,d)$ in $\Omega:=\{(x,y)\in\Rb^2 : y> x\geq0\}$. The multiset of all birth-death pairs in $\Omega$ of $k$-th dimensional topological features in $\mathrm{PH}_k(\Kc(\Xc))$, denoted by $\mathrm{PD}_k(\Kc(\Xc))$, is called the $k$-th \textbf{persistence diagram}; see Figure \ref{fig:S^2example} for an example. 

\begin{figure}[t]
    \centering
    \includegraphics[width=1.0\textwidth]{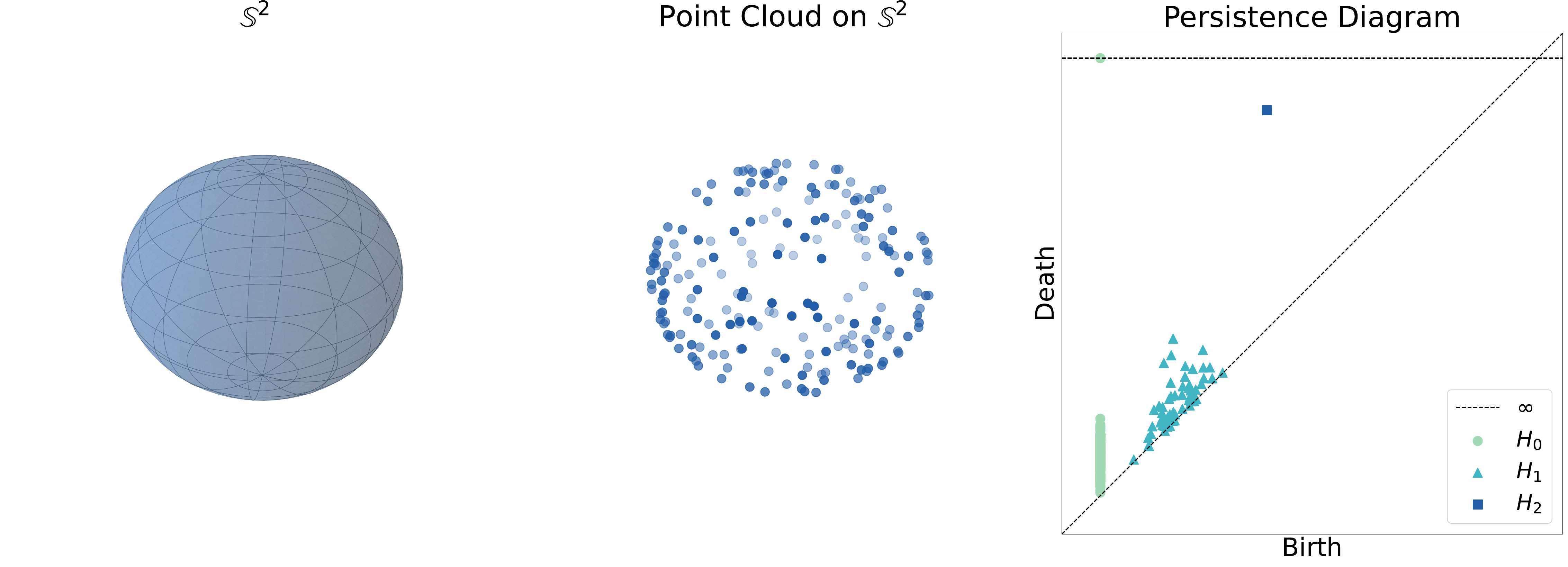}
    \caption{(Left) the sphere $\mathbb{S}^2$, (Middle) a point cloud on $\mathbb{S}^2$, and (Right) the persistence diagram obtained using the Vietoris-Rips filtration.}
    \label{fig:S^2example}
\end{figure}

\paragraph*{Randomness of persistence diagrams}
Consider a random point cloud $\Xc$ on an underlying topological space $\Mb$.
We regard a persistence diagram as a locally finite counting measure on $\Omega$.
We denote by $\mathbf{PD}$ the space of locally finite integer-valued
measures on $\Omega$, interpreted as persistence diagrams. We equip
$\mathbf{PD}$ with the sigma-field \citep{hiraoka2018limit},
\[
\mathcal B(\mathbf{PD})
\coloneq
\sigma\{D\mapsto D(B):B\in\Bc(\Omega),\ 
B\text{ relatively compact}\},
\]
where $\Bc(\Omega)$ is the Borel sigma-field of $\Omega$.
For any homological degree $k$ and filtration rule $\Kc$, the persistence
diagram $\mathrm{PD}_k(\Kc(\Xc))$ is regarded as an element of $\mathbf{PD}$.
We assume that the map
$
\Xc\mapsto \mathrm{PD}_k(\Kc(\Xc))
$
is measurable with respect to the $\sigma$-field. Then
$
D=\mathrm{PD}_k(\Kc(\Xc))$
is a random persistence diagram. Moreover, if $\Xc$ is generated by a stochastic process on $\Mb$, such as
a Poisson process, then the cardinality of $D$ is also a random variable.

\paragraph*{Weight functions on a persistence diagram}
Persistence diagrams often contain topological noise arising from the construction of simplicial complexes on point clouds.
Such noisy features usually have short persistence and appear near the diagonal $y=x$.
To emphasize more persistent, and hence more informative, features, many TDA methods assign weights to diagram points through a weight function.
This idea is used in several vectorization methods, such as persistence images \citep{persistenceimage} and the persistence weighted Gaussian vector \citep{kusano2016persistence,kusano2017kernel}. 

\subsection{Persistence intensity functions}\label{sec::Persistence intensity function} 
A persistence diagram $D$ is a multiset in $\Omega$. This multiset $D$ can be regarded as a discrete measure on $\Omega$, expressed as a sum of the Dirac measures: 
$\sum_{\bx \in D}\delta_\bx$.
As discussed in Section \ref{sec::Persistence diagram}, we treat a persistence diagram as a random object. Hence, it can be considered as a random measure. 
The expectation of the random measure $D$ is a deterministic measure on $\Omega$, denoted by $\Eb\left(D\right)$, and defined by 
\begin{align*}
    \Eb\left(D\right)\left(B\right)\coloneq\Eb(D(B)) =\Eb\left(\sum_{\mathbf{x}\in D}\delta_\mathbf{x}\left(B\right)\right),
\end{align*}
for any Borel set $B\subseteq \Omega$. This deterministic measure represents the expected number of points in $D$ contained in the given $B$. Under suitable conditions, the expectation measure $\Eb(D)$ is absolutely continuous with respect to the Lebesgue measure on $\Omega$. Consequently, there exists a Radon-Nikodym derivative $\frac{d\Eb(D)}{d\text{Leb}_2}$ satisfying that for every Borel set $B$,
\begin{align}\label{eq::intensity_func}
    \int_{B} \frac{d\Eb(D)}{d\text{Leb}_2} d\text{Leb}_2 = \Eb(D)(B),
\end{align}
where $\text{Leb}_2$ is the Lebesgue measure on $\Omega$.

In TDA, this density function is called the persistence intensity function (or simply the intensity function). For the sake of completeness, the sufficient conditions for the existence of intensity functions studied in \citet{Chazal2019} are presented in Section \ref{Appendix::A-sufficient-condition-for-existence-of-intensity-function} of the Supplementary Material.

The persistence intensity function plays a central role in TDA. For a random persistence diagram $D$, the persistence intensity function quantifies the expected number of points of $D$ contained in a Borel set $B$, as given in \eqref{eq::intensity_func}. Therefore, the intensity function represents the expected behavior of $D$ and provides a first-order summary of its underlying probability distribution. Furthermore, this representation is interpretable and allows for the straightforward calculation of linear transformations of $D$. 
Figure~\ref{fig::intensity_func_example} provides a visual illustration of the persistence intensity function.

However, persistence intensity functions do not capture all information contained in persistence diagrams. They do not capture dependencies among the topological features. Nevertheless, the information that persistence intensity functions do capture—namely, the existence and expected scales of topological features—is often of primary interest in TDA research \citep{Topaz2015,amorphoussolids,Wilding2021}. 
These observations highlight the role of persistence intensity functions as informative first-order summaries of random persistence diagram distributions.

\begin{figure}[t]
    \centering
    \includegraphics[width=1.0\textwidth]{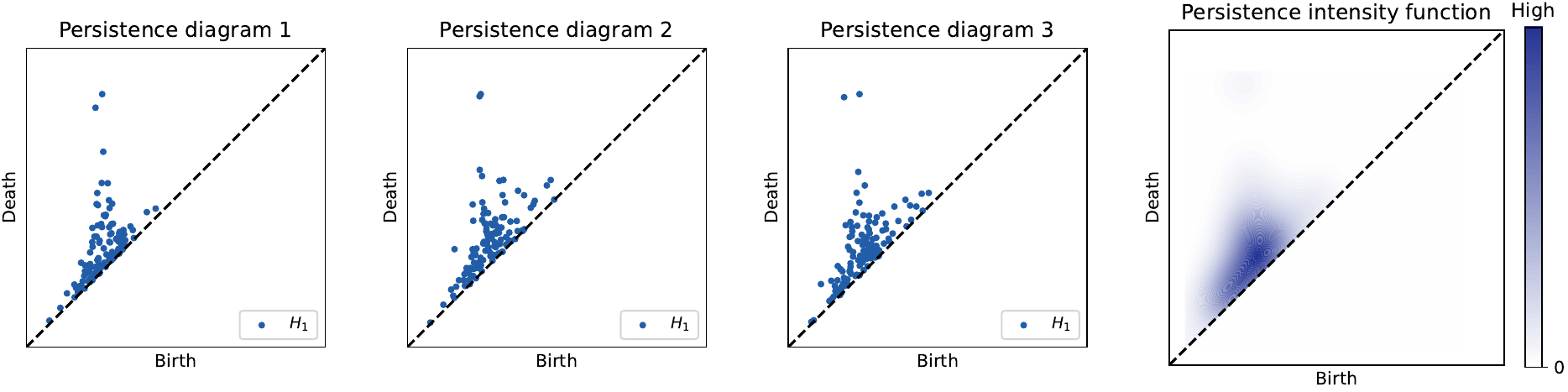}
    \caption{\textbf{Left:} Three persistence diagrams randomly sampled from a random persistence diagram $D$. \textbf{Right:} The estimated persistence intensity function of $D$, which is constructed using the three samples.}
    \label{fig::intensity_func_example}
\end{figure}

\subsection{Assumptions}\label{sec::Assumptions and Notations} In this section, we state the assumptions used throughout the paper. We assume that the sample sizes $n$ and $m$ satisfy $n \asymp m$, that is, there exist constants $0 < c \leq C < \infty$ such that $cm \leq n \leq Cm$.
\paragraph*{Conditions on the kernels}
Let $k^i : \Rb\rightarrow \Rb_{\geq0} $ be a positive definite function in $L^1(\Rb)\cap L^2(\Rb)$ such that $k^i(x)=k^i(-x)$, $\int k^i(x)dx =1$ for $i=1,2$, where $\Rb_{\geq0}:=\{x\in \Rb:x\geq 0\}$. For bandwidths $\lambda = (\lambda_1,\lambda_2) \in (0,\infty)^2$, we define the kernel function $k_\lambda :\Rb^2\times \Rb^2 \rightarrow \Rb_{\geq 0}$ by 
\begin{align}\label{kernel def}
    k_\lambda(x,y) := \prod_{i=1}^2 \frac{1}{\lambda_i}k^i\left(\frac{x_i-y_i}{\lambda_i}\right) ,
\end{align}
which satisfies that 
\begin{align*}
    \int_{\Rb^2}k_\lambda(x,y)dx=1 ,\quad \int _{\Rb^2}k_\lambda(x,y)^2dx = \frac{\kappa_2}{\lambda_1\lambda_2},
\end{align*}
where $\kappa_2 := \prod_{i=1}^2 \int_{\Rb}k^i(x_i)^2dx_i <\infty$. 
Examples satisfying the above conditions include the Gaussian kernel with $k^i(u) =\frac{1}{\sqrt{\pi}}\exp(-u^2)$ and the Laplace kernel with $k^i(u) = \frac{1}{2}\exp(-|u|)$.
For notational simplicity, we also write for $u=(u_1,u_2)\in \Rb^2$,
\begin{align*}
    \varphi_\lambda (u) :=\prod_{i=1}^2 \frac{1}{\lambda_i}k^i\left(\frac{u_i}{\lambda_i}\right),
\end{align*}
so that $k_\lambda(x,y) = \varphi_\lambda(x-y)$, for $x,y\in \Rb^2$.
Throughout the remainder of this paper, we restrict our attention to such kernel functions.
\paragraph*{Conditions on the weight functions} We restrict the weight function $w(x,y)$ to be a positive and smooth function that is nondecreasing in $y-x$. Under Assumption \ref{assump::Bounded persistence condition}, defined later, we restrict the domain of $w$ to
\[
\Omega(M) \coloneq \{(x,y) \in \Omega : y < M\},
\]
so that $\|w^2\|_\infty <\infty.$
When it is necessary to view the weight function as a function on $\mathbb{R}^2$, 
we assume that $w$ admits a compactly supported smooth extension to $\mathbb{R}^2$, 
still denoted by $w$.
Examples of admissible weight functions include: 
\begin{enumerate}
    \item A polynomial function on $\Omega(M)$: $w(x,y) = (y-x)^n$, $n \in \mathbb{N}$,
    \item An arctangent function on $\Omega(M)$: $w(x,y) = \arctan(y-x)$.
\end{enumerate}
Throughout the remainder of this paper, we restrict our attention to such weight functions.

\paragraph*{Conditions on the probability distributions}
For regularity constants $M,N>0$, we define the class $\Pc:=\Pc(M,N)$ of probability distributions on $\mathbf{PD}$ as the set of all distributions $P$ satisfying the following assumptions. In the following, let $D$ be a random persistence diagram whose probability distribution is $P\in \Pc$.

\begin{enumerate}
\renewcommand{\labelenumi}{\theenumi} \renewcommand{\theenumi}{(A\arabic{enumi})}
  \item\label{assump::randomness of diagram}
  $D = \mathrm{PD}_k(\Kc(\Xc))$ for some $k\geq 0$, some filtration $\Kc$, 
  and some point cloud $\Xc$.

  \item\label{assump::Bounded persistence condition} 
  For any $(x,y)\in D$, $y<M$. 
  \item\label{assump::core assumption} There exists a discrete random variable $Z$ taking values in $\Nb$ and a function 
$h:\Nb\to\Rb$ such that,
 conditioned on $Z=\ell$, $|D| \leq h(\ell)$, 
 and the following hold:

  \begin{itemize} 
      \item for each $\ell \in \Nb$, there exists a density function 
      \[
      p_\ell = \frac{d \mathbb{E}[D \mid Z = \ell]}{d \mathrm{Leb}_2},
      \]
      \item the uniform summability condition:
      \[\sum_{\ell=1}^\infty h(\ell)\|w^2\cdot p_{\ell}\|_\infty \Pb(Z=\ell)\leq N.
      \]
  \end{itemize}
\end{enumerate}
\begin{rmk}[Concrete formulations of Assumption \ref{assump::core assumption}]\label{Realizations of Assumption}
We describe two useful reformulations of Assumption \ref{assump::core assumption}, depending on the choice of $Z$.
When $Z$ is taken to be constant, the assumption reduces to:  
\begin{enumerate}
\renewcommand{\labelenumi}{\theenumi} \renewcommand{\theenumi}{(B\arabic{enumi})}
  \item\label{assump::Bounded cardinality condition} There exists a constant $B>0$ such that 
   $|D|\leq B$,  
  \item\label{assump::Existence of intensity function}
  There exists a density function $\frac{d\Eb(D)}{d\text{Leb}_2}=p$, and it satisfies $\|w^2\cdot p\|_\infty \le  N/B$.
\end{enumerate}
When $Z=|\Xc|$, Assumption \ref{assump::core assumption} is satisfied if the following holds.
\begin{enumerate}
\renewcommand{\labelenumi}{\theenumi} \renewcommand{\theenumi}{(C\arabic{enumi})}
  \item\label{assump::Control of the diagram cardinality}
  $|D|(|\Xc|) \le 2^{|\mathcal{X}|}$.
  \item\label{assump::existence of conditional intensity}
  For each $\ell \in \Nb$, there exists a density function $p_\ell = \frac{d \mathbb{E}[D \mid |\mathcal{X}| = \ell]}{d \mathrm{Leb}_2},$ and a constant $L>0$, independent of $P$, such that $\|w^2\cdot p_\ell\|_\infty \leq L^{\ell}.$

  \item\label{assump::Tail behavior} 
  $\sum_{\ell=1}^{\infty} 2^\ell\cdot L^{\ell}
  \cdot \Pb(|\mathcal{X}| = \ell) \le N$.
\end{enumerate}
\end{rmk}

\begin{rmk}[Brief discussion of the assumptions] In this remark, we briefly discuss the above assumptions; see Section~\ref{Appendix::Description-of-the-assumptions} of the Supplementary Material for detailed discussions.
\begin{itemize}
\item In TDA, the Binomial process and the Poisson process are commonly used as models of point processes. As filtration functions, the Vietoris-Rips filtration is also widely employed. Our model is general enough to cover these settings in TDA. 

\item The boundedness conditions on $\|w^2\cdot p\|_\infty$ and $\|w^2\cdot p_{\ell}\|_\infty$ are weaker than those on $\|p\|_\infty$ and $\|p_{\ell}\|_\infty$, respectively. Depending on the filtration employed, intensity functions may diverge near the diagonal $y=x$, while the products $w^2\cdot p$ and $w^2\cdot p_{\ell}$ can remain uniformly bounded under a suitable choice of weight function.

\item Assumption \ref{assump::core assumption} is introduced to cover models in which the cardinality of a diagram is unbounded. When the cardinality is bounded, as in \ref{assump::Bounded cardinality condition}, the setting becomes straightforward. However, when the cardinality is unbounded, one must control its effect in order to bound the variance of the test statistic and this can be achieved 
under Assumption \ref{assump::core assumption}, more concretely under \ref{assump::Control of the diagram cardinality}-\ref{assump::Tail behavior}. In this sense, Assumption \ref{assump::core assumption} is crucial in our analysis; see Remark \ref{rmk::Variance_control_rmk} for more details.
\end{itemize}
\end{rmk}

We have defined the class $\Pc$ of probability distributions. We now introduce notation related to $\Pc$.
Let $\Pc^{\otimes 2}$ denote the Cartesian product $\Pc\times \Pc$. 
For a family $\Fc$ of pairs of intensity functions, we define the subclass $\Pc^{\otimes 2}_{\Fc}$ of $\Pc^{\otimes 2}$ by 
\begin{align}\label{eq::sub class of distributions}
    \Pc^{\otimes 2}_{\Fc}\coloneqq\left\{(P,Q)\in\Pc^{\otimes 2}: (p,q) \in \Fc, \, \mathbb{E}(P)=p,\, \mathbb{E}(Q)=q\right\}.  
\end{align}
The notation $\E(P)=p$ indicates that the intensity function of the distribution $P$ is $p$.
For the alternative class characterized by distinct intensity functions, we define $\Pc^{\otimes 2}_1$ by 
\begin{align}\label{eq::alternative sublcass}
    \Pc^{\otimes 2}_1\coloneqq\{(P,Q)\in \Pc^{\otimes 2} : p\neq q , \, \E(P)=p, \, \E(Q)=q\}.
\end{align}

\subsection{Minimax testing framework}
In this subsection, we introduce the minimax testing framework used in our theoretical analysis.
A brief review of minimax testing theory is provided in
Section~\ref{Appendixsecminimaxrateoftesting} of the Supplementary Material.

Let $\Cc$ be a function class. Since our object of interest is the weighted
difference of persistence intensity functions, we define
\begin{align*}
\mathcal{F}_{\rho}(\mathcal{C},w) := \Bigl\{ (p,q) :   
 w \cdot (p-q) \in \mathcal{C},
 \| w \cdot (p-q) \|_2 \geq \rho 
\Bigr\}.
\end{align*}
The use of the weighted $L^2$-distance reflects the fact that, in many TDA
applications, differences occurring farther from the diagonal $y=x$ are regarded
as more significant.
For a test $\Delta$, its uniform separation rate over $\Cc$ is defined by
\begin{align*}
\rho_{n+m}(\Delta,\Cc,\beta,w,M,N)
:=
\inf\left\{
\widetilde{\rho}>0:
\sup_{(P,Q)\in
\Pc^{\otimes 2}_{\Fc_{\widetilde{\rho}}(\Cc,w)}}
\Pb_{P\times Q}
\bigl(
\Delta(\Xb_n,\Yb_m)=0
\bigr)
\leq \beta
\right\},
\end{align*}
where $M$ and $N$ are the constants defining $\Pc$ in
Section~\ref{sec::Assumptions and Notations}.
Also, the minimax rate of testing is defined by 
\begin{align*}
    \rho^{\dagger}_{n+m}(\Cc,\alpha,\beta,w,M,N) := \inf\left\{\rho_{n+m}(\Delta_\alpha,\Cc,\beta,w,M,N):\Delta_\alpha\in \Psi_{\alpha}\right\},
\end{align*}
where $\Psi_{\alpha}=\left\{\Delta_{\alpha}:\sup_{P\in \mathcal{P}}\Pb_{P\times P}\!\left(\Delta_\alpha(\Xb_n,\Yb_m)=1\right)\leq \alpha \right\}.$

The function space $\Cc$ of interest is the anisotropic Sobolev ball defined by
\begin{align}\label{Sobolev ball}
    \Sc_2^{s_1,s_2}(R)\coloneq\left\{f\in L^1(\Rb^2)\cap L^2(\Rb^2) \mid \int_{\Rb^2} \left(|\xi_1|^{2s_1}+|\xi_2|^{2s_2}\right)|\hat{f}(\xi)|^2d\xi \leq (2\pi)^2R^2 \right\},
\end{align}
for $s_1,s_2>0$ and $R>0$, where $\widehat{f}$ is the Fourier transform of an integrable function $f$, with convention $\widehat{f}(\xi)=\int e^{-i\langle x,\xi\rangle}f(x)dx.$ 
Note that the anisotropic Sobolev ball extends the standard Sobolev ball by allowing the smoothness parameters $s_i$ to differ, thereby providing additional flexibility and yielding novel aspects in the analysis.

\section{Test statistic}\label{sec::Test Statistic}
In this section, we define a distance between two intensity functions embedded in a reproducing kernel Hilbert space (RKHS) and obtain its unbiased estimator; see Section \ref{Appendix::Reproducing-kernel-Hilbert-spaces} of the Supplementary Material for the definition of the RKHS.

Let $P$ and $Q$ be probability distributions in $\Pc$. We have two groups of samples $X_1,\dots,$ $ X_n \overset{\iid}{\sim}P$ and $Y_1,\dots,Y_m \overset{\iid}{\sim}Q$. 
Let $w$ be a weight function and $k_\lambda$ be a kernel function. Consider the RKHS $\Hc_{k_{w,\lambda}}$ generated by the weighted kernel function $k_{w,\lambda}(x,y)=w(x)w(y)k_\lambda(x,y)$, which was first introduced in \citet{kusano2018expectation}. We embed an intensity function into this RKHS as follows. For an intensity function $p$, define 
\begin{align}\label{Eq::mean-embedding}
    \mu_p = \int  p(x)w(\cdot)w(x)k_\lambda(\cdot,x)dx \in \Hc_{k_{w,\lambda}}. 
\end{align}
For the two intensity functions $p$ and $q$, the squared $\Hc_{k_{w,\lambda}}$-norm $\|\mu_p-\mu_q\|_{\Hc_{k_{w,\lambda}}}^2$ of $\mu_p-\mu_q$ can be estimated by the following unbiased estimator: 
\begin{align}\label{U-stat}
    \widehat{\|\mu_p-\mu_q\|}_{\Hc_{k_{w,\lambda}}}^2 &= \frac{1}{n(n-1)m(m-1)}\sum_{1\leq i\neq i'\leq n}\sum_{1\leq j\neq j'\leq m} h_\lambda(X_i,X_{i'},Y_j,Y_{j'}),
\end{align}
where $h_\lambda(X,X',Y,Y') := K(X,X')+K(Y,Y')-K(X,Y')-K(X',Y)$ for $X,X',Y,Y' \in \mathbf{PD} $ and $K(X,Y):=\sum_{\bz\in X}\sum_{\bz'\in Y}w(\bz)w(\bz')k_\lambda(\bz,\bz')$; see Section \ref{Appendix::Analysis-for-test-statistic} of the Supplementary Material for the details.
\begin{rmk}
We explain why the RKHS distance is employed to measure discrepancies between intensity functions. 
First, the squared RKHS distance admits an unbiased estimator whose variance under \(P=Q\) is easier to control than that of an \(L^p\)-based estimator.
Second, persistence diagrams can be vectorized as elements of the RKHS, which allows us to define distances between persistence diagrams that incorporate the weight function.
\end{rmk}

\begin{rmk}\label{rmk::mmd}
In this remark, we briefly introduce the statistical inference method based on the maximum mean discrepancy $(\mathrm{MMD})$. For convenience,  consider two probability densities $f$ and $g$ on $\mathbb{R}^2$, and let $X_1,\dots, X_n \overset{\iid}{\sim} f $ , $Y_1,\dots, Y_m \overset{\iid}{\sim} g $. The $\mathrm{MMD}$ is a distance between $f$ and $g$ defined by
\begin{align*}
    \mathrm{MMD}(f,g,\mathcal{H}_k) := \sup_{h \in \mathcal{H}_k : \|h\|_{\mathcal{H}_k} \leq 1} \Big| \mathbb{E}_{X \sim f}[h(X)] - \mathbb{E}_{Y \sim g}[h(Y)] \Big|,
\end{align*}
where $\Hc_k$ is the RKHS generated by a kernel function $k$ on $\Rb^2\times \Rb^2$ \citep{JMLR:v13:gretton12a}.
An unbiased estimator for the squared MMD is given by
\begin{align}\label{eq::mmd_statistic}
     \frac{1}{n(n-1)} \sum_{i=1}^n \sum_{j \neq i} k(X_i, X_j)
    + \frac{1}{m(m-1)} \sum_{i=1}^m \sum_{j \neq i} k(Y_i, Y_j)
     - \frac{2}{nm} \sum_{i=1}^n \sum_{j=1}^m k(X_i, Y_j), 
\end{align}
that can be used for two-sample testing to assess whether $f = g$. 
\end{rmk}
\begin{rmk}\label{rmk::our_data_trait}
When each persistence diagram consists of a single point, our setting reduces to that of Remark~\ref{rmk::mmd}. 
In contrast, in our framework each observation is a persistence diagram, that is, a random multiset. 
As a consequence, computing kernel values requires pairwise evaluations over all points in the diagrams, which differs from the Euclidean data setting. 
This difference necessitates controlling the cardinality of persistence diagrams in order to bound the variance of the test statistic in \eqref{U-stat}, and explains why existing techniques, such as those of \citet{kim2022minimax,schrab2023mmdagg}, cannot be directly applied.
\end{rmk}

\section{A permutation two-sample test}\label{Sec::Single Permutation Two Sample Test}
In this section, we define the proposed permutation test, establish its finite-sample validity under the distributional null, and analyze its power against alternatives characterized by the $L^2$ distance between their weighted persistence intensity functions. The proofs of the results in this section are provided in Section~\ref{Appendix::proof test} of the Supplementary Material.
\subsection{Hypotheses and testing algorithm}\label{Hypothesis} 
Consider two independent samples of persistence diagrams
$X_1,\dots,X_n \overset{\iid}{\sim} P$ and $Y_1,\dots,$ $Y_m\overset{\iid}{\sim} Q$,
where $P,Q \in \Pc$, and let $p$ and $q$ denote their corresponding intensity functions.
Under this setting, we consider the hypotheses
\[
\mathrm{H}_0:P=Q \quad \text{versus} \quad \mathrm{H}_1:p\neq q.
\] 
\paragraph*{Explanation of the hypotheses}
The hypotheses define the inferential target of this paper. 
The null hypothesis $\mathrm{H_0}:P=Q$ formulates the two-sample problem at the level of the distributions of random persistence diagrams, whereas the alternative $\mathrm{H_1}:p\neq q$ targets distributional differences that are detectable through the persistence intensity functions. 

This formulation leaves an \emph{indifference region} consisting of pairs $(P,Q)$ of distinct persistence diagram distributions with identical persistence intensity functions. As in the indifference-zone formulation of \citet{indifferencezone}, neither decision incurs any loss within this region.

Pairs in this region may differ in their internal dependence structures, but such differences are not captured by persistence intensity functions.
As illustrated in the simulation study in Section \ref{Appendix::sec:equalintensity} of the Supplementary Material, when only differences in dependence structure are of inferential interest, tests based on discrepancies between persistence intensity functions, including the proposed test, may not be appropriate. 

In many TDA problems, however, the presence and expected scales of topological features, captured by persistence intensity functions, are themselves of primary interest \citep{Topaz2015,amorphoussolids,Wilding2021}. Moreover, in a structured manifold-inference setting, two distinct persistence diagram distributions induced by distinct manifolds have different intensity functions once the size of the point cloud is sufficiently large. Such pairs therefore lie outside the indifference region.
Further details are provided in Section \ref{Appendix::sec::Identifiability} of the Supplementary Material.

We now describe the permutation testing algorithm. As discussed in Section \ref{sec::Test Statistic}, our test statistic is an estimator of $ \|\mu_p-\mu_q\|_{\Hc_{k_{w,\lambda}}}^2 $ given by
\begin{align}\label{Test statistic}
    \widehat{T}_\lambda(\Xb_n,\Yb_m) = \frac{1}{n(n-1)m(m-1)}\sum_{1\leq i\neq i'\leq n}\sum_{1\leq j\neq j' \leq m} h_\lambda(U_i,U_{i'},U_{n+j},U_{n+j'}),
\end{align}
where $U_i:=X_i$ and $U_{n+j}:=Y_j$ for $i=1,\dots,n$ and $j=1,\dots,m$.
 To obtain a test threshold, we use a Monte Carlo method based on permutation. Let $\sigma :\{1,\dots,n+m\} \rightarrow \{1,\dots,n+m\}$ be a permutation. Then we can compute $\widehat{T}$ on the permuted samples $\Xb_n^\sigma:=(U_{\sigma(i)})_{1\leq i\leq n}$ and $\Yb_m^\sigma:= (U_{\sigma(n+j)})_{1\leq j\leq m}$ as
\begin{align}\label{permuted test statistic}
    \widehat{T}_\lambda^\sigma &:= \widehat{T}_\lambda(\Xb_n^\sigma,\Yb_m^\sigma) \\
    &=\frac{1}{n(n-1)m(m-1)}\sum_{1\leq i\neq i'\leq n}\sum_{1\leq j\neq j' \leq m} h_\lambda(U_{\sigma_{(i)}},U_{\sigma_{(i')}},U_{\sigma_{(n+j)}},U_{\sigma_{(n+j')}}). \notag
\end{align} 
We uniformly sample $B$ i.i.d. permutations $\sigma^{(1)},\cdots,\sigma^{(B)}$, whose probability mass function is denoted by $r$. Let $\Zb_B:= (\sigma^{(b)})_{1\leq b \leq B}$ and also let for simplicity $\widehat{T}^b:=\widehat{T}_\lambda^{\sigma(b)}$ for $b=1,\dots,B$, and $\widehat{T}^{B+1}:=\widehat{T}(\Xb_n,\Yb_m)$, which corresponds to the identity permutation. By using these quantities, we estimate the conditional quantile of the distribution of $\widehat{T}^\sigma$ given $\Xb_n,\Yb_m$, under the null hypothesis by using a Monte Carlo approximation. An estimator of the conditional $(1-\alpha)$- quantile is given by 
\begin{align}\label{1-alpha quantile}
    \widehat{q}_{1-\alpha}^B(\Zb_B| \Xb_n,\Yb_m) := \inf\left\{u\in \Rb : 1-\alpha \leq \frac{1}{1+B}\sum_{b=1}^{B+1}\Ib\left(\widehat{T}^b\leq u\right)\right\},
\end{align}
which is a $\lceil(B+1)(1-\alpha)\rceil$-th value of the ordered simulated test statistics $(\widehat{T}^b)_{1\leq b \leq B+1}$, where $\Ib$ is an indicator function. We define the test rule for given $\alpha\in(0,1)$ as follows: we reject the null if  $\widehat{T}(\Xb_n,\Yb_m) >\hat{q}^B_{1-\alpha}(\Zb_B|\Xb_n,\Yb_m)$. 
We define our permutation test function by 
\begin{align}\label{Equation::Single permutation test}
  \Delta_{\alpha}^{\lambda,B}(\Xb_n,\Yb_m,\Zb_{B}):=   
  \Ib\left( \widehat{T}_{\lambda}(\Xb_n,\Yb_m) >\hat{q}^B_{1-\alpha}(\Zb_B|\Xb_n,\Yb_m)\right),
\end{align}
where $\lambda=(\lambda_1,\lambda_2)$ is a 2-dimensional bandwidth and $B$ is the number of permutations in the Monte Carlo approximation.

For given $\alpha\in (0,1)$, the Type I error of  $\Delta_{\alpha}^{\lambda,B}$ must be controlled to be at level $\alpha$. The permutation method described above guarantees this for any sample sizes $n$ and $m$. The following proposition formalizes this well-known result. 
\begin{prop}\label{proposition non-asymptotic level}For given $\alpha\in(0,1)$ and $B\in \Nb$, the  permutation test defined in \eqref{Equation::Single permutation test}  has non-asymptotic level $\alpha$, \ie 
\begin{align*}
\Pb_{P\times P\times r} \left(\Delta_{\alpha}^{\lambda,B}(\Xb_n,\Yb_m,\Zb_{B})=1\right) \leq \alpha,    
\end{align*} for any probability distribution $P$ in $\Pc$.
\end{prop}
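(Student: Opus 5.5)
The plan is the standard Monte Carlo permutation argument: under $P=Q$ the pooled sample is exchangeable, and this makes the observed statistic exchangeable with the $B$ permuted ones. Write $\mathbb{U}=(U_1,\dots,U_{n+m})$ for the pooled sample. Under $\mathrm{H}_0$ the $U_i$ are i.i.d.\ from $P$, so $\mathbb{U}\overset{d}{=}\mathbb{U}^{\tau}:=(U_{\tau(1)},\dots,U_{\tau(n+m)})$ for every fixed permutation $\tau$. Set $\sigma^{(B+1)}:=\mathrm{id}$, so that $\widehat{T}^{b}=\widehat{T}_\lambda(\Xb_n^{\sigma^{(b)}},\Yb_m^{\sigma^{(b)}})$ for $b=1,\dots,B+1$.

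\textbf{Step 1: exchangeability of the statistics.} I will show that $(\widehat{T}^1,\dots,\widehat{T}^{B+1})$ is an exchangeable random vector under $P\times P\times r$. Draw an independent uniform permutation $\tau$. Replacing $\mathbb{U}$ by $\mathbb{U}^\tau$ and $\sigma^{(b)}$ by $\sigma^{(b)}\circ\tau^{-1}$ (more precisely, the composition that makes $U_{\sigma^{(b)}(\cdot)}$ unchanged) leaves every $\widehat{T}^b$ unchanged. It turns the identity into the uniform permutation $\tau^{-1}$. By group invariance of the uniform distribution on the symmetric group, the $B+1$ permutations $\sigma^{(1)}\circ\tau^{-1},\dots,\sigma^{(B)}\circ\tau^{-1},\tau^{-1}$ are i.i.d.\ uniform and independent of $\mathbb{U}^\tau$, and $\mathbb{U}^\tau\overset{d}{=}\mathbb{U}$. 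Hence the joint law of $(\widehat{T}^1,\dots,\widehat{T}^{B+1})$ equals the law obtained when all $B+1$ permutations are i.i.d.\ uniform, and that law is invariant under reindexing the $b$'s.

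\textbf{Step 2: rank bound.} The test rejects exactly when $\widehat{T}^{B+1}$ exceeds the $\lceil (B+1)(1-\alpha)\rceil$-th order statistic of $(\widehat{T}^b)_{b\le B+1}$. At most $(B+1)-\lceil(B+1)(1-\alpha)\rceil\le \lfloor (B+1)\alpha\rfloor$ indices $b$ can have $\widehat{T}^b$ strictly above that order statistic; this count holds with ties present. By exchangeability each index has the same probability of being among them. Summing the indicators over $b$ then gives
\[
(B+1)\,\Pb\bigl(\Delta_{\alpha}^{\lambda,B}=1\bigr)=\Eb\Bigl[\sum_{b=1}^{B+1}\Ib\bigl(\widehat{T}^b>\widehat{q}^B_{1-\alpha}\bigr)\Bigr]\le (B+1)\alpha .
\]
This yields the claim for every $P\in\Pc$ and all $n,m,B$.

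\textbf{Main obstacle.} The delicate point is Step 1: the identity permutation is deterministic, so exchangeability must come from re-randomizing the data through $\tau$, and the composition order has to be checked so that each $\widehat{T}^b$ is genuinely unchanged. I will also note that no moment assumptions are required, only measurability of $\widehat{T}_\lambda$, which follows from the measurability of $D\mapsto K(D,D')$ on $\mathbf{PD}$.
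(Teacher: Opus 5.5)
Your proposal is correct and follows the same two-step structure as the paper's proof: exchangeability of $(\widehat{T}^1,\dots,\widehat{T}^{B+1})$ under $P=Q$, followed by a rank argument. The differences are only in execution. You re-randomize with an auxiliary independent uniform $\tau$, reducing to $B+1$ i.i.d.\ uniform permutations, whereas the paper re-centers case by case on $\sigma^{(\pi(B+1))}$. You also prove the counting bound $\#\{b:\widehat{T}^b>\widehat{q}^B_{1-\alpha}\}\le\lfloor(B+1)\alpha\rfloor$ directly, whereas the paper rewrites rejection as $\frac{1}{B+1}\bigl(1+\sum_{b\le B}\Ib(\widehat{T}^b\ge\widehat{T}^{B+1})\bigr)\le\alpha$ and cites Lemma~1 of Romano and Wolf.
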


\subsection{Power analysis}\label{sec::Power analysis} 
We next investigate the power of the test
$\Delta_\alpha^{\lambda,B}$ over the class
$\mathcal P_1^{\otimes 2}$.
The $\mathrm{Type\ II}$ error is given by
\begin{align}\label{power function}
    \sup_{(P,Q)\in \Pc^{\otimes 2}_1}\Pb_{P\times Q\times r}\left(\Delta_{\alpha}^{\lambda,B}(\Xb_n,\Yb_m,\Zb_B)=0\right),
\end{align}
where $\Pc^{\otimes 2}_{1}$ is defined in \eqref{eq::alternative sublcass}.
In the power analysis, for a given $\beta > 0$, we aim to derive a sufficient condition under which the $\mathrm{Type\ II}$ error is bounded by $\beta$, and to express this condition in terms of the $L^2$ distance between the two weighted intensity functions. We also analyze this condition from a minimax perspective.

\paragraph*{Roadmap for the power analysis} 
We split the power analysis into two parts: deriving (1) an upper bound on the uniform separation rate of our test over anisotropic Sobolev balls, and (2) a lower bound of the minimax rate of testing over the balls. By combining the results in the two parts, we conclude our test achieves the minimax optimal rate over anisotropic Sobolev balls. 

\subsubsection{Upper bound for the separation rate over anisotropic Sobolev balls}

In our analysis, it is necessary to obtain a suitable upper bound on the variance of the test statistic. 
Therefore, as a first step, we derive an upper bound on the variance term, as stated in the following lemma. 
\begin{lem}\label{Variance upper bound} There exists a positive constant $C_1(M,N,w,k)$, depending only on $M,N$ and $w$, such that 
\begin{align*} 
    \V_{P\times Q}\left(\widehat{T}_\lambda(\Xb_n,\Yb_m)\right) \leq C_1(M,N,w,k)\left(\frac{\|(w\cdot \psi)*\varphi_\lambda\|^2_2}{n+m} + \frac{1}{(n+m)^2\lambda_1\lambda_2}\right),
\end{align*}
for any $(P,Q)\in \Pc^{\otimes 2}_{1}$, where $\psi = p - q$ with $p=\Eb(P)$ and $q=\Eb(Q)$.
The constants
$M$, $N$, and the notation $p=\Eb(P)$ are defined in 
Section~\ref{sec::Assumptions and Notations}. 
\end{lem}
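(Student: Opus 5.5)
We treat $\widehat{T}_\lambda$ as a two-sample U-statistic with kernel $h_\lambda$ and use its Hoeffding decomposition. Write $\mu_D:=\sum_{\bz\in D}w(\bz)k_\lambda(\cdot,\bz)$ for a diagram $D$, so that $K(X,Y)=\langle \mu_X,\mu_Y\rangle$ and $\Eb\mu_X=\mu_p$. The standard variance formula for a U-statistic of order $(2,2)$ then gives
\[
\V\bigl(\widehat T_\lambda\bigr)\lesssim \frac{\sigma_{1,0}^2}{n}+\frac{\sigma_{0,1}^2}{m}+\frac{\sigma_{2,0}^2}{n^2}+\frac{\sigma_{0,2}^2}{m^2}+\frac{\sigma_{1,1}^2}{nm}.
\]
Here $\sigma_{1,0}^2=\V\bigl(\langle\mu_X,\mu_p-\mu_q\rangle\bigr)$ up to constants, and similarly for $\sigma_{0,1}^2$. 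The second-order terms are bounded by second moments of $K(X,X')$, $K(Y,Y')$ and $K(X,Y)$, after the linear terms are removed. Since $n\asymp m$, it suffices to bound the linear terms by $C\|(w\psi)*\varphi_\lambda\|_2^2$ and the quadratic terms by $C/(\lambda_1\lambda_2)$.

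For the linear term, set $g:=w\cdot\bigl((w\psi)*\varphi_\lambda\bigr)$, using the symmetry of $k_\lambda$. Then $\langle\mu_X,\mu_p-\mu_q\rangle=\sum_{\bz\in X}g(\bz)$, and its variance is at most $\Eb\bigl(\sum_{\bz\in X}g(\bz)\bigr)^2$. This is where Assumption~\ref{assump::core assumption} enters. We condition on $Z=\ell$ and apply Cauchy–Schwarz over the at most $h(\ell)$ points:
\[
\Bigl(\sum_{\bz\in X} g(\bz)\Bigr)^2\le h(\ell)\sum_{\bz\in X}g(\bz)^2 .
\]
Taking conditional expectations gives
\[
h(\ell)\int g^2p_\ell\le h(\ell)\|w^2p_\ell\|_\infty\,\|(w\psi)*\varphi_\lambda\|_2^2 .
\]
Summing over $\ell$ with weights $\Pb(Z=\ell)$ yields the bound $N\|(w\psi)*\varphi_\lambda\|_2^2$. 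The same argument applies to $Y\sim Q$.

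For the quadratic terms it suffices to bound $\Eb K(X,X')^2$, and similarly with $Y$ in either slot. By independence of $X$ and $X'$ we again condition on $Z=\ell$ and $Z'=\ell'$ and use Cauchy–Schwarz on the double sum:
\[
K(X,X')^2\le h(\ell)h(\ell')\sum_{\bz\in X}\sum_{\bz'\in X'}w(\bz)^2w(\bz')^2k_\lambda(\bz,\bz')^2 .
\]
Taking conditional expectations gives
\[
h(\ell)h(\ell')\iint w^2p_\ell(\bz)\,w^2p_{\ell'}(\bz')\,k_\lambda(\bz,\bz')^2\,d\bz\,d\bz' .
\]
We bound $w^2p_\ell$ by its sup norm and integrate $k_\lambda^2$ in $\bz$, which gives $\kappa_2/(\lambda_1\lambda_2)$. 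The remaining factor is $\int w^2p_{\ell'}$, which we bound by $\|w^2p_{\ell'}\|_\infty\cdot|\Omega(M)|$, where $|\Omega(M)|=M^2/2$ is finite by Assumption~\ref{assump::Bounded persistence condition}. Summing over $\ell$ and $\ell'$ gives $C(M,k)N^2/(\lambda_1\lambda_2)$. The cross term $\Eb K(X,Y)^2$ is handled the same way, using the independence of $X\sim P$ and $Y\sim Q$, both in $\Pc$.

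The main obstacle is the bookkeeping in the Hoeffding decomposition. The degenerate second-order components have to be controlled by raw second moments of $K$, and the first-order variances have to collapse exactly to $\|(w\psi)*\varphi_\lambda\|_2^2$ rather than to something like $\|\mu_p\|^2$. I would handle this by writing $h_\lambda=\langle\mu_X-\mu_Y,\mu_{X'}-\mu_{Y'}\rangle$ and centering each $\mu$ at its mean. The first-order projection is then $\langle\mu_X-\mu_p,\mu_p-\mu_q\rangle$ and its analogue for $Y$. The second-order projections are inner products of centered embeddings, for instance $\langle\mu_X-\mu_p,\mu_{X'}-\mu_p\rangle$. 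Their second moments are at most $\Eb K(X,X')^2$ plus lower-order terms, and each of these is bounded by the quadratic estimate above.
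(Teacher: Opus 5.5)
Your proposal is correct and follows essentially the same route as the paper. The linear terms are bounded by conditioning on $Z=\ell$, applying Cauchy--Schwarz (the paper's ``Jensen'' step) with $|X|\le h(\ell)$, and summing to $N\|(w\cdot\psi)*\varphi_\lambda\|_2^2$; the second-order terms are bounded by raw second moments of $K$ via $\int k_\lambda^2=\kappa_2/(\lambda_1\lambda_2)$ and $|\Omega(M)|$. The only difference is bookkeeping. The paper uses the generic two-sample U-statistic bound with the full kernel variance $\sigma^2_{\lambda,2,2}\le 4(\Eb K(X,X')^2+\Eb K(Y,Y')^2+2\Eb K(X,Y)^2)$. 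Your exact Hoeffding decomposition of the bilinear kernel $\langle\mu_X-\mu_Y,\mu_{X'}-\mu_{Y'}\rangle$ reaches the same bound with slightly tighter constants.
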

\begin{rmk}\label{rmk::Variance_control_rmk}
    In this remark, we discuss the main difference between the proof of Lemma \ref{Variance upper bound} and those in existing work on Euclidean data, \citep{kim2022minimax,schrab2023mmdagg}.
    As discussed in Remark \ref{rmk::our_data_trait}, this difference stems from the fact that the data under consideration are multisets with unbounded cardinality. In particular, the proof requires deriving an appropriate upper bound for the following quantity: 
    \begin{align}\label{eq::variance_explanation}
        \Eb_X\left(|X|\int_{\Rb^2}w(x)^2\left[(w\cdot\psi)*\varphi_\lambda(x)\right]^2dX(x)\right).
    \end{align}
     When $|X|$ is bounded as in Condition \ref{assump::Bounded cardinality condition}, it is straightforward to obtain the desired upper bound of \eqref{eq::variance_explanation}. However, $|X|$ is not always bounded; for instance, this is the case when $|\Xc|$ follows a Poisson distribution, where $\Xc$ is a random point cloud that induces the random persistence diagram $X$ \citep{Chazal2019,GLUZBERG2023107216}. To handle this case as well, we decompose \eqref{eq::variance_explanation} as a weighted sum of conditional expectations: 
    \begin{align*}
        \sum_{\ell=1}^\infty \Eb\left(|X| \int_{\Rb^2}w(x)^2\left[(w\cdot\psi)*\varphi_\lambda(x)\right]^2dX(x)\mid Z = \ell \right) P(Z=\ell),
    \end{align*}
    where $Z$ is a discrete random variable, which may be taken as $Z=|\Xc|$.
    Conditioning on $Z = \ell$ allows us to control $|X|$ and $\|w^2\cdot p_\ell\|_{\infty}$ as a function of $\ell$. Using this conditional control, we obtain that \eqref{eq::variance_explanation} is bounded by 
        $C \cdot  \|(w\cdot \psi) *\varphi_\lambda\|_2^2$,
    for some constant $C$ independent of $\lambda$.
\end{rmk}

By combining Lemma \ref{Variance upper bound} and Lemmas \ref{Appendix::first-suff-cond}, \ref{Appendix::quantile-upper-bound}  in the Supplementary Material, we obtain the following proposition. This proposition provides a sufficient condition for controlling the Type II error in terms of the $L^2$-distance between two weighted intensity functions.

\begin{prop}\label{new suff cond} Let $\alpha \in (0,e^{-1})$, $\beta\in (0,1)$ and $\lambda_1,\lambda_2\leq 1$. Let also 
$B\in \Nb$ satisfy $B\geq \frac{3}{\alpha^2}\left(\log\left(\frac{8}{\beta}\right)+\alpha\left(1-\alpha\right)\right)$. Then, there exists a constant $C_3(M,N,w,k)>0$ such that if, for any $(P,Q)\in \Pc^{\otimes 2}_1$,
\begin{align*}
    \|w\cdot \psi\|_2^2\geq \| w \cdot \psi - (w\cdot \psi)*\varphi_\lambda\|_2^2 +C_3(M,N,w,k) \frac{\log(\frac{1}{\alpha})}{\beta(n+m)\sqrt{\lambda_1\lambda_2}},
\end{align*}
then the $\mathrm{Type\ II}$ error is at most $\beta$, where $\psi=p-q$, $p=\E(P)$, and $q=\E(Q)$.
\end{prop}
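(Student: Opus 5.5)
The plan follows the standard route of \citet{kim2022minimax,schrab2023mmdagg}: a Chebyshev-type sufficient condition (Lemma \ref{Appendix::first-suff-cond}), an upper bound on the random permutation quantile (Lemma \ref{Appendix::quantile-upper-bound}), and the variance bound of Lemma \ref{Variance upper bound}.

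\textbf{Step 1: reduce to mean versus quantile plus standard deviation.} Write $\Eb_{P\times Q}[\widehat{T}_\lambda]=\|\mu_p-\mu_q\|^2_{\Hc_{k_{w,\lambda}}}$. Since $k_\lambda(x,y)=\varphi_\lambda(x-y)$ and $\varphi_\lambda$ is symmetric, this equals
\[
\int\!\!\int (w\psi)(x)\varphi_\lambda(x-y)(w\psi)(y)\,dx\,dy=\langle w\psi,(w\psi)*\varphi_\lambda\rangle_{L^2}.
\]
Polarization then gives
\[
2\langle w\psi,(w\psi)*\varphi_\lambda\rangle=\|w\psi\|_2^2+\|(w\psi)*\varphi_\lambda\|_2^2-\|w\psi-(w\psi)*\varphi_\lambda\|_2^2 .
\]
The Type II error is at most $\beta$ provided two events are controlled. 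First, with probability at least $1-\beta/2$, the Monte Carlo quantile $\widehat q^B_{1-\alpha}$ is below a deterministic bound $c_{1-\alpha}$. Second, $\Pb(\widehat T_\lambda\le c_{1-\alpha})\le\beta/2$. The condition on $B$ makes the Monte Carlo quantile close to the full permutation quantile, via a DKW/Bernstein argument inside Lemma \ref{Appendix::quantile-upper-bound}. By Chebyshev, the second event is controlled once
\[
\Eb\widehat T_\lambda\ge c_{1-\alpha}+\sqrt{2\V(\widehat T_\lambda)/\beta}.
\]

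\textbf{Step 2: bound the permutation quantile.} The permuted statistic has conditional mean zero. Its conditional variance is of order $(n+m)^{-2}$ times an average of $K(U_i,U_j)^2$. Using the exponential (Hoeffding-type) inequality for permuted degenerate U-statistics, as in \citet{kim2022minimax}, gives
\[
c_{1-\alpha}\lesssim \frac{\log(1/\alpha)}{n+m}\Bigl(\Eb\,\overline{K^2}/\beta\Bigr)^{1/2}.
\]
The condition $\alpha<e^{-1}$ ensures $\log(1/\alpha)\ge 1$, which absorbs the constants. Bounding $\Eb K(X,Y)^2$ is the step I expect to be the main obstacle, because $K$ is a double sum over diagram points of random, unbounded cardinality. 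I would condition on $Z$ as in Remark \ref{rmk::Variance_control_rmk} and apply Cauchy--Schwarz:
\[
K(X,Y)^2\le |X|\,|Y|\sum w^2(z)w^2(z')k_\lambda^2(z,z').
\]
Conditionally on $Z$, take expectations using the intensities $p_\ell$, together with $\int k_\lambda^2=\kappa_2/(\lambda_1\lambda_2)$ and $\sum_\ell h(\ell)\|w^2p_\ell\|_\infty\Pb(Z=\ell)\le N$. The bounded domain $\Omega(M)$ controls the remaining integral of $w^2p$. This yields $\Eb K^2\le C/(\lambda_1\lambda_2)$, and hence
\[
c_{1-\alpha}\le C\,\frac{\log(1/\alpha)}{\sqrt{\beta}\,(n+m)\sqrt{\lambda_1\lambda_2}}.
\]

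\textbf{Step 3: combine.} Lemma \ref{Variance upper bound} gives
\[
\sqrt{2\V/\beta}\le C\left(\frac{\|(w\psi)*\varphi_\lambda\|_2}{\sqrt{\beta(n+m)}}+\frac{1}{\sqrt\beta\,(n+m)\sqrt{\lambda_1\lambda_2}}\right).
\]
The first term is handled by $ab\le a^2/4+b^2$: it is at most $\tfrac14\|(w\psi)*\varphi_\lambda\|_2^2+C/(\beta(n+m))$. The leftover $\|(w\psi)*\varphi_\lambda\|_2^2$ is absorbed by the polarization identity, and $C/(\beta(n+m))\le C/(\beta(n+m)\sqrt{\lambda_1\lambda_2})$ since $\lambda_i\le1$. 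Using $\beta<1$, so that $\beta^{-1/2}\le\beta^{-1}$, all remainder terms fit under $C_3\log(1/\alpha)/(\beta(n+m)\sqrt{\lambda_1\lambda_2})$. The condition in Proposition \ref{new suff cond} then implies the Chebyshev requirement of Step 1, so the Type II error is at most $\beta$.
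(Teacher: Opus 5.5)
Your proposal is correct and follows essentially the same route as the paper: the identity $\Eb\widehat{T}_\lambda=\langle w\psi,(w\psi)*\varphi_\lambda\rangle$ with polarization, the Chebyshev-type criterion of Lemma \ref{Appendix::first-suff-cond}, and the variance bound of Lemma \ref{Variance upper bound}, with the cross term absorbed by $\|(w\psi)*\varphi_\lambda\|_2^2$, together with a permutation-quantile bound at $\delta=\beta/2$. The only difference is that the paper cites Lemma \ref{Appendix::quantile-upper-bound} as a black box, whereas you sketch its proof; your second-moment bound $\Eb K^2\le C/(\lambda_1\lambda_2)$, obtained by conditioning on $Z$, is exactly the bound on $\mathfrak{M}(\lambda)$ that the paper derives inside the proof of Lemma \ref{Appendix::variance lemma}, and it is what justifies the constant $C_2(M,N,w,k)$ in the diagram setting.
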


\begin{rmk}
The inequality in Proposition \ref{new suff cond} indicates that if the weight function $w$ vanishes on a set where $p$ and $q$ are significantly different, then the term $\|w\cdot \psi\|_2^2$ fails to capture that difference. Since the region on which $p$ and $q$ 
differ is unknown, it is desirable for $w$ to maintain sufficient mass throughout the domain $\Omega(M)$ so that 
no potential difference between $p$ and $q$ is suppressed.
\end{rmk}

Proposition \ref{new suff cond} requires $\|w\cdot(p-q)\|_2^2$ to be greater than the sum of two quantities. To obtain an explicit bandwidth-dependent bound for the first term, we impose a smoothness condition on the weighted difference \(w\cdot\psi\).
Therefore, we assume that $w\cdot\psi$ belongs to the anisotropic Sobolev ball $\Sc_2^{s_1,s_2}(R)$, defined in \eqref{Sobolev ball}. The following theorem states an upper bound on the uniform separation rate of our test over the anisotropic Sobolev ball. 
\begin{thm}\label{thm::upperbound_unifseparation_rate}
    Assume that $\alpha \in (0,e^{-1})$, $\beta\in(0,1)$, $s_1,s_2>0$, $R>0$, and let $B>0$ satisfy $B\geq \frac{3}{\alpha^2}\left(\ln\left(\frac{8}{\beta}\right)+\alpha(1-\alpha)\right).$ Set 
    \begin{align*}
        \bar{s}:=\frac{2}{1/s_1+1/s_2},\quad \tau:=\left(1+\frac{1}{4s_1}+\frac{1}{4s_2}\right)^{-1},\quad\lambda_i^\star:= (n+m)^{-\tau/(2s_i)},\quad i=1,2.
    \end{align*}
         Then the uniform separation rate of the test $\Delta_{\alpha}^{\lambda^\star,B}$ over the anisotropic Sobolev ball $\Sc_2^{s_1,s_2}(R)$ can be upper bounded as follows: 
    \begin{align*}
        \rho_{n+m}\left(\Delta_{\alpha}^{\lambda^\star,B},\Sc_2^{s_1,s_2}(R),\beta,w,M,N\right) \leq C_4(M,N,w,k,s_1,s_2,R,\alpha,\beta)(n+m)^{-\bar{s}/(2\bar{s}+1)},
    \end{align*}
    for some constant $C_4(M,N,w,k,s_1,s_2,R,\alpha,\beta)>0$. 
\end{thm}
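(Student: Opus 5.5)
The plan is to apply Proposition~\ref{new suff cond} with $\lambda=\lambda^\star$ and bound the bias term $\| w\cdot\psi-(w\cdot\psi)*\varphi_\lambda\|_2^2$ using the anisotropic Sobolev condition $w\cdot\psi\in\Sc_2^{s_1,s_2}(R)$. Note first that $\lambda_i^\star=(n+m)^{-\tau/(2s_i)}\le 1$, so the hypotheses of Proposition~\ref{new suff cond} hold. It then suffices to show that whenever $\|w\cdot\psi\|_2\ge C_4(n+m)^{-\bar s/(2\bar s+1)}$, the sufficient condition there is met.

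\textbf{Bias bound.} Write $f=w\cdot\psi$. By Plancherel,
\[
\|f-f*\varphi_\lambda\|_2^2=(2\pi)^{-2}\int|\hat f(\xi)|^2\,|1-\hat\varphi_\lambda(\xi)|^2d\xi .
\]
Here $\hat\varphi_\lambda(\xi)=\widehat{k^1}(\lambda_1\xi_1)\widehat{k^2}(\lambda_2\xi_2)$ with $|\widehat{k^i}|\le 1$ and $\widehat{k^i}(0)=1$. We use the splitting
\[
1-ab=(1-a)+a(1-b),
\]
which gives $|1-\hat\varphi_\lambda(\xi)|\le|1-\widehat{k^1}(\lambda_1\xi_1)|+|1-\widehat{k^2}(\lambda_2\xi_2)|$.

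The key one-dimensional fact needed is
\[
\sup_{u\neq 0}\frac{|1-\widehat{k^i}(u)|^2}{|u|^{2s_i}}\le C(k^i,s_i).
\]
This is the main obstacle. For large $|u|$ it is trivial, since $|1-\widehat{k^i}|\le 2$. For small $|u|$, if $s_i\le 1$ it follows from a bound $|1-\widehat{k^i}(u)|\lesssim |u|^{\min(s_i,\cdot)}$ using positive-definiteness and moments of $k^i$. Following \citet{schrab2023mmdagg}, I would handle this by the standard argument that for $s_i$ exceeding the kernel's moment order one restricts attention to kernels (e.g.\ Gaussian) for which the ratio is bounded. If necessary, the constant is absorbed into $C_4$, which may depend on $k$ and $s_i$.

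Given this fact, the bias satisfies
\[
\|f-f*\varphi_\lambda\|_2^2\le C\bigl(\lambda_1^{2s_1}\textstyle\int|\xi_1|^{2s_1}|\hat f|^2+\lambda_2^{2s_2}\int|\xi_2|^{2s_2}|\hat f|^2\bigr)\le C'R^2(\lambda_1^{2s_1}+\lambda_2^{2s_2}).
\]

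\textbf{Balancing.} Plugging $\lambda^\star$ into both terms gives $\lambda_i^{\star 2s_i}=(n+m)^{-\tau}$ and
\[
(n+m)^{-1}(\lambda_1^\star\lambda_2^\star)^{-1/2}=(n+m)^{-1+\tau(1/(4s_1)+1/(4s_2))}=(n+m)^{-\tau},
\]
since $1-\tau(1/(4s_1)+1/(4s_2))=\tau$ by definition of $\tau$. Finally,
\[
\tau=\Bigl(1+\tfrac{1}{2\bar s}\Bigr)^{-1}=\frac{2\bar s}{2\bar s+1},
\]
so the right side of Proposition~\ref{new suff cond} is at most $C''(n+m)^{-2\bar s/(2\bar s+1)}$, with $C''$ depending on $M,N,w,k,s_i,R,\alpha,\beta$. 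Choosing $C_4=\sqrt{C''}$, every pair in $\Pc^{\otimes2}_{\Fc_\rho}$ with $\rho\ge C_4(n+m)^{-\bar s/(2\bar s+1)}$ has Type II error at most $\beta$. This yields the stated bound on the uniform separation rate.
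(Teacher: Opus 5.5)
Your reduction to Proposition~\ref{new suff cond} and your balancing computation are correct and match the paper; this includes $\lambda_i^\star\le 1$ and $\tau=2\bar s/(2\bar s+1)$. The bias step, however, has a genuine gap.

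The ``key one-dimensional fact'' $\sup_{u\neq 0}|1-\widehat{k^i}(u)|^2/|u|^{2s_i}<\infty$ is false for every admissible kernel once $s_i>2$. Since $k^i\ge 0$ is symmetric with $\int k^i=1$, Jordan's inequality gives, for small $|u|$,
\[
1-\widehat{k^i}(u)=\int_{\Rb}\bigl(1-\cos(ux)\bigr)k^i(x)\,dx\ \ge\ \frac{2u^2}{\pi^2}\int_{|x|\le \pi/|u|}x^2k^i(x)\,dx\ \ge\ c\,u^2,
\]
so the ratio grows like $|u|^{4-2s_i}$. The Gaussian you suggest as a remedy is no exception: $1-\widehat{k^i}(u)=1-e^{-u^2/4}\sim u^2/4$. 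In any case, the theorem is stated for the whole kernel class of Section~\ref{sec::Assumptions and Notations}, not a subclass.

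Even for $s_i\le 2$, the fact needs $\int|x|^{s_i}k^i(x)\,dx<\infty$, which is not assumed. For example, a symmetric $a$-stable density with $a<s_i$ is admissible and has $1-\widehat{k^i}(u)=1-e^{-|u|^{a}}\asymp|u|^{a}$.

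The problem is deeper than a missing lemma. $\Sc_2^{s_1,s_2}(R)$ bounds only the homogeneous seminorm, not $\|f\|_2$. Low-frequency functions with large $L^2$ norm therefore show that no bound $\|f-f*\varphi_\lambda\|_2^2\le C R^2(\lambda_1^{2s_1}+\lambda_2^{2s_2})$ can hold over the whole ball once some $s_i>2$. The absolute bias bound you aim for is unattainable, not merely unproven.

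The missing idea is to bound the bias only relative to $\|f\|_2^2$, and absorb its low-frequency part into the left-hand side of Proposition~\ref{new suff cond}. Since each $\widehat{k^i}$ is continuous with $\widehat{k^i}(0)=1$, there is $t_0>0$ with $S_0:=\sup_{|u_1|,|u_2|\le t_0}|1-\widehat{k^1}(u_1)\widehat{k^2}(u_2)|<1$. Split frequency space as follows:
\begin{itemize}
\item On the box $\{|\lambda_i\xi_i|\le t_0,\ i=1,2\}$, use $|1-\widehat{\varphi_\lambda}|\le S_0$.
\item On its complement, use $|1-\widehat{\varphi_\lambda}|\le 2$, together with $\Ib(|\lambda_i\xi_i|>t_0)\le (|\lambda_i\xi_i|/t_0)^{2s_i}$ and the Sobolev condition.
\end{itemize}
This gives
\[
\|f-f*\varphi_\lambda\|_2^2\le S_0^2\|f\|_2^2+4R^2\bigl(t_0^{-2s_1}\lambda_1^{2s_1}+t_0^{-2s_2}\lambda_2^{2s_2}\bigr).
\]
The sufficient condition then becomes $(1-S_0^2)\|f\|_2^2\ge C(\lambda_1^{2s_1}+\lambda_2^{2s_2})+C_3\log(1/\alpha)/(\beta(n+m)\sqrt{\lambda_1\lambda_2})$. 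This works for all $s_1,s_2>0$ with no moment assumptions on $k^i$, and your balancing argument then completes the proof.
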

This completes the upper bound part in our power analysis.
We now turn to the corresponding minimax lower bound.
\subsubsection{Lower bound for the minimax rate over anisotropic Sobolev balls} 
The central message of this lower bound analysis is that the minimax hardness of Euclidean nonparametric two-sample testing persists in the persistence diagram setting. We establish this by an explicit embedding argument. Specifically, we construct a collection of probability distributions on $\mathbf{PD}$ that are difficult to distinguish, while their intensity functions are well separated in the target metric. 
The construction is based on hard-to-distinguish probability densities
\(f_0\) and \(f_\theta\) on \(\Omega\), adapted from the density
construction of \citet{AlbertLaurentMarrelMeynaoui2022}, which in turn follows the classical Ingster-type perturbation scheme. We embed these density functions into the class $\Pc$ in such a way that the intensity functions of the embedded distributions are precisely $f_0$ and $f_\theta$, respectively.

\begin{figure}[t]
    \centering
    \includegraphics[width=1.0\textwidth]{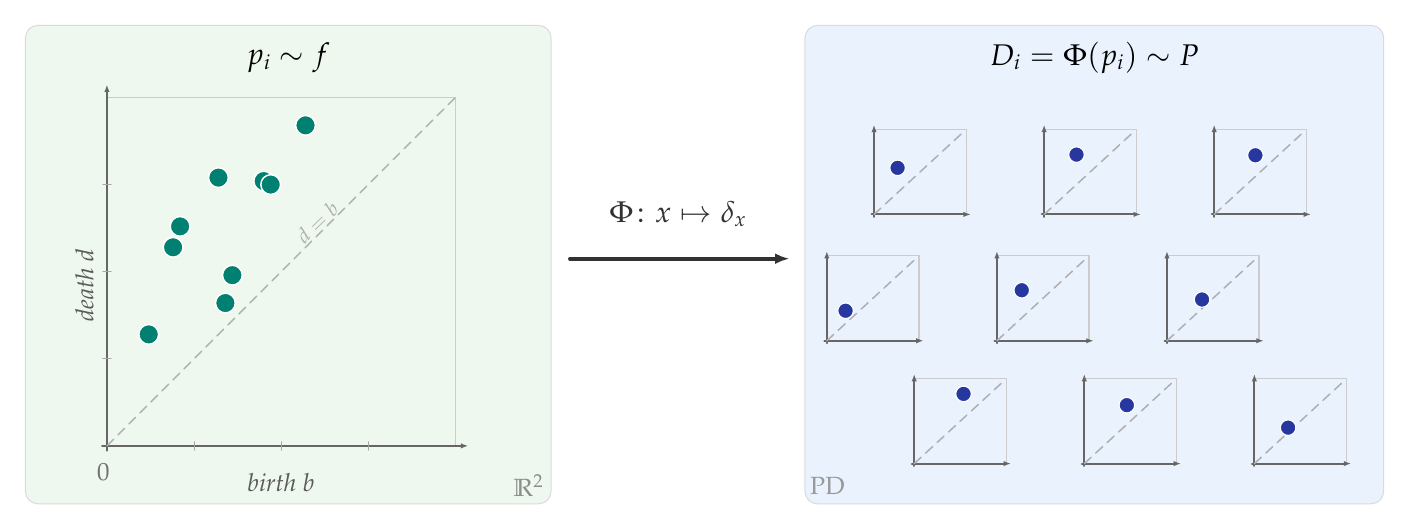}
    \caption{An example of $\Phi$. Each point in $\Omega$ is mapped to the persistence diagram consisting of that point, inducing a distribution $P$ from the probability density $f$ on $\Omega$.}
    \label{fig::exampleofPhi}
\end{figure}
We briefly explain how density functions on $\Omega$ are embedded into the class $\Pc$. 
 Define a measurable map $\Phi:\Omega \rightarrow \mathbf{PD}$ by $\Phi(x)\coloneq \delta_{x}$.
For a probability density function $f$ on $\Omega$,
we push forward $f\,dx$ to obtain a probability 
distribution \(P\) on \(\mathbf{PD}\), namely 
\begin{align*}
    P(A)\coloneq\Phi_{\#}(f\,dx)(A)= \int_{\Phi^{-1}(A)} f(x)dx,
\end{align*}
for any measurable set $A\subseteq \mathbf{PD}$. By construction, the intensity function of $P$ is $f$.
Indeed, for every Borel set \(B\subset \Omega\),
\[
    \mathbb{E}_{D\sim P}[D(B)]
    =
    \int_{\mathbf{PD}} D(B)\,dP(D)
    =
    \int_{\Omega} \delta_x(B) f(x)\,dx
    =
    \int_B f(x)\,dx.
\]
Moreover, this construction is realizable by genuine persistence diagrams:
by Proposition \ref{prop::phofcircle}, for each \(x=(b,d)\in\Omega\) with $b>0$, there exists a point cloud
\(\Xc(b,d)\subset S^1(d)\) such that
\[
    \mathrm{PD}_1\left(\textrm{\v{C}ech}(\Xc(b,d))\right)=\{(b,d)\}.
\]
Hence the singleton diagram \(\delta_x\) is not merely an abstract measure on
\(\Omega\), but can be realized as the persistence diagram of a Čech complex. Figure~\ref{fig::exampleofPhi} provides a visual example of $\Phi$.

Through this embedding, we show that the class of probability measures on $\mathbf{PD}$ is sufficiently broad to 
 contain an embedded copy of the class of probability distributions on $\Omega$. This idea, together with a careful analysis, yields the following minimax lower bound over
anisotropic Sobolev balls.
\begin{thm}\label{minimax lower bound} Let $\alpha,\beta,\gamma\in (0,1)$ satisfy $\alpha+\beta+\gamma <1$, and let $s_1,s_2>0$, $R>0$, and $N>\frac{16\|w^2\|_\infty}{M^2}$. Then, there exists a constant $C_0(M,N,w,s_1,s_2,R,\alpha,\beta,\gamma)>0$ such that,
    \begin{align*}
        \rho^{\dagger}_{n+m}(\Sc_2^{s_1,s_2}(R),\alpha,\beta,w,M,N)  \geq C_0(M,N,w,s_1,s_2,R,\alpha,\beta,\gamma)(n+m)^{-\bar{s}/(2\bar{s}+1)}.
    \end{align*}
\end{thm}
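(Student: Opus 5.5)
We follow the classical Ingster-type reduction: lower-bound the minimax rate by the Bayes risk of a simple-versus-mixture problem, and show that this problem is already hard when the diagrams are embedded densities. The reduction rests on the embedding $\Phi$ and the realizability result Proposition \ref{prop::phofcircle}.

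\textbf{Construction.} Fix a compact rectangle $A\subset\{(x,y):0<x<y<M\}$ bounded away from the diagonal and from $x=0$, so that $w$ is bounded above and below by positive constants on $A$. Let $f_0$ be a smooth density on $\Omega(M)$ that is bounded below on $A$. Partition $A$ anisotropically into $K=K_1K_2$ cells of side lengths $h_1\asymp K_1^{-1}$ and $h_2\asymp K_2^{-1}$, chosen so that $h_1^{s_1}\asymp h_2^{s_2}$. Fix a smooth bump $G$ with zero integral, and rescale it onto each cell $j$ to obtain $G_j$. For signs $\theta\in\{\pm1\}^K$, set
\[
f_\theta = f_0 + \delta\sum_{j}\theta_j \frac{G_j}{w},
\]
with amplitude $\delta$ small enough that $f_\theta\ge0$. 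Dividing by $w$ makes $w(f_\theta-f_0)=\delta\sum_j\theta_jG_j$ an exact sum of bumps. Each $f_\theta$ integrates to one because every $G_j/w$ can be corrected to have zero mass; the simplest way is to build the bumps as $w\cdot$(zero-mean bump), which equivalently perturbs $w f$ directly. We then set $P_0=\Phi_\#(f_0\,dx)$ and $P_\theta=\Phi_\#(f_\theta\,dx)$. These measures have intensities $f_0$ and $f_\theta$, every diagram has cardinality one, and since $b>0$ on $A$, Proposition \ref{prop::phofcircle} realizes each singleton as a Čech $\mathrm{PD}_1$, so (A1) holds.

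\textbf{Membership in $\Pc$.} We check (A2)--(A3) with $Z$ constant, i.e.\ conditions (B1)--(B2) with $B=1$. This requires $\|w^2 f\|_\infty\le N$. If $f_0$ is taken near-uniform on a set of area about $M^2/16$ (for instance $[M/4,M/2]\times[3M/4,M]$), then $\|w^2f_0\|_\infty\le 16\|w^2\|_\infty/M^2<N$, which is exactly where the hypothesis on $N$ enters. The strict slack absorbs the perturbation once $\delta$ is small.

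\textbf{Separation and smoothness.} Standard bump computations give
\[
\|w(f_\theta-f_0)\|_2^2\asymp \delta^2 K h_1h_2\asymp \delta^2 .
\]
Membership $w(f_\theta-f_0)\in\Sc_2^{s_1,s_2}(R)$ holds provided
\[
\delta^2\bigl(h_1^{-2s_1}+h_2^{-2s_2}\bigr)\lesssim R^2 .
\]
The alternative pairs $(P_\theta,P_0)$ and $(P_0,P_\theta)$ thus lie in $\Pc^{\otimes2}_{\Fc_\rho}$ with $\rho\asymp\delta$.

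\textbf{Indistinguishability.} Any level-$\alpha$ test satisfies $\alpha+\beta\ge 1-\mathrm{TV}$. Here the TV distance is between $P_0^{\otimes n}\times P_0^{\otimes m}$ and the mixture $\frac1{2^K}\sum_\theta P_\theta^{\otimes n}\times P_0^{\otimes m}$ (alternatively, perturb both samples). Because $\Phi$ is injective, the TV and $\chi^2$ divergences between the pushforwards equal those between the underlying densities on $\Omega$, so the Euclidean computation of \citet{AlbertLaurentMarrelMeynaoui2022} applies verbatim. It gives
\[
\chi^2\le \exp\!\Bigl(C(n+m)^2\delta^4\sum_j\|G_j\|_2^4\Bigr)-1\asymp \exp\bigl(C(n+m)^2\delta^4 h_1h_2\bigr)-1 ,
\]
using $\|G_j\|_2^2\asymp h_1h_2$ in the normalization where the amplitude is $\delta$ relative to $L^2$ mass $\delta^2$. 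Requiring $\chi^2\le 4\gamma^2$-type control then forces $\mathrm{TV}<1-\alpha-\beta$.

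\textbf{Balancing.} The constraints are $\delta\asymp h_1^{s_1}\asymp h_2^{s_2}$ and $(n+m)^2\delta^4h_1h_2\asymp1$. Substituting $h_i=\delta^{1/s_i}$ gives $\delta^{4+1/s_1+1/s_2}\asymp(n+m)^{-2}$, hence
\[
\delta\asymp(n+m)^{-2/(4+2/\bar s)}=(n+m)^{-\bar s/(2\bar s+1)} .
\]

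\textbf{Main obstacle.} The main obstacle is the bookkeeping in the construction: keeping $f_\theta$ a nonnegative probability density while dividing by $w$, and simultaneously satisfying the sup-norm constraint against the explicit threshold $16\|w^2\|_\infty/M^2$. The first thing to try is to put all perturbations where $w$ is bounded below and to choose $f_0$ uniform on a region of area $M^2/16$.
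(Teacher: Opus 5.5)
Your proposal is correct and follows essentially the same route as the paper. Both arguments use sign-perturbed anisotropic bumps on the uniform density over $[M/4,M/2]\times[3M/4,M]$, which is exactly where $N>16\|w^2\|_\infty/M^2$ enters. Both push these densities forward through $\Phi$ and realize them as \v{C}ech $\mathrm{PD}_1$ diagrams on circles. Both verify (B1)--(B2) with $B=1$, bound the second moment of the likelihood ratio by $\exp(Cm^2\delta^4h_1h_2)$, and balance this against $\delta\asymp h_i^{s_i}$.

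The differences are minor. The paper perturbs $f$ itself and transfers smoothness to $w(f_\theta-f_0)$ through a weighted Young inequality, obtaining Sobolev membership with prior probability at least $1-\gamma$ rather than deterministically. Your deterministic claim for fractional $s_i$ needs one line of justification, for instance Fourier-side H\"older interpolation between $L^2$ and integer-order seminorms. Finally, your indistinguishability requirement should read $\chi^2<4(1-\alpha-\beta)^2$, not a ``$4\gamma^2$-type'' bound.
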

 
Combining Theorem \ref{thm::upperbound_unifseparation_rate} and Theorem \ref{minimax lower bound}, we obtain matching upper and lower bounds up to
multiplicative constants independent of $n$ and $m$.  Consequently, the proposed test achieves the minimax optimal separation rate over the anisotropic Sobolev ball.

\section{Topological properties of \texorpdfstring{\v{C}ech}{Cech} complex on the circle}\label{sec::Topological properties of cech on the circle}
In the proof of Theorem \ref{minimax lower bound}, we verify that the single-point measure $\Phi(x)$
 can be realized as a genuine persistence diagram. To this end, we compute the homology of the \v{C}ech complex built on a point cloud on the circle. 
Since this characterization is also of independent interest, we present it in a separate section. 
The proofs of the results in this section are provided in Section~\ref{Appendix::proof topology} of the Supplementary Material.

In the intersection of topological data analysis (TDA) and computational
geometry, the relationship between an underlying geometric space and
the simplicial complexes built from its discrete samples has been
extensively studied. Previous literature in this area generally falls
into two broad categories:

\begin{enumerate}
\item Topological reconstruction of manifolds: 
These works identify 
 the geometric conditions (e.g., density, reach)
under which the {\v C}ech or Vietoris-Rips complex built from a point cloud correctly reconstructs the homotopy or homology of the underlying manifold with high confidence 
\citep{NiyogiSmaleWeinberger2008}.
\item Homotopy types on the circle:
These works investigate
the exact homotopy types
of {\v C}ech and Vietoris-Rips complexes formed by points sampled
from a circle $S^{1}$ at various scale parameters.
\end{enumerate}
Motivated by these perspectives,
we provide a complete topological characterization of 
the \v{C}ech complex constructed from a point cloud $\mathcal{X}$
on a circle. 
We compute the
1-dimensional persistent homology $\mathrm{PH}_{1}$ of this complex and
discuss its geometric implications for manifold reconstruction 
in this section. The
explicit, stage-by-stage classification of the complex's homotopy
type is deferred to Section~\ref{Appendix::Homotopy equivalence Cech Circle} of the Supplementary Material.

A fundamental question in manifold reconstruction is whether a given
sample is 
distributed well enough 
to capture the global topological
signature of the underlying space.
To formalize our results, we first
define an adjacency relation for points in $\mathcal{X}$ on the circle.  Let \(S^1(r)\subseteq \Rb^2\) denote the circle centered at the origin with radius \(r>0\).
\begin{defn}\label{defn::adjacent}
For any $x\neq y\in\mathcal{X}\subset S^{1}(r)$, we write $x\sim y$
if and only if either path-connected component of $S^{1}(r)\setminus\{x,y\}$
has an empty intersection with $\mathcal{X}$. In other words, $x$
and $y$ are adjacent points  with no other points
of $\mathcal{X}$ lying on one of the arcs connecting them.
\end{defn}
Using this relation, the following proposition establishes
a sharp geometric threshold for this reconstruction on $S^{1}(r)$.
\begin{prop}\label{prop::phofcircle}
Fix $r>0$, and let $\mathcal{X}=\{x_1,\dots,x_n\}\subseteq S^{1}(r)$ be a nonempty finite subset with $n\geq 3$. Then, we have that

\begin{enumerate}
\item If $\mathcal{X}$ is contained in some open hemicircle lying on $S^{1}(r)$,
then 
\[
\mathrm{PD}_{1}\left(\mathrm{\check{C}ech}(\mathcal{X})\right)=\emptyset.
\]
\item If $\mathcal{X}$ is not contained in any open hemicircle lying on $S^{1}(r)$, then 
\[
\mathrm{PD}_{1}\left(\mathrm{\check{C}ech}(\mathcal{X})\right)=\left\{ \left(\frac{1}{2}\sup_{x,y\in\mathcal{X},x\sim y}\left\Vert x-y\right\Vert _{2},r\right)\right\}.
\]
\end{enumerate}
\end{prop}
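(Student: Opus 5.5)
The plan is to use the Nerve Theorem to replace the filtered \v{C}ech complex by the filtered union of closed balls $U_t:=\bigcup_{i} B(x_i,t)\subseteq\Rb^2$. Since balls in $\Rb^2$ are convex, $\text{\v{C}ech}_t(\Xc)\simeq U_t$, and the functorial version of the Nerve Lemma makes these equivalences commute with the inclusions. Hence $\mathrm{PH}_1$ of the \v{C}ech filtration agrees with $\mathrm{PH}_1$ of $\{U_t\}_{t>0}$, and it suffices to track $H_1(U_t)$ as $t$ increases.

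Order the points cyclically and let $g_i=\|x_i-x_{i+1}\|_2$ be the chord lengths between adjacent points (indices mod $n$). Set $b:=\tfrac12\max_i g_i$, which is the supremum over adjacent pairs in the statement. The key geometric facts are the following.
\begin{itemize}
\item[(i)] For $t<b$, the union $U_t$ is disconnected along the largest gap. Two balls whose centres are not adjacent can only meet if the balls centred at the intermediate points already fill the arc between them, so $U_t$ contains no loop around the origin and $H_1(U_t)=0$.
\item[(ii)] For $t\ge b$, the union $U_t$ is connected. It contains the closed polygon $x_1x_2\cdots x_nx_1$, since each chord lies in $B(x_i,g_i/2)\cup B(x_{i+1},g_i/2)$. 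It also deformation retracts onto this polygon, or a thickening of it, provided $U_t$ does not cover the origin.
\item[(iii)] The origin lies in $U_t$ if and only if $t\ge r$, because every $x_i$ is at distance exactly $r$ from the origin.
\end{itemize}
The cleanest route is to compute $H_1(U_t)$ by radial projection. For $t<r$, every ray from the origin meets $U_t$ in a set that is either empty or an interval; one checks this by noting that each ball meets a ray in an interval, and that overlapping intervals along a ray correspond to overlapping arcs. Consequently $U_t$ deformation retracts radially onto its image in $S^1$, which is the union of the arcs $\{\theta : \text{ray at }\theta\text{ meets } B(x_i,t)\}$. This is either a disjoint union of intervals, giving $H_1=0$, or the whole circle, giving $H_1=\Zb$. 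It is the whole circle exactly when consecutive arcs overlap, that is, when $t\ge b$.

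The two cases of the statement then separate as follows.
\begin{itemize}
\item If $\Xc$ lies in an open hemicircle, the largest gap exceeds a semicircle. Its chord then satisfies $g_{\max}/2\ge r$, or the origin is already covered before the projected arcs close up. In either situation no class ever appears before $U_t$ becomes contractible: for $t\ge r$, $U_t$ is star-shaped with respect to the origin, since each ball containing the origin is star-shaped about it. Hence $\mathrm{PD}_1=\emptyset$.
\item If $\Xc$ lies in no open hemicircle, every gap is at most a semicircle, so $b\le r$. A one-dimensional class is born at $b$ and survives for $t\in[b,r)$. At $t=r$ the origin is covered and $U_t$ becomes star-shaped, hence contractible, so the class dies at $r$. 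This yields the single point $(b,r)$.
\end{itemize}

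I expect the main obstacle to be the ray-interval claim, i.e.\ showing that $U_t\cap(\text{ray})$ is connected for $t<r$ and that the radial retraction is continuous, together with the boundary cases $b=r$ and ties in the birth time. If the radial argument proves awkward, I would fall back on the explicit stage-by-stage homotopy classification deferred to Section~\ref{Appendix::Homotopy equivalence Cech Circle}. One further point needs checking: each $B(x_i,t)$ with $t\ge r$ contains the origin and is convex, hence star-shaped about the origin, so their union is star-shaped about the origin.
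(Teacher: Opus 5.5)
Your route is different from the paper's. For $t<r$ the paper deformation retracts $U_t$ in two stages: first onto the convex hull $P_{\Xc}$ by metric projection, then radially outward onto the polygon $B_{\Xc}$ of vertices and adjacent chords. It then compares scales through a commutative diagram. You instead collapse $U_t$ radially onto a circle. That route works and treats both cases uniformly, but the step you flag as the main obstacle is the one carrying the whole proof. Your justification for it (``overlapping intervals along a ray correspond to overlapping arcs'') does not establish it.

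The missing observation is a one-line computation. For a unit vector $u$ and $t<r$, the condition $\|su-x_i\|_2\le t$ reads $s^2-2s\langle u,x_i\rangle+r^2-t^2\le 0$. So every nonempty segment $[s_-,s_+]$ in which the ray meets $\mathcal{B}(x_i,t)$ satisfies $s_-s_+=r^2-t^2$, and hence contains $s^*=\sqrt{r^2-t^2}$. All segments on a given ray therefore share a point. The straight-line homotopy along rays to radius $s^*$ is continuous because $0\notin U_t$, and it is a deformation retraction of $U_t$ onto $U_t\cap S^1(s^*)$. That set is the union of closed arcs of angular half-width $\arcsin(t/r)<\pi/2$ centred at the directions of the $x_i$.

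You also assert without justification that the class born at $b$ ``survives'' on $[b,r)$. This needs the inclusion $U_{t_1}\subseteq U_{t_2}$ to induce an isomorphism on $H_1$. It follows because $y\mapsto y/\|y\|_2$ commutes with the inclusion and is a homotopy equivalence $U_t\to S^1$ once the arcs cover the circle. This is the analogue of the paper's commutative-diagram lemma.

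The genuine error is your criterion that the arcs cover the circle ``exactly when $t\ge b$''. The arcs around two consecutive points close the gap between them iff its angle $\theta$ satisfies $\theta\le 2\arcsin(t/r)$. This matches the chord condition $2r\sin(\theta/2)\le 2t$ only when $\theta\le\pi$.

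In case 1 one gap exceeds $\pi$. Its chord is still strictly less than $2r$, since no two points of an open hemicircle are antipodal, so your claim $g_{\max}/2\ge r$ is false. In fact $b<r$ always holds in case 1; for example, points at angles $0^\circ,10^\circ,\dots,170^\circ$ give $b=r\sin 85^\circ$. Read literally, your criterion and fact (ii) (that $U_t$ retracts onto the polygon $x_1\cdots x_nx_1$ for $b\le t<r$) produce a spurious point $(b,r)$, contradicting part 1. In reality $U_t$ is contractible for $t\in[b,r)$, even though it contains that polygon.

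The correct case-1 argument is that two arcs of half-width less than $\pi/2$ can never bridge a gap larger than $\pi$. So for every $t<r$ the projected set is a disjoint union of proper closed arcs and $H_1(U_t)=0$, while star-shapedness handles $t\ge r$. In case 2 every gap is at most $\pi$, and your criterion is then valid.

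Facts (i)--(ii) are best dropped. Beyond the case-1 problem, $U_t$ need not be disconnected for $t<b$; only the cyclic chain of balls is broken. And ``no loop around the origin'' does not by itself give $H_1(U_t)=0$.
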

This proposition establishes that the successful topological reconstruction
requires the point cloud $\mathcal{X}$ to extend beyond any single open hemicircle.

\paragraph*{Failure of reconstruction} If $\mathcal{X}$
lies entirely within an open hemicircle, the data is too localized.
As the filtration parameter increases, the \v{C}ech complex only
merges into connected components and never forms a macroscopic cycle,
resulting in an empty $\mathrm{PD}_{1}$. The localized point cloud fails to
reconstruct the $S^{1}$ topology.

\paragraph*{Successful reconstruction} If $\mathcal{X}$
is not contained in any open hemicircle, the points are distributed
widely enough to wrap around the center of the circle. This spatial
configuration geometrically guarantees the emergence of a 1-dimensional
cycle homologous to the underlying manifold $S^{1}$.

\paragraph*{Exact characterization of birth and death times}

When the point cloud $\Xc$ is not contained in any open hemicircle,
Proposition~\ref{prop::phofcircle} provides a precise formula for the persistent 
diagram. 
The topological feature emerges at exactly $\frac{1}{2}\sup_{x,y\in\mathcal{X},x\sim y}\left\Vert x-y\right\Vert _{2}$.
In other words, the birth time is 
determined by the maximum Euclidean distance between any two adjacent points along
the circle. 
The death time of the feature is equal to
$r$, the radius of the underlying circle. 
Only when the filtration radius reaches \(r\) does the intersection of the \v{C}ech balls necessarily cover the origin,
rendering the entire complex contractible.

\section{Bandwidth-aggregated two-sample test}\label{Sec:: Bandwidth-Aggregated Two Sample Test}
In Section \ref{sec::Power analysis}, we have obtained the optimal bandwidth. However, this oracle-bandwidth test is not implementable in practice, as it depends on the unknown parameters $s_1$ and $s_2$.
To avoid selecting a single bandwidth in practice, we adopt the multi-bandwidth approach \citep{schrab2026aggregation}.

We briefly describe the bandwidth aggregation test (Aggtest). 
Let $\Lambda$ be a collection of bandwidths.
Let $\sigma^{(1)},\dots,\sigma^{(B)}$ be \iid\ uniform permutations and set $\sigma^{(B+1)}$ to be the identity.
For each $\lambda\in \Lambda$ and $b\in \{1,\dots,B+1\}$, each statistic is standardized into
\begin{align*}
    p_{b}^\lambda := \frac{1}{B+1}\sum_{j=1}^{B+1}\Ib\left(\widehat{T}^j_{\lambda}\geq\widehat{T}^b_{\lambda}\right). 
\end{align*}
For each $b\in \{1,\dots,B+1\}$, the set $\left\{p_b^\lambda:\lambda\in \Lambda\right\}$ is aggregated via  $A_b:=\underset{\lambda\in \Lambda}{\min}\ p_b^{\lambda}$.
Then the p-value is obtained by 
\begin{align*}
    p_{\mathrm{Agg}}:= \frac{1}{B+1}\sum_{b=1}^{B+1}\Ib\left(A_b\leq A_{B+1}\right).
\end{align*}
Aggtest $\Delta_{\mathrm{Agg},\alpha}^{\Lambda,B}$ is defined to reject the null hypothesis if $p_{\mathrm{Agg}} \leq \alpha$,
where $\alpha$ is a given level.

As stated in the following proposition, the test $\Delta_{\mathrm{Agg},\alpha}^{\Lambda,B}$ is minimax optimal up to an iterated logarithmic factor. Unlike the bandwidth collection used in \citet[Section F.1]{schrab2026aggregation}, the collection in the proposition allows different bandwidths across dimensions. 
\begin{prop}\label{Prop::aggregation test power}
Define
\[
K_{n,m}:=\left\lceil
2\log_2\left(\frac{n+m}{\log\log(n+m)}\right)
\right\rceil,
\qquad
\Lambda:=\left\{(2^{-k_1},2^{-k_2}):k_1,k_2\in \{1,\dots,K_{n,m}\}\right\}.
\]
Under the conditions of Theorem \ref{thm::upperbound_unifseparation_rate}, with $B$ taken sufficiently large, we have
\[
\rho_{n+m}
\left(
\Delta_{\mathrm{Agg},\alpha}^{\Lambda,B},
\Sc_2^{s_1,s_2}(R),
\beta,w,M,N
\right)
\leq
C_5
\left(
\frac{\log\log(n+m)}{n+m}
\right)^{\frac{\bar{s}}{2\bar{s}+1}},
\]
for some constant  $C_5=C_5(M,N,w,k,s_1,s_2,R,\alpha,\beta)>0$.
\end{prop}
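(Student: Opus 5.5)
The plan follows the aggregation argument of \citet{schrab2026aggregation} (and \citet{schrab2023mmdagg}), adapted to anisotropic bandwidths and to the variance bound of Lemma \ref{Variance upper bound}. The first step is to reduce the aggregated test to a union of single-bandwidth tests at a corrected level. Because $A_b=\min_\lambda p_b^\lambda$ and $p_{\mathrm{Agg}}$ is the rank of $A_{B+1}$ among the $A_b$, the event $\{p_{\mathrm{Agg}}>\alpha\}$ will be contained, with high probability over the permutations, in the event that every single-bandwidth test with level $\alpha/|\Lambda|$ accepts. Concretely, I would show that for $B$ large (of order $|\Lambda|^2\alpha^{-2}\log(1/\beta)$, via a DKW-type bound on the Monte Carlo empirical distribution of the $A_b$), the aggregated test rejects whenever some $\lambda\in\Lambda$ has $\widehat T_\lambda$ exceeding its permutation quantile at level $c\alpha/|\Lambda|$. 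The Type II error is then bounded by $\min_{\lambda\in\Lambda}$ of the Type II error of $\Delta^{\lambda,B}_{c\alpha/|\Lambda|}$ plus $\beta/2$.

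The second step applies Proposition \ref{new suff cond} at level $\alpha'=c\alpha/|\Lambda|$. Since $|\Lambda|=K_{n,m}^2\asymp(\log(n+m))^2$, we get $\log(1/\alpha')\lesssim\log\log(n+m)$. The sufficient condition then reads
\[
\|w\psi\|_2^2\ \ge\ \|w\psi-(w\psi)*\varphi_\lambda\|_2^2 + C\frac{\log\log(n+m)}{\beta(n+m)\sqrt{\lambda_1\lambda_2}}.
\]
To bound the bias term, I would use the Sobolev computation already behind Theorem \ref{thm::upperbound_unifseparation_rate}: by Plancherel, $\|f-f*\varphi_\lambda\|_2^2=(2\pi)^{-2}\int|1-\widehat{k^1}(\lambda_1\xi_1)\widehat{k^2}(\lambda_2\xi_2)|^2|\hat f|^2$. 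The factor $|1-\widehat{k^1}\widehat{k^2}|$ is at most $|1-\widehat{k^1}(\lambda_1\xi_1)|+|1-\widehat{k^2}(\lambda_2\xi_2)|$, and each piece is at most $C(\lambda_i|\xi_i|)^{s_i}$ for $s_i\le2$; larger $s_i$ may need the same restriction as in that theorem. This gives a bias bound of $C R^2(\lambda_1^{2s_1}+\lambda_2^{2s_2})$.

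The third step optimizes over the grid. The ideal choice is $\lambda_i=(\frac{n+m}{\log\log(n+m)})^{-\tau/(2s_i)}$, with $\tau$ as in Theorem \ref{thm::upperbound_unifseparation_rate}. This balances $\lambda_i^{2s_i}$ against $\log\log(n+m)/((n+m)\sqrt{\lambda_1\lambda_2})$, and both terms are of order $(\log\log(n+m)/(n+m))^{2\bar s/(2\bar s+1)}$, using $\tau=2\bar s/(2\bar s+1)$, which follows since $1/(4s_1)+1/(4s_2)=1/(2\bar s)$. Each $\lambda_i$ lies in $[2^{-K_{n,m}},1/2]$ for $n+m$ large, because $\tau/(2s_i)\le 2$ (this holds as $\tau\le1$ and $\tau/(2s_i)\le\tau(1/(4s_1)+1/(4s_2))\cdot 2<2$). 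So there is a grid point $(2^{-k_1},2^{-k_2})$ with $\lambda_i/2<2^{-k_i}\le\lambda_i$, which changes each term only by a constant factor depending on $s_i$. Plugging in gives the claimed separation $\|w\psi\|_2\ge C_5(\log\log(n+m)/(n+m))^{\bar s/(2\bar s+1)}$. Small $n+m$ are absorbed into $C_5$, since $\|w\psi\|_2$ is bounded on the class.

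I expect the main obstacle to be the first step: making rigorous the reduction of the rank-of-minimum p-value to a Bonferroni-type union over $\lambda$, with explicit control of the Monte Carlo error when the same permutations are shared across bandwidths. I would try to reuse the quantile lemma (Lemma \ref{Appendix::quantile-upper-bound}), applied to each $\lambda$, together with a union bound over $|\Lambda|$ events. The remaining steps are routine once Proposition \ref{new suff cond} is available.
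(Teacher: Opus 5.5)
Your architecture matches the paper's. You dominate the aggregated test by the Bonferroni test $\max_{\lambda\in\Lambda}\Delta^{\lambda,B}_{\alpha/|\Lambda|}$ (the paper simply cites \citet[Theorem 2]{schrab2026aggregation}). You apply the single-bandwidth bound at level $\alpha/|\Lambda|$, so that $\log(|\Lambda|/\alpha)\asymp\log\log(n+m)$. You take $k_i^*=\lceil\frac{\tau}{2s_i}\log_2(\frac{n+m}{\log\log(n+m)})\rceil$, which lies in $\{1,\dots,K_{n,m}\}$ because $\tau/(2s_i)<2$.

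The genuine gap is your bias step. The pointwise bound $|1-\widehat{k^i}(u)|\le C|u|^{s_i}$ requires $\int|x|^{s_i}k^i(x)\,dx<\infty$, which is not among the kernel assumptions. The Cauchy density is admissible, has $\widehat{k^i}(u)=e^{-|u|}$, and violates the bound for every $s_i>1$. For $s_i>2$ the bound fails for \emph{every} admissible kernel: $k^i\ge 0$ gives $1-\widehat{k^i}(u)=\int(1-\cos(ux))k^i(x)\,dx\ge c\,u^2$ near $0$. Hence even the bias of a single fixed $f\neq0$ is at least of order $\lambda_1^4\gg\lambda_1^{2s_1}$ as $\lambda_1\to0$. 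There is also no ``restriction in that theorem'' to fall back on: Theorem \ref{thm::upperbound_unifseparation_rate}, and hence the proposition, holds for all $s_1,s_2>0$. As written, your argument proves the claim only for $s_1,s_2\le2$, and only under an extra moment assumption.

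The missing idea is that the bias need only be a fixed fraction of $\|f\|_2^2$ plus $O(\lambda_1^{2s_1}+\lambda_2^{2s_2})$, since the sufficient condition compares it with $\|f\|_2^2$; this is Lemma \ref{Appendix::lem::Anisotropic bias bound}. Let $S(t)=\sup_{|u_1|,|u_2|\le t}|1-\widehat{k^1}(u_1)\widehat{k^2}(u_2)|$ and pick $t_0$ with $S(t_0)<1$, which is possible since $\widehat{k^i}$ is continuous with $\widehat{k^i}(0)=1$. Bound the Fourier multiplier by $S(t_0)$ on $\{|\lambda_i\xi_i|\le t_0,\ i=1,2\}$. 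Off that set, bound it by $2$ and use $1\le t_0^{-2s_1}|\lambda_1\xi_1|^{2s_1}+t_0^{-2s_2}|\lambda_2\xi_2|^{2s_2}$. This yields $\|f-f*\varphi_\lambda\|_2^2\le S(t_0)^2\|f\|_2^2+4R^2(t_0^{-2s_1}\lambda_1^{2s_1}+t_0^{-2s_2}\lambda_2^{2s_2})$. Moving $S(t_0)^2\|f\|_2^2$ to the left side of the condition in Proposition \ref{new suff cond} gives the sufficient condition $(1-S(t_0)^2)\|f\|_2^2\ge C(\lambda_1^{2s_1}+\lambda_2^{2s_2})+C\log(|\Lambda|/\alpha)/(\beta(n+m)\sqrt{\lambda_1\lambda_2})$. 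This holds for all $s_i>0$ and needs no kernel moments. Equivalently, invoke \eqref{Appendix::ineq::upperbound on separation rate} at each grid point; it holds for every $\lambda$ with $\lambda_1,\lambda_2\le1$.

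Separately, your Step 1 is easier than you expect: with shared permutations the domination is deterministic. Rejection by $\Delta^{\lambda,B}_{\alpha/|\Lambda|}$ is exactly the event $p^\lambda_{B+1}\le\alpha/|\Lambda|$, which forces $A_{B+1}\le\alpha/|\Lambda|$. For each $\lambda$, at most $\lfloor(B+1)\alpha/|\Lambda|\rfloor$ indices $b$ satisfy $p^\lambda_b\le\alpha/|\Lambda|$. Hence $\#\{b:A_b\le A_{B+1}\}\le(B+1)\alpha$, i.e.\ $p_{\mathrm{Agg}}\le\alpha$. No DKW bound or union over $\Lambda$ is needed. A large $B$, of order $|\Lambda|^2\alpha^{-2}\log(8/\beta)$, enters only through Proposition \ref{new suff cond} at level $\alpha/|\Lambda|$.
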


\section{Simulation study}\label{Sec::Simulation Study}
We consider three data sets: torus, two-circles, and ORBIT5K. A simulation for the indifference region of the proposed test is also conducted. Results for the two-circles data and the indifference region are deferred to Section~\ref{Appendix::Additional Simulation Study} of the Supplementary Material.
The source code and the data used in this paper are available in the 
GitHub repository at 
\url{https://github.com/yeongunghan919/A-Two-Sample-Test-for-Random-Persistence-Diagrams}.

We compare our proposed test, Aggtest, with three commonly used testing procedures in TDA: 
(i) the permutation test based on the distance between persistence diagrams (PD) 
\citep{RobinsonTurner2017}, 
(ii) the permutation test based on the distance between persistence landscapes (PL) 
\citep{Bubenik2015}, and 
(iii) the two-stage test based on persistence images (PI) \citep{MoonLazar2023}. 
The hyperparameter settings for each method are included in Section \ref{Appendix::Additional Simulation Study} of the Supplementary Material. 

\paragraph*{Torus data simulation}
In the simulation using the torus data, each point cloud is generated from a Poisson point process on a torus 
with radii $R = 2$ and $r = 1$; see Figure~\ref{Fig::Torus_example} for an example of the 
torus.
In the first experiment, for each noise level
$\sigma \in \{0.01,\ldots,0.05\}$, we conduct $100$
simulation replications. In each replication, two samples of $50$
point clouds each are selected from independently generated collections,
where each point cloud is generated from a homogeneous Poisson point
process on the torus. The point clouds in the first sample are left
unperturbed, whereas independent Gaussian noise
$\epsilon \sim \mathcal{N}(0,\sigma^2 I)$ is added to each point in the
second sample.
In the second experiment, for each
$n \in \{20,40,70,100\}$, we conduct $50$ simulation
replications. In each replication, two samples of $n$ point clouds each
are selected in the same manner. The first sample is left unperturbed,
whereas independent Gaussian noise with $\sigma=0.02$ is added to each
point in the second sample.
Persistence diagrams are constructed using the Vietoris–Rips complex, focusing on one-dimensional features. 
We conduct the tests by varying the noise level in the first experiment and the sample size in the second.
\begin{figure}[t]
    \centering    
    \includegraphics[width=0.9\textwidth,height=0.25\textheight]{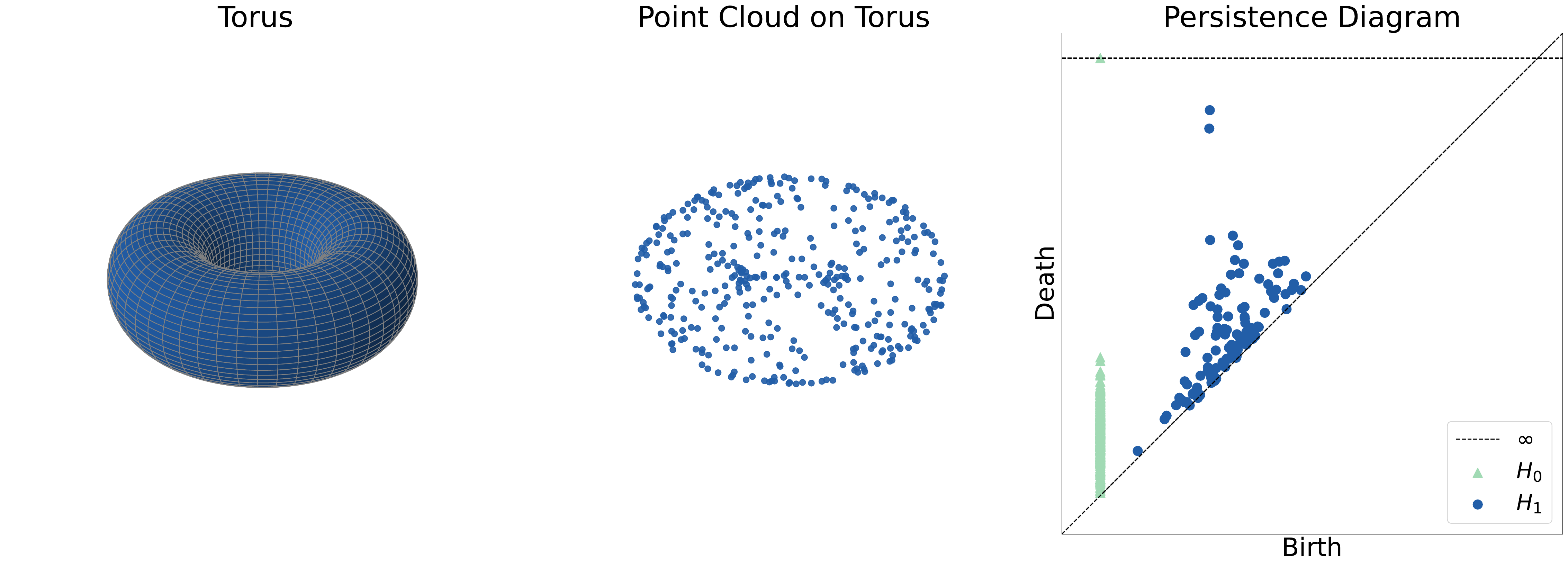}
    \caption{(left) A torus with radii $R=2,r=1$, (middle) a point cloud from the torus, and (right) the persistence diagram of zero- and one-dimensional homological features obtained from the point cloud using the  Vietoris-Rips filtration.}
    \label{Fig::Torus_example}
\end{figure}
The testing results are presented in Figure~\ref{Figg:Torus_simulation}. 

A test is said to have greater power when its rejection probability increases more rapidly 
either as the noise level increases for a fixed sample size or as the sample size increases for a fixed noise level. 
The left panels in Figure~\ref{Figg:Torus_simulation} show that Aggtest generally achieves the highest power, followed by the PI test, 
while the PD and PL tests exhibit comparatively lower power.
We also investigate the effect of the weight function.
Aggtest is applied with five weight functions: $\left\{(y-x)^{q}:q\in\left\{0,\frac{1}{4},\frac{1}{2},\frac{3}{4},1\right\}\right\}$.
The results are shown in the right panels in Figure~\ref{Figg:Torus_simulation}. 
Adding noise is analogous to thickening the torus, which in turn affects the birth and 
death times of the two highly persistent one-dimensional features in each diagram. 
Assigning greater weight to persistence therefore enhances the ability of the test to 
distinguish between the intensity functions. 
Consistent with this intuition, larger weight functions lead to  higher empirical power.
\begin{figure}[t]
    \centering
    \includegraphics[width=0.9\textwidth]{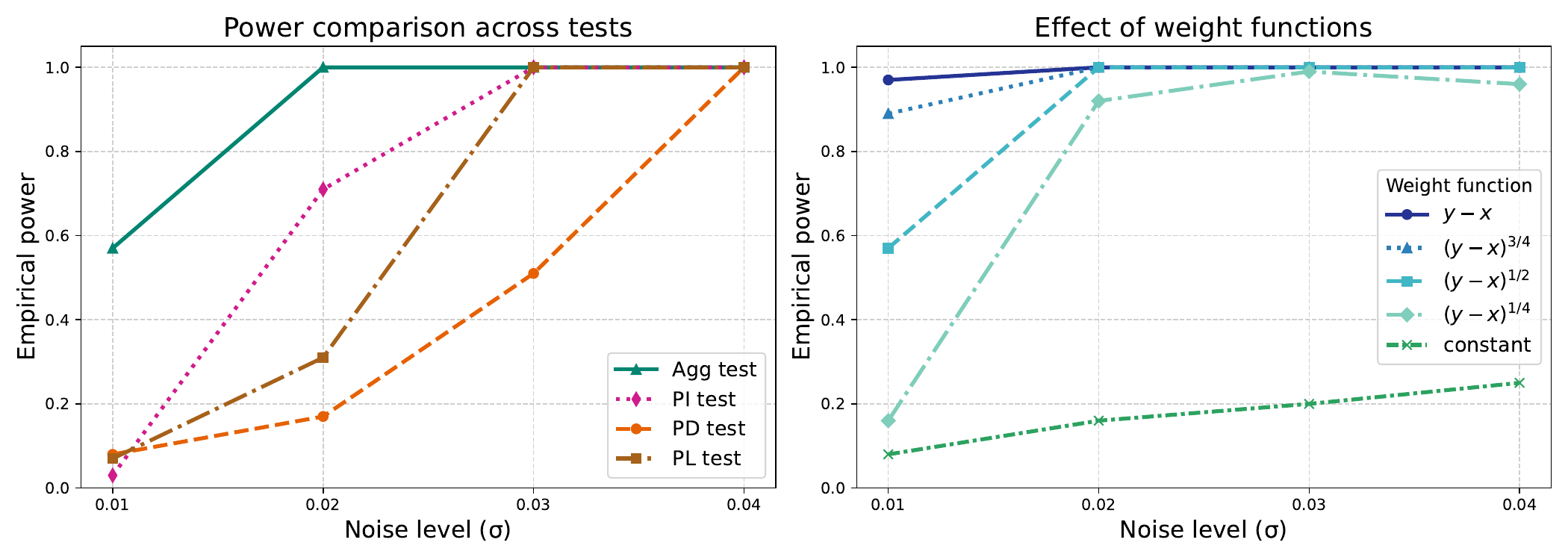}

    \vspace{0.5cm}

    \includegraphics[width=0.9\textwidth]{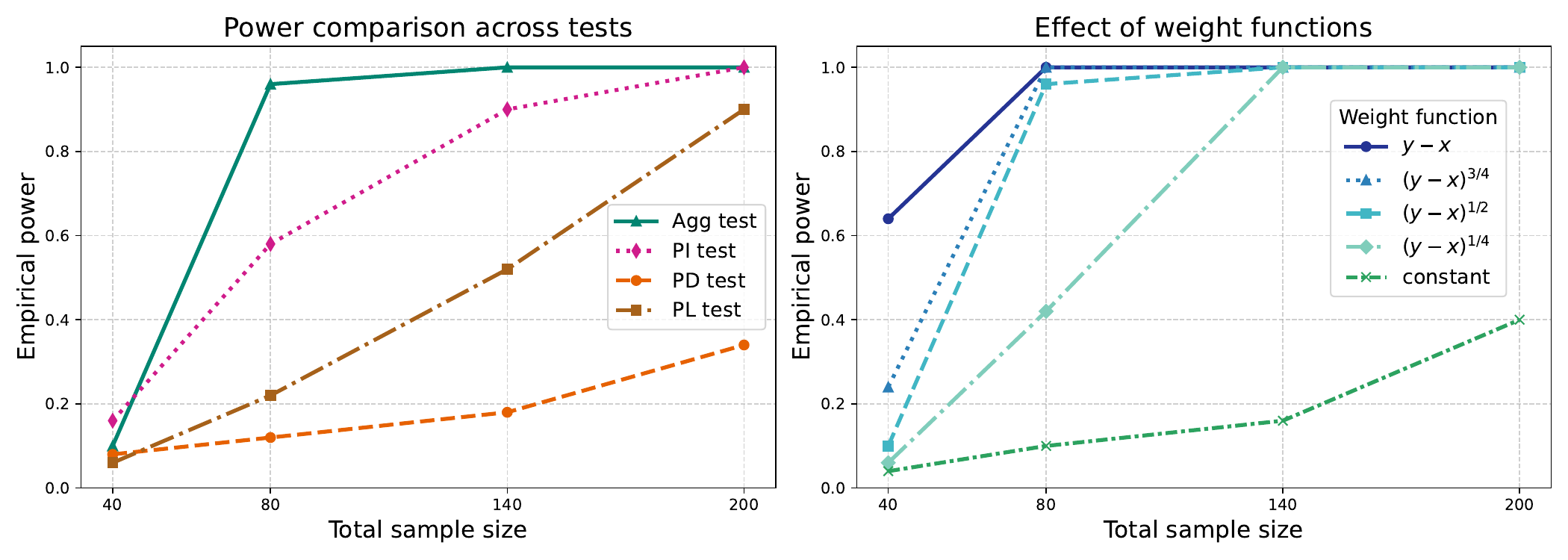}
    \caption{Results of the torus simulation: (left) empirical powers of tests over different noise levels and sample sizes; (right) empirical powers of Aggtest using different weight functions over different noise levels and sample sizes. The numerical values underlying this figure are provided in Section~\ref{Appendix::Tables of torus data simulation} of the Supplementary Material.}
    \label{Figg:Torus_simulation}
\end{figure}
\paragraph*{ORBIT5K data simulation}
We conduct two-sample tests using point clouds generated by $3$ different dynamical systems from ORBIT5K data, \citep{Hertzsch2007DNA,persistenceimage,KimKimZaheer2020PLLay}.
The data-generating process and its visualizations are contained in Section \ref{Appendix::ORBI5K} of the Supplementary Material. 
The testing results are presented in Table \ref{Table::orbit5k}. Scenarios Sc.1, Sc.2, Sc.3 represent distributional-null configurations, in which the two groups are sampled from the same dynamical system.
Scenarios Sc.4–Sc.6 are designed to induce differences in the persistence intensity functions by using point clouds generated from different dynamical systems. As shown in Table \ref{Table::orbit5k}, Aggtest appropriately fails to reject the distributional null in Sc.1, Sc.2, and Sc.3 and rejects the null in the remaining scenarios. 

\section{Conclusion and future work}
Our testing method exhibits several advantages. From a theoretical perspective, it attains the minimax optimal separation rate over anisotropic Sobolev classes of weighted intensity discrepancies. To the best of our knowledge, this is the first minimax separation-rate analysis for two-sample testing of random persistence diagrams against persistence-intensity alternatives, thereby bridging statistical inference and TDA. Moreover, our approach extends kernel-based two-sample testing ideas from Euclidean observations to random persistence diagrams, viewed as random measures.
Empirically, the proposed test generally achieves the highest empirical power against the considered alternatives while requiring relatively fewer tuning parameters than methods based on persistence images.

Several open questions remain. While the proposed test is finite-sample valid under the distributional null, its power guarantees apply only to intensity-separated alternatives. Thus, pairs satisfying $P\neq Q$ but $p=q$, which may differ in dependence structures not captured by the persistence intensity function, lie outside our inferential target. Developing higher-order summaries for such dependence and aggregation procedures over weight functions are natural directions for future work.
\begin{center}
{\large\bf Supplementary Material}
\end{center}
\textbf{Supplementary Material:} The Supplementary Material for “A Two-Sample Test for Random Persistence Diagrams Against Alternatives Characterized by Persistence Intensity Differences” contains background material, full theoretical details, descriptions of the testing algorithms, efficient computational methods, additional simulation studies, and real-data analysis.
\begin{center}
{\large\bf Acknowledgements}
\end{center}
The authors Yeongung Han and Jisu Kim were supported by the Korea Research Foundation, Korea (RS-2024-00353398).
Generative AI (ChatGPT) was used to assist with \LaTeX{} formatting, language editing, and the development of simulation code. The authors reviewed and validated all resulting content and take full responsibility for the manuscript.
\begin{center}
{\large\bf Tables}
\end{center}
\begin{table}[H]
\centering
\caption{p-values of the four testing methods in the ORBIT5K simulation study.}
\label{Table::orbit5k}
\setlength{\tabcolsep}{11pt}
\renewcommand{\arraystretch}{0.62}
\begin{tabular}{lcccccc}
\toprule
 & Sc.1 & Sc.2 & Sc.3 & Sc.4 & Sc.5 & Sc.6 \\
\midrule
\multicolumn{7}{l}{\textbf{Aggtest}} \\
\quad Linear & 0.754 & 0.088 & 0.243 & <0.001 & <0.001 & <0.001 \\
\quad Arctan & 0.665 & 0.204 & 0.761 & <0.001 & <0.001 & <0.001 \\
\quad Constant & 0.986 & 0.178 & 0.452 & <0.001 & <0.001 & <0.001 \\

\multicolumn{7}{l}{\textbf{PI}} \\
\quad Linear & 0.845 & 0.997 & 0.380 & <0.001 & <0.001 & <0.001 \\
\quad Arctan & 1.000 & 1.000 & 0.399 & <0.001 & <0.001 & <0.001 \\
\quad Constant & 1.000 & 0.570 & 0.505 & 0.167 & 0.026 & <0.001 \\

\textbf{PD} & 0.783 & 0.172 & 0.587 & <0.001 & <0.001 & <0.001 \\

\textbf{PL} & 0.671 & 0.844 & 0.729 & <0.001 & <0.001 & <0.001 \\

\bottomrule
\end{tabular}
\end{table}
\let\newpage\arxivoriginalnewpage

\clearpage

\thispagestyle{empty}
\appendix

\if0\blind
{
  \begin{center}
  {\LARGE Supplementary Material for \\"A Two-Sample Test for Random Persistence Diagrams Against Alternatives Characterized by Persistence Intensity Differences"}\par
  {\large
  Yeongung Han\\
  Institute of Basic Sciences, Seoul National University
  }\par
  {\large
  Ilmun Kim\\
  {\centering
  Department of Mathematical Sciences, Korea Advanced\\ Institute of Science and Technology}
  }\par
  {\large
  Jisu Kim\footnote{Jisu Kim (email: jkim82133@snu.ac.kr) is the corresponding author.}\\
  Department of Statistics, Seoul National University
  }\par
  \end{center}
  
} \fi

\if1\blind
{
  \bigskip
  \bigskip
  \bigskip
  \begin{center}
    {\LARGE\bf Supplementary Material for ``A Two-Sample Test for Random Persistence Diagrams Against Alternatives Characterized by Persistence Intensity Differences''}
\end{center}
  \medskip
} \fi

\bigskip

\begin{abstract}
The Supplementary Material includes all proofs of the theoretical results, related work on intensity functions, computational algorithms, and additional simulations.
\end{abstract}

\noindent%
\vfill

\clearpage

\def\combinedarxiv{1}

\spacingset{1.8} 

\ifdefined\combinedarxiv
\else
  \newpage
  {\fontsize{12pt}{15.5pt}\selectfont
    \hypersetup{hidelinks}
    \tableofcontents
    \newpage
    \hypersetup{linkcolor=blue}
  }
\fi

\section{Background material}
\subsection{Statistical inference}
In this section, we briefly review basic concepts in statistical inference, with particular emphasis on permutation tests. This section is intended for TDA researchers who may not be familiar with statistical methodology. 

We observe  two independent samples $X_1,\dots, X_n\overset{\iid}{\sim}F$ and $Y_1,\dots ,Y_m\overset{\iid}{\sim}G$. We denote the null and the alternative hypotheses by $\mathrm{H}_0$ and $\mathrm{H}_1$, respectively.
To test the hypotheses, we construct a test statistic $\hat{T}(\Xb_n,\Yb_m)$, using the observed samples $\Xb_n$ and $\Yb_m$, where $\Xb_n = (X_1,\dots ,X_n)$ and $\Yb_m=(Y_1,\dots,Y_m)$. A decision rule is defined through a critical region \(C\). We reject the null hypothesis \(\mathrm{H}_0\) if \(\hat{T}\in C\), and we do not reject \(\mathrm{H}_0\) otherwise. Therefore, it is important to choose the critical region appropriately. We now discuss how to select the critical region.

The Type I error is the probability that the null hypothesis $\mathrm{H}_0$ is rejected when $\mathrm{H}_0$ is true. The error is significant; hence, it must be less than or equal to the pre-determined level $\alpha$. 
Therefore, the critical region $C_\alpha$ must satisfy the following inequality:
\begin{align}
    \Pb_{\mathrm{H}_0}\left(\hat{T}(\Xb_n,\Yb_m)\in C_\alpha\right)\leq \alpha,
\end{align}
for a pre-determined level $\alpha\in (0,1)$.
The error of a test that we next consider is the Type II error, which is the probability that the null is not rejected when the alternative hypothesis $\mathrm{H}_1$ is true. The quantity $1-\text{Type II error}$ is called the power of the test. Power serves as a criterion for comparing two test methods with the same Type I error. A test with higher power is considered more desirable. Therefore, the main question is how to select the critical region so that  the Type II error is small, while controlling the Type I error. The answer depends on the hypothesis of interest and the test statistic employed. For example, we illustrate how to choose the critical region using Figure \ref{Appendix::fig-Criticalregionexample}. If we choose either $C_1$ or $C_2$ depicted in Figure \ref{Appendix::fig-Criticalregionexample},  the Type I error is $0.05$, since they have the same probability $0.05$ under the null. On the other hand, the Type II error is smaller with $C_1$ than with $C_2$, since $C_1$ has a larger probability under $\mathrm{H}_1$ than $C_2$. Therefore, it is desirable to choose $C_1$ rather than $C_2$.
\begin{figure}[H]
    \centering
    \includegraphics[width=0.6\textwidth]{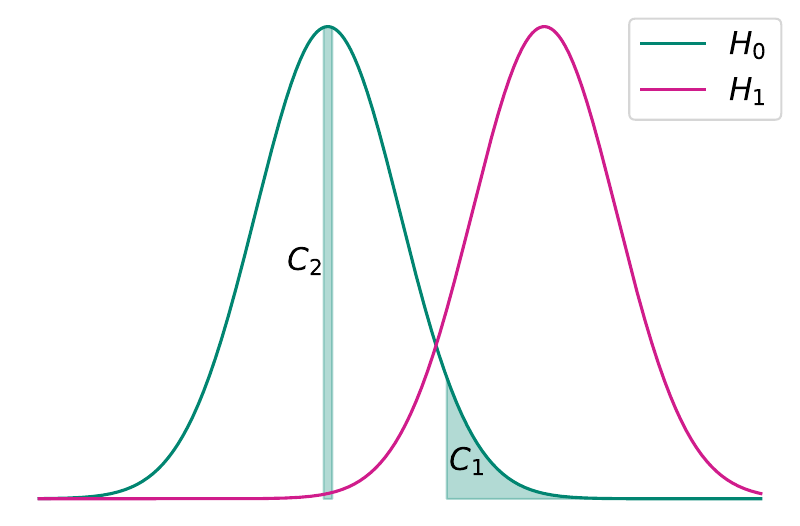}
    \caption{Comparison of two critical regions $C_1$ and $C_2$.}
    \label{Appendix::fig-Criticalregionexample}
\end{figure}

We now discuss permutation tests. If the probability distribution of $\hat{T}$ under the null $\mathrm{H}_0$ is known, then selecting the critical region is straightforward. However, deriving the distribution of the test statistic under the null typically requires strong assumptions on the underlying data distributions, such as assuming that $F$ and $G$ are Gaussian. One way to avoid such strong assumptions is to use resampling methods. Resampling methods generate new samples from the observed data $\Xb_n$ and $\Yb_m$ to approximate the null distribution of the test statistic. The permutation test is one of the resampling methods. Let $\Zb = (Z_1,\dots,Z_{n+m}) = (X_1,\dots,X_n,Y_1,\dots,Y_m)$ 
be the pooled sample. For a given permutation $\sigma : \{1,\dots,n+m\} \to \{1,\dots,n+m\}$, the permuted sample is given by 
$\Zb_{\sigma}=(Z_{\sigma(1)},\dots,Z_{\sigma(n+m)})$, which is split into two groups of sizes $n$ and $m$.
\begin{figure}[t]
\centering
\begin{subfigure}{0.48\textwidth}
    \centering
    \includegraphics[width=\linewidth]{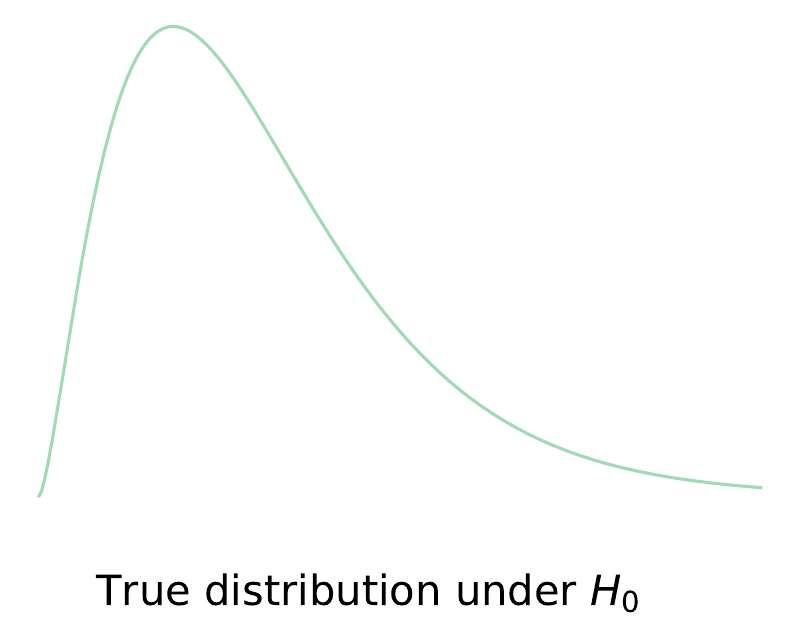}
\end{subfigure}
\hfill
\begin{subfigure}{0.48\textwidth}
    \centering
    \includegraphics[width=\linewidth]{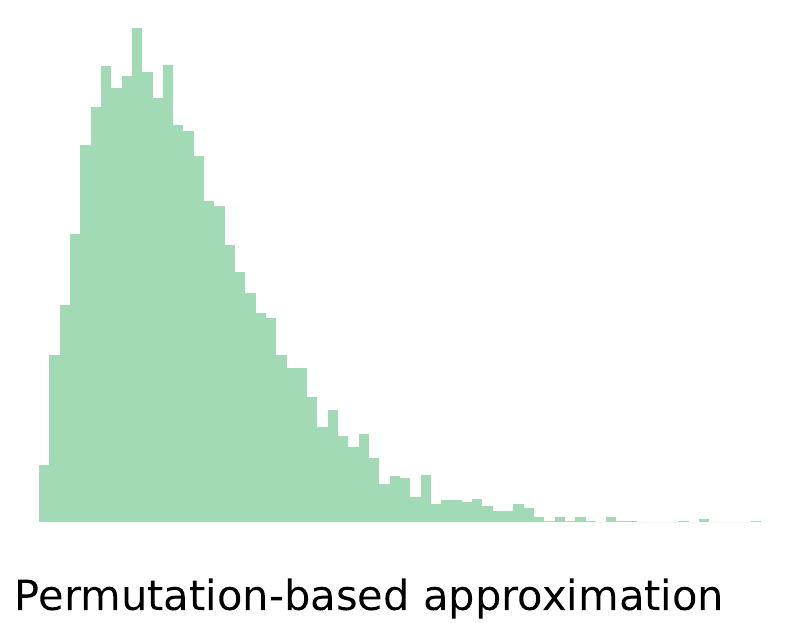}
\end{subfigure}
\caption{Comparison between the true null distribution and its permutation-based approximation with $B=5,000$ Monte Carlo samples.}
\label{Appendix::figure permutation approximation}
\end{figure}
The corresponding test statistic $\hat{T}^{\sigma}(\Xb_n,\Yb_m)$ is then constructed 
in the same manner as $\hat{T}$ using the permuted sample $\Zb_{\sigma}$. Since there are $(n+m)!$ permutations in total, we have $(n+m)!$ re-constructed test statistics. The collection $\left\{\hat{T}^{\sigma}:\sigma \in S\left(\{1,\dots,n+m\}\right)\right\}$ is used to approximate the null distribution of the test statistic, where $S(A)$ denotes the set of all permutations of a set $A$. However, using the entire collection is computationally expensive, since $(n+m)!$ is typically large. By using a Monte Carlo approximation, one can avoid the high cost. More concretely, for a given $B \in \Nb$, we uniformly sample $B$ i.i.d. permutations 
$\sigma^{(1)},\dots,\sigma^{(B)}$. 
We then use $\left\{\hat{T}^{\sigma^{(i)}} : i \in \{1,\dots,B\}\right\}$ 
to approximate the null distribution, instead of using the full collection. An example of this approximation is shown  in Figure \ref{Appendix::figure permutation approximation}.

One of the desirable properties of permutation tests is that they guarantee control of the Type I error for any sample size $n+m$ if exchangeability is satisfied under the null. We say that exchangeability holds when the distribution of $\hat{T}^{\sigma}$ is independent of the permutation $\sigma$. Owing to this property, permutation tests are widely used. We have briefly discussed the basic concepts of statistical inference and permutation tests. We close this section by mentioning two relevant references: \citep{casella2002statistical,LehmannRomano2005}.

\subsection{Minimax rate of testing}\label{Appendixsecminimaxrateoftesting} 
In this subsection, we provide a brief review of statistical inference theory, 
with a particular focus on minimax rate theory.

For convenience, suppose we have two data groups $X_1,\dots, X_n \overset{\iid}{\sim} f $ and  $Y_1,\dots, Y_m \overset{\iid}{\sim} g $ from a pair of probability density functions $f$ and $g$ on $\Rb^d$ that belong to a certain family of pairs of probability densities $\mathscr{P}$. Let $\Xb_n=(X_1,\dots, X_n)$ and $\Yb_m=(Y_1,\dots,Y_m)$. Consider the two-sample testing problem: $\mathrm{H}_0: f=g $ versus $\mathrm{H}_1: f\neq g$. Let $\alpha \in (0,1)$ and $\beta\in (0,1-\alpha)$. 
Let $\Phi_{\alpha}$ be the collection of all level $\alpha$ tests $\Delta_{\alpha}$, $\ie$,
\begin{align*}
    \Phi_{\alpha}=\left\{\Delta_{\alpha}:\sup_{(f,f)\in \mathscr{P}}\Pb_{f\times f}\!\left(\Delta_\alpha(\Xb_n,\Yb_m)=1\right)\leq \alpha \right\}.
\end{align*}

A fundamental question in nonparametric hypothesis testing is to determine the smallest value $\tilde{\rho}_{n+m}>0$ such that the test $\Delta_\alpha$ has power at least $1-\beta$ against all alternative hypotheses satisfying $f-g \in \Cc $, for a function class $\Cc$, and $\|f-g\|_2 > \tilde{\rho}_{n+m}$. The definition of \textbf{uniform separation rate} is derived from this question, and it is defined by 
\begin{align*}
    \rho_{n+m}(\Delta_{\alpha},\Cc,\beta):=\inf\left\{\tilde{\rho}_{n+m}>0:\sup_{(f,g)\in \Fc_{\tilde{\rho}_{n+m}}(\Cc)}\Pb_{f\times g}(\Delta_\alpha(\Xb_n,\Yb_m)=0)\leq \beta\right\},
\end{align*}
where $\Fc_{\tilde{\rho}_{n+m}}(\Cc):=\{(f,g):f-g \in \Cc,\|f-g\|_2\geq\tilde{\rho}_{n+m}\}$. The next question is to determine the smallest value $\tilde{\rho}^\dagger_{n+m}$  such that there exists a test $\Delta_\alpha\in \Phi_{\alpha}$ which has power at least $1-\beta$ whenever $(f,g) \in \Fc_{\tilde{\rho}_{{n+m}}^\dagger}(\Cc)$. That is, if  $\tilde{\rho}<\tilde{\rho}^\dagger_{n+m}$ then there is no test $\Delta_\alpha$ such that 
\begin{align*}
    \sup_{(f,g)\in \Fc_{\tilde{\rho}}(\Cc)}\Pb_{f\times g}(\Delta_{\alpha}(\Xb_n,\Yb_m)=0)\leq \beta.
\end{align*} This question motivates the definition of the \textbf{minimax rate of testing}, and it is defined as
\begin{align*}
    \rho_{n+m}^{\dagger}(\Cc,\alpha,\beta) := \inf\left\{\rho_{n+m}(\Delta_\alpha,\Cc,\beta)\mid \Delta_{\alpha} \in \Phi_{\alpha}\right\}.
\end{align*}

A given test $\Delta_\alpha$ is called \textbf{optimal in the minimax rate sense}
if its uniform separation rate is upper bounded, up to a multiplicative constant
independent of $n$ and $m$, by the minimax rate of testing,
\citep{IngsterSuslina2003,kim2022minimax,schrab2023mmdagg,li2019optimality}.

\subsection{Basic algebraic topology}\label{Appendix::Background_homotopy}
In this section, we introduce several notions from algebraic topology, focusing in particular on homotopy equivalence, which is utilized in Section \ref{Appendix::Homotopy equivalence Cech Circle}. Let $X$ and $Y$ be topological spaces. Consider two continuous maps $f_0,f_1 : X\rightarrow Y$. If there exists a continuous map $H:X\times [0,1]\rightarrow Y$ such that 
\begin{align*}
    H(\cdot,0)= f_0(\cdot),\quad \text{ and }\quad H(\cdot,1)=f_1(\cdot),
\end{align*} then we say that these maps are homotopic, and write as $f_0 \simeq f_1$. The map $H$ is called a homotopy between $f_0$ and $f_1$. In other words, the two continuous maps, which are homotopic, can be continuously deformed into each other via a homotopy. Figure \ref{Appendix::fig-An example of a homotopic pair} provides a visual example of a homotopic pair.
\begin{figure}[H]
    \centering
    \includegraphics[]{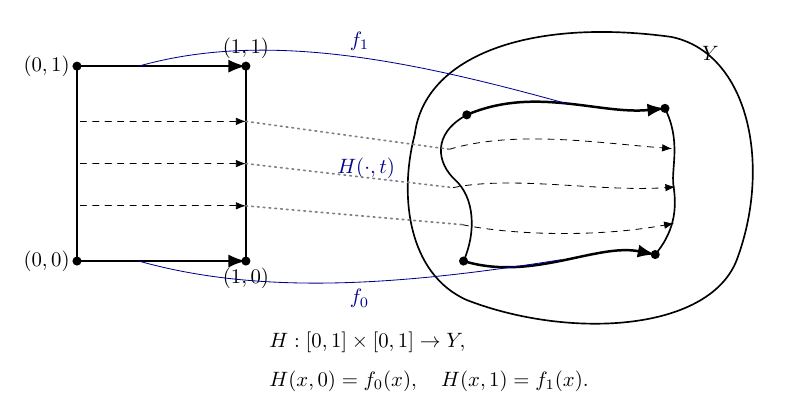}
    \caption{An example of a homotopic pair.}
    \label{Appendix::fig-An example of a homotopic pair}
\end{figure}

In topology, a fundamental question is whether two spaces are equivalent.  
The notion of homotopy between two continuous maps provides a criterion for the equivalence of topological spaces. A continuous map $f:X\rightarrow Y$ is called a homotopy equivalence if there exists a continuous map $g:Y\rightarrow X$ such that $f\circ g \simeq \mathrm{Id}_{Y}$, and $g\circ f \simeq \mathrm{Id}_{X}$, where $\mathrm{Id}_X: X\rightarrow X$ is the identity map. If there exists a homotopy equivalence between $X$ and $Y$, then the topological spaces $X$ and $Y $ are called homotopy equivalent and write as $X\simeq Y$. 

A simple way to show that two spaces are homotopy equivalent is via a deformation retraction. Consider a subspace $A$ of the space $X$. A continuous map $r : X\rightarrow X$ is called a retraction of $X$ onto $A$ if $r(X) =A$ and $r(a)= a$ for any $a\in A$. A deformation retraction of $X$ onto $A$ is a homotopy between the identity map of $X$ and a retraction of $X$ onto $A$. If there exists a deformation retraction of $X$ onto $A$, then $X\simeq A$, because the retraction $r$ is a homotopy equivalence between $X$ and $A$. For example, for any convex set $\Cc$ and any point $c_0\in \Cc$, one can show that $\Cc \simeq \{c_0\}$ via a deformation retraction.

One of the desirable properties of homotopy equivalence is that it ensures that homotopy equivalent spaces have the same homology, as stated in the following proposition. The notion of the homology is described in the next section.
\begin{prop}
    Let $X$ and $Y$ be homotopy equivalent topological spaces. Then, for any $n\in \{0,1,2,\dots\}$, the $n$-th homologies $\mathrm{H}_n(X)$ and $\mathrm{H}_n(Y)$ are isomorphic, $\ie$,  \[
        \mathrm{H}_n(X) \cong \mathrm{H}_n(Y).
    \]
\end{prop}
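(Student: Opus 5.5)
The plan is to obtain the isomorphism $\mathrm{H}_n(X)\cong\mathrm{H}_n(Y)$ from functoriality of singular homology together with the homotopy invariance of induced maps. Let $f:X\to Y$ be a homotopy equivalence with homotopy inverse $g:Y\to X$, so that $g\circ f\simeq \mathrm{Id}_X$ and $f\circ g\simeq \mathrm{Id}_Y$. Every continuous map $f$ induces a chain map $f_\#$ on singular chains, defined by $f_\#(\sigma)=f\circ\sigma$ on singular simplices. This chain map commutes with the boundary operator, so it descends to a homomorphism $f_*:\mathrm{H}_n(X)\to\mathrm{H}_n(Y)$. Functoriality, $(g\circ f)_*=g_*\circ f_*$ and $(\mathrm{Id}_X)_*=\mathrm{Id}$, is immediate from the definition at the chain level.

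The key step is the \emph{homotopy axiom}: if $f_0\simeq f_1$ via $H:X\times[0,1]\to Y$, then $(f_0)_*=(f_1)_*$. I would prove this by constructing a chain homotopy, that is, homomorphisms $P:C_n(X)\to C_{n+1}(Y)$ with
\[
\partial P+P\partial=(f_1)_\#-(f_0)_\#.
\]
The construction goes through the prism operator. For a singular simplex $\sigma:\Delta^n\to X$, I subdivide $\Delta^n\times[0,1]$ into the $n+1$ standard $(n+1)$-simplices $[v_0,\dots,v_i,w_i,\dots,w_n]$. Then I set
\[
P(\sigma)=\sum_{i=0}^n(-1)^i\,H\circ(\sigma\times\mathrm{Id})\big|_{[v_0,\dots,v_i,w_i,\dots,w_n]}.
\]
Verifying the chain homotopy identity is a combinatorial cancellation of faces. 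The interior faces cancel in pairs, the two ends give $f_1\circ\sigma-f_0\circ\sigma$, and the remaining faces assemble into $-P\partial\sigma$. Once the identity holds, any cycle $z$ satisfies $(f_1)_\#z-(f_0)_\#z=\partial Pz$, which is a boundary. Hence the two maps agree on homology.

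With the axiom in hand, the conclusion follows quickly. We have $g_*f_*=(g\circ f)_*=(\mathrm{Id}_X)_*=\mathrm{Id}$ and $f_*g_*=\mathrm{Id}$, so $f_*$ is an isomorphism with inverse $g_*$ in every degree $n$. The main obstacle is the sign bookkeeping in the prism-operator identity. I would handle it exactly as in Hatcher's \emph{Algebraic Topology} (Theorem 2.10), or simply cite that standard result, since this proposition is background material rather than a new contribution of the paper.
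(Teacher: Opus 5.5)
Your proposal is correct. It is the standard argument: the prism operator yields a chain homotopy, so homotopic maps induce the same map on singular homology, and functoriality then makes $f_*$ an isomorphism with inverse $g_*$. Your chain-homotopy identity $\partial P+P\partial=(f_1)_\#-(f_0)_\#$ and your face-cancellation description agree with Hatcher, Theorem 2.10 and Corollary 2.11. The paper gives no proof of this background proposition and simply refers the reader to Hatcher, so your argument is exactly the one it relies on.
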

\noindent
By this proposition, to show that the homologies of $X$ and $Y$ are isomorphic, it suffices to show $X$ and $Y$ are homotopy equivalent. We close this section by referring the reader to \citet{Hatcher}. 

\subsection{Persistent homology}\label{sec::persistent homology}
In this section we briefly introduce homology and persistent homology.
Homology is an invariant of the topological features of a given topological space $\Tc$. The homology of $\Tc$ is a collection of vector spaces $\{\mathrm{H}_k(\Tc):k=0,1,\dots
\}$. The 0th homology $\mathrm{H}_0(\Tc)$ is generated by elements that represent 0-dimensional topological features of $\Tc$ (connected components). Likewise, the $k$-th homology $\mathrm{H}_k(\Tc)$ is generated by elements that represent $k$-dimensional topological features of $\Tc$. 

However, when a goal is to infer the support of an unknown distribution, one typically observes only a finite set of sample points. As a result, the $k$-th homology is trivial for all $k \ge 1$. To recover meaningful topological information, it is therefore necessary to construct a simplicial complex over the observed point cloud. Several constructions of simplicial complexes are available, among which the Vietoris-Rips and \v{C}ech complexes are widely used in TDA. Let $r>0$ be a scale parameter, and let $\Xc$ be a subset of a metric space $(\Mc,d)$.
Then the Vietoris-Rips complex of $\Xc$ at scale $r>0$ is the simplicial complex defined by
\begin{align*}
    \text{Rips}_r(\Xc) :=\left\{\sigma \subseteq \Xc : d(x,y)\leq r \text{ for all }x,y\in \sigma \right\}.
\end{align*}
The \v{C}ech complex of $\Xc$ at scale $r>0$ is defined by
\begin{align*}
    \textrm{\v{C}ech}_r(\Xc) := \left\{\sigma \subseteq \Xc: \bigcap_{x\in \sigma}\Bc(x,r)\neq \emptyset \right\},
\end{align*}
where $\Bc(x,r) =\left\{y \in \Mc \mid d(x,y)\leq r \right\}$.
When $\Xc$ is a given point cloud sampled from an unknown probability distribution, the homology of the support of the underlying distribution is equal to that of $\text{Rips}_r(\Xc)$ or $\text{\v{C}ech}_r(\Xc)$ with high probability for an appropriate choice of $r>0$ under regularity conditions; see~\citet{NiyogiSmaleWeinberger2008,ChazalCohenSteinerLieutier2009} for more details. However, selecting an appropriate scale parameter $r$ in practice remains a challenging problem.

Rather than choosing a single scale $r$, using various scales to extract richer information is the key motivation of persistent homology. Roughly speaking, persistent homology records how homological features appear and disappear across scales. Let $\Kc(\Xc,r)$ be a simplicial complex built on a point cloud $\Xc$ at scale $r$. The collection $\Kc(\Xc):=\{\Kc(\Xc,r)\}_{r>0}$ is called a filtration of $\Xc$ if $\Kc(\Xc,{r_1})\subset \Kc(\Xc,{r_2})$ whenever $r_1 \leq r_2$. The function $\Kc$ used to build the simplicial complex is called a filtration function. The persistent homology of the filtration $\Kc(\Xc)$ records a change in the homology of $\Kc(\Xc,t)$ as $t$ varies. To be specific, consider the inclusion maps $i_{r,s}:\Kc(\Xc,{r}) \hookrightarrow \Kc(\Xc,{s})$ for  $r\leq s$. These inclusion maps $i_{r,s}$ induce a map $i'_{r,s}:\mathrm{H}_k(\Kc(\Xc,{r}))\rightarrow \mathrm{H}_k(\Kc(\Xc,{s}))$ between homologies. The collection of vector spaces  $\{\mathrm{H}_k(\Kc(\Xc,{r})):r>0\}$ with the linear maps $\{i'_{r,s}:r<s\}$ is called the persistent homology of dimension $k$, denoted by $\mathrm{PH}_k(\Kc(\Xc))$; see \citet{EdelsbrunnerHarer2010} for more details.

An element \(\sigma\) of \(\mathrm{PH}_k(K(\mathcal X))\) that represents a \(k\)-dimensional topological feature in \(K(\mathcal X,r_1)\)
becomes trivial for the first time at some \(r_2>r_1\). Then $r_2$ is called the death time of $\sigma$. Moreover, if $\sigma$ is nontrivial precisely for \(r_1\le r\le r_2\), then $r_1$ is called the birth time of $\sigma$. Persistent homology records the birth and death times of all topological features of the space across different scales.

\subsection{Reproducing kernel Hilbert spaces}\label{Appendix::Reproducing-kernel-Hilbert-spaces}
In this section, we introduce the definition of the reproducing kernel Hilbert space (RKHS) and its properties that we use to construct the test statistic. 

For a set $\Sc$, let $k:\Sc\times \Sc \rightarrow \Rb$ be a function, 
denoted by 
a kernel function. The kernel function $k$ is positive definite if $k$ is symmetric, $\ie$, $k(x,y)=k(y,x)$ for any $x,y \in \Sc$ and for any finite collection of $x_1,\dots, x_N \in \Sc$, the matrix $A=\left[k(x_i,x_j)\right]_{1\leq i,j\leq N}$ is positive semi-definite, $\ie$, $v^TAv \geq 0$ for all $v\in \Rb^N$. It is well-known from the following theorem that a positive definite kernel $k$ uniquely defines a Hilbert space $\Hc_k$ as a subspace of a real-valued function space on $\Sc$. The space $\Hc_k$ is called the reproducing kernel Hilbert space (RKHS).  

\begin{thm}[Moore--Aronszajn; see
\textnormal{\citealp{PaulsenRaghupathi2016}}]Let $\Sc$ be a set and $k$ be a positive definite kernel on $\Sc\times \Sc$. Then, there uniquely exists a reproducing kernel Hilbert space $\Hc_k$ satisfying the following:
\begin{enumerate}
    \item $k(\cdot, x)\in \Hc_k$ for any $x\in \Sc$,
    \item $\mathrm{Span}\{k(\cdot,x)\mid x\in \Sc\}$ is dense in $\Hc_k$ with the norm $\|\cdot \|_{\Hc_k}$,
    \item $\langle f,k(\cdot, x)\rangle_{\Hc_k}=f(x)$ for any $x \in \Sc $ and any $f\in \Hc_k$.
\end{enumerate}
\end{thm}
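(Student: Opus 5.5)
The plan is the standard construction: build a pre-Hilbert space from the kernel sections, complete it inside the space of functions on $\Sc$, and then argue uniqueness from the reproducing property.

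\textbf{Pre-Hilbert space.} Let $\Hc_0 := \mathrm{Span}\{k(\cdot,x): x\in\Sc\}$. For $f=\sum_i a_i k(\cdot,x_i)$ and $g=\sum_j b_j k(\cdot,y_j)$ in $\Hc_0$, define
\[
\langle f,g\rangle_0 := \sum_{i,j} a_i b_j\, k(x_i,y_j).
\]
This is well defined, independently of the chosen representations, because it equals $\sum_j b_j f(y_j)$ and also $\sum_i a_i g(x_i)$. The form is bilinear and symmetric. It is positive semidefinite because $k$ is positive definite.

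\textbf{Reproducing property and definiteness on $\Hc_0$.} Directly from the definition, $\langle f,k(\cdot,x)\rangle_0=f(x)$ for every $f\in\Hc_0$. Cauchy--Schwarz for semi-inner products then gives
\[
|f(x)|\le \|f\|_0\sqrt{k(x,x)}.
\]
Hence $\|f\|_0=0$ forces $f\equiv 0$, so $\langle\cdot,\cdot\rangle_0$ is a genuine inner product.

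\textbf{Completion inside the function space.} The same inequality shows that every $\|\cdot\|_0$-Cauchy sequence $(f_n)$ in $\Hc_0$ converges pointwise on $\Sc$. Define $\Hc_k$ as the set of all such pointwise limits, with $\|f\|:=\lim_n\|f_n\|_0$.

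The main obstacle is showing this norm is well defined. Concretely, one must show that a Cauchy sequence $(f_n)\subset\Hc_0$ converging pointwise to $0$ satisfies $\|f_n\|_0\to0$. I will write
\[
\|f_n\|_0^2=\langle f_n,f_n-f_m\rangle_0+\langle f_n,f_m\rangle_0 .
\]
First fix $m$ large, so that the first term is at most $\epsilon\sup_n\|f_n\|_0$ uniformly in $n\ge m$. Then, writing $f_m=\sum_i a_i k(\cdot,x_i)$, the second term equals $\sum_i a_i f_n(x_i)$, which tends to $0$ as $n\to\infty$.

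Given well-definedness, the rest is routine:
\begin{itemize}
\item The inner product is defined by polarization, and $\Hc_0$ is dense in $\Hc_k$ by construction.
\item Completeness follows from a diagonal argument: a Cauchy sequence in $\Hc_k$ is approximated by a Cauchy sequence in $\Hc_0$, whose pointwise limit lies in $\Hc_k$.
\item The reproducing property $\langle f,k(\cdot,x)\rangle=f(x)$ extends from $\Hc_0$ to $\Hc_k$ by continuity of both sides in $f$.
\end{itemize}
This establishes items 1--3.

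\textbf{Uniqueness.} Let $\mathcal G$ be any Hilbert space of functions satisfying 1--3. By property 3 its inner product agrees with $\langle\cdot,\cdot\rangle_0$ on $\Hc_0$, so $\Hc_0$ sits isometrically in $\mathcal G$. Point evaluations in $\mathcal G$ are continuous, again by property 3. Hence every element of $\mathcal G$ is, by density (property 2), the pointwise limit of a $\|\cdot\|_0$-Cauchy sequence in $\Hc_0$, with the same norm. Therefore $\mathcal G=\Hc_k$ isometrically.
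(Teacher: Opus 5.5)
Your proof is correct. The paper gives no proof of this theorem; it only cites Paulsen and Raghupathi. Your argument differs from theirs in where the completion lives.

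\textbf{Comparison with the cited route.} The textbook route completes $\Hc_0$ abstractly. It then identifies each element $h$ of the completion with the function $x\mapsto\langle h,k(\cdot,x)\rangle$. Injectivity of this identification is immediate: a vector orthogonal to every $k(\cdot,x)$ is orthogonal to the dense subspace $\Hc_0$, hence zero.

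You instead follow Aronszajn's concrete construction and complete inside the space of functions on $\Sc$. This makes $\Hc_k$ a function space from the outset. The price is your key lemma: a $\|\cdot\|_0$-Cauchy sequence converging pointwise to $0$ converges to $0$ in norm. That lemma carries exactly the content the abstract route gets for free from density, and you prove it correctly.

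\textbf{A missing step in uniqueness.} You show $\mathcal G\subseteq\Hc_k$ isometrically, but you then conclude $\mathcal G=\Hc_k$ without proving the reverse inclusion. That inclusion needs completeness of $\mathcal G$. Take $f\in\Hc_k$ with a $\|\cdot\|_0$-Cauchy sequence $(f_n)\subset\Hc_0$ converging pointwise to $f$. Because the norms agree on $\Hc_0$, $(f_n)$ is Cauchy in $\mathcal G$, so it converges in $\mathcal G$. By continuity of point evaluations in $\mathcal G$, its limit equals $f$. Equivalently, the isometric copy of $\mathcal G$ is complete, hence closed in $\Hc_k$, and it contains the dense subspace $\Hc_0$.

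Note also that property 2 follows from properties 1 and 3. If $f$ is orthogonal to every $k(\cdot,x)$, then $f(x)=\langle f,k(\cdot,x)\rangle=0$ for all $x$. Hence your uniqueness argument applies to every reproducing kernel Hilbert space with kernel $k$, not only those assumed to satisfy property 2.
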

The third property of the above theorem is called the reproducing property. By this property, we have 
\begin{align*}
    \left\langle k(\cdot,x),k(\cdot,y)\right\rangle_{\Hc_k}=k(x,y),
\end{align*}
for any $x,y\in \Sc$. We use the definition and properties of RKHS in Section \ref{Appendix::Analysis-for-test-statistic}. 
\section{Description of model assumptions}\label{Appendix::Description-of-the-assumptions}
In this section, we elaborate on the conditions on the probability distributions described in Remark
\ref{Realizations of Assumption} of the main text. We clarify the implications of each condition and provide justification. 

Assumption \ref{assump::randomness of diagram} ensures that the sampled diagrams $D_i$ cannot come from different homological dimensions, that is, there exists $k$ such that $D_i=\mathrm{PD}_k(\Kc(\Xc_i))$, where $\Xc_i$ is a random sample of $\Xc$, for any $1\leq i\leq n$. This assumption is natural in view of the inherent randomness of persistence diagrams.

Assumption \ref{assump::Bounded persistence condition} ensures that the $y$-axis of the diagram is bounded.  When the underlying manifolds have uniformly bounded diameter, and the filtration function $\Kc$ depends only on the location of points, such as Vietoris-Rips, and \v{C}ech filtrations, as formally stated in condition \ref{assump::K1} in Section \ref{Appendix::A-sufficient-condition-for-existence-of-intensity-function}, this assumption is satisfied. 

Assumption \ref{assump::core assumption} is introduced to incorporate the two groups of conditions \ref{assump::Bounded cardinality condition}-\ref{assump::Existence of intensity function} and \ref{assump::Control of the diagram cardinality}-\ref{assump::Tail behavior}. We now discuss the implications of each condition and the settings in which they are satisfied.

Conditions \ref{assump::Bounded cardinality condition} and  \ref{assump::Existence of intensity function} are intuitive but restrictive, as they exclude common models such as Poisson point processes, where the cardinality of $\Xc$ is random and unbounded. In such settings, the associated persistence diagram may also have unbounded cardinality, and the intensity function need not be bounded in the supremum norm. Such restrictions motivate us to develop Conditions \ref{assump::Control of the diagram cardinality}-\ref{assump::Tail behavior}. 

Condition~\ref{assump::existence of conditional intensity} consists of two components: the existence of a conditional intensity function and its uniform domination by $L^{\ell}$. Existence is guaranteed, when the underlying manifold is smooth and the point cloud is generated by a continuous stochastic process on the manifold; see Section~\ref{Appendix::A-sufficient-condition-for-existence-of-intensity-function} for details. The second component requires that there exists a constant $L>0$ such that for any $P\in \Pc$, if $p_\ell $ is the conditional intensity function of $P$ and $w$ is a weight function, then $\|w^2\cdot p_{\ell}\|_\infty \leq L^{\ell}$.  
This condition is satisfied under the Vietoris--Rips filtration with an admissible weight function, when the point process is a Poisson process whose intensity measure has a uniformly bounded density; see Remark~\ref{Appendix::rmk::the uniform domination remark} for details.
Moreover, a number of filtration functions with adequately chosen weight functions may satisfy the condition. Since this is beyond the scope of this paper, we leave it as future work.

Condition \ref{assump::Control of the diagram cardinality} holds for all but pathological filtration functions. For example, this assumption is satisfied when the Vietoris-Rips, and \v{C}ech filtrations are used.

Condition \ref{assump::Tail behavior} imposes the uniform summability condition: 
\[\sum_{\ell=1}^{\infty}(2L)^{\ell} \Pb(|\Xc|=\ell)\leq N,\]
for any point cloud random variable $\Xc$ under consideration.
This condition is satisfied, for example, when $\Xc$ follows a Poisson process with an intensity measure whose total mass  $\lambda$ is bounded by $\frac{\log N}{2L-1}$, assuming $2L>1$. Indeed, let $|\Xc| \sim \mathrm{Pois}(\lambda)$ with $\lambda \leq \frac{\log N}{2L-1}$. Then, we have 
\begin{align*}
    \sum_{\ell=1}^{\infty}(2L)^{\ell} \Pb(|\Xc|=\ell)=\sum_{\ell=1}^\infty (2L)^{\ell} \frac{\lambda^{\ell}}{\ell!}e^{-\lambda}\leq e^{(2L-1)\lambda} \leq N. 
\end{align*}


\section{Analysis for test statistic}\label{Appendix::Analysis-for-test-statistic}
In this section, we analyze our test statistic defined in Section \ref{sec::Test Statistic} of the main text. We first show how the test statistic is derived as an unbiased estimator of the squared RKHS norm $\|\mu_p-\mu_q\|^2_{\Hc_{k_w,\lambda}}$, defined in Section \ref{sec::Test Statistic} of the main text. Recall that the $\mu_p$ for an intensity function $p$ is defined by 
\begin{align}
    \mu_p = \int  p(x)w(\cdot)w(x)k_\lambda(\cdot,x)dx \in \Hc_{k_{w,\lambda}}. 
\end{align}
The squared $\Hc_{k_{w,\lambda}}$-norm can be written as 
\begin{align*}
    \|\mu_p-\mu_q\|_{\Hc_{k_{w,\lambda}}}^2 = \langle\mu_p,\mu_p \rangle_{\Hc_{k_{w,\lambda}}} + \langle\mu_q,\mu_q \rangle_{\Hc_{k_{w,\lambda}}} -2\langle\mu_p,\mu_q \rangle_{\Hc_{k_{w,\lambda}}}.
\end{align*}
Note that 
\begin{align*}
    \langle\mu_p,\mu_p \rangle_{\Hc_{k_{w,\lambda}}}= \left\langle \int w(\cdot)w(x)k_\lambda(\cdot,x)d\E(X_1)(x),\int w(\cdot)w(x)k_\lambda(\cdot,x)d\E(X_2)(x) \right\rangle_{\Hc_{k_{w,\lambda}}},
\end{align*}
which can be unbiasedly estimated as
\begin{align*}
    &\left\langle \int w(\cdot)w(x)k_\lambda(\cdot,x)\frac{1}{n}\sum_{i=1}^nd(\delta_{X_i})(x),\int w(\cdot)w(x)k_\lambda(\cdot,x)\frac{1}{n-1}\sum_{j\neq i}^nd(\delta_{X_j})(x)\right\rangle_{\Hc_{k_{w,\lambda}}}\\
    &=\frac{1}{n(n-1)}\sum_{i=1}^n\sum_{j\neq i}^{n}\left\langle \int w(\cdot)w(x)k_\lambda(\cdot,x)d(\delta_{X_i})(x),\int w(\cdot)w(x)k_\lambda(\cdot,x)d(\delta_{X_j})(x)\right\rangle_{\Hc_{k_{w,\lambda}}}\\
    &=\frac{1}{n(n-1)}\sum_{i=1}^n\sum_{j\neq i}^{n}\left\langle  \sum_{\mathbf{z}\in X_i}w(\cdot)w(\mathbf{z})k_\lambda(\cdot,\mathbf{z}),\sum_{\mathbf{z'}\in X_j} w(\cdot)w(\mathbf{z'})k_\lambda(\cdot,\mathbf{z'})\right\rangle_{\Hc_{k_{w,\lambda}}}\\
    &=\frac{1}{n(n-1)}\sum_{i=1}^n\sum_{j\neq i}^n\sum_{\mathbf{z}\in X_i}\sum_{\mathbf{z'}\in X_j}\left\langle w(\cdot)w(\mathbf{z})k_\lambda(\cdot,\mathbf{z}),w(\cdot)w(\mathbf{z'})k_\lambda(\cdot,\mathbf{z}')\right\rangle_{\Hc_{k_{w,\lambda}}}\\
    &=\frac{1}{n(n-1)}\sum_{i=1}^n\sum_{j\neq i}^n\sum_{\mathbf{z}\in X_i}\sum_{\mathbf{z'}\in X_j} w(\mathbf{z})w(\mathbf{z'})k_\lambda(\mathbf{z},\mathbf{z}'),
\end{align*}
where the last equality comes from the reproducing property of RKHS.
Applying the same argument to the remaining terms yields the unbiased estimator:
\begin{align*}
    \widehat{\|\mu_p-\mu_q\|}_{\Hc_{k_{w,\lambda}}}^2
    =& \frac{1}{n(n-1)}\sum_{i=1}^n\sum_{j\neq i}^n \sum_{\mathbf{z}\in X_i}\sum_{\mathbf{z'}\in X_j} w(\mathbf{z})w(\mathbf{z}')k_\lambda(\mathbf{z},\mathbf{z}') \notag\\
    &+\frac{1}{m(m-1)}\sum_{i=1}^m\sum_{j\neq i}^m \sum_{\mathbf{z}\in Y_i}\sum_{\mathbf{z'}\in Y_j} w(\mathbf{z})w(\mathbf{z}')k_\lambda(\mathbf{z},\mathbf{z}')\notag\\
    &-\frac{2}{nm}\sum_{i=1}^n\sum_{j=1}^m \sum_{\mathbf{z}\in X_i}\sum_{\mathbf{z'}\in Y_j} w(\mathbf{z})w(\mathbf{z}')k_\lambda(\mathbf{z},\mathbf{z}').\notag
\end{align*}
This unbiased estimator can be written as
\begin{align}\label{Appendix::eq::def of U_nm}
    \widehat{\|\mu_p-\mu_q\|}_{\Hc_{k_{w,\lambda}}}^2 &= \frac{1}{n(n-1)m(m-1)}\sum_{1\leq i\neq i'\leq n}\sum_{1\leq j\neq j'\leq m} h_\lambda(X_i,X_{i'},Y_j,Y_{j'})\notag\\
    &=:U_{nm},
\end{align}
where $h_\lambda(X,X',Y,Y') := K(X,X')+K(Y,Y')-K(X,Y')-K(X',Y)$ for $X,X',Y,Y' \in \mathbf{PD} $ and $K(X,Y):=\sum_{\bz\in X}\sum_{\bz'\in Y}w(\bz)w(\bz')k_\lambda(\bz,\bz')$ for $X,Y\in \mathbf{PD}$.

Next, we show that $U_{nm}$ is a two-sample U-statistic. To this end, we recall the definition of two-sample U-statistic.
\begin{defn}[Two-sample U-statistic, \citep{lee1990ustatistics,vandervaart1998asymptotic}] Let $X_1,\dots,X_n$ and $Y_1,\dots ,Y_m$ be $\iid$ random samples from possibly different distributions. Let $h(x_1,\dots,$   $ x_r,y_1,\dots y_s)$ be a measurable function that is permutation symmetric in $x_1,\dots, x_r$ and $y_1,\dots y_s$ separately, for $r,s\in \Nb$. A two-sample U-statistic with kernel $h$ is defined as 
\begin{align*}
    U=\frac{1}{\binom{n}{r}}\frac{1}{\binom{m}{s}}\sum_{\alpha}\sum_{\beta}h(X_{\alpha_1},\dots,X_{\alpha_{r}},Y_{\beta_1},\dots, Y_{\beta_s}),
\end{align*}
    where $\alpha$ and $\beta$ range over the collections of all subsets of $r$ different elements from $\{1,\dots,n\}$ and  of $s$ different elements from $\{1,\dots, m\}$, respectively. 
\end{defn}

One can easily check that the statistic $U_{nm}$, defined in \eqref{Appendix::eq::def of U_nm}, is a two-sample U-statistic with $r=2$, $s=2$, and kernel $h=h_{\lambda}$. This fact is used repeatedly in several proofs throughout the paper to exploit fundamental properties of U-statistics, in particular, an upper bound on the variance of the U-statistic.


\section{Homotopy equivalence of  \texorpdfstring{\v{C}ech}{Cech} complex on the circle}
\label{Appendix::Homotopy equivalence Cech Circle}

While we discuss the persistent homology of the \v{C}ech
complex (which is used in the proof of the lower bound in Section~\ref{sec::Power analysis}) in Section~\ref{sec::Topological properties of cech on the circle} of the main text, a more granular
analysis yields the exact homotopy type of the complex at any given
filtration parameter $t>0$. 

Let $n_{t}$ be the number of connected
components in $\bigcup_{x\in\mathcal{X}}\mathcal{B}(x,t)$, where $\mathcal{B}(x,t)$ is the closed ball of radius $t$ centered at $x$ with respect to $\|\cdot \|_2$-norm and let
$r_{0}\coloneqq\frac{1}{2}\underset{\substack{x\sim y \\x,y\in\mathcal{X}}}{\sup} \left\Vert x-y\right\Vert _{2}$; see Section \ref{sec::Topological properties of cech on the circle} of the main text for the definition of $x\sim y$.
The following proposition establishes the explicit homotopy equivalence
($\simeq$) for the \v{C}ech complex built on a point cloud sampled from a circle. For the definition of homotopy equivalence, see Section \ref{Appendix::Background_homotopy}.
\begin{prop}\label{Appendix::homotopy_cech_of_circle}
Fix $r>0$, and let $\mathcal{X}=\{x_1,\dots,x_n\}\subseteq S^{1}(r)$ be a nonempty finite subset with $n\geq 3$. 
\begin{enumerate}
\item If $\mathcal{X}$ is contained in some open hemicircle lying on $S^{1}(r)$,
then for each $t>0$, 
\[
\mathrm{\check{C}ech}(\mathcal{X},t)\simeq\{x_{1},\ldots,x_{n_{t}}\},
\]
\item If $\mathcal{X}$ is not contained in any open hemicircle of radius
$r$, then for each $t>0$, 
\[
\mathrm{\check{C}ech}(\mathcal{X},t)\simeq\begin{cases}
\{x_{1},\ldots,x_{n_{t}}\}, & \text{if }t< r_{0},\\
S^{1}, & \text{if }r_{0}\leq t< r,\\
\{x_{1}\}, & \text{if }t\geq r.
\end{cases}
\]
\end{enumerate}
\end{prop}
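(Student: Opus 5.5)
We plan to use the Nerve Lemma. The balls $\mathcal{B}(x,t)$ are closed and convex in $\Rb^2$, so $\mathrm{\check{C}ech}(\mathcal{X},t)$ is homotopy equivalent to the union $U_t:=\bigcup_{x\in\mathcal{X}}\mathcal{B}(x,t)\subseteq\Rb^2$. Both statements then reduce to planar geometry of $U_t$.

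The first step is to describe when two balls meet. Order the points cyclically as $x_1,\dots,x_n$ along $S^1(r)$. Balls of radius $t$ meet iff the chord length is at most $2t$. The union $U_t$ is then a chain of "beads". Each connected component of $U_t$ consists of a maximal run of cyclically consecutive points whose adjacent chords are all $\le 2t$. A nonadjacent pair in such a run is at most as far apart as one of its consecutive links only when the arc spans at most a half circle, so this needs a short check. Hence the components are exactly the $n_t$ runs, provided not all adjacent gaps are bridged.

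A component formed by a proper run (an arc) is contractible. To see this, we deformation retract along the arc direction onto a single ball, or observe that the nerve of a path of pairwise-overlapping convex sets along an arc is a path-like complex. Triple intersections make this genuinely the hardest point, so we would argue directly. The union of balls centered on a circular arc of angular length $<2\pi$ is star-shaped with respect to a suitable point when $t<r$. Alternatively, the union deformation retracts radially onto a thickened arc.

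For case 1, all points lie in an open hemicircle. We claim $U_t$ never surrounds the origin. Every point of $U_t$ then lies within distance $t$ of the convex hull of $\mathcal{X}$, which misses the origin. Consequently each component is contractible, as each is a union of balls centered on a run contained in a hemicircle, and such a union is star-shaped about the run's midpoint projected appropriately. In particular, once all gaps are bridged the union is still contractible, because the "wrap-around" link is an arc of length $>\pi$ whose chord exceeds all others. The result is $\simeq\{x_1,\dots,x_{n_t}\}$.

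For case 2, write $r_0$ for half the maximal adjacent chord. If $t<r_0$, some gap is unbridged, so we are back to disjoint contractible arcs as above. If $r_0\le t<r$, all consecutive balls overlap and $U_t$ is an annulus-like region. It contains a closed loop around the origin, obtained by concatenating segments $[x_i,x_{i+1}]\subseteq\mathcal{B}(x_i,t)\cup\mathcal{B}(x_{i+1},t)$. Since $t<r$ and no ball contains the origin, $U_t$ radially deformation retracts (in polar coordinates, each ray meets $U_t$ in an interval) onto a circle, giving $S^1$. The interval property along rays is the main technical obstacle. We would try to prove it using the not-in-an-open-hemicircle hypothesis, which ensures every angular direction is covered and each ray segment is connected. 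If $t\ge r$, the origin lies in every ball, so the total intersection is nonempty and $U_t$ is star-shaped about $0$, hence contractible.
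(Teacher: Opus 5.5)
\textbf{There is a genuine gap.} For every $t<r$ your argument depends on a geometric fact you never prove, and the substitutes you offer are false.

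\emph{The star-shapedness claims are false.} You assert that a component of $U_t$ is star-shaped ``with respect to a suitable point'', or ``about the run's midpoint projected appropriately''. Suppose many points fill an arc of angular length close to $\pi$ (allowed in case 1) and $t$ is small. Then $U_t$ is a thin curved band. For every $p\in U_t$, the segment from $p$ to the farther end of the band dips to radius about $r\cos(\pi/4)<r-t$, so it leaves $U_t$. Runs spanning more than a half circle, which occur in case 2 when $t<r_0$, are worse still.

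\emph{The case-1 argument about the origin is insufficient.} $U_t$ does lie in the $t$-neighbourhood of $\mathrm{conv}(\mathcal{X})$. But that neighbourhood contains $0$ as soon as $t$ exceeds the distance from $0$ to the hull, which can be tiny. Moreover, not surrounding the origin would not by itself rule out other holes; the generic planar obstruction is three pairwise-overlapping discs with empty triple intersection.

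\emph{The interval property is left open.} For $r_0\le t<r$ you call it ``the main technical obstacle''. The hypothesis you propose to use, that $\mathcal{X}$ lies in no open hemicircle, is irrelevant to it; that hypothesis only governs which directions are covered.

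\textbf{How to close the gap.} Fix $t<r$ and a ray from $0$ at angular offset $\delta$ from a centre. The ray meets that ball iff $|\delta|\le\arcsin(t/r)$, and then in the radial interval $[r\cos\delta-\sqrt{t^2-r^2\sin^2\delta},\,r\cos\delta+\sqrt{t^2-r^2\sin^2\delta}]$. Since $r^2\cos^2\delta>t^2-r^2\sin^2\delta$, the left endpoint increases and the right endpoint decreases in $|\delta|$. So these intervals are nested. Each ray therefore meets $U_t$ exactly in the interval of the angularly nearest centre, with endpoints continuous in the direction.

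Hence $U_t$ radially deformation retracts onto a copy of $A_t$, the union of the closed arcs of half-width $\arcsin(t/r)<\pi/2$ about the directions of the points of $\mathcal{X}$. Now $A_t=S^1$ iff every empty arc between consecutive points has length at most $2\arcsin(t/r)$. That holds iff $\mathcal{X}$ lies in no open hemicircle and $t\ge r_0$. Otherwise $A_t$ is a disjoint union of $n_t$ closed arcs. Together with your correct star-shapedness about $0$ for $t\ge r$, this proves both statements uniformly, including the regime $t<r_0$.

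\textbf{Comparison with the paper.} The paper takes a different route. It retracts $U_t$ by nearest-point projection onto $\mathrm{conv}(\mathcal{X})$, then radially outward onto the polygon $\mathcal{X}\cup\bigcup_{x\sim y}[x,y]$. It keeps the homotopy inside $U_t$ using $\|\rho_{\mathcal{X}}(y)-x_0\|\le\|y-x_0\|$ for the nearest sample point $x_0$, together with convexity of $\mathrm{conv}(\mathcal{X})\cap\mathcal{B}(x_0,t)$. It applies this only for $r_0\le t<r$ and merely asserts contractibility of the components when $t<r_0$. Your radial approach, once completed as above, would cover that regime explicitly as well.
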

This complete characterization of the homotopy equivalence provides
distinct theoretical advantages when compared to the existing literature
discussed in Section \ref{sec::Topological properties of cech on the circle} of the main text.

\begin{rmk}
When compared with topological reconstruction of manifolds, classical
reconstruction theorems typically rely on geometric conditions (such
as the reach of a manifold and the sampling density) to define a conservative,
restricted interval of $t$ within which the homotopy equivalence
$\textrm{\v{C}ech}(\Xc,t)\simeq\mathcal{M}$ is strictly guaranteed.
In contrast, Proposition \ref{Appendix::homotopy_cech_of_circle} does not merely provide a conservative sufficient range; it exhaustively analyzes the exact homotopy type for all $t>0$.
By identifying the precise phase transitions at $r_{0}$
and $r$, this result yields a significantly stronger and more complete
theoretical characterization for the specific case of a circle.
\end{rmk}

\begin{rmk}
When compared with homotopy types on the circle, much of the existing
literature that explicitly calculates the homotopy types of complexes
on $S^{1}$ focuses on either the Vietoris-Rips complex or the intrinsic
$\check{C}ech$ complex (which measures distance along the geodesic
arc of the circle). In those settings, the complexes often exhibit
highly complex topological behaviors at larger scale parameters, such
as the emergence of higher-dimensional odd spheres. Our proposition,
however, specifically analyzes the ambient $\check{C}ech$ complex
constructed using the standard Euclidean metric in the ambient space.
This leads to a simpler topological evolution: transitioning
cleanly from discrete points, to $S^{1}$, and directly to a contractible
point. This structural simplicity makes the ambient $\check{C}ech$
complex highly practical for constructing straightforward theoretical
examples and proofs, such as the minimax lower bound framework utilized
in our main text.
\end{rmk}

\section{Pairwise distinguishability under a structured model}\label{Appendix::sec::Identifiability}
In this section, we discuss pairwise distinguishability of distributions through their persistence intensity functions under a structured model. In TDA, many studies focus on the topology or geometry of the underlying manifolds from which point clouds are sampled through persistence diagrams. Since the primary object of interest is the underlying manifold in this case, uniform distributions on manifolds are frequently used as probability models \citep{NiyogiSmaleWeinberger2008,fasy2014confidence,bobrowski2023universal}.

Consider the Euclidean space $\Rb^D$ with the Euclidean norm $\|\cdot\|_2$. 
Let $\Kc$ denote either the \v{C}ech or Vietoris-Rips filtration with respect to the distance function $\|\cdot \|_2$, and let $k\in \{0,1,2,\dots\}$ be fixed. We define a collection of compact submanifolds of $\Rb^D$ that are distinguishable by their $k$-th persistence diagrams induced by $\Kc$.

\begin{defn} Define $\Mc_{k}$ as a collection of nonempty compact submanifolds of $\Rb^D$ such that
\begin{align*}
\mathrm{PD}_k\left(\Kc(\Mb_1)\right)\neq \mathrm{PD}_k\left(\Kc(\Mb_2)\right),
\end{align*}
$\text{for any }\Mb_1, \Mb_2 \in \Mc_{k} \text{ such that } \Mb_1\neq \Mb_2.$
\end{defn}
We now build a collection of random persistence diagrams. 
Fix $N\in \Nb$, $\Mb\in \Mc_{k}$, and let $f_{\Mb}$ be the uniform distribution on $\Mb$. Denote by $\mathrm{binomial}(\Mb,N,f_{\Mb})$ the binomial process on $\Mb$ consisting of $N$ points independently sampled from the distribution $f_{\Mb}$. Let 
\begin{align*}
    \Xb_{N,f_{\Mb}} \sim \mathrm{binomial}(\Mb,N,f_{\Mb}).
\end{align*}
Let  $P_{k,N,\Mb}$ denote the distribution of $\mathrm{PD}_k\left(\Kc\left(\Xb_{N,f_{\Mb}}\right)\right)$.

For any pair of manifolds $\Mb_1$ and $\Mb_2$ in $\Mc_k$, the corresponding distributions $P_{k,N,\Mb_1}$ and $P_{k,N,\Mb_2}$ are equal if and only if their expectation measures are equal, for all sufficiently large $N$. The following proposition formalizes this fact.
The proof of the proposition is provided in Section \ref{Appendix::sec::Equivalence proposition}.
\begin{prop}\label{Appendix::prop::Equivalence prop} 
For any $\Mb_1$ and $\Mb_2$ in $\Mc_k$, 
there exists $N^{\dagger}=N^{\dagger}(\Mb_1,\Mb_2)>0$ such that,
for any $N\geq N^{\dagger}$,
\[ P_{k,N,\Mb_1}=P_{k,N,\Mb_2} \quad\Longleftrightarrow\quad \Eb(P_{k,N,\Mb_1})=\Eb(P_{k,N,\Mb_2}),\]
where $\Eb(P_{k,N,\Mb_i})$ denotes the expectation measure of $P_{
k,N,M_i}$.
\end{prop}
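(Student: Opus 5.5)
The forward implication is immediate: if $P_{k,N,\Mb_1}=P_{k,N,\Mb_2}$, then their expectation measures coincide by definition. For the converse I would argue by contraposition, proving the following. If $\Mb_1\neq\Mb_2$ in $\Mc_k$, then for all sufficiently large $N$ the expectation measures differ, so neither side of the equivalence holds. If $\Mb_1=\Mb_2$, both sides hold trivially for every $N$.

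The main tool is the stability of persistence together with consistency of the sample. For the \v{C}ech and Vietoris--Rips filtrations, the bottleneck distance satisfies
\[
d_B\bigl(\mathrm{PD}_k(\Kc(\Xc)),\mathrm{PD}_k(\Kc(\Mb))\bigr)\le C_{\Kc}\, d_H(\Xc,\Mb)
\]
with $C_{\Kc}\in\{1,2\}$. This follows from the interleaving of the filtrations, and $\mathrm{PD}_k(\Kc(\Mb))$ is well defined since compact sets give q-tame filtrations. Moreover, for $\Xb_{N,f_{\Mb}}$ a binomial sample from the uniform distribution on a compact manifold, a covering argument gives
\[
\Pb\bigl(d_H(\Xb_{N,f_{\Mb}},\Mb)>\epsilon\bigr)\le C_\epsilon (1-c_\epsilon)^N .
\]
Here one covers $\Mb$ by finitely many balls of radius $\epsilon/2$, each having uniform mass at least $c_\epsilon>0$.

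Next I localize. Write $D_i:=\mathrm{PD}_k(\Kc(\Mb_i))$; by the definition of $\Mc_k$, $D_1\neq D_2$. By q-tameness, each $D_i$ has only finitely many points (with multiplicity) at distance at least $\delta$ from the diagonal, for every $\delta>0$. So I can pick a point $z\in\Omega$ off the diagonal whose multiplicities satisfy, after relabeling, $m_1(z)>m_2(z)$. I then choose a small closed ball $B$ around $z$ with two properties. Its $2\eta$-neighborhood meets no other point of $D_1\cup D_2$. And $B$ stays at distance more than $2\eta$ from the diagonal, for some $\eta>0$.

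The standard matching argument then shows the following. Any diagram $D$ with $d_B(D,D_i)<\eta$ satisfies $D(B')=m_i(z)$ for a slightly enlarged ball $B'$ with the same separation properties, and I rename $B'$ as $B$. Hence, with $\epsilon:=\eta/C_{\Kc}$, I get the lower bound
\[
\Eb(P_{k,N,\Mb_1})(B)\ge m_1(z)\,\Pb\bigl(d_H(\Xb_{N,f_{\Mb_1}},\Mb_1)\le\epsilon\bigr)\longrightarrow m_1(z),
\]
and the upper bound
\[
\Eb(P_{k,N,\Mb_2})(B)\le m_2(z)+\Eb\bigl[\,|D|\,\Ib\{d_H(\Xb_{N,f_{\Mb_2}},\Mb_2)>\epsilon\}\bigr].
\]

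The main obstacle is the error term in the upper bound, because the diagram cardinality is not bounded uniformly in $N$. The crude bound $2^N$ from condition \ref{assump::Control of the diagram cardinality} would not beat the exponential tail $(1-c_\epsilon)^N$ when $\epsilon$ is small. Instead I would use that the number of points of a degree-$k$ diagram of a finite filtered complex is at most the number of $k$-simplices, which is at most $\binom{N}{k+1}\le N^{k+1}$. The error term is then at most $N^{k+1}C_\epsilon(1-c_\epsilon)^N\to0$.

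Consequently $\liminf_N \Eb(P_{k,N,\Mb_1})(B)\ge m_1(z)>m_2(z)\ge\limsup_N\Eb(P_{k,N,\Mb_2})(B)$. So there exists $N^\dagger(\Mb_1,\Mb_2)$ such that the expectation measures differ on $B$ for all $N\ge N^\dagger$. In that range both $\Eb(P_{k,N,\Mb_1})\neq\Eb(P_{k,N,\Mb_2})$ and, a fortiori, $P_{k,N,\Mb_1}\neq P_{k,N,\Mb_2}$, which proves the equivalence.
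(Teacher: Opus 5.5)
Your proposal is correct and follows essentially the same route as the paper: Hausdorff-to-bottleneck stability, an exponential covering bound on $\Pb(d_H>\epsilon)$, localization to a region where the two limiting diagrams have different counts, and the $N^{k+1}$ cardinality bound to control the contribution of the bad event. The differences are cosmetic: you localize with a small ball around a single point $z$ rather than the paper's open set $A$ with $\partial A$ avoiding $D\cup E$, and you split on whether $\Mb_1=\Mb_2$ rather than starting from $P_{k,N,\Mb_1}\neq P_{k,N,\Mb_2}$.
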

Thus, for each fixed pair of underlying manifolds in $\Mc_k$, once the point-cloud size exceeds the associated pair-dependent threshold, their expectation measures—or equivalently, their persistence intensity functions when these exist—are as informative as the full persistence-diagram distributions for distinguishing the pair.

\section{Auxiliary results}
In this section, we provide auxiliary results that support our assumptions and theoretical developments.   
\subsection{Persistence intensity function}\label{Appendix::Intensity-function}
Persistence intensity functions are important objects in TDA. They have been studied extensively. Here, we focus on two aspects: a sufficient condition for the existence of the persistence intensity function and its uniform boundedness. 
\subsubsection{A sufficient condition for the existence of intensity functions}\label{Appendix::A-sufficient-condition-for-existence-of-intensity-function}
In this section, we describe the conditions under which the persistence intensity function exists, as established in  \citet{Chazal2019}. For the underlying space $\Mc$, we recall the following definitions. 
\begin{defn}[Real Analytic Function]
Let \( U \subset \mathbb{R}^n \) be an open set. A function 
\[
f: U \to \mathbb{R}
\]
is called \textbf{real analytic} on \(U\) if for every point \(x_0 \in U\), there exists a sequence of real numbers \(\{c_\alpha\}_{\alpha \in \mathbb{N}^n}\) such that
\[
f(x) = \sum_{\alpha \in \mathbb{N}^n} c_\alpha (x - x_0)^\alpha,
\]
for all \(x\) in some neighborhood of \(x_0\), where
\[
(x - x_0)^\alpha := (x_1 - x_{0,1})^{\alpha_1} \cdots (x_n - x_{0,n})^{\alpha_n}.
\]

Equivalently, \(f\) is infinitely differentiable (\(C^\infty\)) and its Taylor series at each point converges to \(f\) in a neighborhood of that point.
\end{defn}
\begin{defn}[Real Analytic Manifold]
A \textbf{real analytic manifold} of dimension \(n\) is a topological manifold \(\Mc\) of dimension \(n\) together with an \emph{atlas} 
\[
\{(U_\alpha, \varphi_\alpha)\}_{\alpha \in A}
\] 
satisfying the following properties:

\begin{enumerate}
    \item Each \(U_\alpha \subset \Mc\) is an open set and \(\varphi_\alpha : U_\alpha \to \mathbb{R}^n\) is a homeomorphism onto its image.
    \item The collection \(\{U_\alpha\}_{\alpha \in A}\) covers \(\Mc\), i.e., \(\Mc = \bigcup_{\alpha \in A} U_\alpha\).
    \item For any \(\alpha, \beta \in A\) with \(U_\alpha \cap U_\beta \neq \emptyset\), the \emph{transition map}
    \[
    \varphi_\beta \circ \varphi_\alpha^{-1} : \varphi_\alpha(U_\alpha \cap U_\beta) \to \varphi_\beta(U_\alpha \cap U_\beta)
    \]
    is a \textbf{real analytic function} between open subsets of \(\mathbb{R}^n\).
\end{enumerate}

A manifold equipped with such an atlas is called a \emph{real analytic manifold}.
\end{defn}
For example, any open subset of $\Rb^n$ and $n$-dimensional sphere $\Sb^n \subset \Rb^{n+1}$ are real analytic manifolds. 

Now, we state the conditions in \citet{Chazal2019} for the filtration $\Kc(\Xc)$ built on a point cloud $\Xc$. Fix $n\in \Nb$. Let $\Mc$ be a topological space and let $\Fc_n$ be the collection of non-empty subsets of $\{1,\dots,n\}$. Let $\varphi^{(n)}=(\varphi^{(n)}[J])_{J\in \Fc_n}:\Mc^n\rightarrow \Rb^{\Fc_n}$ be a continuous function. A simplex $J$ is added in the filtration at the time $\varphi^{(n)}[J]$, so it is called a filtering function. If $x$ is a finite subset of $\Mc$, then $\Kc(x)$ is defined by the filtration associated with $\varphi^{(|x|)} $, where $|x|$ is the size of $x$. For $x=(x_1,\dots,x_n)\in \Mc^n$ and for a simplex $J$, let $x(J) := (x_j)_{j\in J}$. For $n\in \Nb$, the assumptions for $\varphi^{(n)}(=\varphi)$ are as follows: 

\begin{enumerate}
\renewcommand{\labelenumi}{\theenumi}
\renewcommand{\theenumi}{(K\arabic{enumi})}
  \item\label{assump::K1} \textit{Absence of interaction:} For $J \in \mathcal{F}_n$, 
  $\varphi[J](x)$ only depends on $x(J)$.

  \item\label{assump::K2} \textit{Invariance under permutations:} For $J \in \mathcal{F}_n$ and for 
  $(x_1, \ldots, x_n) \in \Mc^n$, if $\tau$ is a permutation of $\{1, \ldots, n\}$ 
  whose support is included in $J$, then
  \[
    \varphi[J](x_{\tau(1)}, \ldots, x_{\tau(n)}) 
    = \varphi[J](x_1, \ldots, x_n).
  \]

  \item\label{assump::K3} \textit{Monotonicity:} For $J \subset J' \in \mathcal{F}_n$, 
  $\varphi[J] \leq \varphi[J']$.

  \item\label{assump::K4} \textit{Compatibility:} For a simplex $J \in \mathcal{F}_n$ and for $j \in J$, 
  if $\varphi[J](x_1, \ldots, x_n)$ is not a function of $x_j$ on some open set 
  $U$ of $\Mc^n$, then 
  $\varphi[J] \equiv \varphi[J \setminus \{j\}]$ on $U$.

  \item\label{assump::K5} \textit{Smoothness:} The function $\varphi$ is subanalytic and the gradient 
  of each of its entries indexed by $J$ with $|J|>1 $ is non-vanishing $a.s$. Moreover, for a singleton $\{j\}$, $\varphi[\{j\}]=0$
\end{enumerate}
Assumption \ref{assump::K1} implies that the time at which a simplex $J$ is added to the filtration depends only on the locations of its vertices, not on the relative position between vertices. For a more detailed discussion of the assumptions, see \citep{Chazal2019}.

\begin{rmk} The Vietoris-Rips complex and \text{\v{C}ech} complex defined in Section \ref{sec::persistent homology} satisfy  Assumptions \ref{assump::K1}-\ref{assump::K5}. 
\end{rmk}

\begin{thm}[Existence of an intensity function \citep{Chazal2019}]\label{Appendix::thm::existence-intensity} Fix $n\geq 1$. Assume that $\Mc$ is a real analytic compact $d$-dimensional connected submanifold and that $\Xc$ is a random variable on $\Mc^n$ having a density with respect to the Hausdorff measure. Assume that the filtration $\Kc$ satisfies assumptions from \ref{assump::K1} to \ref{assump::K5}. Then, for $k\geq 1$, $\Eb[\mathrm{PD}_k\left(\Kc(\cX)\right)]$ has a density with respect to the Lebesgue measure on $\Omega:=\{(x,y)\in\Rb^2:0<x<y<\infty\}$. Moreover, $\Eb[\mathrm{PD}_0\left(\Kc(\cX)\right)]$ has a density with respect to the Lebesgue measure on the vertical line $\{0\}\times [0,\infty)$.
\end{thm}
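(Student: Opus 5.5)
Since this result is quoted from \citet{Chazal2019}, I would reproduce their argument in three steps: a combinatorial reduction, a generic-rank argument, and a change-of-variables step. Fix $n$ and write $x=(x_1,\dots,x_n)\in\Mc^n$.

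\textbf{Step 1 (combinatorial reduction).} For each $x$ the filtration $\Kc(x)$ is determined by the ordering of the finitely many values $\{\varphi[J](x):J\in\Fc_n\}$. By \ref{assump::K5}, each $\varphi[J]$ is subanalytic. Hence the set where two values $\varphi[J](x)=\varphi[J'](x)$ coincide is subanalytic. Whenever these two functions are not identically equal there, \ref{assump::K4} together with the non-vanishing gradients should make this set of positive codimension, hence Hausdorff-null. Off this null set, $\Mc^n$ splits into finitely many open subanalytic pieces $U_1,\dots,U_R$. On each piece the order type of the filtration is constant, and so the persistence pairing produced by the standard reduction algorithm is constant. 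Consequently, on $U_r$,
\[
\mathrm{PD}_k(\Kc(x))=\sum_{(J_1,J_2)\in\Pi_r}\delta_{(\varphi[J_1](x),\,\varphi[J_2](x))},
\]
where $\Pi_r$ is a fixed finite set of birth--death pairs of $k$- and $(k+1)$-simplices.

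\textbf{Step 2 (rank of the Jacobian).} It then suffices to show that, for each pair $(J_1,J_2)$ and each piece $U_r$, the pushforward of the law of $\Xc$ restricted to $U_r$ under
\[
\Phi_{J_1,J_2}(x)=(\varphi[J_1](x),\varphi[J_2](x))
\]
is absolutely continuous on $\Omega$. By the coarea formula on the real analytic manifold $\Mc^n$, this holds once the differential $d\Phi_{J_1,J_2}$ has rank $2$ almost everywhere on $U_r$.

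Rank at least $1$ is immediate from the non-vanishing gradient in \ref{assump::K5}; this uses $|J_i|>1$, which holds since $k\geq 1$. The main obstacle is excluding rank exactly $1$, that is, a positive-measure set on which $\nabla\varphi[J_1]$ and $\nabla\varphi[J_2]$ are parallel. I would handle it in two moves.
\begin{itemize}
\item By \ref{assump::K1}, $\varphi[J_1]$ depends only on $x(J_1)$.
\item On $U_r$, I would use \ref{assump::K4} to replace each $J_i$ by a minimal subset $J_i'$ on which $\varphi$ genuinely depends on every vertex.
\end{itemize}
Since $J_1\neq J_2$ are paired and a death cannot occur at the birth time on $U_r$, one expects $J_2'\not\subseteq J_1'$. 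Then $\varphi[J_2]$ varies with some coordinate $x_j$ on which $\varphi[J_1]$ does not depend. The two gradients have different supports in the product tangent space, so they are linearly independent. If this support argument fails on some piece, I would instead show that the locus where the gradients are parallel is a proper subanalytic subset, using analyticity, and is therefore null.

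\textbf{Step 3 (conclusion).} Summing over the finitely many pieces $r$ and pairs in $\Pi_r$, the measure $\Eb[\mathrm{PD}_k(\Kc(\Xc))]$ is a finite sum of absolutely continuous measures, and hence has a density on $\Omega$.

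For $k=0$, \ref{assump::K5} gives birth times $\varphi[\{j\}]=0$, so every point has the form $(0,\varphi[J_2](x))$. The same argument then needs only rank $1$, which follows from the non-vanishing gradient of the edge $J_2$. This yields a density on the vertical line $\{0\}\times[0,\infty)$.
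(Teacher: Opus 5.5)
The paper gives no proof of this theorem; it is quoted from Divol and Chazal. Your outline follows the same strategy as their argument: a subanalytic partition of $\Mc^n$ into finitely many cells of constant order type, a constant persistence pairing on each cell, and the coarea formula applied to $x\mapsto(\varphi[J_1](x),\varphi[J_2](x))$ once its differential has rank $2$ almost everywhere. However, the places where you hedge are exactly where specific ingredients are needed, and as written they are gaps. Each can be closed as follows.

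First, in Step 1 the nullity of the tie sets has nothing to do with (K4) or the gradients. The dichotomy ``identically equal, or coincidence set of positive codimension'' is also false: for the Rips filtration, $\varphi[\{1,2,3\}]$ and $\varphi[\{1,2\}]$ agree on the open set where $\{1,2\}$ is the strictly longest edge and differ elsewhere on the connected set $\Mc^n$. The correct argument is purely subanalytic. The set $S=\{\varphi[J]=\varphi[J']\}$ is closed and subanalytic, so $S\setminus\mathrm{int}(S)$ is subanalytic with empty interior, hence of dimension $<dn$ and null. Removing these finitely many sets and taking connected components yields cells $U_r$ on which the order type, ties included, is constant.

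Second, $J_2'\not\subseteq J_1'$ is not just expected; it follows from (K3), which you never use. If $J_2'\subseteq J_1'$, then $\varphi[J_2]=\varphi[J_2']\le\varphi[J_1']=\varphi[J_1]$ on $U_r$, contradicting $\varphi[J_1]<\varphi[J_2]$ there. Pairs with equality on $U_r$ are diagonal and dropped.

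Third, ``$\varphi[J_2]$ varies with $x_j$'' only says that $\partial_{x_j}\varphi[J_2']$ is not identically zero on $U_r$. The different-supports argument needs it nonzero almost everywhere. To get this, take $J_i'$ to be a minimal $J''\subseteq J_i$ with $\varphi[J'']=\varphi[J_i]$ on $U_r$; this is well defined because the order type is constant. Suppose the zero set of $\partial_{x_j}\varphi[J_2']$ had positive measure. Being subanalytic, it would contain an open $W\subseteq U_r$ on which $\varphi[J_2']$ does not depend on $x_j$. Then (K4) gives $\varphi[J_2']=\varphi[J_2'\setminus\{j\}]$ on $W$. Constancy of the order type propagates this equality to all of $U_r$, contradicting minimality.

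With these three points the rank-$2$ claim follows, since $\partial_{x_j}\varphi[J_1]=0$ by (K1) and $\nabla\varphi[J_1]\neq 0$ a.e. by (K5). Your fallback about a ``proper subanalytic'' parallel locus merely restates what must be proved, and it becomes unnecessary.
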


\begin{rmk} When $\Xc$ follows a Poisson process on $\Mc$, the same result holds, \citep[Corollary 3.4, p. 133]{Chazal2019}.  
\end{rmk}
\begin{rmk}
   Assumption \ref{assump::K5} can be replaced by the following assumption:
    \begin{enumerate}
    \renewcommand{\labelenumi}{\theenumi}
    \renewcommand{\theenumi}{(K6)}
        \item\label{assump::K6} \textit{Smoothness:}  The function $\varphi$ is subanalytic and the gradient of each of its entries is non vanishing a.s.
    \end{enumerate}
    The Vietoris-Rips and \v{C}ech filtrations do not satisfy assumption \ref{assump::K6}, since they have $0$ value for any singleton $J$. For a filtration function satisfying \ref{assump::K6}, the density of the expected $0$-dimensional persistence diagram is defined on $\Omega$, rather than on $\{0\}\times [0,\infty)$.
\end{rmk}

\begin{rmk}
    Since the underlying connected manifold is assumed to be compact and assumption \ref{assump::K1} holds, the $y$-axis of the generated persistence diagram is bounded by $M$ for some $M$>0. 
    Therefore, one can replace $\Omega$ by $\Omega(M)$ and $[0,\infty)$ by $[0,M)$  in Theorem \ref{Appendix::thm::existence-intensity}, where $\Omega(M)= \{(x,y)\in \Omega : y<M\}$.
\end{rmk}
\subsubsection{Uniform bound on persistence intensity functions}\label{Appendix::sec::Uniform-bound-on-intensity-function}
In Condition \ref{assump::existence of conditional intensity}, we assume that $\|w^2\cdot p_\ell\|_\infty\leq L^\ell $. In this section, we discuss a sufficient condition for this assumption, using the result in \citet[p. 13]{wu2024estimation}. To do this, we need the following definitions.

\begin{defn} Let $f$ be a probability density function on a set $\Sc\subseteq \Rb^d$ and let $n\in \Nb$. A point process $\Xc$ consisting of $n$ independent and identically distributed points with common probability density $f$ is called a binomial point process of $n$ points in $\Sc$ with density $f$ and denoted by:
\begin{align*}
    \Xc\sim \mathrm{binomial}(\Sc,n,f).
\end{align*}
If $f$ is the probability density function of the uniform distribution on $\Sc$ with respect to the Lebesgue measure, then $\Xc$ is called a homogeneous binomial point process. 
\end{defn}
\begin{defn}
    A point process $\Xc$ on a set $\Sc\subseteq \Rb^d$ is a Poisson point process with an intensity measure $\mu$ (and a density function $\rho$) if the following two properties hold: 
    \begin{itemize}
    \item For any $B \subseteq \Sc$ such that $\mu(B) < \infty$, 
    $N(B) \sim \mathrm{Pois}(\mu(B))$—the Poisson distribution with mean $\mu(B)$.

    \item For any $n \in \mathbb{N}$ and $B \subseteq S$ such that $0 < \mu(B) < \infty$,
    \[
        (\Xc\cap  B \mid N(B)=n) \sim \mathrm{binomial}\!\left(B, n, \frac{\rho(\cdot)}{\mu(B)}\right),
    \]
    where $N(B)$ is the cardinality of $\Xc\cap B$.
\end{itemize}
If the binomial point process appearing in the second condition is a homogeneous binomial point process then $\Xc$ is called a homogeneous Poisson point process. 
\end{defn}

We are ready to state the following theorem.
\begin{thm}[{\citet[p. 13]{wu2024estimation}}]\label{Appendix::thm::uniform-bound-of-intensity}
    Let $\kappa$ be a probability density on $[0,1]^d$ such that $0<\inf \kappa \leq \sup \kappa <\infty $. Suppose that either $\Xc_N$ is a binomial process with parameters $N$ and $\kappa$ or a Poisson process with intensity measure $N\cdot\kappa$ in the cube $[0,1]^d$. Denote by $p(u)$ the persistence intensity function for the $k$-dimensional expected persistence measure induced by the Vietoris-Rips filtration. Then, there exists a  constant $C'$ depending only on $k$ such that 
    \begin{align*}
        \|p\|_\infty \leq C'\cdot \mathrm{poly}(N,d)\sup\kappa,
    \end{align*}
    where $\mathrm{poly}(N,d)=N^5d^3$.
\end{thm}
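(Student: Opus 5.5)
The plan is to reduce the intensity bound to a density bound for pairs of pairwise distances. I would use the fact, from the structure exploited in \citet{Chazal2019}, that for the Vietoris--Rips filtration every filtration value is the length of some edge. Concretely, a simplex $J$ enters at $\varphi[J](x)=\max_{a,b\in J}\|x_a-x_b\|$, and generically this maximum is attained at a unique edge. Every point $(b,d)$ of $\mathrm{PD}_k(\mathrm{Rips}(\Xc_N))$ is produced by a pair of critical simplices: a positive $k$-simplex $\sigma$ and a negative $(k+1)$-simplex $\tau$. Their entry times are the lengths of their longest edges $e_\sigma=\{a,b\}$ and $e_\tau=\{c,e\}$. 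Under a generic (a.s.) configuration with distinct pairwise distances, each simplex is paired at most once. I would also argue that each ordered pair of edges $(e_\sigma,e_\tau)$ is responsible for at most $C(k)$ diagram points. The reason is that all simplices with the same longest edge enter simultaneously, and the number of features born or killed at a single time is controlled by a constant depending on $k$; bounding this multiplicity is where the constant $C'$ comes from.

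From this, for any Borel $B\subset\Omega$, I would write
\[
D(B)\le C(k)\sum_{(e,e')}\Ib\bigl((\|x_a-x_b\|,\|x_c-x_{e}\|)\in B\bigr),
\]
where the sum runs over ordered pairs of edges. Taking expectations in the binomial case gives $O(N^4)$ terms, one for each choice of at most four indices. Each term equals the probability that a pair of distances lands in $B$. I would split into cases according to whether the two edges share zero or one vertex; identical edges cannot occur, since $b<d$.

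\emph{Disjoint edges.} The two distances are independent, so it suffices to bound the density of $\|X-Y\|$ for $X,Y$ i.i.d.\ $\kappa$. Conditioning on $Y$ gives
\[
P(\|X-Y\|\in[r,r+h])\le \sup\kappa\cdot \mathrm{vol}(\text{shell})\le \sup\kappa\cdot d\,\omega_d r^{d-1}h .
\]
Since $r\le\sqrt d$, this is bounded by a polynomial in $d$ times $\sup\kappa$.

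\emph{Shared vertex.} I would condition on the common vertex and use independence of the two remaining points, which gives the same type of product bound.

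Summing the $O(N^4)$ terms yields a density bound of order $N^4\,\mathrm{poly}(d)\,\sup\kappa$.

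For the Poisson process with intensity $N\kappa$, I would condition on the count $|\Xc_N|=n$. The conditional process is binomial with $n$ points, so the previous bound gives a contribution of $n^4\,\mathrm{poly}(d)\sup\kappa$. Averaging this against the Poisson$(N)$ distribution uses $\Eb[n^4]\lesssim N^4+N$. This stays within the stated $N^5d^3$ envelope, and the extra power of $N$ gives slack for the multiplicity count.

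The main obstacle I expect is the power of $\sup\kappa$. A naive joint-density argument produces $(\sup\kappa)^2$, one factor for each distance. To keep a single factor, I would integrate out one distance trivially, bounding its probability by $1$ after conditioning. This leaves only a one-dimensional density times a bounded factor, with the two-dimensional density of the pair bounded via a coarea-type change of variables. Justifying this carefully, together with the genericity and multiplicity claims for critical edges, is the delicate part of the argument.
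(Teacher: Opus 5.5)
The paper does not prove this statement; it quotes it from \citet{wu2024estimation}. Your argument therefore has to stand on its own, and it fails exactly at the step you flag: getting a single power of $\sup\kappa$.

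\textbf{The $\sup\kappa$ power.} For disjoint edges, your computation gives the joint density of $(\|X_a-X_b\|,\|X_c-X_e\|)$ as the product $f(b)f(d)$ of two one-dimensional densities. Each factor is as large as $\sup\kappa$ times a geometric factor, so the product is of order $(\sup\kappa)^2$; conditioning on the common vertex in the shared-vertex case gives the same. Since $\kappa$ is a probability density on $[0,1]^d$, $\sup\kappa\ge 1$, so $(\sup\kappa)^2$ is genuinely weaker than the claim.

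Your remedy is not valid. Bounding a two-dimensional density at $(b,d)$ requires $\mathbb{P}\bigl((\ell_e,\ell_{e'})\in[b,b+h]\times[d,d+h]\bigr)=O(h^2)$. Bounding one factor by $1$ leaves only $O(h)$, which says nothing about $p$. A coarea computation for $(x_a,x_b,x_c,x_e)\mapsto(\|x_a-x_b\|,\|x_c-x_e\|)$, whose Jacobian is $2$, simply reproduces $f(b)f(d)$.

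Worse, your domination step is already too lossy when $d=2$. Let $\kappa$ put mass about $1/2$ uniformly on each of the facing slabs $[\tfrac12-\sqrt\epsilon,\tfrac12+\sqrt\epsilon]\times[\tfrac14-\epsilon,\tfrac14+\epsilon]$ and $[\tfrac12-\sqrt\epsilon,\tfrac12+\sqrt\epsilon]\times[\tfrac34-\epsilon,\tfrac34+\epsilon]$, plus a small uniform background. Then $\sup\kappa\asymp\epsilon^{-3/2}$. Any point of one slab lies at distance $\tfrac12+O(\epsilon)$ from any point of the other. Hence $(\|X_1-X_2\|,\|X_3-X_4\|)$ puts mass bounded below on a square of side $O(\epsilon)$, and its density is $\gtrsim\epsilon^{-2}\gg\sup\kappa$ somewhere. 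So the right-hand side of $D(B)\le C\sum_{(e,e')}\mathbb{I}(\cdot)$ itself can have intensity far above $\sup\kappa$, and no bound derived from it is linear in $\sup\kappa$. To get linear dependence you would have to use which edge pairs are actually persistence pairs; otherwise you must settle for $(\sup\kappa)^2$.

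\textbf{The multiplicity claim.} It is false for $k\ge 2$; it is fine for $k=1$, where an edge creates at most one $1$-cycle. Take an open set of configurations in $\mathbb{R}^{m+1}$: the $u_i$ form a regular simplex of side $1$, and $a,b,c,c'$ sit suitably on two further orthogonal axes. At scales in $[\|a-c\|,\|a-b\|)$ the Vietoris--Rips complex is the join $\{a,b\}*\{c,c'\}*\{u_1,\dots,u_m\}$, whose $H_2$ has rank $m-1$. If $ac$ is the last short edge and $ab$ the first long one, all $m-1$ classes are born at $\|a-c\|$ and die at $\|a-b\|$. So a single edge pair can carry a number of diagram points growing with $N$ and $d$, not bounded by a constant depending only on $k$. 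The crude bound $\binom{N-2}{k-1}$ on the $k$-simplices entering with an edge gives $N^{k+3}$ in total, which exceeds $N^5$ for $k\ge 3$.

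\textbf{The $d$-dependence.} The bound $\sup\kappa\cdot d\,\omega_d r^{d-1}$ with $r\le\sqrt d$ is not polynomial in $d$: at $r=\sqrt d$ it is of order $(2\pi e)^{d/2}\sup\kappa$. Polynomial dependence requires using the cube. For example, $\mathcal{H}^{d-1}\bigl(S(y,r)\cap[0,1]^d\bigr)\le 2\pi r$ for $d\ge 2$. This holds because, for $U$ uniform on the cube, $\sum_i (U_i-y_i)^2$ has density at most $\pi$.
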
 
\begin{rmk}\label{Appendix::rmk::the uniform domination remark}
In this remark, we discuss a sufficient condition for Condition \ref{assump::existence of conditional intensity}, using Theorem \ref{Appendix::thm::uniform-bound-of-intensity}. Let $\Xc$ follow a Poisson process on the cube $[0,1]^d$ with an intensity  density $\kappa$. With the Vietoris-Rips filtration $\Kc$ and some non-negative integer $k$, the random variable $D=\mathrm{PD}_{k}(\Kc(\Xc))$ defines the conditional intensity function $p_{\ell} $ by 
\[p_{\ell} = \frac{d\Eb(D\mid |\Xc|=\ell)}{d\mathrm{Leb_2}}.\]
Let $|\kappa| := \int \kappa(x)\,dx$ denote the total mass of the density $\kappa$ so that $\kappa/|\kappa|$ is a probability density function.
Since the conditional Poisson process given $|\Xc|=\ell$ is the binomial process with parameter $\ell $ and $\kappa/|\kappa|$, we have by Theorem~\ref{Appendix::thm::uniform-bound-of-intensity},
\begin{align*}
    \|p_{\ell}\|_{\infty} \leq C'(k)\ell^{5}d^3\sup (\kappa/|\kappa|), 
\end{align*}
for each $\ell\geq 1$ and for some constant $C'(k)$.

It is further required that the upper bound on 
$\|p_{\ell}\|_{\infty}$ is independent of $\kappa$, $d$, and $k$. 
To this end, we restrict the ambient dimension to 
$d \in \{0,1,\dots,K\}$ for some $K$, so that the dimension 
$k$ of topological features also satisfies $k \le K$. 
Define
\[
C'_{\mathrm{max}} := \max_{k\in\{0,1,\dots,K\}} C'(k).
\]
Denote by $\Fc$ the collection of all intensity densities $\kappa$ under consideration.
We also assume that there exists a constant $K'>0$ such that for any 
$\kappa \in \Fc$,
\[
\left\|\frac{\kappa}{|\kappa|}\right\|_\infty \le K'.
\]
Lastly, let 
\[
L := (\|w^2\|_\infty\cdot C'_{\mathrm{max}} \cdot K^3 \cdot K')  \vee 32.
\]
Then, we have that 
\[
\|w^2\cdot p_{\ell}\|_{\infty}\leq \|w^2\|_\infty \|p_\ell\|_\infty \le L\ell^5 \le L^{\ell},
\]
for each $\ell\in\Nb$. Consequently, this provides a sufficient 
condition for Condition~\ref{assump::existence of conditional intensity}.
\end{rmk}


\subsection{Additional lemmas for the permutation test} 
In this section, we state lemmas used in our power analysis. When deriving the uniform separation rate of our test, we build on Lemma \ref{Appendix::first-suff-cond} below, which provides a simple sufficient condition for controlling the Type II error. Since this lemma is a well-known result \citep{kim2022minimax,schrab2023mmdagg}, we omit the proof.

\begin{lem}\label{Appendix::first-suff-cond}
    Let $\alpha,\beta\in(0,1), $ and $B\in \Nb\setminus\{0\}.$ If 
    \begin{align}
        \Pb_{P\times Q \times r}\left(\E_{P\times Q}\left(\widehat{T}_\lambda(\Xb_n,\Yb_m)\right)\geq \sqrt{\frac{2}{\beta}\V_{P\times Q}\left(\widehat{T}_\lambda(\Xb_n,\Yb_m)\right)}+\widehat{q}_{1-\alpha}^{\lambda,B}(\Zb_B\mid \Xb_n,\Yb_m)\right)\\\geq 1-\frac{\beta}{2},\notag
    \end{align}
    for any $(P,Q)\in \Pc^{\otimes 2}_1 $,
    then the $\mathrm{Type\ II}$ error, defined in \eqref{power function} of the main text, is at most $\beta$.
\end{lem}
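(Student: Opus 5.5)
The approach is a two-step Chebyshev argument. We work conditionally on the data and split the bad event into two pieces.

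Write $T=\widehat T_\lambda(\Xb_n,\Yb_m)$, $\mu=\E_{P\times Q}(T)$, $\sigma^2=\V_{P\times Q}(T)$ and $\widehat q=\widehat q^{\lambda,B}_{1-\alpha}(\Zb_B\mid\Xb_n,\Yb_m)$. The Type II error for a fixed $(P,Q)\in\Pc^{\otimes 2}_1$ is
\[
\Pb_{P\times Q\times r}(T\le \widehat q).
\]
We bound it by
\[
\Pb\bigl(T\le \widehat q,\ \mu\ge \sqrt{2\sigma^2/\beta}+\widehat q\bigr)+\Pb\bigl(\mu< \sqrt{2\sigma^2/\beta}+\widehat q\bigr).
\]
The hypothesis of the lemma makes the second term at most $\beta/2$.

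For the first term, observe that on the event in question
\[
T\le \widehat q\le \mu-\sqrt{2\sigma^2/\beta},
\]
so that $\mu-T\ge\sqrt{2\sigma^2/\beta}$. The first term is therefore at most
\[
\Pb_{P\times Q}\bigl(|T-\mu|\ge\sqrt{2\sigma^2/\beta}\bigr).
\]
This event no longer involves the permutations, so its probability under $P\times Q\times r$ equals its probability under $P\times Q$. Chebyshev's inequality bounds it by
\[
\frac{\sigma^2}{2\sigma^2/\beta}=\frac{\beta}{2},
\]
at least when $\sigma^2>0$.

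The degenerate case $\sigma^2=0$ needs separate handling. Then $T=\mu$ almost surely, and the event $\{T\le\widehat q\le \mu\}$ is not automatically excluded. The fix is to use a strict inequality in the event or, equivalently, to replace $\sqrt{2\sigma^2/\beta}$ by $\sqrt{2\sigma^2/\beta}+\varepsilon$ and let $\varepsilon\downarrow 0$. Alternatively, we note that $T>\widehat q$ is required for rejection and treat the boundary via continuity from above of probabilities. This boundary issue is the only delicate point.

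Summing the two bounds gives a Type II error of at most $\beta$ for every $(P,Q)\in\Pc^{\otimes 2}_1$. Taking the supremum then yields the claim about \eqref{power function}. We also need to check that $T$ is square integrable, so that Chebyshev's inequality applies. This follows from Lemma~\ref{Variance upper bound}, which shows that $\sigma^2$ is finite under the assumptions defining $\Pc$.
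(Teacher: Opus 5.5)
Your main argument is the standard proof from the references the paper defers to (Kim et al.\ and Schrab et al.); the paper itself omits the proof. You split $\{T\le\widehat q\}$ according to whether the event in the hypothesis occurs. The complement is bounded by $\beta/2$ directly from the hypothesis. The remaining piece forces $\mu-T\ge\sqrt{2\sigma^2/\beta}$, and Chebyshev bounds it by $\beta/2$. This is correct whenever $\sigma^2>0$.

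\textbf{The degenerate case.} Your patch for $\sigma^2=0$ does not work.
\begin{itemize}
\item If you replace $\sqrt{2\sigma^2/\beta}$ by $\sqrt{2\sigma^2/\beta}+\varepsilon$, the Chebyshev term vanishes. The other term, however, becomes $\Pb(\mu<\widehat q+\varepsilon)$, which the hypothesis does not control. As $\varepsilon\downarrow 0$ it decreases to $\Pb(\mu\le\widehat q)$. Since $T=\mu$ almost surely, this is exactly the Type II error $\Pb(T\le\widehat q)$ you are trying to bound, so the limiting argument is circular.
\item The strict-inequality variant fails the same way. Its complementary event is $\{\mu\le\widehat q\}$, but the hypothesis only gives $\Pb(\mu<\widehat q)\le\beta/2$. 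The tie $\{\widehat q=\mu\}$ is left uncontrolled.
\item The appeal to continuity from above runs into the same tie.
\end{itemize}

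In fact, no argument that uses only the stated hypothesis can close this case. If $T\equiv\mu$ and $\widehat q=\mu$ almost surely, the hypothesis holds with probability one, yet the test never rejects and the Type II error equals one.

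To make the proof complete, you have two options. You can add $\sigma^2>0$ as an assumption, which the cited proofs tacitly do. Alternatively, you can show from the structure of $\Pc^{\otimes 2}_1$ that either $\sigma^2=0$ cannot occur or the tie $\{\widehat q=\mu\}$ has small probability.
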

When we use Lemma \ref{Appendix::first-suff-cond}, it is required to obtain upper bounds on $\V\left(\hat{T}_\lambda(\Xb_n,\Yb_m)\right)$ and $\widehat{q}_{1-\alpha}^{\lambda,B}(\Zb_B\mid \Xb_n,\Yb_m)$. An upper bound on the quantile of permutations is stated in the following lemma, whose proof is based on Theorem 6.1 in \citet[p. 241]{kim2022minimax}.
\begin{lem}\citep[Proposition 4, p. 14]{schrab2023mmdagg}\label{Appendix::quantile-upper-bound} Let $\alpha\in (0,e^{-1})$, $\delta\in (0,1)$. Then for all $B\in \Nb$ such that $B\geq \frac{3}{\alpha^2}(\log\left(\frac{4}{\delta}\right)+\alpha(1-\alpha))$, we have
\begin{align*}
    \Pb_{P\times Q\times r}\left(\widehat{q}_{1-\alpha}(\Zb_B\mid \Xb_n,\Yb_m)\leq C_2(M,N,w,k)\frac{1}{\sqrt{\delta}}\frac{\log(\frac{1}{\alpha})}{(n+m)\sqrt{\lambda_1\lambda_2}}\right)\geq 1-\delta,
\end{align*}
for any $(P,Q)\in \Pc^{\otimes 2}_{1}$, and for some constant
$C_2(M,N,w,k)>0$.
Here, the notation $\Pc^{\otimes 2}_{1}$ and the constants $M$ and $N$ are defined in Section~\ref{sec::Assumptions and Notations} of the main text.
\end{lem}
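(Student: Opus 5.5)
The strategy follows Theorem 6.1 of \citet{kim2022minimax}, which bounds the permutation quantile of a degenerate two-sample U-statistic. I would proceed in three steps.

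\textbf{Step 1: from the Monte Carlo quantile to the full permutation quantile.} Let $F^\sigma$ be the conditional distribution function of $\widehat{T}^\sigma_\lambda$ given the data, with $\sigma$ uniform over all $(n+m)!$ permutations. By the Dvoretzky--Kiefer--Wolfowitz inequality (or a Bernstein bound on the empirical frequency), the condition $B\geq \frac{3}{\alpha^2}(\log(4/\delta)+\alpha(1-\alpha))$ gives the following: with probability at least $1-\delta/2$ over $\Zb_B$, the Monte Carlo quantile $\widehat{q}^B_{1-\alpha}$ is at most the full permutation quantile $q_{1-\alpha/2}$ of $F^\sigma$. The identity permutation included among the $B+1$ values is absorbed by the same count. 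It therefore suffices to bound $q_{1-\alpha/2}$ with probability $1-\delta/2$ over the data.

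\textbf{Step 2: an exponential tail for the permuted statistic.} Conditionally on the pooled sample $U_1,\dots,U_{n+m}$, the permuted statistic $\widehat{T}^\sigma_\lambda$ is a degenerate quadratic form in the permutation. Following \citet{kim2022minimax}, replace the permutation by a randomized version built from i.i.d.\ Rademacher signs over disjoint pairs, giving a Rademacher chaos of order two. The Hanson--Wright/chaos inequality (Bonami hypercontractivity) then gives
\[
\Pb_\sigma\bigl(\widehat{T}^\sigma_\lambda\geq t\,\mid\, \Ub\bigr)\leq C\exp\Bigl(-c\,\min\Bigl\{\tfrac{t}{\Sigma},\tfrac{t^2}{\Sigma^2}\Bigr\}\Bigr),
\qquad
\Sigma^2:=\frac{1}{(n+m)^4}\sum_{i\neq j}K(U_i,U_j)^2 .
\]
Consequently the conditional quantile satisfies $q_{1-\alpha/2}\leq C\,\Sigma\log(1/\alpha)$, where $\alpha<e^{-1}$ lets the $\log$ dominate its square root.

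\textbf{Step 3: bounding $\Sigma$ and concluding.} By Markov's inequality, $\Sigma\leq \sqrt{2\Eb\Sigma^2/\delta}$ with probability at least $1-\delta/2$. The main obstacle is showing
\[
\Eb\, K(U_i,U_j)^2\leq \frac{C(M,N,w,k)}{\lambda_1\lambda_2}
\]
uniformly over $(P,Q)\in\Pc^{\otimes 2}_1$, despite the unbounded cardinality of the diagrams. For independent diagrams $X,Y$, Cauchy--Schwarz on the double sum gives
\[
K(X,Y)^2\leq |X|\,|Y|\sum_{\bz\in X}\sum_{\bz'\in Y}w(\bz)^2w(\bz')^2k_\lambda(\bz,\bz')^2 .
\]
Next condition on the latent variables $Z_X=\ell$ and $Z_Y=\ell'$ from Assumption \ref{assump::core assumption}. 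Bound $|X|\leq h(\ell)$ and $|Y|\leq h(\ell')$, and replace the point sums by the conditional intensities $p_\ell$ and $p_{\ell'}$. This yields the conditional bound
\[
h(\ell)h(\ell')\int\!\!\int w^2p_\ell(x)\,w^2p_{\ell'}(y)\,k_\lambda(x,y)^2\,dx\,dy
\leq h(\ell)h(\ell')\,\|w^2p_\ell\|_\infty\,\|w^2p_{\ell'}\|_1\,\frac{\kappa_2}{\lambda_1\lambda_2}.
\]
To control $\|w^2p_{\ell'}\|_1$, bound the sup-norm on the bounded domain $\Omega(M)$, which has area at most $M^2/2$, so that
\[
\|w^2p_{\ell'}\|_1\leq \tfrac{M^2}{2}\,\|w^2p_{\ell'}\|_\infty .
\]
Summing against $\Pb(Z=\ell)\Pb(Z=\ell')$ and using the uniform summability condition gives the bound $\kappa_2 M^2 N^2/(2\lambda_1\lambda_2)$. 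The $X$ and $Y$ samples may carry different latent variables, but the same argument applies to each pair.

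Combining, $\Eb\Sigma^2\lesssim (n+m)^{-2}(\lambda_1\lambda_2)^{-1}$. With probability at least $1-\delta$ over data and permutations together, this gives
\[
\widehat{q}_{1-\alpha}\leq C_2\,\delta^{-1/2}\log(1/\alpha)\,(n+m)^{-1}(\lambda_1\lambda_2)^{-1/2},
\]
which is the claim.
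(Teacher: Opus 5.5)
Your proposal is correct and follows essentially the route the paper relies on. The paper gives no standalone proof; it defers to Proposition 4 of \citet{schrab2023mmdagg}, which reduces the Monte Carlo quantile to the full permutation quantile, applies the permuted U-statistic tail bound of Theorem 6.1 in \citet{kim2022minimax}, and controls $\Sigma^2$ by Markov's inequality. The only model-specific input is the bound $\Eb K(U_i,U_j)^2\le \kappa_2|\Omega(M)|N^2/(\lambda_1\lambda_2)$, and you derive it exactly as the paper bounds $\mathfrak{M}(\lambda)$ in \eqref{Appendix::ineq::upperboundsquaremoment}: Cauchy--Schwarz on the double sum, conditioning on the latent variables, and the uniform summability condition.
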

By combining the upper bounds in Lemma \ref{Appendix::quantile-upper-bound} and Lemma \ref{Variance upper bound} with Lemma \ref{Appendix::first-suff-cond}, we  derive a useful sufficient condition for non-trivial power of our test, stated in Proposition \ref{new suff cond}. 
\section{Proofs for Section~\ref{Sec::Single Permutation Two Sample Test} and \ref{Sec:: Bandwidth-Aggregated Two Sample Test}}
\label{Appendix::proof test}
\subsection{Proof of Proposition \ref{proposition non-asymptotic level}}\label{Appendix::Proof of control of Type I error }
This is the standard proof of Type I error control in permutation tests.  For the sake of completeness, we include the full details. 

We first show that the elements $(\widehat{T}^b)_{1\leq b\leq B+1}$ are exchangeable under the null hypothesis $H_0: P=Q$. Let $\Ub_{n+m}=(X_1,\dots,X_n ,Y_1,\dots ,Y_m)$. Recall that for each $1\leq b\leq B$,
\begin{align*}
    \widehat{T}^b=\widehat{T}(\Ub_{n+m}^{\sigma(b)}) \text{ and } \widehat{T}^{B+1} = \widehat{T}(\Ub_{n+m}),
\end{align*}
where $\sigma(1),\dots, \sigma(B) $ are $\iid$ samples from the Monte Carlo permutation law. 
Let $\pi$ be a deterministic permutation of $\{1,\dots,B+1\}$. We want to show that 
\begin{align}\label{Appendix::State::exchangibility}
    \left(\widehat{T}^1,\dots, \widehat{T}^{B+1}\right) \text{ and } \left(\widehat{T}^{\pi(1)},\dots, \widehat{T}^{\pi(B+1)}\right)\  \text{ have the same distribution.} 
\end{align}
We consider the first case: $\pi(B+1) =B+1$. In this case, the permutations $(\sigma(1),\dots,\sigma(B))$ are i.i.d., exchangeable, and independent of $\Ub_{n+m}$. Thus $(\sigma(\pi(1)),\dots,\sigma(\pi(B)))$ is an i.i.d. sample of uniform permutations of $\{1,\dots,n+m\}$, independent of $\Ub_{n+m}$. Thus the statement \eqref{Appendix::State::exchangibility} holds. 
Next we consider the second case: $\pi(B+1)\neq B+1$. Let $\tilde{\Ub}_{n+m}:= \Ub_{n+m}^{\sigma(\pi(B+1))}$. Then 
\begin{align*}
    \widehat{T}^{\pi(B+1)}= \widehat{T}(\tilde{\Ub}_{n+m})
\end{align*}
For each $b \in \{1,\dots, B\}$,
\[
\left\{
\begin{aligned}
\widehat{T}^{\pi(b)}
&= \widehat{T}\left( \Ub_{n+m}^{\sigma(\pi(b))}\right)
=\widehat{T}\left( \tilde{\Ub}_{n+m}^{\sigma(\pi(b))\circ \sigma(\pi(B+1))^{-1}}\right)
&& \text{if } \pi(b) \neq B+1, \\
\widehat{T}^{\pi(b)}
&= \widehat{T}\left( \Ub_{n+m}\right)
=\widehat{T}\left( \tilde{\Ub}_{n+m}^{id\circ \sigma(\pi(B+1))^{-1}}\right)
&& \text{if } \pi(b) = B+1,
\end{aligned}
\right.
\]
where $\sigma(\pi(B+1))^{-1}$ denotes the inverse permutation of $\sigma(\pi(B+1)):\{1,\dots,n+m\} \rightarrow \{1,\dots, n+m\}$ and $\circ$ denotes the composition of two permutations. 

We need to show that $\{\tilde{U}_{n+m},\sigma(\pi(1))\circ \sigma(\pi(B+1))^{-1},\dots,\sigma(\pi(B))\circ \sigma(\pi(B+1))^{-1}\}$ have same distribution as $\{U_{n+m},\sigma(\pi(1)),\dots,\sigma(\pi(B))\}$. Let $A$ be a measurable set, and $\sigma_1,\dots \sigma_B$ be (fixed) permutations of $\{1,\dots,n+m\}$. Then 
\begin{align*}
    &\Pb(\tilde{U}_{n+m} \in A, \sigma(\pi(1))\circ \sigma(\pi(B+1))^{-1} =\sigma_1,\dots,\sigma(\pi(B))\circ \sigma(\pi(B+1))^{-1}=\sigma_B)\\
    =&\Pb(\tilde{U}_{n+m} \in A, \sigma(\pi(1)) =\sigma_1\circ \sigma(\pi(B+1)),\dots,\sigma(\pi(B))=\sigma_B\circ \sigma(\pi(B+1)))\\
    =&\Eb\big[\Pb(\tilde{U}_{n+m} \in A, \sigma(\pi(1)) =\sigma_1\circ \sigma(\pi(B+1)),\\&\dots,\sigma(\pi(B))=\sigma_B\circ \sigma(\pi(B+1))\mid\sigma(\pi(B+1))) \big]
\end{align*}
This leads to
\begin{align*}
\Pb(\tilde{U}_{n+m} \in A, \sigma(\pi(1))\circ \sigma(\pi(B+1))^{-1} =\sigma_1,\dots,\sigma(\pi(B))\circ \sigma(\pi(B+1))^{-1}=\sigma_B)\\
= \Eb\left[
\Pb(U_{n+m} \in A) \times 
\left(\prod_{\substack{b=1\\b \neq \pi^{-1}(B+1)}}\Pb(\sigma(\pi(b))=\sigma_b\circ \sigma(\pi(B+1))\mid\sigma(\pi(B+1)))\right) \right. \\
 \left. \times
\Pb(id=\sigma_{\pi^{-1}(B+1)}\circ \sigma(\pi(B+1))\mid\sigma(\pi(B+1)))\right],
\end{align*}
since $\{\sigma(1),\dots,\sigma(B)\}$ are independent of $U_{n+m}$ and under the null hypothesis, $\tilde{U}_{n+m}$ and $U_{n+m}$ have the same distribution. Therefore, we have that  
\begin{align*}
    &\Pb\left(\tilde{U}_{n+m} \in A, \sigma(\pi(1))\circ \sigma(\pi(B+1))^{-1}\right. =\sigma_1,\dots,\sigma(\pi(B))\circ \left.\sigma(\pi(B+1))^{-1}=\sigma_B \right )\\
    &=\Pb(U_{n+m}\in A) \left(\frac{1}{(n+m)!}\right)^{B-1}\Pb\left(\sigma(\pi(B+1))=\sigma^{-1}_{\pi^{-1}(B+1)}\right)\\
    &=\Pb(U_{n+m}\in A)\left(\frac{1}{(n+m)!}\right)^{B}\\
    &= \Pb\left(U_{n+m} \in A, \sigma(\pi(1))\right. =\sigma_1,\dots,\sigma(\pi(B)) \left.=\sigma_B \right ).
\end{align*}
This implies that $\{\tilde{U}_{n+m},\sigma(\pi(1))\circ \sigma(\pi(B+1))^{-1},\dots,\sigma(\pi(B))\circ \sigma(\pi(B+1))^{-1}\}$ and $\{U,$ $\sigma(\pi(1)),\dots,\sigma(\pi(B))\}$ have the same distribution. Consequently, the statement \eqref{Appendix::State::exchangibility} holds.
The exchangeability of the $\{\widehat{T}^b:1\leq b\leq B+1\}$ is proved. 
We now have 
\begin{align*}
    \text{$H_0$ is rejected} &\iff \widehat{T}(\Xb_n,\Yb_m) > \widehat{q}_{1-\alpha}^B(\Zb_B\mid \Xb_n,\Yb_m)\\
    &\iff \widehat{T}^{B+1}> \text{the }\lceil(B+1)(1-\alpha)\rceil\text{th ordered value of }\left\{\widehat{T}^1,\dots \widehat{T}^{B+1}\right\}\\
    &\iff \sum_{b=1}^{B+1}\Ib\left(\widehat{T}^b<\widehat{T}^{B+1}\right)\geq \lceil(B+1)(1-\alpha)\rceil\\
    &\iff B+1 -\sum_{b=1}^{B+1}\Ib\left(\widehat{T}^b<\widehat{T}^{B+1}\right)\leq B+1-\lceil(B+1)(1-\alpha)\rceil\\
    &\iff \sum_{b=1}^{B+1}\Ib\left(\widehat{T}^b\geq\widehat{T}^{B+1}\right)\leq \lfloor \alpha(B+1)\rfloor\\
    &\iff \sum_{b=1}^{B+1}\Ib\left(\widehat{T}^b\geq\widehat{T}^{B+1}\right)\leq  \alpha(B+1)\\
    &\iff \frac{1}{B+1}\left(1+\sum_{b=1}^B\Ib\left(\widehat{T}^b\geq\widehat{T}^{B+1}\right)\right)\leq \alpha.
\end{align*}
Using the exchangeability of $\left(\widehat{T}^b\right)_{1\leq b\leq B+1}$, the result of Lemma 1 in  \citet{RomanoWolf2005a} implies that 
\begin{align*}
    \Pb_{P\times P\times r}\left(\frac{1}{B+1}\left(1+\sum_{b=1}^B\Ib\left(\widehat{T}^b\geq \widehat{T}^{B+1}\right)\right)\leq \alpha\right)\leq \alpha.
\end{align*}
Therefore,
\begin{align*}
    \Pb_{P\times P\times r }\left(\text{$H_0$ is rejected}\right) \leq \alpha. 
\end{align*}
This completes the proof of Proposition \ref{proposition non-asymptotic level}.
\qed

\subsection{Proof of Lemma \ref{Variance upper bound}}\label{Appendix::Proof of upper bound of variance}
\noindent
Let $P$ and $Q$ be the probability distributions in $\Pc$ and denote by $p$ and $q$ their intensity functions, respectively.
As mentioned in Section \ref{Appendix::Analysis-for-test-statistic}, $\hat{T}_\lambda(\Xb_n,\Yb_m)$ is  a two-sample U-statistic. We will use the following upper bound \eqref{Appendix::intermediate step variance upper bound} for two-sample U-statistics; for a proof see \citet[p. 38]{lee1990ustatistics}. There exists a positive constant $c_0$ such that 
\begin{align}\label{Appendix::intermediate step variance upper bound}
    \V_{P\times Q}\left(\hat{T}_\lambda(\Xb_n,\Yb_m)\right) \leq c_0\left(\frac{\sigma^2_{\lambda,1,0}}{n}+\frac{\sigma^2_{\lambda,0,1}}{m}+\left(\frac{1}{n}+\frac{1}{m}\right)^2\sigma_{\lambda,2,2}^2\right),
\end{align}
where 
\begin{align}\label{Appendix::intermediate step quantities}
    &\sigma^2_{\lambda,1,0} := \V_X\left(\Eb_{X',Y,Y'}[h_\lambda(X,X',Y,Y')]\right),\\
    &\sigma^2_{\lambda,0,1} := \V_Y\left(\Eb_{X,X',Y'}[h_\lambda(X,X',Y,Y')]\right),\nonumber \\
    &\sigma^2_{\lambda,2,2} := \V_{X,X',Y,Y'}\left(h_\lambda(X,X',Y,Y')\right).\nonumber
\end{align}
Here $X,X'\overset{\iid}{\sim} P$ and $Y,Y'\overset{\iid}{\sim} Q$ are all independent of each other. Since we may assume $n\leq C'\cdot m$ for some $C'>0$, there exists $c_0^{\dagger}$ such that
\begin{align*}
    \V_{P\times Q}\left(\hat{T}_\lambda(\Xb_n,\Yb_m)\right) \leq c_0^{\dagger}\left(\frac{\sigma^2_{\lambda,1,0}+\sigma^2_{\lambda,0,1}}{n+m}+\frac{\sigma_{\lambda,2,2}^2}{(n+m)^2}\right).
\end{align*}

\begin{lem}\label{Appendix::variance lemma} Assume the same conditions as in Lemma \ref{Variance upper bound}. The quantities $\sigma^2_{\lambda,1,0}$, $\sigma^2_{\lambda,0,1}$, $\sigma^2_{\lambda,2,2}$ defined in \eqref{Appendix::intermediate step quantities} satisfy the following upper bounds:
\begin{align*}
    &\sigma_{\lambda,1,0}^2 \leq N\|(w\cdot\psi)* \varphi_\lambda\|^2_2, \\
    &\sigma_{\lambda,0,1}^2 \leq N\|(w\cdot\psi)* \varphi_\lambda\|^2_2,\\
    &\sigma^2_{\lambda,2,2}\leq 16|\Omega(M)| N^2 \frac{\kappa_2}{\lambda_1\lambda_2},
\end{align*}
where the constants $N,M$ come from the assumptions in Section \ref{sec::Assumptions and Notations}.
\end{lem}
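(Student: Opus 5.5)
We need bounds on the three variance components. Our plan is to compute the projection of $h_\lambda$ explicitly and then bound everything through Assumption \ref{assump::core assumption}.

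\emph{First-order term.} For fixed $X$, write $g(x):=w(x)\,[(w\cdot\psi)*\varphi_\lambda](x)$. Using $\Eb\int f\,dX'=\int f p$ for the intensity, and the symmetry $k_\lambda(x,y)=\varphi_\lambda(x-y)$ with $\varphi_\lambda$ even, we get
\begin{align*}
\Eb_{X',Y,Y'}[h_\lambda(X,X',Y,Y')]
&= \sum_{\bz\in X} w(\bz)\int w(y)k_\lambda(\bz,y)\bigl(p(y)-q(y)\bigr)dy + c\\
&= \int g\,dX + c,
\end{align*}
where $c$ is a constant not depending on $X$. Hence
\[
\sigma^2_{\lambda,1,0}\le \Eb\Bigl[\Bigl(\int g\,dX\Bigr)^2\Bigr]\le \Eb\Bigl[|X|\int g^2\,dX\Bigr]
\]
by Cauchy--Schwarz over the at most $|X|$ atoms. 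This is exactly the quantity \eqref{eq::variance_explanation} of Remark \ref{rmk::Variance_control_rmk}. Conditioning on $Z=\ell$ gives $|X|\le h(\ell)$ and $\Eb[\int g^2 dX\mid Z=\ell]=\int g^2p_\ell\le \|w^2p_\ell\|_\infty\,\|(w\psi)*\varphi_\lambda\|_2^2$. Summing over $\ell$ with weights $\Pb(Z=\ell)$ and applying the uniform summability condition yields $\sigma^2_{\lambda,1,0}\le N\|(w\psi)*\varphi_\lambda\|_2^2$. The bound for $\sigma^2_{\lambda,0,1}$ is identical with $Q$ in place of $P$; the sign of $\psi$ is irrelevant since it enters squared.

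\emph{Second-order term.} We bound $\sigma^2_{\lambda,2,2}\le\Eb h_\lambda^2\le 4\bigl(\Eb K(X,X')^2+\Eb K(Y,Y')^2+\Eb K(X,Y')^2+\Eb K(X',Y)^2\bigr)$. For independent $U\sim P_1$ and $V\sim P_2$ in $\Pc$, Cauchy--Schwarz gives
\[
K(U,V)^2\le |U|\,|V|\sum_{\bz\in U}\sum_{\bz'\in V} w(\bz)^2w(\bz')^2k_\lambda(\bz,\bz')^2 .
\]
We condition on $Z_U=\ell$ and $Z_V=\ell'$, which are independent. Then
\begin{align*}
\Eb[K(U,V)^2\mid Z_U=\ell,Z_V=\ell']
&\le h(\ell)h(\ell')\iint w^2(x)p_\ell(x)\,w^2(y)p'_{\ell'}(y)\,k_\lambda(x,y)^2\,dx\,dy\\
&\le h(\ell)h(\ell')\,\|w^2p_\ell\|_\infty\,\|w^2p'_{\ell'}\|_\infty\,|\Omega(M)|\,\frac{\kappa_2}{\lambda_1\lambda_2},
\end{align*}
using that the supports lie in $\Omega(M)$ by \ref{assump::Bounded persistence condition}, and integrating $k_\lambda^2$ in one variable to get $\kappa_2/(\lambda_1\lambda_2)$. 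Summing over $(\ell,\ell')$ gives the factor $N^2$. Collecting the four terms with the prefactor $4$ produces the constant $16|\Omega(M)|N^2\kappa_2/(\lambda_1\lambda_2)$.

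\emph{Expected obstacle.} The delicate step is the first-order one. It requires justifying the mixed expectation $\Eb[|X|\int g^2dX]$ through conditioning: $|X|$ and $X$ are dependent, and only the conditional cardinality bound decouples them. It also requires handling the sign convention, namely that the cross terms $K(X,Y')$ and $K(X',Y)$ together give $w\cdot(p-q)$ rather than $w\cdot(p+q)$. Measurability and Fubini are routine given local finiteness.
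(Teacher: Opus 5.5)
Your proposal is correct and follows the paper's proof essentially step by step. It uses the same projection $\Eb_{X',Y,Y'}[h_\lambda]=\int w\,[(w\cdot\psi)*\varphi_\lambda]\,dX+\text{const}$ and the same Jensen/Cauchy--Schwarz reduction to $\Eb\bigl[|X|\int w^2[(w\cdot\psi)*\varphi_\lambda]^2\,dX\bigr]$, handled by conditioning on $Z$ and the uniform summability condition; it then bounds $\sigma^2_{\lambda,2,2}\le 4\sum\Eb K^2$, controlling each second moment by conditioning on the independent $(Z,Z')$, $\int k_\lambda(x,y)^2\,dx=\kappa_2/(\lambda_1\lambda_2)$, and $|\Omega(M)|$, exactly as the paper does.
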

\noindent
\textbf{Proof of Lemma \ref{Appendix::variance lemma}}
Note that by the definition of $h_\lambda$ in \eqref{U-stat} of the main text,
\begin{align*}
\Eb_{X',Y'}\left[h_\lambda(X,X',Y,Y')\right] =\Eb_{X',Y'}\left[K_\lambda(X,X')+K_\lambda(Y,Y')-K_\lambda(X,Y')-K_\lambda(X',Y)\right].
\end{align*}
Moreover, 
\begin{align*}
     \Eb_{X'}(K_\lambda(X,X'))&=\Eb_{X'}\left(\sum_{x\in X}\sum_{x' \in X'}w(x)w(x')k_\lambda(x,x')\right)\\
     &=\Eb_{X'}\left(\int_{\Rb^2}\sum_{x\in X}w(x)w(u)k_\lambda(x,u)dX'(u)\right)\\
     &=\int_{\Rb^2}\sum_{x\in X}w(x)w(u)k_\lambda(x,u)d\Eb X'(u)\\
     &=\int_{\Rb^2}\sum_{x\in X}w(x)w(u)k_\lambda(x,u)p(u)du\\
     &=\int_{\Rb^2}V_{k_\lambda,X}^w(u)w(u)p(u)du,
\end{align*}
where $V_{k_\lambda,X}^w(u) = \sum_{x\in X}w(x)k_\lambda(x,u)$.
For simplicity, we define 
\begin{align*}
    G(X,p) := \int_{\Rb^2}V_{k_\lambda,X}^w(u)w(u)p(u)du.
\end{align*}
By using this notation, we have that 
\begin{align*}
    \Eb_{X',Y'}\left[h_\lambda(X,X',Y,Y')\right] = G(X,p) +G(Y,q) -G(X,q) -G(Y,p).
\end{align*}
By the definition of $\sigma_{\lambda,1,0}^2$ in \eqref{Appendix::intermediate step quantities}, we obtain that
\begin{align*}
    \sigma_{\lambda,1,0}^2 
    &= \V_X\left(\Eb_Y\left(G(X,p)+G(Y,q)-G(X,q)-G(Y,p)\right)\right)\\
    &=\V_X\left(G(X,p)-G(X,q)+\Eb_Y\left(G(Y,q)-G(Y,p)\right)\right)\\
    &=\V_X\left(G(X,p)-G(X,q)\right)\\
    &\leq\Eb_X\left([G(X,p)-G(X,q)]^2\right)\\
    &=\Eb_X\left(\left[\int_{\Rb^2}V_{k_\lambda,X}^w(u)(p-q)(u)w(u)du\right]^2\right)\\
    &=\Eb_X\left(\left[\int_{\Rb^2}\int_{\Rb^2}w(x)k_\lambda(x,u)\psi(u)w(u)dudX(x)\right]^2\right) \qquad (\because \psi =p-q)\\ 
    &=\Eb_X\left(\left[\int_{\Rb^2}w(x)((w\cdot\psi)*\varphi_\lambda)(x)dX(x)\right]^2\right)\\
    &=\Eb_X\left(|X|^2\left[\int_{\Rb^2}w(x)((w\cdot\psi)*\varphi_\lambda)(x)\frac{1}{|X|}dX(x)\right]^2\right)\\
    &\leq  \Eb_X\left(|X|\int_{\Rb^2}w(x)^2\left[(w\cdot\psi)*\varphi_\lambda(x)\right]^2dX(x)\right). \qquad (\because \text{ Jensen's inequality})
\end{align*}
Note that $|X|$ need not be bounded. We control the last term using the conditional control assumption (Assumption \ref{assump::core assumption}). By Assumption \ref{assump::core assumption}, there exists a discrete random variable $Z$ and a function $h:\Nb\rightarrow \Rb$ such that, conditioned on $Z=\ell$, $|X| \leq h(\ell)$ and 
$\sum_{\ell=1}^{\infty}h(\ell)\|w^2\cdot p_\ell\|_\infty P(Z=\ell)\leq N$. Here, the discrete random variable $Z$ is associated with the random diagram $X$. Therefore, we have 
\begin{align*}
\sigma_{\lambda,1,0}^2 \leq &\Eb_X\left(|X|\int_{\Rb^2}w(x)^2\left[(w\cdot\psi)*\varphi_\lambda(x)\right]^2dX(x)\right)\\
\leq &\sum_{\ell=1}^\infty h(\ell)\cdot\Eb\left(\int_{\Rb^2}w(x)^2\left[(w\cdot\psi)*\varphi_\lambda(x)\right]^2dX(x)\mid Z = \ell \right)P(Z=\ell)\\
 =& \sum_{\ell=1}^\infty h(\ell) P(Z=\ell)\cdot  \int_{\Rb^2}w(x)^2\left[(w\cdot\psi)*\varphi_\lambda(x)\right]^2p_\ell(x)dx\ \left(\because p_\ell = \frac{d\Eb[X\mid Z = \ell]}{d\mathrm{Leb}_2}\right) \\
    \leq & \sum_{\ell=1}^\infty h(\ell) P(Z=\ell) \|w^2\cdot p_\ell\|_\infty \|(w\cdot \psi) *\varphi_\lambda\|_2^2\\
    \leq & N \|(w\cdot \psi) *\varphi_\lambda\|_2^2. \ \left(\because  \sum_{\ell=1}^\infty h(\ell)\|w^2\cdot p_\ell\|_\infty P(Z=\ell)\leq N\right)
\end{align*}
Thus, we can conclude 
\begin{align*}
    \sigma_{\lambda,1,0}^2  \leq  N \|(w\cdot \psi) *\varphi_\lambda\|_2^2.
\end{align*}
By the same argument, using the discrete random variable $Z'$ associated with $Y$, we have 
\begin{align*}
    \sigma_{\lambda,0,1}^2 \leq   N \|(w\cdot \psi) *\varphi_\lambda\|_2^2.
\end{align*} 
For the third term, we have 
\begin{align*}
    \sigma^2_{\lambda,2,2} &= \V_{X,X',Y,Y'}\left(h_\lambda(X,X',Y,Y')\right)\\
    &=\V_{X,X',Y,Y'}\left(K_\lambda(X,X')+K_\lambda(Y,Y')-K_\lambda(X,Y')-K_\lambda(X',Y)\right)\\
    &\leq 4(\Eb_{X,X'}[K_\lambda(X,X')^2] + \Eb_{Y,Y'}[K_\lambda(Y,Y')^2]+2\Eb_{X,Y}[K_\lambda(X,Y)^2]).
\end{align*}
Denote the squared moments as follows:
\begin{align*}
    &\Mc_{XX'}(\lambda):= \Eb_{X,X'}[K_\lambda(X,X')^2],\\
    &\Mc_{YY'}(\lambda):= \Eb_{Y,Y'}[K_\lambda(Y,Y')^2],\\
    &\Mc_{XY}(\lambda):= \Eb_{X,Y}[K_\lambda(X,Y)^2].
\end{align*}
We compute an upper bound for each of the squared moments, $\Mc_{XX'}(\lambda), \Mc_{YY'}(\lambda),$ and $\Mc_{XY}(\lambda)$. First, we consider the moment $\Mc_{XY}(\lambda)$. Note that
\begin{align*}
    \Mc_{XY}(\lambda)&= \Eb_{X,Y}\left[\left( \int_{\Rb^2}\int_{\Rb^2}w(x)w(y)k_\lambda(x,y)dX(x)dY(y)\right)^2\right] \\
    &\leq  \Eb_{X,Y}\left[ |X|\cdot |Y|\cdot \int_{\Rb^2}\int_{\Rb^2}w(x)^2w(y)^2k_\lambda(x,y)^2dX(x)dY(y)\right],
\end{align*}
where the inequality is derived from $\mathrm{Jensen}$ inequality with the fact that $\frac{X}{|X|}$ and $\frac{Y}{|Y|}$ are probability measures. As in the proof of $\sigma^2_{\lambda,1,0}$, we proceed using the conditional control assumption to obtain an upper bound for the last term. 
\begin{align*}
    &\Eb_{X,Y}\!\left[\, |X|\,|Y|
        \int_{\Rb^2}\!\!\int_{\Rb^2}
            w(x)^2 w(y)^2 k_\lambda(x,y)^2 \, dX(x)\, dY(y)
    \right]\\[0.5em]
    &\le 
    \sum_{\ell=1}^\infty \sum_{\ell'=1}^\infty 
        h(\ell)h(\ell')\,
        \Eb\!\left[
            \Eb\!\left[
                \int_{\Rb^2}\!\!\int_{\Rb^2}
                w(x)^2 w(y)^2 k_\lambda(x,y)^2 \, dX(x)\, dY(y)
                \,\middle|\, Z=\ell
            \right]
            \middle|\, Z'=\ell'
        \right] \\
    &\qquad\qquad \times
        P(Z=\ell)\, P(Z'=\ell') \\[0.5em]
    &=
    \sum_{\ell=1}^\infty \sum_{\ell'=1}^\infty 
        h(\ell)h(\ell')\,
        P(Z=\ell)\,P(Z'=\ell') \\
    &\qquad\qquad \times
        \int_{\Rb^2}\!\!\int_{\Rb^2}
            w(x)^2 w(y)^2 k_\lambda(x,y)^2
            p_\ell(x)\, q_{\ell'}(y)
        \, dx\, dy\ \left(\because q_{\ell'} = \frac{d\Eb[Y\mid Z' = \ell']}{d\mathrm{Leb}_2}\right) \\[0.5em]
    &\le 
    \sum_{\ell=1}^\infty\sum_{\ell'=1}^\infty
        h(\ell)h(\ell')
        P(Z=\ell)\,P(Z'=\ell')\|w^2\cdot p_\ell\|_\infty \\
    &\qquad\qquad \times
        \int_{\Rb^2}\!\!\int_{\Rb^2}
            w(y)^2 k_\lambda(x,y)^2 q_{\ell'}(y)
        \, dx\, dy \\[0.5em]
    &=
    \sum_{\ell=1}^\infty\sum_{\ell'=1}^\infty
        h(\ell)h(\ell')
        P(Z=\ell)\,P(Z'=\ell')\|w^2\cdot p_\ell\|_\infty
        \frac{\kappa_2}{\lambda_1\lambda_2}
        \int_{\Rb^2} w(y)^2 q_{\ell'}(y)\,dy .
\end{align*}
Note that 
\begin{align*}
    \int_{\Rb^2} w(y)^2q_{\ell'}(y) dy=\int_{\Omega(M)} w(y)^2q_{\ell'}(y) dy &\leq \|w^2\cdot q_{\ell'} \|_\infty \cdot |\Omega(M)|,
\end{align*}
where the first equality holds since the support of $q_{\ell'}$ is contained in the set $\Omega(M)$, and $|\Omega(M)|$ is the area of $\Omega(M).$
Thus, we have that 
\begin{align*}
    \Mc_{XY}(\lambda) &\leq \frac{\kappa_2}{\lambda_1\lambda_2} |\Omega(M)|\sum_{\ell=1}^\infty h(\ell)P(Z=\ell)\| w^2\cdot p_\ell\|_\infty
    \sum_{\ell'=1}^\infty h(\ell')P(Z'=\ell')\|w^2\cdot q_{\ell'}\|_\infty \\
    &\leq  \frac{\kappa_2}{\lambda_1\lambda_2}|\Omega(M)|N^2.
\end{align*}
Thus, we can conclude that 
\begin{align*}
    \Mc_{XY}(\lambda)  \leq \frac{\kappa_2}{\lambda_1\lambda_2}|\Omega(M)|N^2.
\end{align*}
Similarly, we can compute 
\begin{align*}
\Mc_{XX'}(\lambda),\Mc_{YY'}(\lambda)\leq  \frac{\kappa_2}{\lambda_1\lambda_2}|\Omega(M)|N^2.
\end{align*}
We have the following inequality 
\begin{align}\label{Appendix::ineq::upperboundsquaremoment}
    \mathfrak{M}(\lambda):=\max\left\{\Mc_{XX'}(\lambda),\Mc_{YY'}(\lambda),\Mc_{XY}(\lambda)\right\} \leq  \frac{\kappa_2}{\lambda_1\lambda_2}|\Omega(M)|N^2.
\end{align} 
Therefore, we have that 
\begin{align*}
    \sigma^2_{\lambda,2,2} \leq  16\frac{\kappa_2}{\lambda_1\lambda_2}|\Omega(M)|N^2.
\end{align*} 
This completes the proof of Lemma \ref{Appendix::variance lemma}.
\qed

Letting $C_1(M,N,w,k) := \max \left\{4c_0^{\dagger}N,16c_0^{\dagger}|\Omega(M)|N^2\kappa_2\right\}$ and putting together the result of Lemma \ref{Appendix::variance lemma} with  Inequality \eqref{Appendix::intermediate step variance upper bound}, we obtain that 
\begin{align*}
    \V_{P\times Q}\left(\hat{T}_\lambda(\Xb_n,\Yb_m)\right)\leq C_1({M,N,w})\left(\frac{\|(w\cdot\psi)*\varphi_\lambda\|^2_2}{n+m}+\frac{1}{(n+m)^2\lambda_1\lambda_2}\right). 
\end{align*}
This completes the proof of Lemma \ref{Variance upper bound}.
\qed

\subsection{Proof of Proposition \ref{new suff cond}}\label{Appendix::Proof of new sufficient condition} 

Let $(P,Q)\in \Pc^{\otimes 2}_1$ and let $p=\E(P)$ and $q=\E(Q)$.
\begin{lem}\label{Appendix::Lemma of expectation of test stat} Let $X_1,\dots, X_n \overset{\iid}{\sim} P ,$ $Y_1,\dots, Y_m\overset{\iid}{\sim} Q$. The expectation of the test statistic $\widehat{T}_{\lambda}(\Xb_n,\Yb_m)$ is 
\begin{align*}
    \frac{1}{2}\left(\|w\cdot\psi\|_{L^2(\Rb^2)}^2+\|(w\cdot\psi) * \varphi_{\lambda}\|_{L^2(\Rb^2)}^2-\|w\cdot\psi -(w\cdot\psi) * \varphi_{\lambda}\|_{L^2(\Rb^2)}^2\right).
\end{align*}
\end{lem}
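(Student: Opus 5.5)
By unbiasedness (Section \ref{Appendix::Analysis-for-test-statistic}), $\E\,\widehat{T}_\lambda = \|\mu_p-\mu_q\|^2_{\Hc_{k_{w,\lambda}}}$. I will compute this norm directly as an $L^2$ inner product and then apply the polarization identity. The plan has three steps.

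\textbf{Step 1: reduce to kernel integrals.} By linearity of expectation, each term of $h_\lambda$ has expectation of the form $\E K(X,X')$ with independent diagrams. The computation in the proof of Lemma \ref{Appendix::variance lemma} uses Fubini together with the defining property \eqref{eq::intensity_func} of the intensity, applied to each diagram in turn. It gives
\[
\E K(X,X')=\int\!\!\int w(x)w(u)k_\lambda(x,u)p(x)p(u)\,dx\,du .
\]
Integrability is justified by $\|w^2 p\|_\infty$ finite (from Assumption \ref{assump::core assumption}), $w$ bounded on $\Omega(M)$, and $k_\lambda\in L^1$. Summing the four terms yields
\[
\E\,\widehat{T}_\lambda=\int\!\!\int (w\psi)(x)\,\varphi_\lambda(x-u)\,(w\psi)(u)\,dx\,du=\langle w\psi,(w\psi)*\varphi_\lambda\rangle_{L^2(\Rb^2)},
\]
where $w\psi$ is extended by zero outside $\Omega(M)$, so that $w\psi\in L^1\cap L^2$. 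The symmetry $k^i(x)=k^i(-x)$ ensures $\varphi_\lambda(x-u)=\varphi_\lambda(u-x)$, so the order of convolution does not matter.

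\textbf{Step 2: polarization.} For real $f,g\in L^2$ we have
\[
\langle f,g\rangle=\tfrac12\left(\|f\|_2^2+\|g\|_2^2-\|f-g\|_2^2\right).
\]
Taking $f=w\psi$ and $g=(w\psi)*\varphi_\lambda$ gives exactly the claimed expression. Here $g\in L^2$ by Young's inequality, since $\|\varphi_\lambda\|_1=1$.

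\textbf{Main obstacle.} The only delicate point is the Fubini/Campbell step for random multisets with possibly unbounded cardinality. I need $\E\sum_{x\in X}\sum_{u\in X'}|w(x)w(u)k_\lambda(x,u)|<\infty$. Independence of $X$ and $X'$ lets me apply the first-moment identity $\E\int f\,dX=\int f p$ separately to each diagram, first for nonnegative $f$ by monotone convergence. The resulting bound is $\|w\|_\infty\|w p\|_{1}$-type finiteness: $w p$ is integrable on the bounded set $\Omega(M)$ because $\int w^2 p\le \|w^2p\|_\infty|\Omega(M)|$, combined with Cauchy--Schwarz and finiteness of $\E|X|$. If $\E|X|$ were not guaranteed, I would instead work with the $w$-weighted bound as in Lemma \ref{Appendix::variance lemma}, since positivity of $w$ already gives the needed finiteness.
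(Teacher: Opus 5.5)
Your argument is the paper's proof. You compute each $\E K(\cdot,\cdot)$ through the intensities, combine the four terms into $\langle w\psi,(w\psi)*\varphi_\lambda\rangle_{L^2}$, and polarize.

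The paper does this formally and never addresses integrability, so your extra justification goes beyond it. As written, however, that justification does not hold up.

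\textbf{Step 1.} The facts you cite ($\|w^2p\|_\infty<\infty$, $w$ bounded, $k_\lambda\in L^1$) do not control $wp=(w^2p)/w$ near the diagonal. Admissible weights such as $w=y-x$ vanish there.

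\textbf{Main obstacle.} Assumption \ref{assump::core assumption} does not imply $\E|X|<\infty$. For example, take $w=y-x$, $\Pb(Z=\ell)\propto\ell^{-2}$, and, given $Z=\ell$, $\ell$ i.i.d.\ uniform points in $\{0<y-x<\ell^{-2}\}\cap\Omega(M)$. Then the summability in (A3) holds but $\E|X|=\infty$. Positivity of $w$ by itself gives no such bound either.

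\textbf{Fix.} Condition on $Z$. Since $p=\sum_\ell \Pb(Z=\ell)\,p_\ell$ and $\int p_\ell\le h(\ell)$, Minkowski and Jensen give
\[
\|wp\|_2^2\le\sum_{\ell}\Pb(Z=\ell)\,\|w^2p_\ell\|_\infty\int p_\ell\le\sum_{\ell}\Pb(Z=\ell)\,h(\ell)\,\|w^2p_\ell\|_\infty\le N .
\]
By Tonelli, Cauchy--Schwarz and Young, $\E K(X,X')=\langle wp,(wp)*\varphi_\lambda\rangle_{L^2}\le\|wp\|_2^2\le N$. The same holds for the mixed and $Y$ terms. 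This also gives $\|w\psi\|_2\le 2\sqrt{N}$, which is exactly what the polarization step needs.

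Alternatively, finiteness of the expectations follows from the second-moment bound on $\E K(X,Y)^2$ established in the proof of Lemma \ref{Appendix::variance lemma}.
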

\noindent
\textbf{Proof of Lemma \ref{Appendix::Lemma of expectation of test stat}.}
The expectation of $\widehat{T}_{\lambda}(\Xb_n,\Yb_m)$ can be written as 
\begin{align*}
    \Eb\left[\hat{T}_{\lambda}(\Xb_n,\Yb_m)\right] &=\Eb_{X,X'}\left[K_\lambda(X,X')\right] - 2\Eb_{X,Y}\left[K_\lambda(X,Y)\right] +\Eb_{Y,Y'}\left[K_\lambda(Y,Y')\right],
\end{align*}
where $X,X' \overset{\iid}{\sim} P$ and $Y,Y' \overset{\iid}{\sim} Q$ are all independent of each other. Each term can be expressed as: 
\begin{align*}
    \Eb_{X,X'}\left[K_\lambda(X,X')\right]  &= \Eb_{X,X'} \left[\int_{\Rb^2}\int_{\Rb^2}w(x)w(x')k_\lambda(x,x')dX(x)dX'(x')\right]\\
    &=\int_{\Rb^2}\int_{\Rb^2}w(x)w(x')k_\lambda(x,x')d\Eb X(x)d\Eb X'(x')\\
    &=\int_{\Rb^2}\int_{\Rb^2}w(x)w(x')k_\lambda(x,x')p(x)p(x')dxdx',\\
    \Eb_{Y,Y'}\left[K_\lambda(Y,Y')\right] &= \int_{\Rb^2}\int_{\Rb^2}w(x)w(x')k_\lambda(x,x')q(x)q(x')dxdx',\\
    \Eb_{X,Y}\left[K_\lambda(X,Y)\right] &= \int_{\Rb^2}\int_{\Rb^2}k_\lambda(x,x')w(x)w(x')p(x)q(x')dxdx'.
\end{align*} 
Thus we have 
\begin{align*}
    &\Eb\left[\hat{T}_{\lambda}(\Xb_n,\Yb_m)\right]\\ &=\int_{\Rb^2}\int_{\Rb^2}w(x)w(x')k_\lambda(x,x')\left[p(x)p(x')+q(x)q(x')-p(x)q(x')-p(x')q(x)\right]dxdx'\\
    &=\int_{\Rb^2}\int_{\Rb^2}w(x)w(x')k_\lambda(x,x')\left[\psi(x)\psi(x')]\right]dxdx'\\
    &=\int_{\Rb^2}w(x)\psi(x)\int_{\Rb^2}w(x')k_\lambda(x-x')\psi(x')dx'dx\\
    &=\int_{\Rb^2}w(x)\psi(x)((w\cdot\psi)*\varphi_\lambda)(x)dx\\
    &=\langle w\cdot \psi ,(w\cdot \psi) * \varphi_{\lambda}\rangle_{L^2(\Rb^2)}\\
    &=\frac{1}{2}\left(\|w\cdot\psi\|_{L^2(\Rb^2)}^2+\|(w\cdot\psi) * \varphi_{\lambda}\|_{L^2(\Rb^2)}^2-\|w\cdot\psi -(w\cdot\psi) * \varphi_{\lambda}\|_{L^2(\Rb^2)}^2\right).
\end{align*}
This completes the proof of Lemma \ref{Appendix::Lemma of expectation of test stat}.
\qed

We replace $\Eb[\hat{T}_{\lambda}(\Xb_n,\Yb_m)]$ in the sufficient condition in Lemma \ref{Appendix::first-suff-cond} by the result of Lemma \ref{Appendix::Lemma of expectation of test stat}. Then the condition can be rewritten as 
\begin{align*}
    \Pb_{P\times Q \times r}\bigg(\|w\cdot\psi\|_2^2\geq \|w\cdot\psi -(w\cdot\psi) * \varphi_{\lambda}\|_2^2-\|(w\cdot\psi) * \varphi_{\lambda}\|_2^2+2\sqrt{\frac{2}{\beta}\V_{P\times Q}\left(\hat{T}_{\lambda}(\Xb_n,\Yb_m)\right)}\\+2\widehat{q}_{1-\alpha}^{\lambda,B}(\Zb_B\mid \Xb_n,\Yb_m)\bigg)\geq 1-\frac{\beta}{2}.
\end{align*}
By Lemma \ref{Variance upper bound}, we know
\begin{align*}
    \V_{P\times Q}\left(\hat{T}_{\lambda}(\Xb_n,\Yb_m)\right) \leq C_1\left(\frac{\|(w\cdot\psi)*\varphi_\lambda \|_2^2}{n+m}+\frac{1}{(n+m)^2\lambda_1\lambda_2}\right).
\end{align*}
In the remainder of this proof, we adapt an argument from \citet[pp. 66-67]{schrab2023mmdagg}:
\begin{align*}
    2\sqrt{\V_{P\times Q}\left(\hat{T}_{\lambda}(\Xb_n,\Yb_m)\right)} &\leq 2\sqrt{\frac{2C_1}{\beta}\frac{\|(w\cdot \psi)*\varphi_\lambda \|_2^2}{n+m}+\frac{2C_1}{\beta(n+m)^2\lambda_1\lambda_2}}\\
    &\leq 2\sqrt{\frac{2C_1}{\beta}\frac{\|(w\cdot \psi)*\varphi_\lambda \|_2^2}{n+m}} +\frac{2\sqrt{2C_1}}{\sqrt{\beta}(n+m)\sqrt{\lambda_1\lambda_2}}\\
    &\leq \|(w\cdot \psi)*\varphi_\lambda \|_2^2+\frac{2C_1}{\beta(n+m)}+\frac{2\sqrt{2C_1}}{\sqrt{\beta}(n+m)\sqrt{\lambda_1\lambda_2}}\\
    &\leq \|(w\cdot \psi)*\varphi_\lambda \|_2^2+\frac{6C_1}{\beta(n+m)\sqrt{\lambda_1\lambda_2}} \log \left (\frac{1}{\alpha}\right).
\end{align*}
Thus we have 
\begin{align*}
    \|(w\cdot \psi)*\varphi_\lambda \|_2^2-2\sqrt{\frac{2}{\beta}\V_{P\times Q}\left(\hat{T}_{\lambda}(\Xb_n,\Yb_m)\right)} \geq -\frac{6C_1}{\beta(n+m)\sqrt{\lambda_1\lambda_2}} \log \left (\frac{1}{\alpha}\right),
\end{align*}
where the facts that $\sqrt{x+y} \leq \sqrt{x}+\sqrt{y}$ for all $x,y >0,$ $2\sqrt{xy}\leq x+y $ for all $x,y >0,$ $\lambda_1,\lambda_2\leq 1$, $\beta\in (0,1) ,\; \log \left (\frac{1}{\alpha}\right)>1$ are used.

Let $C_3(M,N,w,k):=6C_1(M,N,w,k)+2\sqrt{2}C_2(M,N,w,k)$ where $C_1$ and $C_2$ are defined in Lemmas \ref{Variance upper bound} and \ref{Appendix::quantile-upper-bound}, respectively.  Assume that 
\begin{align*}
    \|w\cdot \psi\|_2^2 \geq \|(w\cdot \psi) -(w\cdot \psi) *\varphi_\lambda\|_2^2 + C_3(M,N,w,k)\frac{\log \left (\frac{1}{\alpha}\right)}{\beta(n+m)\sqrt{\lambda_1\lambda_2}}.
\end{align*}
Then 
\begin{align*}
    &\Pb_{P\times Q \times r}\left(\Eb\left[\hat{T}_{\lambda}(\Xb_n,\Yb_m)\right]\geq \sqrt{\frac{2}{\beta}\V_{P\times Q}\left(\hat{T}_{\lambda}(\Xb_n,\Yb_m)\right)}+\widehat{q}_{1-\alpha}^{\lambda,B}(\Zb_B\mid \Xb_n,\Yb_m)\right)
    \\
    =&\Pb_{P\times Q \times r}\bigg(2\widehat{q}_{1-\alpha}^{\lambda,B}(\Zb_B\mid \Xb_n,\Yb_m)\leq\\
    &\|(w\cdot \psi)\|_2^2-\|w\cdot \psi -(w\cdot \psi) * \varphi_{\lambda}\|_2^2+\|(w\cdot \psi) * \varphi_{\lambda}\|_2^2-2\sqrt{\frac{2}{\beta}\V_{P\times Q}\left(\hat{T}_{\lambda}(\Xb_n,\Yb_m)\right)}\bigg)\\
    \geq& \Pb_{P\times Q \times r}\bigg(2\widehat{q}_{1-\alpha}^{\lambda,B}(\Zb_B\mid \Xb_n,\Yb_m)\leq C_3\frac{\log \left (\frac{1}{\alpha}\right)}{\beta(n+m)\sqrt{\lambda_1\lambda_2}}-\frac{6C_1\log \left (\frac{1}{\alpha}\right)}{\beta(n+m)\sqrt{\lambda_1\lambda_2}} \bigg)\\
    =& \Pb_{P\times Q \times r}\bigg(2\widehat{q}_{1-\alpha}^{\lambda,B}(\Zb_B\mid \Xb_n,\Yb_m)\leq (6C_1+2\sqrt{2}C_2)\frac{\log \left (\frac{1}{\alpha}\right)}{\beta(n+m)\sqrt{\lambda_1\lambda_2}}-\frac{6C_1\log \left (\frac{1}{\alpha}\right)}{\beta(n+m)\sqrt{\lambda_1\lambda_2}} \bigg)\\
    =& \Pb_{P\times Q \times r}\bigg(2\widehat{q}_{1-\alpha}^{\lambda,B}(\Zb_B\mid \Xb_n,\Yb_m)\leq (2\sqrt{2}C_2)\frac{\log \left (\frac{1}{\alpha}\right)}{\beta(n+m)\sqrt{\lambda_1\lambda_2}} \bigg)\\
    \geq & \Pb_{P\times Q \times r}\bigg(\widehat{q}_{1-\alpha}^{\lambda,B}(\Zb_B\mid \Xb_n,\Yb_m)\leq \sqrt{\frac{2}{\beta}}C_2\frac{\log \left (\frac{1}{\alpha}\right)}{(n+m)\sqrt{\lambda_1\lambda_2}} \bigg)\\
    \geq& 1-\frac{\beta}{2},
\end{align*}
where the second inequality holds, since $\beta \in (0,1)$ and the last one holds by Lemma \ref{Appendix::quantile-upper-bound}, since $B\geq \frac{3}{\alpha^2}(\log \left (\frac{8}{\beta}\right)+\alpha(1-\alpha)).$ By Lemma \ref{Appendix::first-suff-cond}, the proof is complete. 
\qed
\subsection{Proof of Theorem \ref{thm::upperbound_unifseparation_rate}}\label{Appendix::Proof of bandwidth expression}

We recall that the definition of the anisotropic Sobolev ball 
\begin{align*}
    \Sc_2^{s_1,s_2}(R)\coloneq\left\{f\in L^1(\Rb^2)\cap L^2(\Rb^2) \mid \int_{\Rb^2} \left(|\xi_1|^{2s_1}+|\xi_2|^{2s_2}\right)|\hat{f}(\xi)|^2d\xi \leq (2\pi)^2R^2 \right\},
\end{align*}
where $\hat{f}$ denotes the Fourier transform of $f$, \ie , $\hat{f}(\xi) := \int_{\Rb^2}f(x)e^{-i\langle x,\xi\rangle}dx$. 

We further recall that our kernel function $\varphi_\lambda(u)$ on $\Rb^2$ can be written as 
\begin{align*}
    \varphi_\lambda(u) = \prod_{i=1}^2\frac{1}{\lambda_i}k_i\left(\frac{u_i}{\lambda_i}\right),\quad u=(u_1,u_2)\in \Rb^2,
\end{align*} 
where for $i=1,2,$ the one-dimensional kernel factor $k_i:\Rb\rightarrow \Rb_{\geq0}$ satisfies 
\begin{align*}
    k_i\in L^1(\Rb)\cap L^2(\Rb),\quad k_i(x)=k(-x),\quad \int_\Rb k_i(x)dx =1.
\end{align*}
Note that by these properties, we have that 
\begin{align*}
    \left|\widehat{k_i}(u)\right|\leq1,\quad u\in\Rb,\quad \text{and}\quad \widehat{\varphi_{\lambda}}(\xi)=\widehat{k_1}(\lambda_1\xi_1)\widehat{k_2}(\lambda_2\xi_2), \quad \xi=(\xi_1,\xi_2) \in \Rb^2.
\end{align*}
We first derive an upper bound on the bias term $\|w\cdot \psi -(w\cdot \psi)*\varphi_\lambda\|$. 
The following lemma provides this bound. 
\begin{lem}[Anisotropic bias bound]\label{Appendix::lem::Anisotropic bias bound} Let $s_1,s_2>0$ and $R>0$. For $t>0$, define 
\begin{align*}
    S(t):= \sup_{|u_1|\leq t,|u_2|\leq t} \left|1-\widehat{k_1}(u_1)\widehat{k_2}(u_2)\right|.
\end{align*}
Since, $S(t)\rightarrow 0$ as $t \downarrow 0$, there exists $t_0>0$ such that $S(t_0)<1$.
For every $f\in \Sc_2^{s_1,s_2}(R)$ and every $\lambda=(\lambda_1,\lambda_2)\in (0,\infty)^2$, we have that
\begin{align}
    \|f-f*\varphi_\lambda\|_2^2 \leq S(t_0)^2\|f\|_2^2 +4R^2(t_0^{-2s_1}\lambda_1^{2s_1}+t_0^{-2s_2}\lambda_2^{2s_2}).
\end{align} 
\end{lem}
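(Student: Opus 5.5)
The plan is to work entirely on the Fourier side via Plancherel. With the convention $\widehat f(\xi)=\int e^{-i\langle x,\xi\rangle}f(x)\,dx$ we have $\|g\|_2^2=(2\pi)^{-2}\|\widehat g\|_2^2$. Since $\widehat{f*\varphi_\lambda}=\widehat f\,\widehat{\varphi_\lambda}$ and $\widehat{\varphi_\lambda}(\xi)=\widehat{k_1}(\lambda_1\xi_1)\widehat{k_2}(\lambda_2\xi_2)$, this gives
\begin{align*}
\|f-f*\varphi_\lambda\|_2^2=(2\pi)^{-2}\int_{\Rb^2}\bigl|1-\widehat{k_1}(\lambda_1\xi_1)\widehat{k_2}(\lambda_2\xi_2)\bigr|^2|\widehat f(\xi)|^2\,d\xi .
\end{align*}
The remark that $S(t)\to0$ as $t\downarrow0$ follows from continuity of $\widehat{k_i}$ (as $k_i\in L^1$) together with $\widehat{k_i}(0)=1$. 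I will take this as given, along with $|\widehat{k_i}|\le1$ stated just before the lemma.

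Split the frequency domain into $A:=\{\xi: |\lambda_1\xi_1|\le t_0,\ |\lambda_2\xi_2|\le t_0\}$ and its complement. On $A$, the multiplier is bounded by $S(t_0)$ by definition of $S$. Plancherel then gives a contribution of at most $S(t_0)^2(2\pi)^{-2}\int|\widehat f|^2=S(t_0)^2\|f\|_2^2$.

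On $A^c$, use the crude bound $|1-\widehat{k_1}\widehat{k_2}|\le 2$, so the squared multiplier is at most $4$. Now $A^c\subseteq\{|\xi_1|>t_0/\lambda_1\}\cup\{|\xi_2|>t_0/\lambda_2\}$. On the first set we have $1\le (\lambda_1|\xi_1|/t_0)^{2s_1}$, and on the second $1\le(\lambda_2|\xi_2|/t_0)^{2s_2}$. Hence pointwise on $A^c$,
\begin{align*}
1\le t_0^{-2s_1}\lambda_1^{2s_1}|\xi_1|^{2s_1}+t_0^{-2s_2}\lambda_2^{2s_2}|\xi_2|^{2s_2}\le \max_i\bigl(t_0^{-2s_i}\lambda_i^{2s_i}\bigr)\bigl(|\xi_1|^{2s_1}+|\xi_2|^{2s_2}\bigr).
\end{align*}
Multiplying by $4(2\pi)^{-2}|\widehat f|^2$ and integrating, the Sobolev ball constraint $\int(|\xi_1|^{2s_1}+|\xi_2|^{2s_2})|\widehat f|^2\le(2\pi)^2R^2$ bounds the $A^c$ contribution by $4R^2\max_i(t_0^{-2s_i}\lambda_i^{2s_i})\le 4R^2(t_0^{-2s_1}\lambda_1^{2s_1}+t_0^{-2s_2}\lambda_2^{2s_2})$.

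Adding the two contributions yields the claim. The only delicate point is the anisotropic handling of $A^c$: one must avoid a union-bound double count exceeding the stated constant. The pointwise inequality above, bounding the indicator of $A^c$ by the sum of the two weighted terms, handles this cleanly with constant exactly $4$. The rest is routine bookkeeping of the $(2\pi)^2$ factors, which cancel against the normalization in the definition of $\Sc_2^{s_1,s_2}(R)$.
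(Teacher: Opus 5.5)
Your proof is correct and follows the paper's argument step for step: Plancherel, the split of frequency space at $|\lambda_i\xi_i|\le t_0$, the bound $S(t_0)$ on the low-frequency box, the crude bound $4$ on its complement, and then the Sobolev constraint. The only difference is cosmetic: you pass through $\max_i t_0^{-2s_i}\lambda_i^{2s_i}$ times the full Sobolev integral, whereas the paper bounds each weighted integral $\int|\xi_i|^{2s_i}|\widehat f(\xi)|^2\,d\xi$ separately by $(2\pi)^2R^2$, and both routes give the stated constant.
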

\begin{proof}
    Fix $f\in \Sc_2^{s_1,s_2}(R)$ and $\lambda=(\lambda_1,\lambda_2)\in (0,\infty)^2$. Define a measurable subset $A_{\lambda}(t_0)$ of $\Rb^2$ by
    \begin{align*}
        A_{\lambda}(t_0):=\left\{\xi \in \Rb^2 : |\lambda_i\xi_i|\leq t_0,\ i=1,2. \right\}.
    \end{align*}
By Plancherel's identity, 
\begin{align*}
    (2\pi)^2 \|f-f*\varphi_\lambda\|^2_2 &=\int_{\Rb^2}\left|1-\widehat{\varphi_\lambda}(\xi)\right|^2 \left|\widehat{f}(\xi)\right|^2d\xi\\
    &=\int_{\Rb^2}\left|1-\widehat{k_1}(\lambda_1\xi_1)\widehat{k_2}(\lambda_2\xi_2)\right|^2 \left|\widehat{f}(\xi)\right|^2d\xi.
\end{align*}
By splitting the last integral over $A_{\lambda}(t_0)$ and its complement, we obtain that 
\begin{align*}
    (2\pi)^2 \|f-f*\varphi_\lambda\|^2_2 &\leq S(t_0)^2 \int_{A_{\lambda}(t_0)} \left|\widehat{f}(\xi)\right|^2d\xi +\int_{A_{\lambda}(t_0)^c}\left|1-\widehat{k_1}(\lambda_1\xi_1)\widehat{k_2}(\lambda_2\xi_2)\right|^2 \left|\widehat{f}(\xi)\right|^2d\xi\\
    &\leq S(t_0)^2 (2\pi)^2 \left\|f\right\|^2_2 +4\int_{A_{\lambda}(t_0)^c} \left|\widehat{f}(\xi)\right|^2d\xi.
\end{align*}
In the last inequality, we use $|\widehat{k_i}(u)|\leq \|k_i\|_1=1$ and the Plancherel's identity.
Let us next consider the integral $4\int_{A_{\lambda}(t_0)^c} \left|\widehat{f}(\xi)\right|^2d\xi$ in the last term. If $\xi \in A_\lambda(t_0)^c$, then at least one of the inequalities $|\lambda_1\xi_1| >t_0 $ or $|\lambda_2\xi_2|>t_0$ holds. Therefore, 
\begin{align*}
    4\int_{A_{\lambda}(t_0)^c} \left|\widehat{f}(\xi)\right|^2d\xi &\leq 4t_0^{-2s_1}\lambda_1^{2s_1}\int_{\Rb^2}|\xi_1|^{2s_1}\left|\widehat{f}(\xi)\right|^2d\xi 
    +4t_0^{-2s_2}\lambda_2^{2s_2}\int_{\Rb^2}|\xi_2|^{2s_2}\left|\widehat{f}(\xi)\right|^2d\xi\\
    &\leq 4(2\pi)^2 R^2\left(t_0^{-2s_1}\lambda_1^{2s_1}+t_0^{-2s_2}\lambda_2^{2s_2}\right).
\end{align*}
In the last inequality, we use the definition of the anisotropic Sobolev ball. Consequently, we obtain that
\begin{align*}
    \|f-f*\varphi_\lambda\|^2_2 &\leq S(t_0)^2  \left\|f\right\|^2_2 + 4R^2\left(t_0^{-2s_1}\lambda_1^{2s_1}+t_0^{-2s_2}\lambda_2^{2s_2}\right).
\end{align*}
This completes the proof. 
\end{proof}

Now fix $(P,Q) \in \Pc^{\otimes2}_{\Fc_{\rho}\left(S_2^{s_1,s_2}(R),w\right)}$ and for $p=E(P)$ and $q=E(Q)$, write 
\begin{align*}
    f=w\cdot (p-q).
\end{align*} Choose $t_0>0$ as in Lemma \ref{Appendix::lem::Anisotropic bias bound} and denote 
\begin{align*}
    S_0:=S(t_0) \in (0,1), \quad C_4'(s_1,s_2,R,k_1,k_2):=4R^2\max\{t_0^{-2s_1},t_0^{-2s_2}\}.
\end{align*}
Lemma \ref{Appendix::lem::Anisotropic bias bound} implies that 
\begin{align}\label{Appendix::ineq::bias upperbound}
    \|f-f*\varphi_\lambda\|_2^2 \leq S_0^2\|f\|_2^2 + C_4'(s_1,s_2,R,k_1,k_2)(\lambda_1^{2s_1}+\lambda_2^{2s_2}).
\end{align}
By Proposition \ref{new suff cond}, the Type II error is less than or equal to $\beta$ whenever 
\begin{align}\label{Appendix::ineq::restatement suff cond Type II error}
    \|f\|_2^2 \geq \|f-f*\varphi_\lambda\| _2^2 +C_3(M,N,w,k) \frac{\log(1/\alpha)}{\beta(n+m)\sqrt{\lambda_1\lambda_2}}. 
\end{align}
Combining inequalities \eqref{Appendix::ineq::bias upperbound} and \eqref{Appendix::ineq::restatement suff cond Type II error}, we obtain the following sufficient condition for controlling Type II error: 
\begin{align}
    \left(1-S_0^2\right)\|f\|_2^2 \geq C_4'(s_1,s_2,R,k_1,k_2)(\lambda_1^{2s_1}+\lambda_2^{2s_2})+C_3(M,N,w,k) \frac{\log(1/\alpha)}{\beta(n+m)\sqrt{\lambda_1\lambda_2}}.
\end{align}
Equivalently,
\begin{align*}
    \|f\|_2^2 \geq \frac{C_4'(s_1,s_2,R,k_1,k_2)}{\left(1-S_0^2\right)}(\lambda_1^{2s_1}+\lambda_2^{2s_2})+\frac{C_3(M,N,w,k)}{\left(1-S_0^2\right)} \frac{\log(1/\alpha)}{\beta(n+m)\sqrt{\lambda_1\lambda_2}}.
\end{align*}
Define $C_4''(M,N,w,s_1,s_2,R,\beta):=\max\left(\frac{C_4'(s_1,s_2,R,k_1,k_2)}{\left(1-S_0^2\right)},\frac{C_3(M,N,w,k)}{\beta\left(1-S_0^2\right)}\right)$. Then we can conclude that 
the Type II error is at most $\beta$ whenever 
\begin{align*}
    \|f\|_2^2 \geq C_4''(M,N,w,s_1,s_2,R,\beta)\left(\lambda_1^{2s_1}+\lambda_2^{2s_2}+\ \frac{\log(1/\alpha)}{(n+m)\sqrt{\lambda_1\lambda_2}}\right).
\end{align*}
This implies 
\begin{align}\label{Appendix::ineq::upperbound on separation rate}
    &\rho_{n+m}^2\left(\Delta_{\alpha}^{\lambda,B},\Sc_2^{s_1,s_2}(R),\beta,w,M,N\right)\notag \\&\leq C_4''(M,N,w,s_1,s_2,R,\beta)\left(\lambda_1^{2s_1}+\lambda_2^{2s_2}+\ \frac{\log(1/\alpha)}{(n+m)\sqrt{\lambda_1\lambda_2}}\right).
\end{align}
Lastly, for the chosen bandwidths,
\begin{align*}
    \left(\lambda_i^\star\right)^{2s_i}=(n+m)^{-\tau},\ i=1,2.
\end{align*}
Moreover, 
\begin{align*}
    \frac{1}{(n+m)\sqrt{\lambda_1^\star\lambda_2^\star}}=(n+m)^{-1+\frac{\tau}{4s_1}+\frac{\tau}{4s_2}}=(n+m)^{-\tau}.
\end{align*}
Substituting these relations into \eqref{Appendix::ineq::upperbound on separation rate} gives 
\begin{align*}
    \rho_{n+m}\left(\Delta_{\alpha}^{\lambda^\star,B},\Sc_2^{s_1,s_2}(R),\beta,w,M,N\right) \leq C_4(M,N,w,s_1,s_2,R,\alpha,\beta)(n+m)^{-\tau/2},
\end{align*}
where $C_4(M,N,w,s_1,s_2,R,\alpha,\beta):= C_4''(M,N,w,s_1,s_2,R,\beta)(2+\log(1/\alpha)).$ This completes the proof of Theorem \ref{thm::upperbound_unifseparation_rate}.
\qed

\subsection{Proof of Theorem \ref{minimax lower bound}}\label{Appendix::Proof of Minimax lower bound} 
In this proof, we use the notation $X_i$ for samples in the two-dimensional Euclidean space $\mathbb{R}^2$ and $D_i$ for samples of persistence diagrams. For a probability distribution $P$, denote by $P^{\otimes n}$ the $n$-fold product measure of $P$.

The proof is based on the following lemma, which is a modified version of \citet[Lemma 5]{AlbertLaurentMarrelMeynaoui2022}. Its proof follows the same arguments as in \citet[Lemma 5]{AlbertLaurentMarrelMeynaoui2022}, and is therefore omitted.
\begin{lem}\label{Appendix::lem::base of minimax lowerbound} Let $\alpha$, $\beta$, and $\gamma$ be in $(0,1)$ such that $\alpha +\beta+\gamma <1$.
Let $\rho_*>0$ and let $\kappa_{\rho_*}$ be a probability measure on $\Pc\otimes \Pc$ such that $\kappa_{\rho_*}\left(\Pc^{\otimes 2}_{\mathcal{F}_{\rho_*}(\mathcal{C},w)}\right)\geq 1-\gamma$. Define the associated probability measure $\Pb_{\kappa_{\rho_*}}$ by 
\begin{align*}
    \Pb_{\kappa_{\rho_*}}(\Ac) = \int_{\Pc\otimes\Pc}P^{\otimes n}\otimes Q^{\otimes m}(\Ac)d\kappa_{\rho_*}(P,Q),
\end{align*}
for any measurable set $\Ac$ in $\mathbf{PD}^n\times \mathbf{PD}^m$. Let $\mathbb{P}_0$ be a probability measure  on $\mathbf{PD}^n \times \mathbf{PD}^m$ such that
$\mathbb{P}_0 = P^{\otimes (n+m)}$,
for some probability measure $P$ on $\mathbf{PD}$. Assume that $\mathbb{P}_{\kappa_{\rho_*}}$ is absolutely continuous with respect to $\mathbb{P}_0$, and that
\begin{align*}
\Eb_{\mathbb{P}_0}\left[L^2_{\kappa_{\rho_*}}(\Db_{n+m})\right] 
< 1 + 4(1-\alpha - \beta - \gamma)^2,
\end{align*}
where $L_{\kappa_{\rho_*}} := d\mathbb{P}_{\kappa_{\rho_*}}/d\mathbb{P}_0$ is the likelihood ratio and $\Db_{n+m}\sim \Pb_0$. Then, we have that 
\begin{align*}
    \rho^{\dagger}_{n+m}(\Cc,\alpha,\beta,w,M,N) \geq \rho_*,
\end{align*}
for the notations used in this lemma; see Section \ref{sec::Assumptions and Notations} of the main text.
\end{lem}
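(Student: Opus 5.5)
The plan is the standard Le Cam / Ingster argument for minimax testing lower bounds, carried out on the testing problem with the data space $\mathbf{PD}^n\times\mathbf{PD}^m$ (Albert et al., Lemma 5).

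\textbf{Step 1: reduction to a level-$\alpha$ test with power $1-\beta$.} Fix any test $\Delta_\alpha\in\Psi_\alpha$. Suppose, for contradiction, that its uniform separation rate is below $\rho_*$. Then its Type II error is at most $\beta$ on $\Pc^{\otimes 2}_{\Fc_{\rho_*}(\Cc,w)}$.

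\textbf{Step 2: Type I error under the null.} Note that $\Pb_0=P^{\otimes(n+m)}$ is a genuine null distribution, since it equals $P^{\otimes n}\otimes P^{\otimes m}$. I will implicitly take $P\in\Pc$, as the lemma is used with $P$ induced by $f_0$. Hence $\Pb_0(\Delta_\alpha=1)\le\alpha$.

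\textbf{Step 3: rejection probability under the mixture.} Split the integral defining $\Pb_{\kappa_{\rho_*}}$ over the set $\Pc^{\otimes 2}_{\Fc_{\rho_*}(\Cc,w)}$ and its complement. The complement has $\kappa_{\rho_*}$-mass at most $\gamma$, and on the set itself the acceptance probability is at most $\beta$. This gives
\[
\Pb_{\kappa_{\rho_*}}(\Delta_\alpha=0)\le\beta+\gamma,
\]
using measurability of $(P,Q)\mapsto P^{\otimes n}\otimes Q^{\otimes m}(\Ac)$.

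\textbf{Step 4: combining the two bounds.} Steps 2 and 3 give
\[
\Pb_{\kappa_{\rho_*}}(\Delta_\alpha=1)-\Pb_0(\Delta_\alpha=1)\ge 1-\alpha-\beta-\gamma .
\]
The left side is at most the total variation distance $\|\Pb_{\kappa_{\rho_*}}-\Pb_0\|_{TV}=\tfrac12\Eb_{\Pb_0}|L_{\kappa_{\rho_*}}-1|$. By Cauchy–Schwarz and $\Eb_{\Pb_0}L_{\kappa_{\rho_*}}=1$, this is at most $\tfrac12\sqrt{\Eb_{\Pb_0}[L^2_{\kappa_{\rho_*}}]-1}$. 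The hypothesis on the second moment then gives
\[
\tfrac12\sqrt{\Eb_{\Pb_0}[L^2_{\kappa_{\rho_*}}]-1}<1-\alpha-\beta-\gamma,
\]
which is a contradiction.

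\textbf{Step 5: conclusion and the main obstacle.} Step 4 shows that every test in $\Psi_\alpha$ has separation rate at least $\rho_*$, and taking the infimum gives the claim. The main care point is the separation-rate definition: it is an infimum over $\widetilde\rho$. I must show that for every $\widetilde\rho<\rho_*$ the supremum of the Type II error exceeds $\beta$. This follows because $\Fc_{\rho_*}\subseteq\Fc_{\widetilde\rho}$ and Step 4 applies directly. A second care point is measure-theoretic: $\kappa_{\rho_*}$ should be supported on finitely many pairs in the application, which makes the measurability in Step 3 trivial.
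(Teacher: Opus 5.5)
Your proposal is correct and is essentially the argument the paper relies on: the paper omits the proof and defers to Lemma 5 of Albert et al., which is exactly this Le Cam-type reasoning. That argument has three ingredients: level $\alpha$ under $\Pb_0$, acceptance probability at most $\beta+\gamma$ under the mixture, and the bound $\|\Pb_{\kappa_{\rho_*}}-\Pb_0\|_{TV}\le\frac{1}{2}\sqrt{\Eb_{\Pb_0}[L^2_{\kappa_{\rho_*}}]-1}$. You also note that $P$ must belong to $\Pc$ for $\Pb_0(\Delta_\alpha=1)\le\alpha$ to follow from $\Delta_\alpha\in\Psi_\alpha$; this is a hypothesis the statement leaves implicit, and it holds in the application because $P$ is induced by $f_0$, which Part A shows lies in $\Pc$.
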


By Lemma \ref{Appendix::lem::base of minimax lowerbound}, it is sufficient to construct the probability measures $\Pb_{\kappa_{\rho_*}}$ and $\Pb_0$ satisfying the conditions in Lemma \ref{Appendix::lem::base of minimax lowerbound} with adequately chosen $\rho_*$. We proceed with the construction in the following three steps.
\begin{itemize}

\item \textbf{Step 1.} We reconstruct the collection of probability density functions used in the proof of \citet[Theorem 4]{AlbertLaurentMarrelMeynaoui2022}. 

\item \textbf{Step 2.} We construct probability measures on $\mathbf{PD}$ induced by the density functions introduced in {\bf{Step 1}}. 

\item \textbf{Step 3.} We verify that the measures constructed in {\bf{Step 2}} satisfy the conditions of Lemma \ref{Appendix::lem::base of minimax lowerbound}.

\end{itemize}

In {\bf Step 1}, we construct the collection of probability density functions on $\Omega(M)$. This construction is adapted from \cite{ingster1993asymptotically}. Define a continuous function $G:\Rb \rightarrow \Rb$ by 
\begin{align*}
    G(t)=\exp\left(\frac{-1}{1-(4t+3)^2}\right)\Ib_{(-1,-1/2)}(t) -\exp\left(\frac{-1}{1-(4t+1)^2}\right)\Ib_{(-1/2,0)}(t).
\end{align*} Note that the support of $G$ is contained in $(-1,0)$ and $\int_{\Rb}G(t)dt=0$. Let 
\begin{align*}
    \Rc(M)\coloneqq\left(\frac{M}{4},\frac{M}{2}\right)\times\left(\frac{3M}{4},M\right),    
\end{align*}
which is an open rectangle contained in $\Omega(M)$. Define  
\begin{align*}
 f_0(x,y) := \frac{16}{M^2}\Ib_{\Rc(M)}(x,y),  
\end{align*}
which is the probability density function of the uniform distribution in $\Rc(M)$.
   We obtain a collection of  perturbed versions of $f_0$ by using the function $G$. To do this, let $\lambda_{i}\in (0,1]$ satisfy that $Z_{i}:=1/\lambda_{i}$ is an integer for $i=1,2$, and let $ I=\{1,\dots,Z_{1}\} \times \{1,\dots,Z_{2}\}$. For $s_1,s_2>0$, let 
  \begin{align}\label{Appendix::eq::stilde and sbar}
    \bar{\mathbf{s}}:= \frac{2}{1/s_1+1/s_2}, \quad \text{and} \quad \tau:=\left(1+\frac{1}{4s_1}+\frac{1}{4s_2}\right)^{-1}=\frac{2\bar{\mathbf{s}}}{2\bar{\mathbf{s}}+1}.    
  \end{align}
  We further define a quantity $\tilde{\lambda}$ by 
  \begin{align*}
       \tilde{\lambda} :=\left(\frac{1}{\lambda_1^{2s_1}}+\frac{1}{\lambda^{2s_2}_2}\right)^{-1/2}.
  \end{align*}
  For each $\theta:= \left\{\theta_{(i,j)}:(i,j)\in I\right\}\in \{-1,1\}^{Z_{1}\times Z_{2} }$, define the perturbed function $f_\theta: \Rb^2 \rightarrow \Rb$ by 
 \begin{align}\label{Appendix::eq::ftheta}
f_\theta(&x,y)
:= f_0(x,y) \\
&\quad  +C_0 \tilde{\lambda}\lambda_{1}\lambda_2
\sum_{(i,j)\in I} \theta_{(i,j)} 
G_{\lambda_1}\!\left(\frac{x - M/4}{M/4} - i\lambda_1\right)
G_{\lambda_2}\!\left(\frac{y - 3M/4}{M/4} - j\lambda_2\right),\notag
\end{align}
 where $G_{h}(\cdot):=(1/h)G(\cdot/h)$ for any $h>0$ and $C_0$ is a constant independent of $n$ and $m$ that will be specified later. 
 For each $\theta$, the function $f_\theta$ is supported in $\Rc(M)$. 
 The density functions $f_\theta$ and $f_0$ are obtained by transporting the support of the functions used in \citet[Section 4]{AlbertLaurentMarrelMeynaoui2022} from $(0,1)^2$ to $\Rc(M)$, via an affine map, and by allowing the parameters $\lambda_1$, $\lambda_2$ to vary across dimensions. Therefore, their theoretical properties established in \citet[Section 4]{AlbertLaurentMarrelMeynaoui2022} remain unchanged. This completes {\bf{Step 1}}.
 
In {\bf{Step 2}}, we construct  probability distributions $P$ and $Q_\theta$ on $\mathbf{PD}$ whose persistence intensity functions are $f_0$ and $f_\theta$, respectively. To do this, we use the following lemma. 

\begin{lem}\label{Appendix::the lem for the first step} Let $f$ be a probability density function on $\Omega=\{(x,y)\in \Rb^2: y>x\geq0\}$ with respect to the two-dimensional Lebesgue measure. Then there is a probability distribution $P$ on $\mathbf{PD}$, induced by $f$, whose persistence intensity function is $f$.
\end{lem}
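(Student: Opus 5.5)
The plan is to realize $P$ as the pushforward of $f\,dx$ under the singleton map $\Phi:\Omega\to\mathbf{PD}$, $\Phi(x)=\delta_x$, as sketched in the main text. Two things must be checked: that $\Phi$ is measurable with respect to $\mathcal B(\mathbf{PD})$, so that $P:=\Phi_{\#}(f\,dx)$ is a well-defined probability measure on $\mathbf{PD}$; and that the expectation measure of $P$ has density $f$.

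\textbf{Measurability.} Since $\mathcal B(\mathbf{PD})$ is generated by the evaluation maps $D\mapsto D(B)$ for relatively compact Borel sets $B\subseteq\Omega$, it suffices to show that $x\mapsto \Phi(x)(B)=\delta_x(B)=\mathbb{I}_B(x)$ is Borel measurable on $\Omega$ for each such $B$. This holds because $B$ is Borel. Note also that $\delta_x$ is a locally finite integer-valued measure, so $\Phi$ indeed maps into $\mathbf{PD}$. Then $P(A)=\int_{\Phi^{-1}(A)}f(x)\,dx$ is a probability measure, and $P(\mathbf{PD})=\int_\Omega f=1$.

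\textbf{Intensity.} For any Borel $B\subseteq\Omega$, apply the change-of-variables formula for pushforward measures to the nonnegative measurable function $D\mapsto D(B)$. Measurability of this function for general, not necessarily relatively compact, $B$ follows by a monotone-class argument, or directly by writing $B$ as an increasing union of relatively compact pieces and using monotone convergence. This gives
\[
\mathbb{E}_{D\sim P}[D(B)]=\int_\Omega \delta_x(B)f(x)\,dx=\int_B f(x)\,dx .
\]
Hence $\mathbb{E}(D)\ll \mathrm{Leb}_2$, and by uniqueness of the Radon--Nikodym derivative its density is $f$ a.e.

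The only mildly delicate step is the measurability of evaluation maps for non-relatively-compact $B$; I would handle it by exhausting $\Omega$ with the relatively compact sets $\{(x,y)\in\Omega:\ y<k\}\cap B$, $k\in\Nb$, and passing to the limit with monotone convergence on both sides. If realizability by genuine diagrams is also wanted, which is not needed for the lemma as stated, one would invoke Proposition~\ref{prop::phofcircle} to identify each $\delta_{(b,d)}$ with $b>0$ as $\mathrm{PD}_1$ of a \v{C}ech filtration on $S^1(d)$; the boundary $\{b=0\}$ is Lebesgue-null and therefore irrelevant.
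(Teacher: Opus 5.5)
Your proposal is correct and follows the paper's proof essentially line by line. Both define $P$ as the pushforward of $f\,dx$ under $\Phi(x)=\delta_x$, get measurability of $\Phi$ from the generating evaluation maps $D\mapsto D(B)$ (which pull back to $\mathbb{I}_B$), and conclude with the change-of-variables identity $\Eb_{D\sim P}[D(B)]=\int_B f(x)\,dx$.

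Your extra step for non-relatively-compact $B$, which the paper glosses over, needs one adjustment. The sets $\{(x,y)\in\Omega: y<k\}\cap B$ can accumulate at the diagonal, so they need not be relatively compact in $\Omega$. Exhaust instead by $\{(x,y)\in\Omega: y<k,\ y-x>1/k\}\cap B$; then $D(B)$ is a pointwise monotone limit of measurable maps, and the rest of your argument goes through.
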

\begin{proof}
The proof is motivated by \citet{wu2024estimation}.
Consider the Dirac measure $\delta_x \in \mathbf{PD}$ for $x\in \Omega$. Define a map $\Phi:\Omega\to\mathbf{PD}$ by
\[
    \Phi(x)=\delta_x.
\]
Then, the map $\Phi$ is measurable. 
Recall that $\mathbf{PD}$ is equipped with the sigma-field
\[
\mathcal B(\mathbf{PD})
=
\sigma\{D\mapsto D(B):B\in\Bc(\Omega),\ 
B\text{ relatively compact}\},
\]
where $\Bc(\Omega)$ is the Borel sigma-field of $\Omega$.
For every relatively compact set
$B\subset\Omega$, let
$\operatorname{ev}_B:D\in \mathbf{PD}\mapsto D(B)\in \Rb$ be an evaluation map. Note that $\mathcal B(\mathbf{PD})$ is the smallest sigma-field on which such evaluation maps are measurable. It holds that, for each $x\in \Omega$,
\[
    (\operatorname{ev}_B\circ\Phi)(x)
    =
    \delta_x(B)
    =
    \mathbf 1_B(x).
\]
Since the maps $1_B=\operatorname{ev}_B\circ \Phi$ and $\operatorname{ev}_B$ are measurable, $\Phi$ is also measurable.

Define the probability measure $P$ on $\mathbf{PD}$ as the pushforward measure induced by $f$, $\ie$, for any measurable set $\Yc\subset \mathbf{PD}$,
\begin{align}\label{Appendix::eq::construction of dist from density}
    P(\Yc) := \int_{\Phi^{-1}(\Yc)}f(x)dx.
\end{align}
Then, by a change-of-variables argument, for any measurable function $h$,
\begin{align*}
    \int_{\Yc}h(\mu)dP(\mu) = \int_{\Phi^{-1}(\Yc)}h(\Phi(x))f(x)dx.
\end{align*}
Let $\Ac \subset\Omega$ be a measurable set. By definition, the expected measure $\Eb[\mu]$ for $\mu \sim P$ satisfies 
\begin{align*}
    \Eb(\mu)(\Ac) &= \E[\mu(\Ac)]=\int_{\mathbf{PD}}\mu(\Ac)dP(\mu) \\
    &=\int_{\Phi^{-1}(\mathbf{PD})}\Phi(x)(\Ac)f(x)dx\\
    &=\int_{\Omega}\delta_x (\Ac)f(x)dx\\
    &=\int_\Omega \Ib(x\in \Ac)f(x)dx\\
    &=\int_{\Ac} f(x)dx
\end{align*}
This implies the intensity function of $P$ is $f$. This completes the proof of Lemma \ref{Appendix::the lem for the first step}.
\end{proof}

By Lemma \ref{Appendix::the lem for the first step}, there exist the probability distributions $P$ and $Q_\theta$ induced by $f_0$ and $f_\theta$, respectively. To define the probability measure $\kappa_{\rho_*}$, let 
$\Theta=\{\Theta_{(i,j)}:(i,j)\in I\}$ be a random vector whose components $\Theta_{(i,j)}$ are i.i.d. Rademacher variables and let $\pi$ be the corresponding distribution of $\Theta$. 
Let $\kappa_{\rho_*}$ be the distribution of $(P, Q_\Theta)$ induced by $\pi$. Let us now concretize the associated probability measures $\Pb_{\kappa_{\rho_*}}$ and $\Pb_0$, which are introduced in Lemma \ref{Appendix::lem::base of minimax lowerbound}. It suffices to consider them on measurable rectangles $\Ac = A_1 \times \cdots \times A_{n+m}$, 
where each $A_i$ is a measurable subset of $\mathbf{PD}$.
For such $\Ac$, we have that 
\begin{align}\label{Appendix::eq::concrete Ptheta}
    \Pb_{\kappa_{\rho_*}}(\Ac) &= \int P^{\otimes n}\otimes Q_{\theta}^{\otimes m}(\Ac)d\pi(\theta)\notag\\
    &=\int
    \prod_{i=1}^nP(A_i)\cdot \prod_{j=n+1}^{n+m}Q_{\theta}(A_j)
    d\pi(\theta)\notag\\
    &=
    \prod_{i=1}^n\int_{\Phi^{-1}(A_i)}f_0(x)dx\cdot \prod_{j=n+1}^{n+m}\int\int_{\Phi^{-1}(A_j)}f_\theta(x)dx
    d\pi(\theta),
\end{align}
where, in the last equality, we use the property \eqref{Appendix::eq::construction of dist from density} in the proof of Lemma \ref{Appendix::the lem for the first step}. 
Let $\Pb_0$ be induced by the distribution $P$. Then, by the above arguments, we also have that 
\begin{align}\label{Appendix::eq::concrete P0}
    \Pb_{0}(\Ac) =\prod_{i=1}^{n+m}\int_{\Phi^{-1}(A_i)}f_0(x)dx.
\end{align} 
This completes {\bf{Step 2}}.

In {\bf{Step 3}}, we show that the conditions in Lemma \ref{Appendix::lem::base of minimax lowerbound} hold with $\Pb_{\kappa_{\rho_*}}$ and $\Pb_0$. We split {\bf{Step 3}} into two parts: Part A  and Part B. In Part A, we show that the distributions $P$ and $Q_\theta$ are contained in our model $\Pc$. The main issue in this part is to show that the function $\Phi$, defined in the proof of Lemma \ref{Appendix::the lem for the first step}, is implementable in TDA settings. 
In Part B, we verify the remaining conditions in Lemma \ref{Appendix::lem::base of minimax lowerbound} hold.

We begin with Part B. As mentioned in {\bf{Step 1}}, the density functions $f_0$ and $f_\theta$ satisfy the properties established in \citet[Lemma 6, Lemma 7, and Proposition 5]{AlbertLaurentMarrelMeynaoui2022}. Lemma \ref{Appendix::lem::properties of ftheta and f0} below shows key properties of intensity functions $f_0$ and $f_\theta$.

\begin{lem}
\label{Appendix::lem::properties of ftheta and f0} Let $w$ be a weight function, $s_1,s_2>0$, $\gamma\in (0,1)$, $R>0$, and $N>\frac{\|w^2\|_\infty}{|\Rc(M)|}$. Let $\lambda_i\in (0,1]$ such that $\frac{1}{\lambda_i}$ is an integer, for $i=1,2$. Consider $f_0$ and $f_\theta$ defined in {\bf{Step 1}}. Then, we have that 
\begin{enumerate}
    \item If $C_0\leq \min\left\{\frac{N|\Rc(M)|-\|w^2\|_\infty}{\|w^2\|_\infty |\Rc(M)|},e^{-2}|\Rc(M)|\right\}$, then 
    $\max{\{\|w^2\cdot f_0\|_\infty,\|w^2\cdot f_\theta\|_\infty\}}\leq N$ and $f_\theta$ is nonnegative.
    \item The functions $f_0$ and $f_\theta$ also satisfy $\| (f_0-f_\theta)\|_2=  C_0\|G\|^2_2\frac{M}{4}\tilde{\lambda}$, hence it holds that 
    \begin{align}\label{Appendix::L2 weighted discrepancy lower bound}
        \|w\cdot (f_0-f_\theta)\|_2\geq c^*\tilde{\lambda},
    \end{align}
    for $c^*\coloneq\underset{(x,y)\in \Rc(M)}{\inf}w(x,y)\cdot  C_0\|G\|^2_2\frac{M}{4}$.
    \item There exists a constant $C(s_1,s_2,\gamma,M,w)>0$ such that, if
\begin{align*}
C_0^2 \leq \frac{(2\pi)^2 R^2}{C(s_1,s_2,\gamma,M,w)},
\end{align*} then we have that
    \begin{align*}
        P_{\pi}\left(w\cdot \left(f_0-f_{\Theta}\right)\in \Sc^{s_1,s_2}_{2}(R)\right)\geq 1-\gamma,
    \end{align*}
\end{enumerate}
where $\theta$ in $f_\theta$ is replaced by the random vector $\Theta$ in the third statement of the lemma. Here, $P_\pi$ denotes the distribution induced by the law $\pi$ of $\Theta$.    
\end{lem}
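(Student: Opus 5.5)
We treat the three statements separately. Each is a direct computation on the explicit perturbation in \eqref{Appendix::eq::ftheta}, using $\|G\|_\infty\le e^{-1}$, the disjointness of the supports of the translated bumps, and $\int G=0$.

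\textbf{Statement 1.} We first record that the bumps $G_{\lambda_1}(\cdot-i\lambda_1)G_{\lambda_2}(\cdot-j\lambda_2)$, viewed in the rescaled coordinates $u=\frac{x-M/4}{M/4}\in(0,1)$ and $v=\frac{y-3M/4}{M/4}\in(0,1)$, have pairwise disjoint supports. Indeed, $G$ is supported in $(-1,0)$, so the $(i,j)$ bump lives on $((i-1)\lambda_1,i\lambda_1)\times((j-1)\lambda_2,j\lambda_2)$, and these cells tile $(0,1)^2$. Consequently $\sup|f_\theta-f_0|\le C_0\tilde\lambda\lambda_1\lambda_2\cdot\lambda_1^{-1}\lambda_2^{-1}\|G\|_\infty^2\le C_0\tilde\lambda e^{-2}$. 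Since $\lambda_i\le1$, we have $\tilde\lambda\le 1/\sqrt2\le1$.
\begin{itemize}
\item For nonnegativity, on $\Rc(M)$ we have $f_0=1/|\Rc(M)|$, so $f_\theta\ge 1/|\Rc(M)|-C_0e^{-2}\ge0$ whenever $C_0\le e^{2}/|\Rc(M)|$. The stated bound $C_0\le e^{-2}|\Rc(M)|$ needs to be matched against this. I expect only a constant bookkeeping issue here: one should verify which normalization of $|\Rc(M)|=M^2/16$ is intended, and I will adjust accordingly.
\item For the sup bound, $\|w^2 f_\theta\|_\infty\le\|w^2\|_\infty(1/|\Rc(M)|+C_0)$. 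This is at most $N$ exactly when $C_0\le (N|\Rc(M)|-\|w^2\|_\infty)/(\|w^2\|_\infty|\Rc(M)|)$, which is the first term in the minimum.
\item For $f_0$ itself, $\|w^2 f_0\|_\infty\le\|w^2\|_\infty/|\Rc(M)|<N$ follows directly from the hypothesis on $N$.
\end{itemize}
Finally, $\int f_\theta=1$ because each bump integrates to zero, since $\int G=0$.

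\textbf{Statement 2.} By disjointness, $\|f_\theta-f_0\|_2^2=C_0^2\tilde\lambda^2\lambda_1^2\lambda_2^2\sum_{(i,j)\in I}\|G_{\lambda_1}\|_2^2\|G_{\lambda_2}\|_2^2\cdot J$, where $J=(M/4)^2$ is the Jacobian of the affine change of variables. Since $\|G_h\|_2^2=\|G\|_2^2/h$ and $|I|=1/(\lambda_1\lambda_2)$, the powers of $\lambda$ cancel. This leaves $\|f_\theta-f_0\|_2=C_0\tilde\lambda\|G\|_2^2\,M/4$. The weighted bound then follows from $|w|\ge\inf_{\Rc(M)}w$ on the support, which is contained in $\Rc(M)$; this infimum is positive because $w$ is positive and smooth on the compact closure $\overline{\Rc(M)}\subset\Omega$.

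\textbf{Statement 3.} This is the main obstacle. The function $w\cdot(f_0-f_\Theta)$ is no longer a pure sum of tensor bumps, so the Fourier computation of \citet{AlbertLaurentMarrelMeynaoui2022} does not apply verbatim. I will proceed in four steps.
\begin{enumerate}
\item I would bound the anisotropic Sobolev seminorm by directional derivatives: for integer orders, $\int|\xi_1|^{2s_1}|\hat g|^2=(2\pi)^2\|\partial_1^{s_1}g\|_2^2$. For non-integer $s_i$, I would use the fractional characterization, or interpolate between $\lfloor s_i\rfloor$ and $\lceil s_i\rceil$ via H\"older's inequality on the Fourier side.
\item Set $g_\Theta=f_0-f_\Theta$ and apply the Leibniz rule to $w\,g_\Theta$. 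Each derivative of $w$ is bounded on $\Rc(M)$, because $w$ has a smooth compactly supported extension. It therefore suffices to control $\|\partial_1^{a}g_\Theta\|_2^2$ for $a\le s_1$, and similarly in the second variable.
\item By disjointness, $\|\partial_1^a g_\Theta\|_2^2$ is deterministic and of order $C_0^2\tilde\lambda^2\lambda_1^{-2a}$, up to constants depending on $G$ and $M$. This gives
\[
\int(|\xi_1|^{2s_1}+|\xi_2|^{2s_2})|\widehat{w g_\Theta}|^2\le C\,C_0^2\tilde\lambda^2(\lambda_1^{-2s_1}+\lambda_2^{-2s_2})=C\,C_0^2,
\]
using the definition of $\tilde\lambda$. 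The lower-order Leibniz terms are dominated because $\lambda_i\le1$.
\item Since this bound holds for every $\theta$, the probability statement is trivial: it holds with probability $1\ge1-\gamma$. The constant $C$ absorbs the dependence on $\gamma$, so the condition $C_0^2\le(2\pi)^2R^2/C$ suffices.
\end{enumerate}
The delicate point is the fractional-order case. If that case resists the argument above, I would instead cite the corresponding estimate in \citet[Lemma 7]{AlbertLaurentMarrelMeynaoui2022} for $g_\Theta$ and control multiplication by $w$ through the boundedness of multiplication by smooth compactly supported functions on anisotropic Sobolev spaces.
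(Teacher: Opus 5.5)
Your proposal is correct. Statements 1 and 2 are exactly the direct computations the paper delegates to Albert et al. For Statement 3 you take a genuinely different route.

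\textbf{Statement 1.} Your suspicion about the threshold is justified, but the problem lies in the statement, not in your bookkeeping. Take $\lambda_1=\lambda_2=1$. Then $\tilde\lambda=1/\sqrt{2}$ and $\min f_\theta=1/|\Rc(M)|-C_0e^{-2}/\sqrt{2}$. For the choice $C_0=e^{-2}|\Rc(M)|$, which is admissible when $N$ is large, this is negative as soon as $|\Rc(M)|^2>\sqrt{2}\,e^{4}$. So the stated condition does not imply $f_\theta\ge 0$ in general. The threshold you derived, $C_0\le e^{2}/|\Rc(M)|$, is the right one, and you should state it as a correction rather than leaving it as something to adjust. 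Nothing downstream changes, because the lower-bound argument only needs the existence of some small $C_0$.

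\textbf{Statement 3: the paper's route.} The paper works on the Fourier side. It writes $\widehat{w g}=(2\pi)^{-2}\,\widehat{w}*\widehat{g}$ and proves a weighted Young inequality, using $|u|^{s}\le C(|u-x|^{s}+|x|^{s})$ and finite moments of $\widehat{w}$. This bounds the anisotropic seminorm of $w(f_\Theta-f_0)$ by $\|f_\Theta-f_0\|_2^2$ plus the seminorm of $f_\Theta-f_0$. It then imports the argument of Albert et al.\ (Lemma 7) for the unweighted seminorm. That argument gives the bound $C_0^2C_2(s_1,s_2,\gamma,M)$ only with probability $1-\gamma$: in frequency space the cross terms between different bumps do not vanish and are controlled only in expectation over the Rademacher signs.

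\textbf{Statement 3: your route.} You work in physical space, where disjoint supports kill the cross terms exactly. Leibniz together with the deterministic identity $\|\partial_1^b g_\Theta\|_2^2=C_0^2\tilde\lambda^2(4/M)^{2b-2}\lambda_1^{-2b}\|G^{(b)}\|_2^2\|G\|_2^2$ gives the integer-order bound. The fractional case is not actually delicate. Write $s=k+\eta$ with $\eta\in(0,1)$ and $h=w g_\Theta$. H\"older gives $\int|\xi_1|^{2s}|\hat h|^2\le\bigl(\int|\xi_1|^{2k}|\hat h|^2\bigr)^{1-\eta}\bigl(\int|\xi_1|^{2k+2}|\hat h|^2\bigr)^{\eta}\le C\,C_0^2\tilde\lambda^2\lambda_1^{-2s}$. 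Summing over the two directions then yields $C\,C_0^2$ by the definition of $\tilde\lambda$.

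\textbf{What each buys.} Your argument is self-contained and strictly stronger: membership in $\Sc_2^{s_1,s_2}(R)$ holds for every $\theta$, so $\gamma$ plays no role and the prior is supported entirely on the alternative class. It does use that $G\in C_c^\infty$ and that the extension of $w$ has bounded derivatives up to order $\lceil s_i\rceil$; both hold here. The paper's route reuses an existing unweighted estimate and treats all real $s_i$ in one step without interpolation, at the cost of a citation and only a high-probability conclusion. Your fallback plan is essentially the paper's argument, but you do not need it.
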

\begin{proof} We prove only the third statement, since the proofs of the remaining statements follow from the same arguments as those in \citet{AlbertLaurentMarrelMeynaoui2022}.
In the proof, our target is to show that
\begin{align*}
    \Pb_\pi\left(\int_{\Rb^2}\left(|u|^{2s_1}+|v|^{2s_2}\right) \left|\reallywidehat{w (f_\Theta - f_0)}(u,v)\right|^2dudv\leq (2\pi)^2R^2\right)\geq 1-\gamma.
\end{align*}
To do this, we use the following lemma. 
\begin{lem}\label{lem::weighted young}
Let $s_1,s_2>0$. Let $K \in L^1(\mathbb{R}^2)$ satisfy
\[
\|K\|_{L^1} + \||x|^{s_1}K\|_{L^1} + \||y|^{s_2}K\|_{L^1} < \infty,
\]
where $\||x|^{s_1}K\|_{L^1}=\int_{\Rb^2}|x|^{s_1}|K(x,y)|d(x,y)$ and $\||y|^{s_2}K\|_{L^1}=\int_{\Rb^2}|y|^{s_2}|K(x,y)|d(x,y)$.
Let $h \in L^2(\Rb^2)$ satisfy
\[
\int_{\mathbb{R}^2} \left(|u|^{2s_1} + |v|^{2s_2}\right) |h(u,v)|^2 \, dudv < \infty.
\]
Then there exists a constant $C>0$, depending only on $K,s_1,s_2$, such that
\begin{align*}
&\int_{\mathbb{R}^2} \left(|u|^{2s_1} + |v|^{2s_2}\right)
|(K*h)(u,v)|^2 \, dudv \\
&\qquad \le C \left(
\|h\|_{L^2}^2
+
\int_{\mathbb{R}^2} \left(|u|^{2s_1} + |v|^{2s_2}\right)
|h(u,v)|^2 \, dudv
\right).
\end{align*}
\end{lem}
\begin{proof}
We only prove for the $u$-component; the $v$-component follows similarly.

For any $s_1>0$, there exists $C_{s_1}>0$ such that
\[
|u|^{s_1} \le C_{s_1} \big( |u-x|^{s_1} + |x|^{s_1} \big).
\]
Hence,
\begin{align*}
|u|^{s_1}|(K*h)(u,v)|
&= |u|^{s_1} \left| \int_{\mathbb{R}^2} K(x,y)\,h(u-x,v-y)\,dx\,dy \right| \\
&\le \int_{\mathbb{R}^2} |u|^{s_1} |K(x,y)|\,|h(u-x,v-y)|\,dx\,dy \\
&\le C_{s_1} \int_{\mathbb{R}^2}
\left( |u-x|^{s_1} + |x|^{s_1} \right)
|K(x,y)|\,|h(u-x,v-y)|\,dx\,dy.
\end{align*}
Thus we obtain,
\[
|u|^{s_1}|(K*h)|
\le C_{s_1}
\Big(
(|x|^{s_1}|K|) * |h|
+
|K| * (|u|^{s_1}|h|)
\Big).
\]
Taking the $L^2$-norm and applying Young's inequality for convolution,
\begin{align*}
\||u|^{s_1}(K*h)\|_{L^2}
&\le C_{s_1} \left(
\| |x|^{s_1}K \|_{L^1}\|h\|_{L^2}
+
\|K\|_{L^1}\||u|^{s_1}h\|_{L^2}
\right).
\end{align*}
Squaring both sides yields
\[
\||u|^{s_1}(K*h)\|_{L^2}^2
\le C \left(
\|h\|_{L^2}^2 + \||u|^{s_1}h\|_{L^2}^2
\right).
\]
An analogous proof holds for the $v$-component. Summing the two bounds gives
\begin{align*}
&\int_{\mathbb{R}^2} \left(|u|^{2s_1} + |v|^{2s_2}\right)
|(K*h)(u,v)|^2 \, dudv \\
&\qquad \le C \left(
\|h\|_{L^2}^2
+
\int_{\mathbb{R}^2} \left(|u|^{2s_1} + |v|^{2s_2}\right)
|h(u,v)|^2 \, dudv
\right),
\end{align*}
which completes the proof of Lemma \ref{lem::weighted young}.
\end{proof}
The Fourier transform $\reallywidehat{w\cdot (f_{\Theta}-f_0)}$ can be written as 
\begin{align*}
    \reallywidehat{w\cdot (f_{\Theta}-f_0)}= \frac{1}{(2\pi)^2} \widehat{w}*\reallywidehat{f_{\Theta}-f_0}.
\end{align*}
Therefore, by applying Lemma \ref{lem::weighted young} with $K=\widehat{w}$, $h=\reallywidehat{f_{\Theta}-f_0}$, we can obtain that
\begin{align*}
&\int_{\Rb^2}\left(|u|^{2s_1}+|v|^{2s_2}\right) \left|\reallywidehat{w (f_\Theta - f_0)}(u,v)\right|^2dudv\\
&=\frac{1}{(2\pi)^4}\int_{\mathbb{R}^2} \left(|u|^{2s_1} + |v|^{2s_2}\right)
|(\widehat{w}*\reallywidehat{f_\Theta - f_0})(u,v)|^2 \, dudv \\
&\qquad \le C_w(s_1,s_2) \left(
\|\reallywidehat{f_\Theta - f_0})\|_{L^2}^2
+
\int_{\mathbb{R}^2} \left(|u|^{2s_1} + |v|^{2s_2}\right)
|(\reallywidehat{f_\Theta - f_0})(u,v)|^2 \, dudv
\right),
\end{align*}
for some positive constant $C_w(s_1,s_2)$ depending only on $w$, $s_1$, and $s_2$. Note that the first summand in the last term $\|\reallywidehat{f_\Theta - f_0})\|_{L^2}^2$ can be bounded above as 
\begin{equation}\label{ineq::L^2ofwidehatftheta-f0}
\begin{aligned}
    \|(\reallywidehat{f_\Theta - f_0})\|_{L^2}^2&= (2\pi)^2\|f_\Theta - f_0\|_{L^2}\\
    &=(2\pi)^2\left(C_0\|G\|_2\frac{M}{4}\tilde{\lambda}\right)^2\\
    &\leq (2\pi)^2C_0^2\|G\|_2^2\left(\frac{M}{4}\right)^2=:C_1(M)C_0^2,
\end{aligned}
\end{equation}
by Plancherel's identity and the second statement of Lemma \ref{Appendix::lem::properties of ftheta and f0}. 

Furthermore, following the same arguments in the proof of \citet[Lemma 7]{AlbertLaurentMarrelMeynaoui2022},
we obtain that there exists a constant $C_2(s_1,s_2,\gamma,M)>0$ such that
\begin{align*}
    \int_{\Rb^2} (|u|^{2s_1}+|v|^{2s_2})\left|\reallywidehat{(f_{\Theta}-f_0)}(u,v)\right|^2 du dv \leq C_0^2C_2(s_1,s_2,\gamma,M), 
\end{align*}
with probability greater than $1-\gamma$. Therefore, by defining 
\[C(s_1,s_2,\gamma,M,w):=C_w(s_1,s_2)\left(C_1(M)+C_2(s_1,s_2,\gamma,M)\right),\] one can conclude that
\begin{align*}
    \Pb_\pi\left(\int_{\Rb^2}\left(|u|^{2s_1}+|v|^{2s_2}\right) \left|\reallywidehat{w (f_\Theta - f_0)}(u,v)\right|^2dudv\leq (2\pi)^2R^2\right)\geq 1-\gamma,
\end{align*}
since $ C_0^2 \leq \frac{(2\pi)^2 R^2}{C(s_1,s_2,\gamma,M,w)}$.
This completes the proof of Lemma \ref{Appendix::lem::properties of ftheta and f0}.
\end{proof}

Let us now consider the likelihood ratio $L_{\kappa_{\rho_*}}$ defined in Lemma \ref{Appendix::lem::base of minimax lowerbound}. Let $D_1,\dots D_{n+m}$ be i.i.d. observations from the distribution $P$ and define $X_i=\Phi^{-1}(D_i)$ for each $i=1,\dots,n+m$, where $\Phi$ is defined in the proof of Lemma \ref{Appendix::the lem for the first step}. By \eqref{Appendix::eq::concrete Ptheta} and \eqref{Appendix::eq::concrete P0}, and by standard measure-theoretic arguments, we obtain the following equality:
\begin{align}\label{Appenidx::eq::liklihood}
    L_{\kappa_{\rho_*}}(D_1,\dots,D_{n+m}) = \int \prod_{j=n+1}^{n+m}\frac{f_\theta(X_j)}{f_0(X_j)}d\pi(\theta).
\end{align}
In other words, the likelihood ratio $L_{\kappa_{\rho_*}}$ coincides with the one defined by $f_\theta$ and $f_0$. 
Using this, we verify the upper bound on the squared likelihood ratio in Lemma \ref{Appendix::lem::base of minimax lowerbound}. 
Before formally stating this bound, let us remind our target. In this proof, we aim to show the lower bound $\rho_{*}$ in Lemma \ref{Appendix::lem::base of minimax lowerbound} has the form:
\begin{align*}
    C\cdot (n+m)^{\frac{-\bar{\mathbf{s}}}{2\bar{\mathbf{s}}+1}},
\end{align*}
for some constant $C$ independent of $n$ and $m$.
To achieve this rate, we need to appropriately choose the parameter $\lambda_i$ so that $\tilde{\lambda}\asymp(n+m)^{\frac{-\bar{\mathbf{s}}}{2\bar{\mathbf{s}}+1}}$. The following lemma also captures this consideration.
\begin{lem}\label{Appendix::lem::likelihood condition} Let $\alpha$, $\beta$, $\gamma$ $\in (0,1)$ satisfy $\alpha+\beta+\gamma<1$, and let $s_1,s_2>0$, and $R>0$. Let $C_0(s_1,s_2,\gamma,w,M,N,R)$ satisfy the conditions in Lemma \ref{Appendix::lem::properties of ftheta and f0}. Then, there exists a constant $C(\alpha,\beta,s_1,s_2,\gamma,w,M,N,R)>0$ such that, if the quantities  $Z_i$ and $\lambda_i$ are set as
\begin{align*}
    Z_i=
     \left\lceil \frac{1}{C(\alpha,\beta,s_1,s_2,\gamma,w,M,N,R)\, (n+m)^{-\tau/(2s_i)}} \right\rceil \quad \text{and} \quad \lambda_i=\frac{1}{Z_i},\quad i=1,2,
\end{align*}
    then, for $\rho_{n+m}^*:= c^*\tilde{\lambda}$ where $c^*$ is a constant defined in \eqref{Appendix::L2 weighted discrepancy lower bound}, 
    \begin{align*}
    \rho_{n+m}^* \asymp(n+m)^{\frac{-\bar{\mathbf{s}}}{2\bar{\mathbf{s}}+1}}\quad \text{and}\quad  \kappa_{\rho_{n+m}^{*}}\left(\Pc^{\otimes2}_{\Fc_{\rho_{n+m}^{*}}(\Sc^{\mathbf{s}}_2(R))}\right)\geq 1-\gamma.   
    \end{align*}
     Moreover,
    \begin{align*}
        \Eb_{\Pb_{0}} \left[L^2_{\kappa_{\rho_{n+m}^{*}}}(\Db_{n+m})\right] <1+4(1-\alpha-\beta-\gamma)^2.
    \end{align*} 
\end{lem}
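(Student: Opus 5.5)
The plan has three parts. First, fix the bandwidth choice and check the rate of $\rho^*_{n+m}$. Second, show that the prior $\kappa$ is supported, with probability at least $1-\gamma$, on the alternative class. Third, compute the second moment of the likelihood ratio via the standard Ingster $\chi^2$ argument applied to the product structure in \eqref{Appenidx::eq::liklihood}.

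\textbf{Rate of $\rho^*_{n+m}$.} With $Z_i=\lceil 1/(C(n+m)^{-\tau/(2s_i)})\rceil$ we get $\lambda_i\asymp C(n+m)^{-\tau/(2s_i)}$, so $\lambda_i^{2s_i}\asymp C^{2s_i}(n+m)^{-\tau}$. Hence
\[
\tilde\lambda=(\lambda_1^{-2s_1}+\lambda_2^{-2s_2})^{-1/2}\asymp (n+m)^{-\tau/2}=(n+m)^{-\bar{\mathbf s}/(2\bar{\mathbf s}+1)},
\]
using \eqref{Appendix::eq::stilde and sbar}. The support statement follows from Lemma~\ref{Appendix::lem::properties of ftheta and f0}. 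Item 1 places $P$ and $Q_\theta$ in $\Pc$: take $Z$ constant with $h\equiv1$, so $|D|=1$. The membership $P,Q_\theta\in\Pc$ is deferred to Part A, where Proposition~\ref{prop::phofcircle} realizes singletons. Item 2 gives $\|w\cdot(f_0-f_\theta)\|_2\ge c^*\tilde\lambda=\rho^*_{n+m}$ for every $\theta$. Item 3 gives Sobolev membership with $\pi$-probability at least $1-\gamma$. Together these yield $\kappa(\Pc^{\otimes2}_{\Fc_{\rho^*}})\ge1-\gamma$.

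\textbf{Second moment of the likelihood ratio.} Write $f_\theta=f_0(1+\sum_{(i,j)}\theta_{ij}\,g_{ij})$ on $\Rc(M)$, where
\[
g_{ij}=C_0\tilde\lambda\lambda_1\lambda_2\,G_{\lambda_1}(\cdot)G_{\lambda_2}(\cdot)/f_0 .
\]
By \eqref{Appenidx::eq::liklihood} and Fubini, with $\theta,\theta'$ independent draws from $\pi$,
\[
\Eb_{\Pb_0}[L^2]=\Eb_{\theta,\theta'}\Big(\int f_\theta f_{\theta'}/f_0\Big)^m .
\]
The integrals $\int G=0$ kill the cross terms. The functions $g_{ij}$ have disjoint supports, so they are orthogonal in $L^2(f_0)$. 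Therefore
\[
\int f_\theta f_{\theta'}/f_0 = 1+\sum_{ij}\theta_{ij}\theta'_{ij}\,a,
\]
where $a=\int g_{ij}^2f_0 = C_0^2\tilde\lambda^2\lambda_1\lambda_2\|G\|_2^4\,c(M)$; the $\lambda_1\lambda_2$ factor comes from rescaling the two bumps. Using $1+x\le e^x$ and the independence of the Rademacher products $\theta_{ij}\theta'_{ij}$, we obtain
\[
\Eb[L^2]\le\prod_{ij}\cosh(ma)\le\exp\Big(\tfrac12 Z_1Z_2\,m^2a^2\Big)=\exp\big(c\,C_0^4\,m^2\tilde\lambda^4\lambda_1\lambda_2\big).
\]

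\textbf{Choosing the constant.} We have $\tilde\lambda^4\asymp C^{-4s}(n+m)^{-2\tau}$, while $\lambda_1\lambda_2\asymp C^2(n+m)^{-\tau(1/(2s_1)+1/(2s_2))}$. The exponent of $n+m$ is
\[
2-2\tau-\tau\Big(\tfrac1{2s_1}+\tfrac1{2s_2}\Big)=2-2\tau\Big(1+\tfrac1{4s_1}+\tfrac1{4s_2}\Big)=0,
\]
using $m\le n+m$. So the bound reduces to $\exp(c'C_0^4\cdot\text{const}(C))$. This is made smaller than $1+4(1-\alpha-\beta-\gamma)^2$ by the choice of $C$, or by shrinking $C_0$ further while staying within the constraints of Lemma~\ref{Appendix::lem::properties of ftheta and f0}.

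\textbf{Main obstacle.} The step I expect to be hardest is tracking how $C$ enters the constants. Enlarging $C$ must drive the exponent to zero without destroying the $\asymp$ rate of $\rho^*$. Since only the order in $n+m$ matters, and $C$ enters as a power with sign fixed by $2s_1,2s_2$, I would instead fix $C=1$ and absorb all smallness into $C_0$. One must also check that $Z_i\ge1$ integrality causes no trouble for small $n+m$; that is handled by the ceiling.
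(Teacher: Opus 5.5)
Your argument follows the paper's proof up to the last step: the rate of $\tilde\lambda$, the support claim via Lemma~\ref{Appendix::lem::properties of ftheta and f0}, and the Ingster $\chi^2$ computation leading to $\exp\bigl(cC_0^4\,m^2\tilde\lambda^4\lambda_1\lambda_2\bigr)$ with the exponent of $n+m$ cancelling are all correct. The gap is in choosing the constant, where you have the dependence on $C$ backwards.

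\textbf{The correct dependence on $C$.} From the ceiling, $\lambda_i\le C(n+m)^{-\tau/(2s_i)}$ for every $n+m$, so a one-sided bound is all you need. Hence $\lambda_i^{-2s_i}\ge C^{-2s_i}(n+m)^{\tau}$ and $\tilde\lambda^4\le (C^{-2s_1}+C^{-2s_2})^{-2}(n+m)^{-2\tau}$. Therefore
\[
m^2\tilde\lambda^4\lambda_1\lambda_2\le \bigl(C^{-2s_1-1}+C^{-2s_2-1}\bigr)^{-2}.
\]
The right-hand side is increasing in $C$ for all $s_1,s_2>0$ and tends to $0$ as $C\downarrow 0$. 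So $\tilde\lambda^4$ scales like a positive power of $C$, not $C^{-4s}$, and the sign does not depend on $s_1,s_2$.

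It follows that you must \emph{shrink} $C$; your statement that ``enlarging $C$ must drive the exponent to zero'' is false. For large $C$ one gets $Z_i=1$ and $\lambda_i=1$, and the exponent becomes of order $m^2$. With the correct sign, for the given $C_0$ you simply take $C$ small enough that $c\,C_0^4\bigl(C^{-2s_1-1}+C^{-2s_2-1}\bigr)^{-2}<\log\{1+4(1-\alpha-\beta-\gamma)^2\}$. This is exactly what the lemma asserts.

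\textbf{Your fallback proves a different statement.} Fixing $C=1$ and shrinking $C_0$ changes the quantifiers. In the lemma, $C_0=C_0(s_1,s_2,\gamma,w,M,N,R)$ is given: it is any value satisfying the upper bounds in Lemma~\ref{Appendix::lem::properties of ftheta and f0}. Only $C$ may depend on $\alpha,\beta$, and forcing $C_0$ to depend on $\alpha,\beta$ proves something else. That variant would still suffice for Theorem~\ref{minimax lower bound}, since $C_0$ enters $\rho^*$ only through the constant $c^*$, but it is not the lemma as stated.

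\textbf{A smaller omission.} Applying $(1+x)^m\le e^{mx}$ requires $1+a\sum_{ij}\theta_{ij}\theta'_{ij}\ge 0$. This holds because the quantity equals $\int f_\theta f_{\theta'}/f_0$ and $f_\theta\ge 0$ by item~1 of Lemma~\ref{Appendix::lem::properties of ftheta and f0}, but you should say so.
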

\begin{proof}[Proof of Lemma \ref{Appendix::lem::likelihood condition}]
It is sufficient to prove the last argument in the lemma.

For \(u=(i,j)\in I\), define
\begin{align*}
    g_u(x,y)
:=
G_{\lambda_1}
\left(
\frac{x-M/4}{M/4}-i\lambda_1
\right)
G_{\lambda_2}
\left(
\frac{y-3M/4}{M/4}-j\lambda_2
\right)
\end{align*}
and
\begin{align*}
\Delta_u(x,y)
:=
C_0\widetilde{\lambda}\lambda_1\lambda_2 g_u(x,y).    
\end{align*}
Then, we have 
\begin{align*}
    f_\theta=f_0+\sum_{u\in I}\theta_u\Delta_u.
\end{align*}
For $u\in I$, the supports of the functions $g_u$ are pairwise disjoint. Moreover, since $\int_{\mathbb{R}}G(t)\,dt=0$, it holds that
\begin{align}
    \int_{\Rc(M)}\Delta_u(x,y)\,dx\,dy=0
\end{align}
and
\begin{align}
    \int_{\Rc(M)}
\frac{\Delta_u(x,y)\Delta_v(x,y)}{f_0(x,y)}
\,dx\,dy
=0,
\qquad u\neq v\in I.
\end{align}
Since $f_0=16/M^2$ on $\Rc(M)$, a change of variables yields
\begin{align*}
v_\lambda
&:=
\int_{\Rc(M)}
\frac{\Delta_u(x,y)^2}{f_0(x,y)}
\,dx\,dy\\
&=
\frac{C_0^2M^4}{256}\,
\|G\|_2^4\,
\widetilde{\lambda}^{\,2}\lambda_1\lambda_2\\
&=:C_G\widetilde{\lambda}^{\,2}\lambda_1\lambda_2.
\end{align*}
We now observe the expectation of the squared likelihood ratio. 
Let $\Theta'$ be an independent copy of $\Theta$. By \eqref{Appenidx::eq::liklihood}, the first $n$ observations cancel from the likelihood ratio, and independence of the remaining $m$ observations yields
\begin{align*}
\Eb_{P_0}
\left[
L_{\kappa_{\rho_{n+m}^{\star}}}^{\,2}
(\mathbb{D}_{n+m})
\right]
&=
\Eb_{\Theta,\Theta'}
\left[
\int_{\Rc(M)}
\frac{f_\Theta(x,y)f_{\Theta'}(x,y)}
     {f_0(x,y)}
\,dx\,dy
\right]^m\\
&=
\Eb_{\Theta,\Theta'}
\left[
1+
v_\lambda
\sum_{u\in I}\Theta_u\Theta'_u
\right]^m.
\end{align*}
The quantity in square brackets is nonnegative, since it is an integral of
$f_\Theta f_{\Theta'}/f_0$. Therefore, using the inequality $1+x\leq e^x$, for $x\in \Rb$, and the fact that
$
\varepsilon_u:=\Theta_u\Theta'_u$, for $u\in I$, are i.i.d. Rademacher variables, we obtain 
\begin{align*}
\Eb_{P_0}
\left[
L_{\kappa_{\rho_{n+m}^{\star}}}^{\,2}
(\mathbb{D}_{n+m})
\right]&=
\Eb_{\Theta,\Theta'}
\left[
1+
v_\lambda
\sum_{u\in I}\Theta_u\Theta'_u
\right]^m\\
&\leq
\Eb
\left[
\exp\left(
mv_\lambda\sum_{u\in I}\varepsilon_u
\right)
\right]\\
&= \prod_{u\in I}\Eb\left[\exp(mv_{\lambda}\varepsilon_u)\right]\\
&=
(\cosh(mv_\lambda))^{|I|}\\
&\leq
\exp\left(\frac{|I|m^2v_\lambda^2}{2}\right),
\end{align*}
where, in the last inequality, we use $\cosh(x)\leq \exp(x^2/2)$ for any $x\in \Rb$. Moreover,
since
$
|I|=Z_1Z_2=\frac{1}{\lambda_1\lambda_2},
$
we can conclude that
\begin{align}
    \Eb_{P_0}
\left[
L_{\kappa_{\rho_{n+m}^{\star}}}^{\,2}
(\mathbb{D}_{n+m})
\right]
\leq
\exp\left(
\frac{C_{G}^2}{2}m^2\widetilde{\lambda}^{4}
\lambda_1\lambda_2
\right).
\end{align}
Let 
\begin{align*}
    Z_i= \left\lceil C_*^{-1}(n+m)^{\tau/(2s_i)}\right\rceil,\quad \lambda_i=\frac{1}{Z_i},\quad i=1,2.
\end{align*}
Then, we have 
\begin{align*}
    m^2 \tilde{\lambda}^4\lambda_1\lambda_2 &\leq (n+m)^{\frac{-4\bar s-2}{2\bar s+1}+2} \cdot (C_*^{-2s_1-1}+C_*^{-2s_2-1})^{-2}\\
    &=(C_*^{-2s_1-1}+C_*^{-2s_2-1})^{-2}.
\end{align*}
Since $1-\alpha-\beta-\gamma>0$, we may choose $C_*>0$, depending only on
$\alpha,\beta,\gamma,s_1,s_2,w,M,N$, and $R$, sufficiently small so that
\begin{align}
\frac{C_G^2}{2}(C_*^{-2s_1-1}+C_*^{-2s_2-1})^{-2}< 
\log\left\{1+4(1-\alpha-\beta-\gamma)^2\right\}.
\end{align}
With this choice, we have 
\begin{align}
\Eb_{P_0}
\left[
L_{\kappa_{\rho_{n+m}^{\star}}}^{\,2}
(\mathbb{D}_{n+m})
\right]
<
1+4(1-\alpha-\beta-\gamma)^2.
\end{align} Taking
\begin{align*}
C(\alpha,\beta,s_1,s_2,\gamma,w,M,N,R):=C_*
\end{align*}
completes the proof of the third argument in Lemma \ref{Appendix::lem::likelihood condition}.
\end{proof}

This completes Part B of {\bf{Step 3}}. Once Part A is proved, the proof of Theorem \ref{minimax lower bound} is completed by combining Lemmas \ref{Appendix::lem::base of minimax lowerbound} and \ref{Appendix::lem::likelihood condition}. 

In Part A, we check whether the distributions $P$ and $Q_\theta$ are contained in $\Pc$; see Section \ref{sec::Assumptions and Notations} of the main text, for the definition of $\Pc$. We show that $P$ and $Q_\theta$ satisfy the conditions \ref{assump::randomness of diagram}, \ref{assump::Bounded persistence condition}, \ref{assump::Bounded cardinality condition}, and \ref{assump::Existence of intensity function}. 

The condition \ref{assump::Bounded persistence condition} is satisfied, since the supports of $f_\theta$ and $f_0$ are contained in $\Omega(M)$. Since the cardinality of each random persistence diagram following $P$ or $Q_\theta$ is $1$, the condition \ref{assump::Bounded cardinality condition} holds. By the first statement of Lemma \ref{Appendix::lem::properties of ftheta and f0}, the condition \ref{assump::Existence of intensity function} holds as well. Then, it remains to verify the condition \ref{assump::randomness of diagram}. 
This is equivalent to asking whether the image of $\Phi$, defined in the proof of Lemma \ref{Appendix::the lem for the first step}, can be realized as a persistence diagram for some point cloud with some filtration function.
\paragraph*{Point cloud construction}
Let a point $(b,d) \in \Omega$ with $b>0$ be given. 
We construct a point cloud $\Xc(b,d)$ on the circle $S^{1}(d)$, defined by
\begin{align*}
S^{1}(d) = \{ x \in \mathbb{R}^{2} \mid \|x\|_2 = d \}.
\end{align*}
The point cloud $\Xc(b,d)$ is constructed as follows: the points are arranged on $S^{1}(d)$ in cyclic order so that the Euclidean distance between any two adjacent points is exactly $2b$, except for the final pair, for which the distance is at most $2b$ if necessary. Figure \ref{Appendix::fig:cech_point_cloud} presents visual examples of point clouds constructed in this manner. With this convention, the map  $(b,d) \mapsto \Xc(b,d)$ is measurable when point clouds are viewed as counting measures on $\Rb^2$.

By Proposition \ref{prop::phofcircle} in the main text, we obtain that
\begin{align*}
    \mathrm{PD}_1\left(\mathrm{\check{C}ech}(\Xc(b,d))\right)=\{(b,d)\},
\end{align*}
implying that the function $\Phi:x\mapsto \delta_x$ can be realized by the \v{C}ech complex on the point cloud $\Xc(x)$. 
This completes Part A and the proof of Theorem \ref{minimax lower bound}. \qed

\begin{figure}[h]
    \centering
\includegraphics[width=\textwidth]{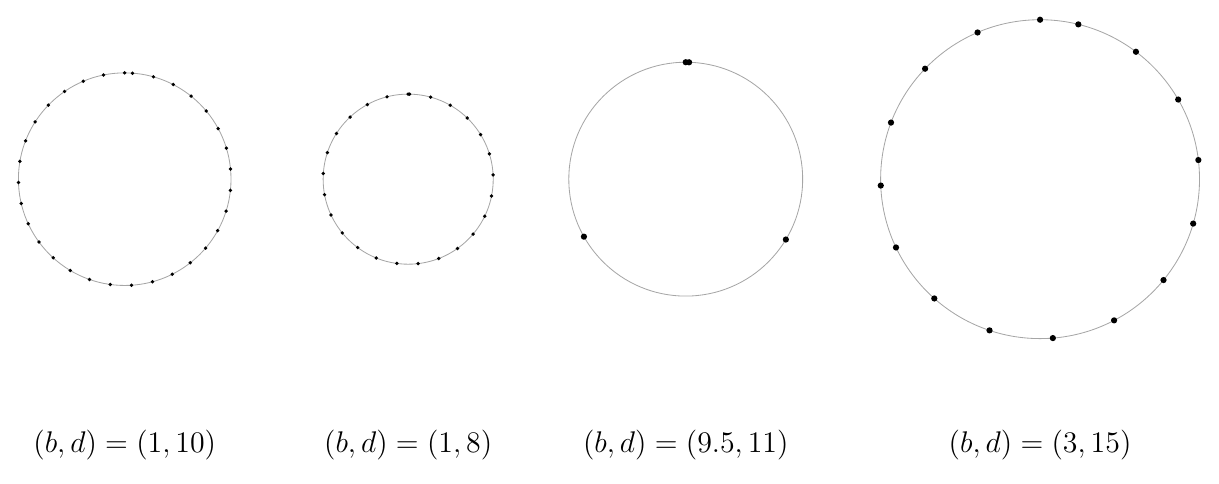}
    \caption{
    Examples of point clouds constructed on the circle $S^1(d)$ for different values of $(b,d)$. 
    }
    \label{Appendix::fig:cech_point_cloud}
\end{figure}
\subsection{Proof of Proposition \ref{Prop::aggregation test power}} The proof follows the same argument as in \citet[Section F.1]{schrab2026aggregation}.

Recall that the bandwidth collection is defined as 
\[
\Lambda=\left\{(2^{-k_1},2^{-k_2}):k_1,k_2\in \{1,\dots, K_{n,m}\}\right\},\qquad K_{n,m}=\left\lceil
2\log_2\left(\frac{n+m}{\log\log(n+m)}\right)
\right\rceil,
\] and let 
\begin{align*}
    K:= |\Lambda|.
\end{align*}
According to \citet[Theorem 2]{schrab2026aggregation}, the Type II error of the aggregation test $\Delta_{\mathrm{Agg},\alpha}^{\Lambda,B}$ is uniformly dominated by that of the Bonferroni-aggregated testing procedure $\Delta_{\mathrm{Bonf}}^{\Lambda,B}$ over the same bandwidth collection, which is defined as 
\begin{align*}
    \Delta_{\mathrm{Bonf}}^{\Lambda,B}:=\max_{\lambda \in \Lambda}\Delta_{\alpha/K}^{\lambda,B},
\end{align*}
where $\Delta_{\alpha}^{\lambda,B}$ is the permutation test with a fixed bandwidth $\lambda$ and level $\alpha$. For sufficiently large $B$, as in the proof of Theorem \ref{thm::upperbound_unifseparation_rate}, the test $\Delta_{\alpha}^{\lambda,B}$ satisfies the separation bound 
\begin{align*}
    \rho_{n+m}^2\left(\Delta_{\alpha}^{\lambda,B},\Sc_2^{s_1,s_2}(R),\beta,w,M,N\right)\notag \leq C\left(\lambda_1^{2s_1}+\lambda_2^{2s_2}+\ \frac{\log(1/\alpha)}{(n+m)\sqrt{\lambda_1\lambda_2}}\right),
\end{align*}
for some constant $C=C(M,N,w,s_1,s_2,R,\beta)>0$. By applying this bound at level $\alpha/K$ and employing the uniform power dominance of $\Delta_{\mathrm{Agg},\alpha}^{\Lambda,B}$ over $\Delta_{\mathrm{Bonf}}^{\Lambda,B}$, we have 
\begin{equation}\label{Appendix::ineq::bonferroni bound}
\begin{aligned}
    \rho_{n+m}^2\left(\Delta_{\mathrm{Agg},\alpha}^{\Lambda,B},\Sc_2^{s_1,s_2}(R),\beta,w,M,N\right)\leq &\rho_{n+m}^2\left(\Delta_{\mathrm{Bonf}}^{\Lambda,B},\Sc_2^{s_1,s_2}(R),\beta,w,M,N\right)\\\leq& C\cdot\min_{(\lambda_1,\lambda_2)\in \Lambda}\left\{\lambda_1^{2s_1}+\lambda_2^{2s_2}+\frac{\log(K/\alpha)}{(n+m)\sqrt{\lambda_1\lambda_2}}\right\}.
\end{aligned}
\end{equation}
Balancing the terms in
\begin{align*}
    \lambda_1^{2s_1}+\lambda_2^{2s_2}+\frac{\log(K/\alpha)}{(n+m)\sqrt{\lambda_1\lambda_2}},
\end{align*} we choose $(\lambda_1,\lambda_2)=\left(2^{-k_1^*},2^{-k_2^*}\right)$, where
\begin{align*}
    k_i^*:=\left\lceil \frac{2\bar{s}}{s_i(4\bar{s}+2)}\log_2\left(\frac{n+m}{\log\log(n+m)}\right)\right\rceil,\quad i=1,2.
\end{align*}
Note that, for $i=1,2,$ 
\begin{align*}
    0<\frac{2\bar{s}}{s_i(4\bar{s}+2)} <2.
\end{align*}
Therefore, to ensure that $k_i^* \in \{1,\dots,K_{n,m}\}$, for both $i=1,2,$ we set
\begin{align*}
    K_{n,m}=\left\lceil
2\log_2\left(\frac{n+m}{\log\log(n+m)}\right)
\right\rceil
\end{align*}
rather than
\begin{align*}
   \left\lceil
\log_2\left(\frac{n+m}{\log\log(n+m)}\right)
\right\rceil. 
\end{align*}
Substituting this choice into the bound \eqref{Appendix::ineq::bonferroni bound} yields 
\begin{align*}
    \rho_{n+m}^2\left(\Delta_{\mathrm{Agg},\alpha}^{\Lambda,B},\Sc_2^{s_1,s_2}(R),\beta,w,M,N\right)\leq  C_5 \left(\frac{\log\log (n+m)}{n+m}\right)^{\frac{2\bar{s}}{2\bar{s}+1}},
\end{align*}
for some constant $C_5=C_5(M,N,w,k,s_1,s_2,R,\alpha,\beta)>0$. Taking square roots on both sides of the above inequality completes the proof of Proposition \ref{Prop::aggregation test power}. \qed

\section{Proofs for Sections~\ref{sec::Topological properties of cech on the circle},  \ref{Appendix::Homotopy equivalence Cech Circle} and \ref{Appendix::sec::Identifiability}}
\label{Appendix::proof topology}
\subsection{Proof of Proposition \ref{Appendix::homotopy_cech_of_circle}}
We first introduce the notation used in the proof.
\paragraph*{Notation used in this proof.}
\begin{enumerate}
    \item Recall that \(\Xc\) is a nonempty finite subset of \(S^1(r)\).
    
    \item For a set \(A\subseteq \mathbb{R}^2\), \(\partial A\) is the boundary
    of \(A\), and \(\mathrm{int}(A)\) is the interior of \(A\).
    
    \item For \(x,y\in \mathbb{R}^2\), 
    \[
        [x,y]:=\{tx+(1-t)y: t\in [0,1]\},
    \]
    and 
    \[
    (x,y):=\{tx+(1-t)y: t\in (0,1)\}.
    \]
    \item Let \(r>0\). For \(x,y \in S^1(r)\), \(x\sim y\) denotes that
    \(x\) and \(y\) are adjacent in the finite set \(\Xc\); for the definition,
    see Definition \ref{defn::adjacent} in the main text.
    
    \item For a set \(A\subset \mathbb{R}^2\), \(\mathrm{conv}(A)\) is the
    convex hull of \(A\), $\ie$,
    \[\mathrm{conv}(A):= \left\{\sum_{i=1}^n\lambda_ix_i: x_i\in A,\sum_{i=1}^n\lambda_i=1,\lambda_i\geq 0,n\in \Nb\right\}.\]
    Moreover, we let
    \[
        P_{\Xc}:=\mathrm{conv}(\Xc).
    \]
    
    \item For three points \(x,y,z\in \mathbb{R}^2\) such that
    \(\{x-z,y-z\}\) is linearly independent, let
    \[
        \triangle xyz := \mathrm{conv}(\{x,y,z\}).
    \]
    
    \item For three points \(x,y,z\in\mathbb{R}^2\), we denote by
\(\angle xyz\) the angle at \(y\) formed by the two segments
\([y,x]\) and \([y,z]\), \ie,
\[
    \angle xyz
    :=
    \arccos\left(
    \frac{\langle x-y,z-y\rangle}
    {\|x-y\|\,\|z-y\|}
    \right).
\]
    
    \item The two-dimensional closed ball centered at \(x\in \mathbb{R}^2\)
    with radius \(r>0\) is defined by
    \[
        \mathcal{B}(x,r):= \{y\in \mathbb{R}^2: \|x-y\|_2\leq r\}.
    \]
\end{enumerate}
We begin the proof by stating the claims and the lemmas that will be used throughout.
\begin{claim}\label{Appendix::half plan claim}
For $x\neq y\in S^{1}(r)$, define a closed subset $H_{x,y}\subseteq \Rb^2$ by
\[
H_{x,y}\coloneqq\left\{ v\in\mathbb{R}^{2}:(x+y)^{\top}v\leq r^{2}+x^{\top}y\right\}. 
\]
Then, we have that
\begin{enumerate}
\item $[x,y]\subseteq\partial H_{x,y}$,
\item If $x+y\neq0$, then $0\in \mathrm{int}(H_{x,y})$,
\item If $\mathcal{X}\subseteq H_{x,y}$, then  $P_{\mathcal{X}}\subseteq H_{x,y}$,
\item If $x,y\in \Xc$ satisfy $x\sim y$, then $\mathcal{X}\subseteq H_{x,y}$ and $P_{\mathcal{X}}\subseteq H_{x,y}$.
\end{enumerate}
\end{claim}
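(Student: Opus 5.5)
The plan is to rewrite the defining inequality of $H_{x,y}$ in terms of the midpoint of $[x,y]$. Everything then reduces to elementary linear algebra plus one geometric observation about adjacency.

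Fix $x\neq y\in S^1(r)$ and put $a:=x+y$ and $c:=r^2+x^\top y$. Since $\|x\|=\|y\|=r$, we have $a^\top x = r^2+x^\top y = c$ and likewise $a^\top y=c$. Hence, for $t\in[0,1]$, $a^\top(tx+(1-t)y)=c$. So $[x,y]$ lies on the line $\{a^\top v=c\}$, which is $\partial H_{x,y}$ when $a\neq 0$. When $a=0$, i.e. $y=-x$, we get $c=r^2-r^2=0$, so $H_{x,y}=\mathbb{R}^2$ and the containment is trivial. This settles item 1.

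For item 2, assume $a\neq0$. Then $c=r^2+x^\top y>0$, because $x^\top y\geq -r^2$ with equality only when $y=-x$. Since $a^\top 0=0<c$, the origin lies in the open half-plane $\{a^\top v<c\}=\mathrm{int}(H_{x,y})$.

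Item 3 is immediate: $H_{x,y}$ is an intersection of closed half-planes (or all of $\mathbb{R}^2$), so it is convex. Any convex set containing $\mathcal X$ contains $\mathrm{conv}(\mathcal X)=P_{\mathcal X}$.

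Item 4 carries the real content. Again the case $y=-x$ is trivial, so assume $a\neq0$. For $z\in S^1(r)$, compute
\[
a^\top z - c = x^\top z + y^\top z - r^2 - x^\top y = -(x-z)^\top(y-z),
\]
using $z^\top z=r^2$. Therefore $z\in H_{x,y}$ if and only if $(x-z)^\top(y-z)\geq 0$, i.e. $\angle xzy\leq \pi/2$ or $z\in\{x,y\}$.

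Now use the inscribed angle theorem. The chord $[x,y]$ splits $S^1(r)\setminus\{x,y\}$ into two open arcs. For $z$ on one arc the angle $\angle xzy$ is constant and equals half the central angle subtending the opposite arc.
\begin{itemize}
\item On the arc whose complementary arc is the longer one, $\angle xzy\geq\pi/2$.
\item On the other arc, $\angle xzy\leq\pi/2$.
\end{itemize}
Equivalently, the line $\partial H_{x,y}$ cuts the circle exactly at $x$ and $y$. The side containing the origin holds the major arc, and the other side holds the minor arc.

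Adjacency $x\sim y$ says that one of these two arcs contains no point of $\mathcal X$. I must argue that the empty arc is the one lying outside $H_{x,y}$. This orientation issue is the step I expect to be the main obstacle, since the definition only says that \emph{some} arc is empty. If the empty arc is the minor one (the arc on the side away from the origin), then every other point of $\mathcal X$ lies on the major arc, hence in $H_{x,y}$, and item 3 gives $P_{\mathcal X}\subseteq H_{x,y}$.

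If instead the empty arc is the major one, the claim could fail as literally stated. In that case all of $\mathcal X$ sits on the closed minor arc, so $\mathcal X$ lies in an open hemicircle and the relevant pair would be chosen differently. To handle this, I would first try to interpret $x\sim y$ with the empty arc being the one on the far side of the chord from the origin. Failing that, I would restrict item 4 to the case, used later, where $\mathcal X$ is not contained in any open hemicircle. There, an empty major arc would force $\mathcal X$ into the closed minor arc, contradicting that hypothesis (modulo the diametric case $a=0$, which is already trivial). So the empty arc must be the minor one, and the argument above applies.
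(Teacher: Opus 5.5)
Your items 2 and 3, and the computational core of item 4, follow the paper's argument. The paper's ``simple computation'' that $z\notin H_{x,y}$ forces $\|x-y\|_2>\max\{\|x-z\|_2,\|y-z\|_2\}$ is your identity $(x+y)^\top z-(r^2+x^\top y)=-(x-z)^\top(y-z)$, i.e.\ an obtuse angle at $z$, which places $z$ on the minor arc.

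Your orientation worry is real, not hypothetical. Item 4 is false as stated, and the paper's proof has exactly the gap you identified. It declares that $z$ lying on ``the arc joining $x$ and $y$'' contradicts $x\sim y$, whereas $x\sim y$ only requires \emph{some} arc of $S^1(r)\setminus\{x,y\}$ to miss $\mathcal{X}$. For example, take $r=1$, $0<\theta<\pi/2$, $x=(\cos\theta,\sin\theta)$, $y=(\cos\theta,-\sin\theta)$, $z=(1,0)$ and $\mathcal{X}=\{x,y,z\}$. The major arc misses $\mathcal{X}$, so $x\sim y$. Yet $(x+y)^\top z-(1+x^\top y)=2\cos\theta\,(1-\cos\theta)>0$, so $z\notin H_{x,y}$.

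Both of your repairs are sound, because the closed minor arc has central angle less than $\pi$ and hence lies in an open hemicircle. The first is preferable: read $x\sim y$ as saying that the open arc on the far side of the chord from the origin misses $\mathcal{X}$. Then item 4 holds for every $\mathcal{X}$ with no hemicircle hypothesis. This is also the form of adjacency the paper actually produces later, since the step proving $x_1\sim x_2$ shows that the shorter arc is empty.

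One step of yours is wrong, however: the antipodal case of item 1 is not trivial but false. If $y=-x$, then $H_{x,y}=\mathbb{R}^2$, whose boundary is empty, so $[x,y]\not\subseteq\partial H_{x,y}$. Item 1 therefore needs the hypothesis $x+y\neq 0$, as item 2 already has. The only alternative is to reinterpret $\partial H_{x,y}$ as the level set $\{v:(x+y)^\top v=r^2+x^\top y\}$. The paper's proof makes the same slip: it infers $x,y\in\partial H_{x,y}$ from equality in the defining inequality, which is valid only when $x+y\neq0$.
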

\begin{proof}\noindent
\begin{enumerate}
    \item Both $x$ and $y$ satisfies $(x+y)^{\top}x=r^{2}+x^{\top}y$ and $(x+y)^{\top}y=r^{2}+x^{\top}y$, so $x,y\in\partial H$. Then, $(x+y)^T(tx+(1-t)y)=r^2+x^Ty,$ for any $t\in [0,1].$ This implies $[x,y]\subseteq \partial{H_{x,y}}$.
    \item The condition $x+y\neq0$ ($x,y$ are not an antipodal pair) implies that $x^{\top}y>-r^{2}$, therefore, we have $0\in \mathrm{int}(H_{x,y})$.
    \item Since $H_{x,y}$ is convex and $\Xc \subseteq H_{x,y}$, we have $P_{\Xc}=\mathrm{conv}(\Xc)\subseteq H_{x,y}.$
    \item Suppose that there exists an element $z \in \Xc$ such that $z\notin H_{x,y}$. By simple computations, one can show that $\|x-y\|_{_2}> \max \{\|x-z\|_2,\|y-z\|_2\}.$ This implies that the point $z$ lies in the arc joining $x$ and $y$. Therefore, it yields a contradiction with the assumption that $x\sim y$.
Then, $P_{\mathcal{X}}\subset H_{x,y}$ as well, by the third statement.
\end{enumerate}
This completes the proof of Claim \ref{Appendix::half plan claim}.
\end{proof}
For each nonzero $x\in P_{\mathcal{X}}\setminus\{0\}$, let $\mathcal{I}_{x}\coloneqq\{\lambda\geq1:\lambda x\in P_{\mathcal{X}}\}$.
Since both $\{\lambda x\in\mathbb{R}^{2}:\lambda\geq1\}$ and $P_{\mathcal{X}}$
are closed and convex, 
\[
\{\lambda x\in P_{\mathcal{X}}:\lambda\geq1\}=\{\lambda x\in\mathbb{R}^{2}:\lambda\geq1\}\cap P_{\mathcal{X}}
\]
is closed and convex as well. Further, since $P_{\mathcal{X}}$ is
bounded, this set is also bounded. Then this set corresponds to $\mathcal{I}_{x}$
under the linear map $\lambda\mapsto\lambda x$ (which is bijective
since $x\neq0$), so $\mathcal{I}_{x}$ is closed, convex, and bounded.
Since $\mathcal{I}_{x}$ is a subset of $\mathbb{R}$, $\mathcal{I}_{x}$
is a closed and bounded interval.
Define the radial projection map $\rho_{\Xc}:P_{\mathcal{X}}\setminus\{0\}\to P_{\mathcal{X}}\setminus\{0\}$
by
\[
\rho_{\Xc}(x)\coloneq\left(\max\mathcal{I}_{x}\right)x.
\]
Note that $\max\mathcal{I}_{x}\geq1$, so $x\neq0$ implies $\rho_{\Xc}(x)\neq0$.
Let
\[
B_{\mathcal{X}}\coloneqq \mathrm{Im}(\rho_{\Xc}),
\]
where $\mathrm{Im}(f)$ denotes the image set of a function $f$. Figure \ref{Appendix::fig:imageofrho} provides visual illustrations  for the map $\rho_{\Xc}$.
\begin{figure}[h]
    \centering
\includegraphics[width=\textwidth]{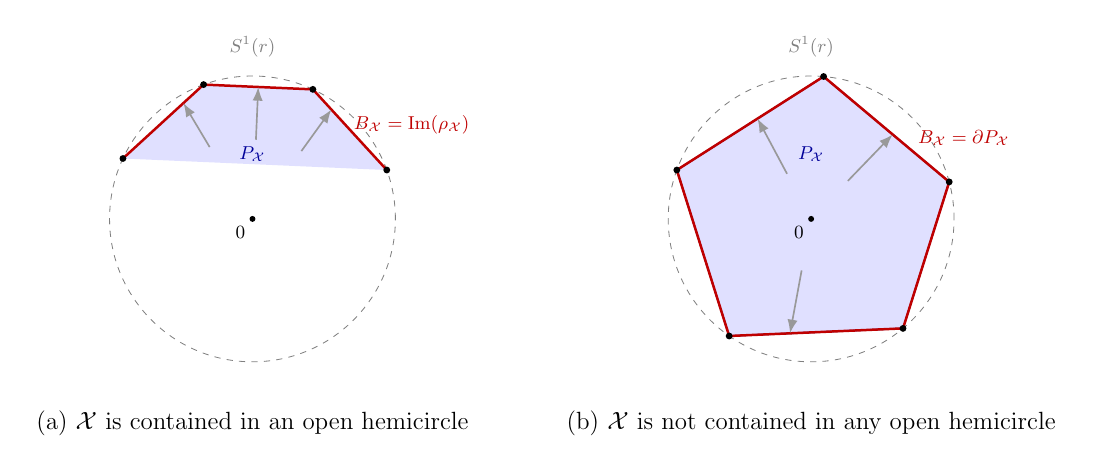}
    \caption{
    Illustrations of the radial projection map $\rho_{\Xc}$.
    }
\label{Appendix::fig:imageofrho}
\end{figure}



\begin{claim}\label{Appendix::Claim::x1x2}
Let $x_{0}\in S^{1}(r)$ be satisfying $x_{0}\notin\mathcal{X}$ and
$(0,x_{0})\cap P_{\mathcal{X}}\neq\emptyset$. Let $w_{0}\neq0\in\mathbb{R}^{2}$
be a vector satisfying $w_{0}\perp x_{0}$, and let $M_{1}\coloneqq\left\{ v\in\mathbb{R}^{2}:w_{0}^{\top}v<0\right\} $,
$M_{2}\coloneqq\left\{ v\in\mathbb{R}^{2}:w_{0}^{\top}v>0\right\} $.
Then, we have that
\begin{enumerate}[label=(\alph*)]
\item\label{Appendix::Claime2::1}  Both $M_{1}\cap\mathcal{X}$ and $M_{2}\cap\mathcal{X}$ are nonempty.
\end{enumerate}
Furthermore, for 
\[
x_{1}\coloneqq \underset{x\in\mathcal{X}\cap M_{1}}{\mathrm{argmin}}\left\{ \left\Vert x-\rho_{\Xc}(z)\right\Vert _{2}\right\},\qquad x_{2}\coloneqq\underset{x\in\mathcal{X}\cap M_{2}}{\mathrm{argmin}}\left\{ \left\Vert x-\rho_{\Xc}(z)\right\Vert _{2}\right\},
\]
it holds that
\begin{enumerate}[resume,label=(\alph*)]
\item\label{Appendix::Claime2::2}  The path-connected component of $S^{1}(r)\setminus\{x_{1},x_{2}\}$
containing $x_{0}$ has empty intersection with $\mathcal{X}$. 
\item\label{Appendix::Claime2::3}  $x_{0}\in H_{x_{1},x_{2}}^{\complement}$ and $P_{\mathcal{X}}\subset H_{x_{1},x_{2}}$.
\item\label{Appendix::Claime2::4} $(0,x_{0})\cap P_{\mathcal{X}}\subset\triangle0x_{1}x_{2}$.
\item\label{Appendix::Claime2::5}  $x_{1}\sim x_{2}$.

\end{enumerate}
\end{claim}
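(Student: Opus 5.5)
Throughout, I read the unspecified point $z$ in the definitions of $x_1,x_2$ as a fixed point $z\in(0,x_0)\cap P_{\Xc}$. Since $z\neq0$ and $\rho_{\Xc}(z)$ lies on the same ray, we have $\rho_{\Xc}(z)=t x_0$ for some $t\in(0,1]$. The key computation is that for $x\in S^1(r)$,
\[
\|x-tx_0\|_2^2=r^2+t^2-2t\,x^\top x_0 .
\]
So, because $t>0$, minimizing $\|x-\rho_{\Xc}(z)\|_2$ over $\Xc\cap M_i$ is the same as maximizing $x^\top x_0$, that is, minimizing the angle between $x$ and $x_0$. Hence $x_1$ and $x_2$ are the points of $\Xc$ angularly closest to $x_0$ on either side of the line $\mathbb{R}x_0=\{w_0^\top v=0\}$. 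All parts except the separation in (c) follow from this reformulation and Claim \ref{Appendix::half plan claim}.

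For (a), I argue by contradiction. Suppose $\Xc\cap M_1=\emptyset$. Then $\Xc\subseteq\{w_0^\top v\ge0\}$, and since this set is convex, $P_{\Xc}$ lies in it too. The only points of $S^1(r)$ on the line $w_0^\top v=0$ are $\pm x_0$, and $x_0\notin\Xc$. Hence $P_{\Xc}\cap\mathbb{R}x_0=\mathrm{conv}(\Xc\cap\mathbb{R}x_0)\subseteq\{-x_0\}$, which does not meet $(0,x_0)$. This contradicts the hypothesis, and $M_2$ is symmetric.

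For (b), any point of $\Xc$ on the open arc from $x_1$ through $x_0$ to $x_2$ lies in $M_1$ or $M_2$. It would be angularly strictly closer to $x_0$ than $x_1$, respectively $x_2$, contradicting their choice. This is exactly (b), and (e) is then immediate from Definition \ref{defn::adjacent}.

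For (c), the inclusion $P_{\Xc}\subseteq H_{x_1,x_2}$ follows from (e) and Claim \ref{Appendix::half plan claim}(4). The remaining assertion $x_0\notin H_{x_1,x_2}$ is the step I expect to be the main obstacle, because it requires knowing which side of the chord $[x_1,x_2]$ the point $x_0$ lies on. A point $v\in S^1(r)$ violates $(x_1+x_2)^\top v\le r^2+x_1^\top x_2$ exactly when $v$ lies on the open arc between $x_1$ and $x_2$ of angular length $<\pi$, which is the arc centred in the direction $x_1+x_2$. So I must show that the $\Xc$-free arc $A$ containing $x_0$ has length $<\pi$. Suppose instead that $A$ has length $\ge\pi$. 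By (b), $\Xc$ lies on the complementary closed arc $S^1(r)\setminus A$, of length $\le\pi$. That arc lies in a closed half-plane bounded by the line through $x_1,x_2$, which, since its length is $\le\pi$, does not contain $0$ in its interior. The open segment $(0,x_0)$ lies in the opposite open half-plane, except when $A$ has length exactly $\pi$, in which case $x_1=-x_2$ and $(0,x_0)$ meets the line only at $0$, which is excluded. This gives $P_{\Xc}\cap(0,x_0)=\emptyset$, a contradiction. I would verify this last half-plane bookkeeping carefully, including the degenerate antipodal case.

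For (d), the angle $\angle x_1 0 x_2<\pi$ by (c), and the ray through $x_0$ lies between the rays through $x_1$ and $x_2$. Hence the segment $[0,x_0]$ crosses the chord line at a single point $c\in[x_1,x_2]$. By Claim \ref{Appendix::half plan claim}(2), $0\in\mathrm{int}(H_{x_1,x_2})$ (note $x_1+x_2\neq0$ by (c)), and $x_0\notin H_{x_1,x_2}$. Therefore $[0,x_0]\cap H_{x_1,x_2}=[0,c]$. This segment lies in $\triangle 0x_1x_2$, since the triangle is convex and contains $0$ and $c$. Combining with $P_{\Xc}\subseteq H_{x_1,x_2}$ gives $(0,x_0)\cap P_{\Xc}\subseteq\triangle 0x_1x_2$.
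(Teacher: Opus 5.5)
Your argument is essentially the paper's. It uses the same reduction of $x_1,x_2$ to the points of $\Xc$ angularly nearest $x_0$ on each side of $\mathbb{R}x_0$ (the paper compares distances to $x_0$ directly), the same convexity contradiction for (a), and the same nearest-point argument for (b). It also uses the same chord-separation contradiction for $x_0\notin H_{x_1,x_2}$ and the same crossing-point argument for (d). There is a small typo: $\|x-tx_0\|_2^2=r^2+t^2r^2-2t\,x^\top x_0$, which does not affect the monotonicity in $x^\top x_0$.

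The step that does not stand as written is the first half of (c), where you obtain $P_{\Xc}\subseteq H_{x_1,x_2}$ from (e) and item 4 of Claim~\ref{Appendix::half plan claim}. Under the literal Definition~\ref{defn::adjacent} (one of the two arcs is $\Xc$-free), which is exactly what makes your (e) immediate, that item is false in general. Take $r=1$, $x=(\cos\theta,-\sin\theta)$, $y=(\cos\theta,\sin\theta)$, $z=(1,0)$ with $0<\theta<\pi/2$. The long arc between $x$ and $y$ is free, so $x\sim y$. Yet $(x+y)^\top z=2\cos\theta>2\cos^2\theta=1+x^\top y$, so $z\notin H_{x,y}$. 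The proof of that item tacitly assumes the free arc is the shorter one, and in your setting that is precisely the second half of (c), which you have not established at that point. Reading $\sim$ as `the shorter arc is free' rescues the item, but then (e) no longer follows immediately from (b).

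The fix is a reordering, and it is what the paper does. Your proof that $A$ has length $<\pi$ never uses the inclusion, so do it first. Then $\Xc\subseteq S^1(r)\setminus A$, the closed longer arc, which lies in $H_{x_1,x_2}$, and convexity gives $P_{\Xc}\subseteq H_{x_1,x_2}$. Statement (e) then holds under either reading.

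Your explicit treatment of $x_1=-x_2$ is more careful than the paper's. The paper's dichotomy `$x_0\in H_{x_1,x_2}^{\complement}$ or $x_0\in\mathrm{int}(H_{x_1,x_2})$', and its claim that $x_1+x_2=0$ forces $0\in\partial H_{x_1,x_2}$, both overlook that $H_{x_1,x_2}=\mathbb{R}^2$ in that case.
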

\begin{proof}
\ref{Appendix::Claime2::1}.
Suppose either $M_{1}\cap\mathcal{X}$ or $M_{2}\cap\mathcal{X}$ is empty,
say $M_{2}\cap\mathcal{X}=\emptyset$. Then, $\mathcal{X}\subset M_{1}\cup\{\lambda z:\lambda\leq0\}$. 
Since $M_{1}\cup\{\lambda z:\lambda\leq0\}$ is convex, we have that 
\[
P_{\mathcal{X}}\subset M_{1}\cup\{\lambda z:\lambda\leq0\}.
\]
It implies that $z\notin P_{\mathcal{X}}$, since $z\notin M_{1}\cup\{\lambda z:\lambda\leq0\} $. This is a contradiction. 
Therefore, $M_{2}\cap\mathcal{X}$ is
nonempty. Similarly, we have $M_{1}\cap\mathcal{X}\neq\emptyset$ as well.

\ref{Appendix::Claime2::2}. Let $A_{x_{0}}$ be the path-connected component of $S_{1}\backslash\{x_{1},x_{2}\}$
containing $x_{0}$. Since for all $y\in A_{x_{0}}\cap M_{1}$, $\left\Vert y-x_{0}\right\Vert \leq\left\Vert x_{1}-x_{0}\right\Vert $,
the definition of $x_{1}$ implies that $A_{x_{0}}\cap M_{1}\cap\mathcal{X}=\emptyset$.
Similarly, $A_{x_{0}}\cap M_{2}\cap\mathcal{X}=\emptyset$ holds as
well.  Since $(M_{1}\cup M_{2}\cup\{x_{0}\})\cap A_{x_{0}}=A_{x_{0}}$,
we have that 
\[
A_{x_{0}}\cap\mathcal{X}=\left(A_{x_{0}}\cap M_{1}\cap\mathcal{X}\right)\cup\left(A_{x_{0}}\cap M_{2}\cap\mathcal{X}\right)\cup\left(A_{x_{0}}\cap\{x_{0}\}\cap\mathcal{X}\right)=\emptyset.
\]

\ref{Appendix::Claime2::3}.
The definition of $x_{1}$ and $x_{2}$ implies either path-connected
component of $S_{1}\setminus\{x_{1},x_{2}\}$ containing $x_{0}$
has empty intersection with $\mathcal{X}$; in other words, $x_{0}\in H_{x_{1},x_{2}}^{\complement}$
implies $\mathcal{X}\subset H_{x_{1},x_{2}}$, and $x_{0}\in \mathrm{int}(H_{x_{1},x_{2}})$
implies $\mathcal{X}\subset\overline{H_{x_{1},x_{2}}^{\complement}}$.

Now, suppose $x_{0}\in \mathrm{int}(H_{x_{1},x_{2}})$, so that $\mathcal{X}\subset\overline{H_{x_{1},x_{2}}^{\complement}}$.
Then $P_{\mathcal{X}}\subset\overline{H_{x_{1},x_{2}}^{\complement}}$, due to convexity of $\overline{H_{x_{1},x_{2}}^{\complement}}$, i.e.,
$P_{\mathcal{X}}\cap \mathrm{int}(H_{x_{1},x_{2}})=\emptyset$. Since $x_{0}\in \mathrm{int}(H_{x_{1},x_{2}})$
implies $(0,x_{0})\subset \mathrm{int}(H_{x_{1},x_{2}})$, this contradicts the assumption $(0,x_{0})\cap P_{\mathcal{X}}\neq\emptyset$.
Hence, it is indeed the case that $x_{0}\in H_{x_{1},x_{2}}^{\complement}$
and $\mathcal{X}\subset H_{x_{1},x_{2}}$, and due to convexity,
$P(\mathcal{X})\subset H_{x_{1},x_{2}}.$

\ref{Appendix::Claime2::4}.
We will first show that $(0,x_{0})\cap[x_{1},x_{2}]\neq\emptyset$.
Since $0\in H_{x_{1},x_{2}}$ and $x_{0}\in H_{x_{1},x_{2}}^{\complement}$,
this implies that 
\[
(0,x_{0})\cap\partial H_{x_{1},x_{2}}\neq\emptyset.
\]
Moreover, since $(0,x_{0})\subset\mathcal{B}(0,r)$, we have that 
\[
(0,x_{0})\cap\partial H_{x_{1},x_{2}}\subset\mathcal{B}(0,r)\cap\partial H_{x_{1},x_{2}}=[x_{1},x_{2}],
\]
which implies that $(0,x_{0})\cap[x_{1},x_{2}]\neq\emptyset$. Now, let
\[
\bar{x}_{0}\coloneqq(0,x_{0})\cap[x_{1},x_{2}].
\]
The two facts that $P_{\mathcal{X}}\subset H_{x_{1},x_{2}}$ and $x_{0}\in H_{x_{1},x_{2}}^{\complement}$,
imply that 
\[
(0,x_{0})\cap P_{\mathcal{X}}=(0,\bar{x}_{0}]\cap P_{\mathcal{X}}.
\]
Now, since $\bar{x}_{0}\in[x_{1},x_{2}]$, there exists $\xi\in[0,1]$
such that 
\[
\bar{x}_{0}=\xi x_{1}+(1-\xi)x_{2}.
\]
Then, for any $y\in(0,\bar{x}_{0}]$, there exists $\lambda\in[0,1]$
such that $y=\lambda\bar{x}_{0}$. Therefore, we have that 
\[
y=\lambda\xi x_{1}+\lambda(1-\xi)x_{2}+(1-\lambda)0,\quad\text{with }\lambda\xi+\lambda(1-\xi)+(1-\lambda)=1.
\]
Hence, $y\in\triangle0x_{1}x_{2}$, which implies that 
\[
(0,x_{0})\cap P_{\mathcal{X}}\subset\triangle0x_{1}x_{2}.
\]
\ref{Appendix::Claime2::5}
We will first show that $x_{1}+x_{2}\neq0$. If $x_{1}+x_{2}=0$,
then $0\in\partial H_{x_{1},x_{2}}$. Then since $x_{0}\in H_{x_{1},x_{2}}^{\complement}$,
we have 
\[
(0,x_{0})\subset H_{x_{1},x_{2}}^{\complement}.
\]
Since $P_{\mathcal{X}}\subset H_{x_{1},x_{2}}$ from (b), this implies
that 
\[
(0,x_{0})\cap P_{\mathcal{X}}=\emptyset,
\]
which contradicts the assumption $(0,x_{0})\cap P_{\mathcal{X}}\neq\emptyset$.
Hence we have 
\[
x_{1}+x_{2}\neq0,
\]
and this implies that $0\in \mathrm{int}(H_{x_{1},x_{2}})$.

Now from \ref{Appendix::Claime2::2}
the path-connected component of $S_{1}\setminus\{x_{1},x_{2}\}$ containing
$x_{0}$ has empty intersection with $\mathcal{X}$. Since $0\in \mathrm{int}(H_{x_{1},x_{2}})$,
the component contained in $H_{x_{1},x_{2}}^{\complement}$ is the shorter
arc. 
Since $x_{0}\in H_{x_{1},x_{2}}^{\complement}$, the shorter arc is the one containing $x_{0}$. 
Therefore, we have that 
\[
x_{1}\sim x_{2},
\] 
because otherwise the component containing $x_0$ has non-empty intersection with $\Xc$.
The proof of Claim \ref{Appendix::Claim::x1x2} is completed.
\end{proof}

\begin{lem}\label{Appendix::lem convexhull inclusion} For the convex hull $P_{\Xc}$, the following relation holds:
\[
P_{\mathcal{X}}\subseteq \left(\bigcup_{x\in\mathcal{X}}[0,x]\right)\bigcup\left(\bigcup_{x,y\in\mathcal{X},x\sim y}\triangle0xy\right).
\]
\end{lem}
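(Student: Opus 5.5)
We take an arbitrary point $z\in P_{\mathcal X}$ and show that it lies in one of the pieces on the right-hand side. If $z=0$, then $z\in[0,x]$ for any $x\in\mathcal X$, since $\mathcal X$ is nonempty. So assume $z\neq 0$, and let $x_0:=r\,z/\|z\|_2\in S^1(r)$ be the point of the circle on the ray through $z$. Because $P_{\mathcal X}\subseteq \mathcal B(0,r)$, we have $\|z\|_2\le r$, and hence $z\in(0,x_0]$.

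\emph{Case 1: $x_0\in\mathcal X$.} Then $z\in[0,x_0]$ with $x_0\in\mathcal X$, so $z$ belongs to the first union.

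\emph{Case 2: $x_0\notin\mathcal X$.} First, $z\neq x_0$: points of $P_{\mathcal X}$ lying on $S^1(r)$ are extreme points of the disc, so they must belong to $\mathcal X$. Therefore $z\in(0,x_0)\cap P_{\mathcal X}$, and in particular $(0,x_0)\cap P_{\mathcal X}\neq\emptyset$. This is exactly the hypothesis of Claim~\ref{Appendix::Claim::x1x2}. Choose $w_0\perp x_0$ and the open half-planes $M_1,M_2$ as in that claim, and let $x_1,x_2$ be the points it provides, reading $\rho_{\mathcal X}(z)$ in its definition as the relevant reference point on the ray through $x_0$; measuring nearness from $x_0$ itself also works. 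Part \ref{Appendix::Claime2::4} gives $z\in(0,x_0)\cap P_{\mathcal X}\subseteq\triangle 0x_1x_2$, and part \ref{Appendix::Claime2::5} gives $x_1\sim x_2$. Hence $z$ lies in the second union.

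The argument splits on whether $x_0\in\mathcal X$, and all the geometric work is already contained in Claim~\ref{Appendix::Claim::x1x2}. The one point needing care is that $\triangle 0x_1x_2$ must be a genuine triangle, so that the notation $\triangle xyz$ is defined. This amounts to checking that $0$, $x_1$, $x_2$ are not collinear, i.e.\ $x_1+x_2\neq0$, which was established inside the proof of part \ref{Appendix::Claime2::5}. Part \ref{Appendix::Claime2::1} guarantees that $x_1$ and $x_2$ exist. The remaining subtlety I expect is notational: the definitions of $x_1,x_2$ refer to a point $z$ and to $\rho_{\mathcal X}(z)$, and I will make this consistent by taking the claim's $z$ to be our $z$, which lies on the ray through $x_0$. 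I will then verify that the parts of the claim, in particular \ref{Appendix::Claime2::2}, still go through for this choice, since they only use nearness along the circle on each side of $x_0$.
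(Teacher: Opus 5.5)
Your proposal is correct and follows the paper's proof. The paper likewise takes $z\in P_{\mathcal X}\setminus\bigcup_{x\in\mathcal X}[0,x]$, notes that the radial point of $z$ on the circle is not in $\mathcal X$, and applies parts (d) and (e) of Claim~\ref{Appendix::Claim::x1x2} to get $z\in\triangle 0xy$ with $x\sim y$, exactly as in your Case~2. Your extra checks only fill in what the paper leaves implicit: the case $z=0$, $z\neq x_0$ by extremality, nondegeneracy of the triangle, and reading the claim's reference point as $x_0$. That last reading is harmless, because for $x\in S^1(r)$ and $p=tx_0$ with $t>0$, $\|x-p\|_2$ is a decreasing function of $\langle x,x_0\rangle$.
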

\begin{proof}
Suppose $z\in P_{\mathcal{X}}\setminus\bigcup_{x\in\mathcal{X}}[0,x]$.
Then, $\rho_{\mathcal{X}}(z)\notin\mathcal{X}$. Claim \ref{Appendix::Claim::x1x2} implies
that there exist $x,y\in\mathcal{X}$ with $x\sim y$ such that 
\[
z\in\triangle0xy.
\]
This completes the proof of Lemma \ref{Appendix::lem convexhull inclusion}.
\end{proof}
\begin{claim}\label{Appendix::Claim characterization of the boundary} For any $x\in P_{\Xc}\setminus \{0\}$, 
$\rho_{\Xc}(x)=x$ if and only if $x\in B_{\mathcal{X}}$.
\end{claim}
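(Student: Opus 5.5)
The statement asks for two implications, and one of them is immediate from the definitions. If $\rho_{\Xc}(x)=x$, then $x=\rho_{\Xc}(x)$ lies in the image of $\rho_{\Xc}$, that is, $x\in B_{\Xc}$. So all the work is in the converse: if $x\in B_{\Xc}$ then $\rho_{\Xc}(x)=x$.

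For the converse, I first write $x=\rho_{\Xc}(y)=\mu y$ for some $y\in P_{\Xc}\setminus\{0\}$, where $\mu:=\max\mathcal I_y\ge 1$. The key step is to identify $\mathcal I_x$ in terms of $\mathcal I_y$. Because $x$ is a positive multiple of $y$, for $\lambda\ge1$ we have
\[
\lambda x\in P_{\Xc}\iff (\lambda\mu)y\in P_{\Xc}\iff \lambda\mu\in\mathcal I_y .
\]
Here $\lambda\mu\ge\mu\ge1$, so membership in $\{\nu\ge1:\nu y\in P_{\Xc}\}$ is automatic from the range. This gives
\[
\mathcal I_x=\{\lambda\ge1:\lambda\mu\in\mathcal I_y\}=\{\lambda\ge 1: \lambda\mu\le \max\mathcal I_y=\mu\}\cap\{\lambda:\lambda\mu\in\mathcal I_y\}.
\]
Since $\mu=\max\mathcal I_y$, no $\lambda>1$ can satisfy $\lambda\mu\in\mathcal I_y$. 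On the other hand $\lambda=1$ does belong to $\mathcal I_x$, because $x=\mu y\in P_{\Xc}$. Hence $\mathcal I_x=\{1\}$, so $\max\mathcal I_x=1$ and $\rho_{\Xc}(x)=x$.

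This argument uses only the facts already established before the claim: $\mathcal I_y$ is a closed bounded interval, so its maximum exists and $\rho_{\Xc}(y)\in P_{\Xc}$, and $\rho_{\Xc}(y)\neq 0$. I do not expect a real obstacle. The only point needing care is to check that $x\in P_{\Xc}\setminus\{0\}$, so that $\mathcal I_x$ is defined; this holds because $\rho_{\Xc}$ maps into $P_{\Xc}\setminus\{0\}$.

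As a remark, the computation shows that $\rho_{\Xc}$ is idempotent, $\rho_{\Xc}\circ\rho_{\Xc}=\rho_{\Xc}$. This is exactly what lets $\rho_{\Xc}$ serve as a retraction onto $B_{\Xc}$ later in the proof.
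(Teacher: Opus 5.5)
Your proof is correct and follows essentially the same route as the paper. For the nontrivial direction you write $x=(\max\mathcal I_y)\,y$ and note that $\lambda x=(\lambda\max\mathcal I_y)\,y\notin P_{\Xc}$ for every $\lambda>1$, which gives $\max\mathcal I_x=1$; the other direction is immediate from $B_{\Xc}=\mathrm{Im}(\rho_{\Xc})$, which the paper states as a contrapositive.
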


\begin{proof}
If $x\in B_{\mathcal{X}}$, then $x=\left(\max\mathcal{I}_{y}\right)y$
for some $y\in P_{\mathcal{X}}$. Then, for any $\xi>\max\mathcal{I}_{y}$,
$\xi y\notin P_{\mathcal{X}}$. Therefore, for any $\lambda>1$, 
\[
\lambda x=\left(\lambda\max\mathcal{I}_{y}\right)y\notin P_{\mathcal{X}}.
\]
This implies that $\max\mathcal{I}_{x}=1$, and $\rho_{\Xc}(x)=x$.
Next, if $x\notin B_{\mathcal{X}},$ then $x\neq\rho_{\Xc}(y)$
for any $y\in P_{\mathcal{X}}\setminus\{0\}$. Hence, we obtain that $x\neq\rho_{\Xc}(x)$. This completes the proof of Claim \ref{Appendix::Claim characterization of the boundary}.
\end{proof}
\begin{lem}\label{Appendix::lem::Boundary equality}
When $\mathcal{X}$ is regarded as a set of vertices, $B_{\mathcal{X}}$ is
a union of vertices and neighbor edges, i.e.,
\[
B_{\mathcal{X}}=\mathcal{X}\bigcup\left(\bigcup_{x,y\in\mathcal{X},x\sim y}[x,y]\right).
\]
\end{lem}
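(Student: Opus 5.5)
We prove the two inclusions separately, using Claim \ref{Appendix::Claim characterization of the boundary}: a point $v\in P_{\Xc}\setminus\{0\}$ lies in $B_{\Xc}$ if and only if $\lambda v\notin P_{\Xc}$ for every $\lambda>1$. Thus $B_{\Xc}$ consists exactly of the radially maximal points of $P_{\Xc}$. Throughout we implicitly work in the situation where $0\in P_{\Xc}$ is relevant. If $0\notin P_{\Xc}$, the radial structure degenerates, and in that case (which can only occur when $\Xc$ lies in a hemicircle) we check the identity directly. If the paper only uses the lemma when $0\in\mathrm{int}(P_{\Xc})$, we record that restriction explicitly.

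For the inclusion ``$\supseteq$'', first take $x\in\Xc$. Since $\|x\|=r$ while every point of $P_{\Xc}\subseteq\Bc(0,r)$ has norm at most $r$, no point $\lambda x$ with $\lambda>1$ lies in $P_{\Xc}$. Hence $x\in B_{\Xc}$. Next take $x\sim y$ and $v\in[x,y]$.
\begin{itemize}
\item If $x+y=0$, then $v$ lies on a diameter. Adjacency forces all of $\Xc$ onto one closed half-circle, and there $H_{x,y}$ is exactly the closed half-plane bounded by the line through $x,y$.
\item In general, Claim \ref{Appendix::half plan claim}(1),(4) gives $P_{\Xc}\subseteq H_{x,y}$ and $v\in\partial H_{x,y}$. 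If $x+y\neq0$, Claim \ref{Appendix::half plan claim}(2) gives $0\in\mathrm{int}(H_{x,y})$, i.e.\ $(x+y)^\top 0<r^2+x^\top y$. Because the functional $u\mapsto(x+y)^\top u$ is linear and $(x+y)^\top v=r^2+x^\top y>0$, we get $(x+y)^\top(\lambda v)>r^2+x^\top y$ for $\lambda>1$. So $\lambda v\notin H_{x,y}\supseteq P_{\Xc}$, and Claim \ref{Appendix::Claim characterization of the boundary} yields $v\in B_{\Xc}$, noting $v\neq 0$.
\end{itemize}

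For the inclusion ``$\subseteq$'', let $v\in B_{\Xc}$, so $\rho_{\Xc}(v)=v$. If $v\in\Xc$ we are done. Otherwise apply Lemma \ref{Appendix::lem convexhull inclusion} to place $v$ in some segment $[0,x]$ with $x\in\Xc$, or in some triangle $\triangle0xy$ with $x\sim y$.
\begin{itemize}
\item \emph{Segment case.} If $v=tx$ with $t<1$, then $x=(1/t)v\in P_{\Xc}$ contradicts maximality. So $t=1$ and $v=x$.
\item \emph{Triangle case.} Write $v=a x+b y$ with $a,b\ge0$ and $a+b\le1$. If $a+b<1$, then $\lambda v$ with $\lambda=1/(a+b)>1$ lies in $[x,y]\subseteq P_{\Xc}$, which contradicts maximality. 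Hence $a+b=1$ and $v\in[x,y]$. When $a+b=0$ we would have $v=0$, which is excluded.
\end{itemize}

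The main obstacle is bookkeeping the degenerate cases: antipodal adjacent pairs, and configurations where $0$ lies on the boundary of or outside $P_{\Xc}$. In those cases the radial map and Claim \ref{Appendix::Claim::x1x2} need care, namely the hypothesis $(0,x_0)\cap P_{\Xc}\neq\emptyset$. I would handle them by a short separate argument using that adjacency then confines $\Xc$ to a closed half-circle. The generic cases reduce immediately to the half-plane Claim \ref{Appendix::half plan claim} and Lemma \ref{Appendix::lem convexhull inclusion}.
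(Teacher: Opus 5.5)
Your argument for the nondegenerate case is the paper's own proof. For ``$\supseteq$'' you use $\|x\|=r$ for vertices and the supporting half-plane $H_{x,y}$ of Claim~\ref{Appendix::half plan claim} for adjacent edges. For ``$\subseteq$'' you use Lemma~\ref{Appendix::lem convexhull inclusion} with the segment and triangle cases, and that direction is valid in every configuration.

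\textbf{The gap.} It lies in the degenerate cases you defer. No separate argument can handle them, because the identity is false there, and your assertions about them are wrong.
\begin{itemize}
\item \emph{Antipodal adjacent pair.} If $x\sim y$ with $x+y=0$, then $r^2+x^\top y=0$ and $H_{x,y}=\{v:0\le 0\}=\Rb^2$. It is not the half-plane bounded by the line through $x,y$. For example, take $\Xc=\{(1,0),(-1,0),(0,1)\}\subset S^1(1)$. Here $(1,0)\sim(-1,0)$ through the empty lower semicircle. Yet $0\in[x,y]$ does not lie in $B_\Xc\subseteq P_\Xc\setminus\{0\}$. Also $v=(1/2,0)$ satisfies $2v\in P_\Xc$, so $v\notin B_\Xc$.
\item \emph{Your ``generic'' case $x+y\neq 0$ is not generic enough.} Claim~\ref{Appendix::half plan claim}(4) fails whenever the $\Xc$-free arc between $x$ and $y$ is the major one. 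Its proof puts $z$ on ``the arc'' joining $x,y$, but adjacency empties only one of the two arcs. Take $\Xc$ at angles $60^\circ,90^\circ,120^\circ$ on $S^1(1)$. The two extreme points are adjacent through the major arc, and $(0,1)\notin H_{x,y}$. The midpoint $v=(0,\sqrt{3}/2)$ of their chord satisfies $\tfrac{2}{\sqrt{3}}v\in P_\Xc$, so $v\notin B_\Xc$. This is exactly the case $0\notin P_\Xc$ that you proposed to ``check directly''.
\end{itemize}

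\textbf{The repair.} It is a hypothesis, not a case analysis. The identity holds precisely when $\Xc$ is not contained in any closed hemicircle, i.e.\ when $0\in\mathrm{int}(P_\Xc)$. Under that hypothesis, every $\Xc$-free arc between adjacent points subtends an angle less than $\pi$. Hence $x+y\neq 0$, Claim~\ref{Appendix::half plan claim}(1),(2),(4) are valid, and your generic argument proves the lemma verbatim. State this restriction instead of promising to treat the other configurations.

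The paper's proof has the same blind spot. It asserts that $x\neq y$ implies $x^\top y>-r^2$, which is false for antipodal pairs, and it applies Claim~\ref{Appendix::half plan claim}(4) without restriction. For the paper's use in case 2 of Proposition~\ref{prop::phofcircle}, the restriction costs nothing: an antipodal adjacent pair forces $r_0=r$, so the range $r_0\le t<r$ is empty.
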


\begin{proof}
From Claim \ref{Appendix::Claim characterization of the boundary},
we have that $x\in B_{\mathcal{X}}$ if and only if $\rho_{\Xc}(z)=z$.
Hence it is equivalent to show that 
\begin{equation}\label{eq:radial_function_dichotomy}
\begin{aligned}
\rho_{\mathcal{X}}(z)=z 
& \qquad\text{if } 
z\in\mathcal{X}\cup
\left(\bigcup_{x,y\in\mathcal{X},\,x\sim y}[x,y]\right),\\
\rho_{\mathcal{X}}(z)\neq z 
& \qquad\text{otherwise}.
\end{aligned}
\end{equation}

First, we assume $z\in\mathcal{X}\bigcup\left(\bigcup_{x,y\in\mathcal{X},x\sim y}[x,y]\right)$,
and we will show $\rho_{\mathcal{X}}(z)=z$. Note that either $z\in\mathcal{X}$
or $z\in[x,y]$ for some $x,y\in\mathcal{X}$ with $x\sim y$ holds.

Suppose $z\in\mathcal{X}$, then $\left\Vert z\right\Vert _{2}=r$,
and for any $v\in P_{\mathcal{X}}$, $\left\Vert v\right\Vert _{2}\leq r$.
Hence, $\max\mathcal{I}_{z}=1$, and 
\[
\rho_{\Xc}(z)=z.
\]

Suppose $z\in[x,y]$, for some $x,y\in\mathcal{X}$ with $x\sim y$.
Since $x\sim y$, we have that $x\neq y$, which implies that $x^{\top}y>-r^{2}$.
Recall
\[
H_{x,y}=\left\{ v\in\mathbb{R}^{2}:(x+y)^{\top}v\leq r^{2}+x^{\top}y\right\} ,
\]
then, by Claim \ref{Appendix::half plan claim}, $H_{x,y}$ is a closed
half plane, $0\in\mathrm{int}(H_{x,y})$, and $[x,y]\subset\partial H_{x,y}$.
Furthermore, $x\sim y$ implies that 
\[
P_{\mathcal{X}}=\mathrm{conv}(\mathcal{X})\subset H_{x,y}.
\]
Since $0\in\mathrm{int}(H_{x,y})$ and $z\in\partial H_{x,y}$, for
any $\lambda>1$, $\lambda z\notin H$, which implies $\lambda z\notin P_{\mathcal{X}}$.
Hence $\max\mathcal{I}_{z}=1$, and 
\[
\rho_{\Xc}(z)=z.
\]

Now, we assume $z\notin\mathcal{X}\bigcup\left(\bigcup_{x,y\in\mathcal{X},x\sim y}[x,y]\right)$,
and we will show $\rho_{\mathcal{X}}(z)\neq z$. Note that either $z\in[0,x]$
for some $x\in\mathcal{X}$, or $z\in\triangle0xy$ for some $x,y\in\mathcal{X}$
with $x\sim y$, by Lemma \ref{Appendix::lem convexhull inclusion}. 

Suppose $z\in[0,x]$, for some $x\in\mathcal{X}$. Since $z\neq x$,
then there exists some $\lambda_{*}>1$ such that $\lambda_{*}z=x$.
Hence, $\max\mathcal{I}_{z}>1$, and 
\[
\rho_{\Xc}(z)\neq z.
\]

Suppose $z\in\triangle0xy$, for some $x,y\in\mathcal{X}$ with $x\sim y$.
Since $z\notin[x,y]$, then there exists some $\lambda_{*}>1$ such
that $\lambda_{*}z\in[x,y]$. Hence, $\max\mathcal{I}_{z}>1$, and
\[
\rho_{\Xc}(z)\neq z.
\]

We have shown \eqref{eq:radial_function_dichotomy}. Consequently, by
Claim \ref{Appendix::Claim characterization of the boundary}, this
completes the proof of Lemma \ref{Appendix::lem::Boundary equality}. 
\end{proof}

\begin{claim}\label{Appendix::Claim::projection distance comparison}
For any $x\in P_{\mathcal{X}}\setminus\{0\}$, let $x_{0}\in\mathcal{X}$
be one of the closest points to $x$ among $\mathcal{X}$ (if there are multiple such points, choose any one of them). Then either of the following holds: 
\[
\rho_{\mathcal{X}}(x)=x_{0},\qquad\text{or}\qquad\angle x\rho_{\mathcal{X}}(x)x_{0}\in\left[\frac{\pi}{2},\pi\right].
\]
Correspondingly, we have 
\[
\left\Vert \rho_{\Xc}(x)-x_{0}\right\Vert _{2}\leq\left\Vert x-x_{0}\right\Vert _{2}.
\]
Visual illustrations of Claim \ref{Appendix::Claim::projection distance comparison} are presented in Figure \ref{Appendix::fig:rho projection}.
\end{claim}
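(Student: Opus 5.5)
We will prove the dichotomy by locating $\rho_{\Xc}(x)$ on $B_{\Xc}$ and then using an elementary obtuse-angle argument. First, if $\rho_{\Xc}(x)\in\Xc$, we show that $\rho_{\Xc}(x)=x_0$ may be assumed, or that the angle condition holds anyway. The ray through $x$ hits $S^1(r)$ only at the point $\rho_{\Xc}(x)$. Because $\|\rho_{\Xc}(x)\|_2=r\ge\|v\|_2$ for every $v\in P_{\Xc}$, the segment $[x,\rho_{\Xc}(x)]$ points radially outward. For any $y\in S^1(r)$ we have $\langle y-\rho_{\Xc}(x),\rho_{\Xc}(x)\rangle=\langle y,\rho_{\Xc}(x)\rangle-r^2\le 0$. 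Since $x-\rho_{\Xc}(x)$ is a nonpositive multiple of $\rho_{\Xc}(x)$, it follows that $\langle x-\rho_{\Xc}(x),x_0-\rho_{\Xc}(x)\rangle\ge0$ fails only in a direction we need to check. Concretely, the angle at $\rho_{\Xc}(x)$ between $x$ (inward) and $x_0$ equals $\pi$ minus the angle between the outward normal and $x_0-\rho_{\Xc}(x)$, and the latter is $\ge\pi/2$. Hence the angle is in $[\pi/2,\pi]$ whenever $x_0\ne\rho_{\Xc}(x)$.

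Second, and more generally, suppose $\rho_{\Xc}(x)\notin\Xc$. By Lemma \ref{Appendix::lem::Boundary equality}, $\rho_{\Xc}(x)\in(y_1,y_2)$ for some adjacent $y_1\sim y_2$. By Claim \ref{Appendix::half plan claim}, $P_{\Xc}\subseteq H_{y_1,y_2}$ and $[y_1,y_2]\subseteq\partial H_{y_1,y_2}$. Write $n:=y_1+y_2$, the outward normal of this half plane. Since $x\in P_{\Xc}$ lies on the segment from $0$ to $\rho_{\Xc}(x)$, we have $\langle x-\rho_{\Xc}(x),n\rangle\le0$, with equality only when $x=\rho_{\Xc}(x)$. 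In that degenerate case the inequality $\|\rho_{\Xc}(x)-x_0\|\le\|x-x_0\|$ is trivial, and we interpret the angle statement as vacuous or handle it directly. The key point is that every $y\in\Xc$ satisfies $\langle y-\rho_{\Xc}(x),n\rangle\le 0$, since $\Xc\subseteq H_{y_1,y_2}$.

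This alone does not give the angle bound, because both vectors point inward. We therefore decompose along the chord direction $e:=y_2-y_1$. Writing $x-\rho_{\Xc}(x)=-a\rho_{\Xc}(x)$ with $a\in(0,1]$, we need $\langle \rho_{\Xc}(x),x_0-\rho_{\Xc}(x)\rangle\ge0$, i.e.\ $\langle\rho_{\Xc}(x),x_0\rangle\ge\|\rho_{\Xc}(x)\|^2$. This is where the choice of $x_0$ as the nearest point enters, and it is the main obstacle. Because $y_1\sim y_2$, the points of $\Xc$ other than $y_1,y_2$ lie on the far arc. Since $x$ lies in $\triangle 0y_1y_2$ (Claim \ref{Appendix::Claim::x1x2}(d)), the nearest point $x_0$ of $\Xc$ to $x$ should be $y_1$ or $y_2$. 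We plan to verify this by showing that for $z$ on the far arc, $\|x-z\|\ge\min(\|x-y_1\|,\|x-y_2\|)$. The argument uses the perpendicular bisector of $[y_1,z]$: points of the triangle on the $y_1$ side satisfy $\|x-y_1\|\le\|x-z\|$, and $\triangle0y_1y_2$ lies in the union of the two corresponding sides. For ties we may need to allow $x_0=z$; in that case the inequality we get is weak, so we redo the computation with $z$ in place of $y_i$.

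With $x_0\in\{y_1,y_2\}$, say $x_0=y_1$, the claim reduces to $\langle\rho_{\Xc}(x),y_1-\rho_{\Xc}(x)\rangle\ge0$. Write $\rho_{\Xc}(x)=ty_1+(1-t)y_2$. Then this inner product equals $(1-t)\langle ty_1+(1-t)y_2,\,y_1-y_2\rangle=(1-t)(2t-1)(r^2-\langle y_1,y_2\rangle)$, which is $\ge0$ iff $t\ge1/2$. So we must show that nearness to $y_1$ forces $\rho_{\Xc}(x)$ onto the $y_1$-half of the chord. This holds because the perpendicular bisector of $[y_1,y_2]$ passes through $0$ and is therefore radial, so $x$ and $\rho_{\Xc}(x)$ lie on the same side of it. This gives the angle in $[\pi/2,\pi]$. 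Finally, the law of cosines in the triangle $x\,\rho_{\Xc}(x)\,x_0$ with an angle $\ge\pi/2$ at $\rho_{\Xc}(x)$ yields $\|\rho_{\Xc}(x)-x_0\|_2\le\|x-x_0\|_2$, and the case $\rho_{\Xc}(x)=x_0$ is trivial.
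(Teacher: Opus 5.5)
Your edge case is essentially right, but your vertex case is wrong, not just loosely worded. Put $\rho:=\rho_{\Xc}(x)\in\Xc$ and $x=(1-a)\rho$ with $a\in(0,1)$. For every $x_0\in S^1(r)\setminus\{\rho\}$ you get $\langle x-\rho,\,x_0-\rho\rangle=a\,(r^2-\langle\rho,x_0\rangle)>0$. So the angle at $\rho$ is strictly acute: $\pi$ minus an angle of at least $\pi/2$ is at most $\pi/2$, not at least $\pi/2$. This also contradicts your own criterion $\langle\rho,x_0\rangle\ge\|\rho\|_2^2$ from the second case, which fails when $\|\rho\|_2=r$ and $x_0\neq\rho$. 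Hence the angle alternative can never be what rescues this case, and the distance bound cannot be derived from it. The missing observation is that $x_0=\rho$ is forced. For $y\in S^1(r)$ we have $\|x-y\|_2^2=\|x\|_2^2+r^2-2\langle x,y\rangle$. Since $x$ is a positive multiple of $\rho$, Cauchy--Schwarz makes $\rho$ the unique point of $S^1(r)$, hence of $\Xc$, nearest to $x$. This is exactly what the paper asserts in that case.

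Your second case follows the paper's route in substance. The paper simply asserts $\rho_{\Xc}(x)\in[x_0,x_1]$ for some $x_1\in\Xc$. It then turns $\|x-x_0\|_2\le\|x-x_1\|_2$ into a comparison of central angles and uses the equal base angles of the isosceles triangle $0x_0x_1$ to get $\angle 0\rho_{\Xc}(x)x_0\ge\pi/2$. Your identity $(1-t)(2t-1)(r^2-\langle y_1,y_2\rangle)$, together with the radial perpendicular bisector of $[y_1,y_2]$, is an equivalent and cleaner version. You are also right to flag that $x_0\in\{y_1,y_2\}$ needs proof, which the paper omits.

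However, your fallback for ties is not viable. If some $z\in\Xc\setminus\{y_1,y_2\}$ were a nearest point, $\rho_{\Xc}(x)$ would not lie on a chord ending at $z$, so there is nothing to ``redo with $z$.'' You need, and can get, strict inequality. On $S^1(r)$, $\|x-y\|_2$ is strictly increasing in the angular distance from $y$ to the direction of $x$. That direction lies on the closed arc between $y_1$ and $y_2$ beyond the chord, which by Claim \ref{Appendix::half plan claim} contains no other point of $\Xc$. So a shortest arc from it to $z$ passes through $y_1$ or $y_2$ strictly before reaching $z$, giving $\|x-z\|_2>\min_i\|x-y_i\|_2$.
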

\begin{figure}[h]
    \centering
\includegraphics[width=\textwidth]{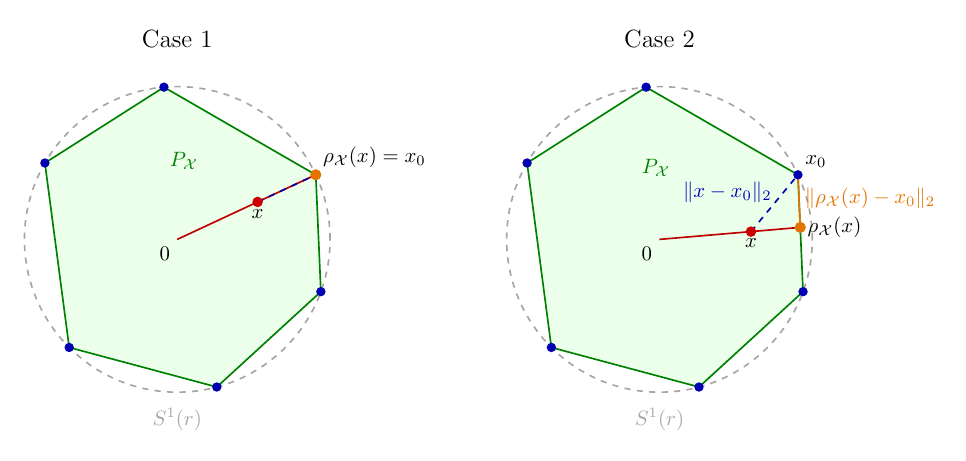}
    \caption{
    Visual illustrations of Claim~\ref{Appendix::Claim::projection distance comparison}.
    }
\label{Appendix::fig:rho projection}
\end{figure}

\begin{proof}
Since $\rho_{\Xc}(x)\in B_{\mathcal{X}}=\mathcal{X}\bigcup\left(\bigcup_{x,y\in\mathcal{X},x\sim y}[x,y]\right)$, by Lemma \ref{Appendix::lem::Boundary equality},
either $\rho_{\Xc}(x)\in\mathcal{X}$ or $\rho_{\Xc}(x)\in\bigcup_{x,y\in\mathcal{X},x\sim y}[x,y]$ holds.

If $\rho_{\Xc}(x)\in\mathcal{X}$, then $\rho_{\Xc}(x)=x_{0}$. For this case, we obviously have 
\[
\left\Vert \rho_{\Xc}(x)-x_{0}\right\Vert _{2}\leq\left\Vert x-x_{0}\right\Vert _{2}.
\]

Suppose $\rho_{\Xc}(x)\in\bigcup_{x,y\in\mathcal{X},x\sim y}[x,y]$.
Then, there exists some $x_{1}\in\mathcal{X}$ such that $\rho_{\Xc}(x)\in[x_{0},x_{1}]$.
This implies that
\[
\left\Vert x-x_{0}\right\Vert _{2}\leq\left\Vert x-x_{1}\right\Vert _{2}.
\]
Then, $x^{\top}x_{0}\leq x^{\top}x_{1}$, which implies that $\angle x_{0}0\rho_{\Xc}(x)\leq\angle x_{1}0\rho_{\Xc}(x)$.
Since $\angle0x_{0}x_{1}=\angle0x_{1}x_{0}$, we have that 
\[
\angle0\rho_{\Xc}(x)x_{0}\geq\angle0\rho_{\Xc}(x)x_{1}.
\]
Therefore, we can conclude that  
\[
\angle x\rho_{\Xc}(x)x_{0}=\angle0\rho_{\Xc}(x)x_{0}\geq\frac{\pi}{2},
\]
where the inequality holds, since $\angle0\rho_{\Xc}(x)x_{0}+\angle0\rho_{\Xc}(x)x_{1}=\pi$. Lastly, by the law of cosines, the fact that $\angle x\rho_{\Xc}(x)x_{0}\in [ \frac{\pi}{2},\pi]$ implies
\begin{align*}
    \left\Vert \rho_{\Xc}(x)-x_{0}\right\Vert _{2}\leq\left\Vert x-x_{0}\right\Vert _{2}.
\end{align*}
This completes the proof of Claim \ref{Appendix::Claim::projection distance comparison}.
\end{proof}
We will define three homotopy maps as follows. For the definition of homotopy maps, see Section \ref{Appendix::Background_homotopy}. 

Define $F_{1}:\mathbb{R}^{2}\times[0,1]\to\mathbb{R}^{2}$
by
\[
F_{1}(x,t)\coloneq(1-t)x+t\Pi_{P_{\mathcal{X}}}(x),
\]
where $\Pi_{P_{\Xc}}:\Rb^{2}\rightarrow P_{\Xc}$ is the projection map onto $P_{\Xc}$. Since $P_{\Xc}$ is nonempty closed convex set, the projection map is well-defined. Define 
$F_{2}:P_{\mathcal{X}}\setminus\{0\}\times[0,1]\to P_{\mathcal{X}}\setminus\{0\}$
by 
\[
F_{2}(x,t)\coloneq(1-t)x+t\rho_{\Xc}(x).
\]
Then, we define a homotopy map 
$F:\mathbb{R}^{2}\setminus\Pi_{P_{\mathcal{X}}}^{-1}\{0\}\times[0,1]\to\mathbb{R}^{2}$
by $F\coloneq F_{1}*F_{2}$, i.e., 
\begin{align}\label{Appendix::eq:: the homotopy map F}
F(x,t)\coloneq\begin{cases}
F_{1}(x,2t), & 0\leq t\leq\frac{1}{2},\\
F_{2}(\Pi_{P_{\mathcal{X}}}(x),2t-1), & \frac{1}{2}\leq t\leq1.
\end{cases}
\end{align}
\begin{claim}\label{Appendix::Claim::Continuity and well definedness of F}
The map $F$ in \eqref{Appendix::eq:: the homotopy map F} is well-defined
and continuous.
\end{claim}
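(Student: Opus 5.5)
The plan is to check the two pieces of \eqref{Appendix::eq:: the homotopy map F} separately and then glue them with the pasting lemma on the closed sets $\{t\le 1/2\}$ and $\{t\ge 1/2\}$ of the domain $\bigl(\mathbb{R}^2\setminus\Pi_{P_{\mathcal X}}^{-1}\{0\}\bigr)\times[0,1]$.

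\textbf{Step 1: the first piece.} The metric projection $\Pi_{P_{\mathcal X}}$ onto the nonempty closed convex set $P_{\mathcal X}$ is well defined and $1$-Lipschitz. Hence $F_1(x,t)=(1-t)x+t\,\Pi_{P_{\mathcal X}}(x)$ is jointly continuous on $\mathbb{R}^2\times[0,1]$.

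\textbf{Step 2: well-definedness of the second piece.} If $x\notin\Pi_{P_{\mathcal X}}^{-1}\{0\}$, then $\Pi_{P_{\mathcal X}}(x)\in P_{\mathcal X}\setminus\{0\}$, so $F_2(\Pi_{P_{\mathcal X}}(x),\cdot)$ is meaningful. It remains to check that $F_2$ maps into $P_{\mathcal X}\setminus\{0\}$. For $z\in P_{\mathcal X}\setminus\{0\}$ we have $\rho_{\mathcal X}(z)=(\max\mathcal I_z)z$ with $\max\mathcal I_z\ge 1$. Thus $F_2(z,t)=\mu z$ for some $\mu\in[1,\max\mathcal I_z]$, which is nonzero. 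It lies in $P_{\mathcal X}$ by convexity, since $z$ and $\rho_{\mathcal X}(z)$ both lie in $P_{\mathcal X}$.

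\textbf{Step 3: agreement at $t=1/2$.} At $t=1/2$ the two formulas give $F_1(x,1)=\Pi_{P_{\mathcal X}}(x)$ and $F_2(\Pi_{P_{\mathcal X}}(x),0)=\Pi_{P_{\mathcal X}}(x)$, so they agree. Therefore everything reduces to continuity of $\rho_{\mathcal X}$ on $P_{\mathcal X}\setminus\{0\}$. Once that is known, $F_2$ is continuous as an affine combination of continuous maps, its composition with $\Pi_{P_{\mathcal X}}$ is continuous, and the pasting lemma finishes the proof.

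\textbf{Step 4: continuity of $\rho_{\mathcal X}$, the main obstacle.} The origin need not be an interior point of $P_{\mathcal X}$; it may be on the boundary or outside, as in case 1 of Proposition \ref{Appendix::homotopy_cech_of_circle}. So the Minkowski-gauge argument is not directly available, and I will instead give an explicit piecewise formula.

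By Lemma \ref{Appendix::lem::Boundary equality} together with the definition of $\rho_{\mathcal X}$, for $z\in P_{\mathcal X}\setminus\{0\}$ the point $\rho_{\mathcal X}(z)$ lies either in $\mathcal X$ or on a segment $[x,y]$ with $x\sim y$. By Lemma \ref{Appendix::lem convexhull inclusion}, $z$ lies in some ray $[0,x]$ or in some triangle $\triangle 0xy$ with $x\sim y$. For an adjacent pair $x\sim y$ I would set
\[
C_{x,y}:=\{a x+b y: a,b\ge 0\}\cap P_{\mathcal X}\setminus\{0\}.
\]
On this set I claim
\[
\rho_{\mathcal X}(z)=\frac{r^2+x^\top y}{(x+y)^\top z}\,z .
\]
The argument uses Claim \ref{Appendix::half plan claim}. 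The segment $[x,y]$ lies in $\partial H_{x,y}$, and $P_{\mathcal X}\subseteq H_{x,y}$. Since $x\ne -y$ we have $r^2+x^\top y>0$, and $(x+y)^\top z>0$ for $z$ in the cone minus $0$. Consequently the ray through $z$ leaves $P_{\mathcal X}$ exactly where it crosses the line $\partial H_{x,y}$, which gives the formula.

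Each such formula is continuous on $C_{x,y}$. The sets $C_{x,y}$ are finitely many, relatively closed in $P_{\mathcal X}\setminus\{0\}$, and cover it. A ray $[0,x]$ is contained in the cone of either neighbouring pair, since $x$ has a neighbour when $n\ge 3$. On overlaps the formulas agree because each equals $\rho_{\mathcal X}$. The pasting lemma for finite closed covers then yields continuity.

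The delicate point I expect is verifying that $C_{x,y}$ really covers every $z$ whose ray exits through $[x,y]$, including the degenerate cases where $0\in\partial P_{\mathcal X}$. I would handle these using Claim \ref{Appendix::Claim::x1x2}\ref{Appendix::Claime2::4}, which places the relevant points in $\triangle 0x_1x_2$ for an adjacent pair $x_1\sim x_2$.
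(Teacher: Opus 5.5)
Your Steps 1--3 coincide with the paper's proof. The paper only checks that $\Pi_{P_{\mathcal X}}(x)\neq 0$ on the domain and that $F_1(x,1)=F_2(\Pi_{P_{\mathcal X}}(x),0)$, and then applies the pasting lemma, tacitly taking the continuity of $F_2$, i.e.\ of $\rho_{\mathcal X}$, for granted. Your Step~4 tries to supply that missing ingredient. The strategy is sound: finitely many relatively closed cones, an explicit formula on each, then pasting. However, as written the formula is false on some of your cones.

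The relation $x\sim y$ only requires \emph{one} component of $S^1(r)\setminus\{x,y\}$ to be empty, and that need not be the shorter arc. Take $0<a<\pi/2$ and $\mathcal X=\{v_1,v_2,v_3\}$ with $v_1=r(\cos a,-\sin a)$, $v_2=r(1,0)$, $v_3=r(\cos a,\sin a)$ (case~1).
\begin{itemize}
\item Then $v_1\sim v_3$, because the long arc is empty.
\item But $H_{v_1,v_3}=\{(u_1,u_2)\in\mathbb{R}^2: u_1\le r\cos a\}$ does not contain $v_2$, so $P_{\mathcal X}\not\subseteq H_{v_1,v_3}$.
\item Your cone $C_{v_1,v_3}$ is all of $P_{\mathcal X}$.
\item On it, your formula returns the point where the ray \emph{enters} $P_{\mathcal X}$ through the near chord $[v_1,v_3]$. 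At $z=v_2$ it gives $(\cos a)\,v_2$, whereas $\rho_{\mathcal X}(v_2)=v_2$.
\end{itemize}
So the claim that each local formula equals $\rho_{\mathcal X}$ fails, and with it the agreement on overlaps. Likewise, adjacency does not give $x\neq -y$. If $x,-x\in\mathcal X$ and the other points lie in one open half circle, then $x\sim -x$, and on $C_{x,-x}=[-x,x]\setminus\{0\}$ your formula reads $0/0$. Part~4 of Claim~\ref{Appendix::half plan claim} holds only when the empty component is the shorter arc, so it cannot be cited for these pairs. The degenerate configurations you flagged break the formula itself, not only the covering.

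The repair is to keep only adjacent pairs whose empty component is the open shorter arc. For such pairs:
\begin{itemize}
\item $x\neq -y$ and $\mathcal X\subseteq H_{x,y}$, hence $P_{\mathcal X}\subseteq H_{x,y}$;
\item $(x+y)^\top(ax+by)=(a+b)(r^2+x^\top y)>0$ on the cone minus the origin;
\item the exit point computed by your formula lies on $[x,y]\subseteq P_{\mathcal X}$, so it really is $\rho_{\mathcal X}(z)$.
\end{itemize}
Listing $\mathcal X$ cyclically, at most one consecutive pair is discarded: the wrap-around pair when $\mathcal X$ lies in an open hemicircle, or the antipodal pair when $0\in\partial P_{\mathcal X}$. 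Non-consecutive pairs are never adjacent, and since $n\ge 3$ every point keeps a retained neighbour. The retained cones cover, respectively, the whole plane, the cone spanned by the two extreme points of $\mathcal X$, or the closed half-plane bounded by the antipodal diameter. Each of these contains $P_{\mathcal X}\setminus\{0\}$. With this restriction your pasting argument goes through and yields the continuity of $\rho_{\mathcal X}$ that the paper's proof leaves implicit.
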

\begin{proof}
To show that $F$ is well defined and continuous, we need to show
that for all $x\in\mathbb{R}^{2}\backslash\Pi_{P_{\mathcal{X}}}^{-1}\{0\}$,
$\Pi_{P_{\mathcal{X}}}(x)\neq0$ and 
\[
F_{1}(x,1)=F_{2}(\Pi_{P_{\mathcal{X}}}(x),0).
\]
The first is obvious since $x\in\mathbb{R}^{2}\backslash\Pi_{P_{\mathcal{X}}}^{-1}\{0\}$
is equivalent to that $\Pi_{P_{\mathcal{X}}}(x)\neq0$. The second also
holds, since 
\[
F_{1}(x,1)=\Pi_{P_{\mathcal{X}}}(x)=F_{2}(\Pi_{P_{\mathcal{X}}}(x),0).
\]
Therefore, $F$ is well-defined. 
Furthermore, by the pasting lemma, $F$ is continuous.
This completes of Claim \ref{Appendix::Claim::Continuity and well definedness of F}.
\end{proof}
\begin{claim}\label{Appendix::Claim::projection line inclusion}
Fix $t>0$. For any $y\in\bigcup_{x\in\mathcal{X}}\mathcal{B}(x,t)$,
\[
[y,\Pi_{P_{\mathcal{X}}}(y)]\subset\bigcup_{x\in\mathcal{X}}\mathcal{B}(x,t).
\]
\end{claim}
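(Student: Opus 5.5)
Let $y\in\bigcup_{x\in\mathcal{X}}\mathcal{B}(x,t)$ and write $\bar y:=\Pi_{P_{\mathcal{X}}}(y)$. The plan is to find a single point $x^\ast\in\mathcal{X}$ with $\|y-x^\ast\|_2\le t$ and $\|\bar y-x^\ast\|_2\le t$. Since $\mathcal{B}(x^\ast,t)$ is convex, it then contains the whole segment $[y,\bar y]$, and the claim follows.

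\textbf{Step 1.} Pick $x^\ast\in\mathcal{X}$ with $\|y-x^\ast\|_2\le t$; such a point exists by the choice of $y$.

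\textbf{Step 2.} Show $\|\bar y-x^\ast\|_2\le\|y-x^\ast\|_2$. This is the standard obtuse-angle property of the metric projection onto a nonempty closed convex set. Since $x^\ast\in\mathcal{X}\subseteq P_{\mathcal{X}}$, the variational inequality for the projection gives
\[
\langle y-\bar y,\;x^\ast-\bar y\rangle\le 0 .
\]
Expanding $y-x^\ast=(y-\bar y)+(\bar y-x^\ast)$ then yields
\[
\|y-x^\ast\|_2^2=\|y-\bar y\|_2^2+\|\bar y-x^\ast\|_2^2-2\langle y-\bar y,\,x^\ast-\bar y\rangle\ge\|\bar y-x^\ast\|_2^2 .
\]
Hence $\|\bar y-x^\ast\|_2\le\|y-x^\ast\|_2\le t$, so $\bar y\in\mathcal{B}(x^\ast,t)$.

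\textbf{Step 3.} Both endpoints $y$ and $\bar y$ lie in the convex ball $\mathcal{B}(x^\ast,t)$, so $[y,\bar y]\subseteq\mathcal{B}(x^\ast,t)\subseteq\bigcup_{x\in\mathcal{X}}\mathcal{B}(x,t)$.

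None of the steps is a serious obstacle. The only point needing care is recalling the projection inequality in Step 2, which is a basic Hilbert-space fact. The claim is presumably stated separately so that the homotopy $F_1$ can later be restricted to the union of balls.
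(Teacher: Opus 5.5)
Your argument is correct, and it differs from the paper's in one decisive choice. The paper splits into cases according to where $\Pi_{P_{\mathcal{X}}}(y)$ lands:
\begin{itemize}
\item $y\in P_{\mathcal{X}}$;
\item the projection lies on an open boundary edge $(x_0,x_1)$;
\item the projection is a vertex.
\end{itemize}
In the edge case it asserts that $y\in\mathcal{B}(x_0,t)\cup\mathcal{B}(x_1,t)$. It then uses $y-\Pi_{P_{\mathcal{X}}}(y)\perp x_i-\Pi_{P_{\mathcal{X}}}(y)$, which is just your projection inequality specialised to the endpoints of that face.

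You instead keep the ball $\mathcal{B}(x^\ast,t)$ that already contains $y$ and apply the inequality with $z=x^\ast$. This removes the case analysis and all circle geometry: your proof works for any set in a Hilbert space and any closed convex set containing it. It also avoids the paper's unjustified switch of centre.

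That switch can in fact fail when $\mathcal{X}$ lies in an open hemicircle. Take $\mathcal{X}\subset S^1(r)$ to be the points at polar angles $10^\circ$, $90^\circ$, $170^\circ$, with $y=(0,0.16r)$ and $t=0.9r$. The projection $(0,r\sin 10^\circ)$ lies on the open edge joining the $10^\circ$ and $170^\circ$ points. Both of these are about $0.985r>t$ from $y$, while $y$ is within $0.84r$ of the $90^\circ$ point. The claim still holds in this configuration, by exactly your argument.

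The paper's vertex case likewise tacitly needs $\|y-\Pi_{P_{\mathcal{X}}}(y)\|\le t$. This happens to be true for points on a circle, but it is not argued. Your route is therefore not only shorter but also covers the hemicircle case in which the claim is later used.
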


\begin{proof}
If $y\in P_{\mathcal{X}}$, then it is trivial. Therefore, we only need
to show the case where $y\in\mathbb{R}^{2}\setminus P_{\mathcal{X}}$.
Note that $\mathbb{R}^{2}\setminus P_{\mathcal{X}}$ factorizes as
follows: 
\[
\mathbb{R}^{2}\setminus P_{\mathcal{X}}=\left(\bigcup_{x,x'\in\mathcal{X},(x,x')\subset\partial P_{\mathcal{X}}}\Pi_{P_{\mathcal{X}}}^{-1}\left(x,x'\right)\setminus P_{\mathcal{X}}\right)\bigcup\left(\bigcup_{x\in\mathcal{X}}\Pi_{P_{\mathcal{X}}}^{-1}\{x\}\setminus P_{\mathcal{X}}\right).
\]

First, suppose $y\in\bigcup_{(x,x')\subset\partial P_{\mathcal{X}}}\Pi_{P_{\mathcal{X}}}^{-1}\left(x,x'\right)\setminus P_{\mathcal{X}}$.
Then, $y\in\Pi_{P_{\mathcal{X}}}^{-1}\left(x_0,x_1\right)\setminus P_{\mathcal{X}}$
for some $(x_0,x_1)\subset\partial P_{\mathcal{X}}$, so either
$y\in\mathcal{B}(x_{0},t)$ or $y\in\mathcal{B}(x_{1},t)$. Since $\Pi_{P_{\Xc}}(y)$ is the same as the orthogonal projection of $y$ onto $[x,x']$, we have that
\[
y-\Pi_{P_{\mathcal{X}}}(y)\perp x_{i}-\Pi_{P_{\mathcal{X}}}(y),\qquad i=0,1.
\]
Therefore, we obtain that
\[
\left\Vert \Pi_{P_{\mathcal{X}}}(y)-x_{i}\right\Vert _{2}\leq\left\Vert y-x_{i}\right\Vert _{2},\qquad i=0,1.
\]
This implies that $\Pi_{P_{\mathcal{X}}}(y)\in\mathcal{B}(x_{i},t)$. Since $\mathcal{B}(x_{i},t)$ is convex, it holds that 
\[
[y,\Pi_{P_{\mathcal{X}}}(y)]\subset\mathcal{B}(x_{i},t)\subset\bigcup_{x\in\mathcal{X}}\mathcal{B}(x,t).
\]

Second, suppose $y\in\bigcup_{x\in\mathcal{X}}\Pi_{P_{\mathcal{X}}}^{-1}\{x\}\setminus P_{\mathcal{X}}$,
and let $x_{0}\coloneqq\Pi_{P_{\mathcal{X}}}(y)$. Then 
\[
\Pi_{P_{\mathcal{X}}}(y)=x_{0}\in\mathcal{B}(x_{0},t),
\]
and since $\mathcal{B}(x_{0},t)$ is convex, this implies that 
\[
[y,\Pi_{P_{\mathcal{X}}}(y)]\subset\mathcal{B}(x_{0},t)\subset\bigcup_{x\in\mathcal{X}}\mathcal{B}(x,t).
\]

Consequently, in any case, $[y,\Pi_{P_{\mathcal{X}}}(y)]\subseteq\bigcup_{x\in\mathcal{X}}\mathcal{B}(x,t)$. This completes the proof of Claim \ref{Appendix::Claim::projection line inclusion}.
\end{proof}
\begin{claim}\label{Appnedix::Claim:: line segement contained in balls}
Fix $t\in(0,r)$. We have that for any $y\in P_{\mathcal{X}}\cap\left(\bigcup_{x\in\mathcal{X}}\mathcal{B}(x,t)\right)$,
\[
[y,\rho_{\Xc}(y)]\subset P_{\mathcal{X}}\cap\left(\bigcup_{x\in\mathcal{X}}\mathcal{B}(x,t)\right).
\]
\end{claim}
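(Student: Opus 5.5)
The plan is to reduce the statement to Claim~\ref{Appendix::Claim::projection distance comparison} together with convexity of closed balls and of $P_{\mathcal{X}}$. Fix $t\in(0,r)$ and $y\in P_{\mathcal{X}}\cap\bigcup_{x\in\mathcal{X}}\mathcal{B}(x,t)$.

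First I will check that $y\neq 0$, so that $\rho_{\Xc}(y)$ is defined. Every $x\in\mathcal{X}$ satisfies $\|x\|_2=r>t$, so $0\notin\bigcup_{x\in\mathcal{X}}\mathcal{B}(x,t)$. This is exactly where the hypothesis $t<r$ enters.

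Inclusion in $P_{\mathcal{X}}$ is immediate. By construction $\rho_{\Xc}(y)=(\max\mathcal{I}_y)\,y\in P_{\mathcal{X}}$, and $y\in P_{\mathcal{X}}$. Since $P_{\mathcal{X}}$ is convex, the whole segment $[y,\rho_{\Xc}(y)]$ lies in $P_{\mathcal{X}}$.

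For inclusion in the union of balls, let $x_0\in\mathcal{X}$ be a point of $\mathcal{X}$ closest to $y$. Then $\|y-x_0\|_2\le t$, because $y$ lies in some ball $\mathcal{B}(x,t)$ with $x\in\mathcal{X}$ and $x_0$ is at least as close to $y$ as $x$ is. Claim~\ref{Appendix::Claim::projection distance comparison} then gives
\[
\|\rho_{\Xc}(y)-x_0\|_2\le\|y-x_0\|_2\le t .
\]
So both endpoints $y$ and $\rho_{\Xc}(y)$ lie in the single ball $\mathcal{B}(x_0,t)$. By convexity of that ball, $[y,\rho_{\Xc}(y)]\subset\mathcal{B}(x_0,t)\subset\bigcup_{x\in\mathcal{X}}\mathcal{B}(x,t)$. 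Intersecting with the previous step gives the claimed inclusion.

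The step that needs the most care is the correct use of Claim~\ref{Appendix::Claim::projection distance comparison}. The point is to choose $x_0$ as a closest point of $\mathcal{X}$ to $y$, not an arbitrary center of a ball containing $y$; only for a closest point does the claim guarantee that the radial projection does not move $y$ farther from $x_0$. Beyond that choice, and the verification that $y\neq0$, the argument is routine.
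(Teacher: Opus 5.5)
Your proof is correct and follows the same route as the paper. You note that $t<r$ forces $y\neq 0$. The projection-distance claim then places $\rho_{\Xc}(y)$ in the same ball $\mathcal{B}(x_0,t)$ as $y$, and convexity of $P_{\mathcal{X}}\cap\mathcal{B}(x_0,t)$ finishes.

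Your choice of $x_0$ as a nearest point of $\mathcal{X}$ to $y$ is a genuine refinement, not just care. The paper instead takes an arbitrary center with $y\in\mathcal{B}(x_0,t)$, and for such a center the inequality $\|\rho_{\Xc}(y)-x_0\|_2\le\|y-x_0\|_2$ can fail. For example, take $\mathcal{X}$ dense on the circle, $\|y\|_2=r/2$, and $x_0$ at angle $\pi/3$ from $y$. The claim being invoked only covers nearest points.
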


\begin{proof}
Let $y\in P_{\Xc}\cap \Bc(x_0,t)$, for some $x_0 \in \Xc$.
The condition $t< r$ implies that $y\neq0$. Hence, Claim \ref{Appendix::Claim::projection distance comparison} implies that 
\[
\rho_{\Xc}(y)\in\mathcal{B}(x_{0},t).
\]
Note that $\rho_{\Xc}(y)\in B_{\mathcal{X}}\subset P_{\mathcal{X}}$.
Since both $\mathcal{B}(x_{0},t)$ and $P_{\mathcal{X}}$ are convex,
$P_{\mathcal{X}}\cap\mathcal{B}(x_{0},t)$ is convex as well, and
this implies that 
\[
[y,\rho_{\Xc}(y)]\subset P_{\mathcal{X}}\cap\mathcal{B}(x_{0},t)\subset P_{\mathcal{X}}\cap\left(\bigcup_{x\in\mathcal{X}}\mathcal{B}(x,t)\right).
\]
This completes the proof of Claim \ref{Appnedix::Claim:: line segement contained in balls}.
\end{proof}
\begin{lem}\label{Appendix::lem::codomain of restricted homopoty F}
Suppose $t<r$. 
The image of the restricted map $F$ to $\bigcup_{x\in\mathcal{X}}\mathcal{B}(x,t)\times[0,1]$ is contained in $\bigcup_{x\in\mathcal{X}}\mathcal{B}(x,t)$, $\ie$,
\[
F|_{\bigcup_{x\in\mathcal{X}}\mathcal{B}(x,t)}:\bigcup_{x\in\mathcal{X}}\mathcal{B}(x,t)\times[0,1]\to\bigcup_{x\in\mathcal{X}}\mathcal{B}(x,t).
\]
\end{lem}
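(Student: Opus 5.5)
We need to show that for every $y\in U_t:=\bigcup_{x\in\mathcal{X}}\mathcal{B}(x,t)$ (with $y\notin\Pi_{P_{\mathcal{X}}}^{-1}\{0\}$, so that $F$ is defined) and every $s\in[0,1]$, the point $F(y,s)$ lies in $U_t$. By the definition of $F$ in \eqref{Appendix::eq:: the homotopy map F}, the trajectory $s\mapsto F(y,s)$ is the concatenation of two straight segments: first $[y,\Pi_{P_{\mathcal{X}}}(y)]$, traced by $F_1(y,2s)$ for $s\in[0,\tfrac12]$, and then $[\Pi_{P_{\mathcal{X}}}(y),\rho_{\mathcal{X}}(\Pi_{P_{\mathcal{X}}}(y))]$, traced by $F_2(\Pi_{P_{\mathcal{X}}}(y),2s-1)$ for $s\in[\tfrac12,1]$. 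So it suffices to show that each of these two segments is contained in $U_t$.

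\textbf{First half.} For $s\in[0,\tfrac12]$, the point $F(y,s)=(1-2s)y+2s\,\Pi_{P_{\mathcal{X}}}(y)$ lies on $[y,\Pi_{P_{\mathcal{X}}}(y)]$. By Claim \ref{Appendix::Claim::projection line inclusion}, applied with the fixed $t$, this segment is contained in $U_t$. In particular, taking the endpoint, $z:=\Pi_{P_{\mathcal{X}}}(y)\in P_{\mathcal{X}}\cap U_t$.

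\textbf{Second half.} Since $y$ is in the domain of $F$, we have $z\neq 0$, so $\rho_{\mathcal{X}}(z)$ is defined. Because $t<r$, Claim \ref{Appnedix::Claim:: line segement contained in balls} applies to $z\in P_{\mathcal{X}}\cap U_t$ and gives $[z,\rho_{\mathcal{X}}(z)]\subset P_{\mathcal{X}}\cap U_t\subset U_t$. For $s\in[\tfrac12,1]$, the point $F(y,s)=(1-(2s-1))z+(2s-1)\rho_{\mathcal{X}}(z)$ lies on this segment, hence in $U_t$.

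Combining the two halves, $F(U_t\times[0,1])\subseteq U_t$, which is the assertion.

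The only delicate point is the domain issue: the statement restricts $F$ to $U_t\times[0,1]$, but $F$ is defined only off $\Pi_{P_{\mathcal{X}}}^{-1}\{0\}$. I would address this with a short remark that the restriction is understood on $U_t\setminus\Pi_{P_{\mathcal{X}}}^{-1}\{0\}$. I would also note that when $0\in P_{\mathcal{X}}$ and $t<r$, we have $0\notin U_t$, because every point of $\mathcal{X}$ lies at distance $r>t$ from the origin. Consequently $z\neq0$ automatically whenever $z\in U_t$, and Claim \ref{Appnedix::Claim:: line segement contained in balls} is applicable at every point of the second half.
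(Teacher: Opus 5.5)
Your proof is correct and follows the paper's argument essentially verbatim. You split the trajectory of $F$ into the segments $[y,\Pi_{P_{\mathcal{X}}}(y)]$ and $[\Pi_{P_{\mathcal{X}}}(y),\rho_{\mathcal{X}}(\Pi_{P_{\mathcal{X}}}(y))]$ and handle them with Claims~\ref{Appendix::Claim::projection line inclusion} and~\ref{Appnedix::Claim:: line segement contained in balls}. Your closing observation, that $0\notin\bigcup_{x\in\mathcal{X}}\mathcal{B}(x,t)$ while $\Pi_{P_{\mathcal{X}}}(y)$ lies in that union, is exactly the paper's proof that $\bigl(\bigcup_{x\in\mathcal{X}}\mathcal{B}(x,t)\bigr)\cap\Pi_{P_{\mathcal{X}}}^{-1}\{0\}=\emptyset$, so the hedge about restricting the domain is unnecessary: every point of the union already lies in the domain of $F$.
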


\begin{proof}
It needs to show that 
\begin{equation}
\left(\bigcup_{x\in\mathcal{X}}\mathcal{B}(x,t)\right)\cap\Pi_{P_{\mathcal{X}}}^{-1}\{0\}=\emptyset,\label{eq:unionball_emptyintersection}
\end{equation}
and 
\begin{align}
F\left(\bigcup_{x\in\mathcal{X}}\mathcal{B}(x,t)\times[0,1]\right)\subset\bigcup_{x\in\mathcal{X}}\mathcal{B}(x,t).\label{Appendix::inclusion 1}
\end{align}
From Claim~\ref{Appendix::Claim::projection line inclusion}, we
have that, for each $y\in\bigcup_{x\in\mathcal{X}}\mathcal{B}(x,t)$,
\begin{equation}
\left[y,\Pi_{P_{\mathcal{X}}}(y)\right]\subset\bigcup_{x\in\mathcal{X}}\mathcal{B}(x,t).\label{eq:project_line_inclusion}
\end{equation}

We will first show \eqref{eq:unionball_emptyintersection}. Note that
the condition $t<r$ implies that $0\notin\bigcup_{x\in\mathcal{X}}\mathcal{B}(x,t)$.
Suppose some $y\in\bigcup_{x\in\mathcal{X}}\mathcal{B}(x,t)$ satisfies
$\Pi_{P_{\mathcal{X}}}(y)=0$. Then $\left[y,0\right]\subset\bigcup_{x\in\mathcal{X}}\mathcal{B}(x,t)$
from \eqref{eq:project_line_inclusion}, and this contradicts $0\notin\bigcup_{x\in\mathcal{X}}\mathcal{B}(x,t)$.
Hence we have 
\[
\left(\bigcup_{x\in\mathcal{X}}\mathcal{B}(x,t)\right)\cap\Pi_{P_{\mathcal{X}}}^{-1}\{0\}=\emptyset.
\]

Next, we will show \eqref{Appendix::inclusion 1}. Claim~\ref{Appnedix::Claim:: line segement contained in balls}
implies that, for each $y\in\bigcup_{x\in\mathcal{X}}\mathcal{B}(x,t)$,
\[
\left[\Pi_{P_{\mathcal{X}}}(y),\rho_{\Xc}(\Pi_{P_{\mathcal{X}}}(y))\right]\subset\bigcup_{x\in\mathcal{X}}\mathcal{B}(x,t).
\]
Combining this and \eqref{eq:project_line_inclusion} yields the inclusion
\[
F\left(\bigcup_{x\in\mathcal{X}}\mathcal{B}(x,t)\times[0,1]\right)\subset\bigcup_{x\in\mathcal{X}}\mathcal{B}(x,t).
\]

Hence, the map $F|_{\bigcup_{x\in\mathcal{X}}\mathcal{B}(x,t)}$ can
be regarded as a map: 
\[
F|_{\bigcup_{x\in\mathcal{X}}\mathcal{B}(x,t)}:\bigcup_{x\in\mathcal{X}}\mathcal{B}(x,t)\times[0,1]\to\bigcup_{x\in\mathcal{X}}\mathcal{B}(x,t).
\]
This completes the proof of Lemma \ref{Appendix::lem::codomain of restricted homopoty F}. 
\end{proof}

\begin{cor}\label{Appendix::cor::homotopy equivalence}
Suppose $t< r$, then $\bigcup_{x\in\mathcal{X}}\mathcal{B}(x,t)$
deformation retracts to $\bigcup_{x\in\mathcal{X}}\mathcal{B}(x,t)\cap B_{\mathcal{X}}$
via $F$, and in particular, $\bigcup_{x\in\mathcal{X}}\mathcal{B}(x,t)$
and $\bigcup_{x\in\mathcal{X}}\mathcal{B}(x,t)\cap B_{\mathcal{X}}$
are homotopy equivalent.
\end{cor}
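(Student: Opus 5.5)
The plan is to check directly that $F$, restricted to $U_t:=\bigcup_{x\in\mathcal{X}}\mathcal{B}(x,t)$, satisfies the definition of a deformation retraction of $U_t$ onto $A_t:=U_t\cap B_{\mathcal{X}}$. By Lemma \ref{Appendix::lem::codomain of restricted homopoty F}, $U_t$ does not meet $\Pi_{P_{\mathcal{X}}}^{-1}\{0\}$. By Claim \ref{Appendix::Claim::Continuity and well definedness of F}, $F$ is continuous. Hence $H:=F|_{U_t\times[0,1]}:U_t\times[0,1]\to U_t$ is a continuous map into $U_t$. Its time-zero map is $F(x,0)=F_1(x,0)=x$, which is the identity of $U_t$. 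It remains to show that $r_t:=H(\cdot,1)$ is a retraction onto $A_t$.

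First, $H(x,1)=\rho_{\mathcal{X}}(\Pi_{P_{\mathcal{X}}}(x))$, and this point lies in $B_{\mathcal{X}}=\mathrm{Im}(\rho_{\mathcal{X}})$. Lemma \ref{Appendix::lem::codomain of restricted homopoty F} also places it in $U_t$. So $r_t(U_t)\subseteq A_t$.

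Second, take $a\in A_t$. Then $a\in B_{\mathcal{X}}\subseteq P_{\mathcal{X}}$, so $\Pi_{P_{\mathcal{X}}}(a)=a$. Since $a\neq 0$ (because $0\notin U_t$ when $t<r$), Claim \ref{Appendix::Claim characterization of the boundary} gives $\rho_{\mathcal{X}}(a)=a$. Thus $r_t(a)=a$. Together with the first point, this yields $r_t(U_t)=A_t$, so $r_t$ is a retraction onto $A_t$.

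Consequently $H$ is a homotopy from $\mathrm{Id}_{U_t}$ to a retraction onto $A_t$, which is exactly a deformation retraction. Homotopy equivalence then follows as recalled in Section \ref{Appendix::Background_homotopy}: the inclusion and $r_t$ are mutual homotopy inverses, since $r_t\circ\iota=\mathrm{Id}_{A_t}$ and $\iota\circ r_t\simeq\mathrm{Id}_{U_t}$ via $H$.

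The only delicate point is continuity of $F_2$, i.e.\ of the radial projection $\rho_{\mathcal{X}}$ on $P_{\mathcal{X}}\setminus\{0\}$, which Claim \ref{Appendix::Claim::Continuity and well definedness of F} uses implicitly. If this needs justification, I would argue from Lemma \ref{Appendix::lem::Boundary equality}. It shows that $B_{\mathcal{X}}$ is the polygonal chain of adjacent edges. On each cone $\triangle 0xy$ with $x\sim y$, $\rho_{\mathcal{X}}(z)=\frac{r^2+x^\top y}{(x+y)^\top z}\,z$, which is continuous there because $(x+y)^\top z>0$. On each segment $[0,x]$, $\rho_{\mathcal{X}}(z)=x$. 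These formulas agree on overlaps, so the pasting lemma over the finite closed cover from Lemma \ref{Appendix::lem convexhull inclusion} gives continuity.
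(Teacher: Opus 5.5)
Your argument is correct and is essentially the paper's proof. You restrict $F$ to $U_t\times[0,1]$ via Lemma~\ref{Appendix::lem::codomain of restricted homopoty F}, observe that time $0$ is the identity and time $1$ is $\rho_{\mathcal{X}}\circ\Pi_{P_{\mathcal{X}}}$ with image in $U_t\cap B_{\mathcal{X}}$, and use Claim~\ref{Appendix::Claim characterization of the boundary} to show that points of $U_t\cap B_{\mathcal{X}}$ are fixed. The paper notes in addition that these points are fixed at every time, which the definition you use does not need.

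One correction concerns your optional continuity argument for $\rho_{\mathcal{X}}$, which the paper itself does not supply.

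The formula $\rho_{\mathcal{X}}(z)=\frac{r^2+x^\top y}{(x+y)^\top z}\,z$ on $\triangle 0xy$ holds only when $x+y\neq 0$ and $P_{\mathcal{X}}\subseteq H_{x,y}$. That is, the $\mathcal{X}$-free arc between $x$ and $y$ must be the minor one, whereas Definition~\ref{defn::adjacent} only asks that \emph{one} of the two arcs be free. This matters in two cases.
\begin{itemize}
\item If $\mathcal{X}$ lies in an open hemicircle (a case where the corollary is also invoked), its two extreme points $p,p'$ are adjacent through the major arc, and $\triangle 0pp'\cap P_{\mathcal{X}}=[p,p']$. Your formula is the identity on this chord, but $\rho_{\mathcal{X}}$ pushes each interior point of it out to the far boundary chain. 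So Lemma~\ref{Appendix::lem::Boundary equality}, which you rely on, overstates $B_{\mathcal{X}}$ there.
\item For an antipodal adjacent pair, the denominator $(x+y)^\top z$ vanishes.
\end{itemize}

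The repair is simple. Since $|\mathcal{X}|\geq 3$, at most one consecutive pair has a free arc of length $\geq\pi$, so discard it. Every ray from $0$ meeting $P_{\mathcal{X}}\setminus\{0\}$ still leaves $P_{\mathcal{X}}$ through a vertex or a retained edge. Hence the retained triangles, together with the segments $[0,x]$, cover $P_{\mathcal{X}}\setminus\{0\}$. Your formula is correct on each piece, the pieces agree on overlaps, and the pasting lemma then gives continuity.
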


\begin{proof}
By Lemma \ref{Appendix::lem::codomain of restricted homopoty F}, we can consider $F$ at \eqref{Appendix::eq:: the homotopy map F} as a homotopy map 
\[
F:\bigcup_{x\in\mathcal{X}}\mathcal{B}(x,t)\times[0,1]\to\bigcup_{x\in\mathcal{X}}\mathcal{B}(x,t).
\]
Note that the following identities hold:
\begin{align*}
 F\left(\bigcup_{x\in\mathcal{X}}\mathcal{B}(x,t)\times\{0\}\right)=\bigcup_{x\in\mathcal{X}}\mathcal{B}(x,t) \quad \text{and}\quad    F\left(\bigcup_{x\in\mathcal{X}}\mathcal{B}(x,t)\times\{1\}\right)=\bigcup_{x\in\mathcal{X}}\mathcal{B}(x,t)\cap B_{\mathcal{X}}.
\end{align*}
Moreover, the restriction of $F$ to $\bigcup_{x\in\mathcal{X}}\mathcal{B}(x,t)\cap B_{\mathcal{X}}\times \{\alpha\}$ is the identity map on the set $\bigcup_{x\in\mathcal{X}}\mathcal{B}(x,t)\cap B_{\mathcal{X}}$, for any $\alpha \in [0,1]$.
Consequently, we have that  $\bigcup_{x\in\mathcal{X}}\mathcal{B}(x,t)$ deformation retracts
onto $\bigcup_{x\in\mathcal{X}}\mathcal{B}(x,t)\cap B_{\mathcal{X}}$
via $F$, implying that they are homotopy equivalent.
This completes the proof of Corollary \ref{Appendix::cor::homotopy equivalence}.
\end{proof}
We are now ready to specify the homotopy type of $\textrm{\v{C}ech}(\Xc,t)$.
\begin{proof}[Proof of Proposition~\ref{Appendix::homotopy_cech_of_circle}]
From Nerve Theorem in \citet[p.~71]{EdelsbrunnerHarer2010}, we have that 
\[
\textrm{\v{C}ech}(\Xc,t)\simeq\bigcup_{x\in\mathcal{X}}\mathcal{B}(x,t).
\]

For $t<r_{0}$, 
$\bigcup_{x\in\mathcal{X}}\mathcal{B}(x,t)$
consists of finitely many connected components, and each of them is contractible.
Therefore, we have that
\[
\textrm{\v{C}ech}(\Xc,t)\simeq\{x_{1},\ldots,x_{n_{t}}\}.
\]

When $r_{0}\leq  t< r$, by Corollary \ref{Appendix::cor::homotopy equivalence}, it holds that
\begin{align*}
\bigcup_{x\in\mathcal{X}}\mathcal{B}(x,t)& \simeq\bigcup_{x\in\mathcal{X}}\mathcal{B}(x,t) \cap B_{\mathcal{X}}\\
&=B_{\mathcal{X}}\\
& \simeq
\begin{cases}
S^{1},\quad \text{if } \Xc \text{ is not contained in any hemicircle},\\
\{x_{1}\},\quad \text{otherwise}.
\end{cases}
\end{align*}

When $t\geq r$, $P_{\mathcal{X}}\subset\bigcup_{x\in\mathcal{X}}\mathcal{B}(x,t)$,
and then define a homotopy map $\bar{F}_{1}:\bigcup_{x\in\mathcal{X}}\mathcal{B}(x,t)\times[0,1]\to\bigcup_{x\in\mathcal{X}}\mathcal{B}(x,t)$
as $F_{1}$ on $\bigcup_{x\in\mathcal{X}}\mathcal{B}(x,t)$, i.e.,
\[
\bar{F}_{1}(x,t)=(1-t)x+t\Pi_{P_{\mathcal{X}}}(x).
\]
Claim \ref{Appendix::Claim::projection line inclusion} gives that $\bar{F}_{1}$ is well defined. Then, $\bar{F}_{1}$
provides a deformation retraction from $\bigcup_{x\in\mathcal{X}}\mathcal{B}(x,t)$
to $P_{\mathcal{X}}$. Hence, we have that
\[
\bigcup_{x\in\mathcal{X}}\mathcal{B}(x,t)\simeq P_{\mathcal{X}}\simeq\{x_{1}\}.
\]
Consequently, the proof of Proposition \ref{Appendix::homotopy_cech_of_circle} is completed. \end{proof}

\subsection{Proof of Proposition \ref{prop::phofcircle}}\label{Appendix::Proofofproposition phcircles}
We adopt the same notation and definitions used in the proof of Proposition \ref{Appendix::homotopy_cech_of_circle}. 
In Proposition \ref{Appendix::homotopy_cech_of_circle},
we compute the homotopy type of $\textrm{\v{C}ech}(\Xc,t)$, for each $t>0$. 
Therefore, when $\Xc$ is not contained in any open hemicircle, the homology groups $\mathrm{H}_{1}\left(\bigcup_{x\in \Xc}\Bc(x,t_1)\right)$ and $\mathrm{H}_{1}\left(\bigcup_{x\in \Xc}\Bc(x,t_2)\right)$ have dimension one, 
for $r_0\leq t_1<t_2< r$. 
To obtain a persistence diagram from the homology groups, the persistence of each generator in the groups must be computed. 
Concretely, if the generator $g_1$ of $\mathrm{H}_{1}\left(\bigcup_{x\in \Xc}\Bc(x,t_1)\right)$ becomes zero at the homology group $\mathrm{H}_{1}\left(\bigcup_{x\in \Xc}\Bc(x,t_2)\right)$, then the persistence diagram has at least two points representing $g_1$ and the generator $g_2$ of $\mathrm{H}_{1}\left(\bigcup_{x\in \Xc}\Bc(x,t_2)\right)$.
Therefore, in this proof, we aim to show that all homology groups $\mathrm{H_1}\left(\bigcup_{x\in \Xc}\Bc(x,t)\right)$ are isomorphic, for $r_0\leq t< r$.

Let $\mathcal{B}(t)\coloneqq\bigcup_{x\in\mathcal{X}}\mathcal{B}(x,t)$
and $\mathcal{B}_{\partial B}(t)\coloneqq\bigcup_{x\in\mathcal{X}}\mathcal{B}(x,t)\cap\partial B_{\mathcal{X}}$, for notational simplicity.
Denote by $\imath_{A\to B}$ the inclusion map from $A$ to $B$, for sets $A$ and $B$ such that $A\subseteq B$, and let $\rho:= \rho_{\mathcal{X}}$. Furthermore, for a continuous map \(f:X\to Y\), we denote by
\[
f_*:\mathrm{H}_1(X)\to \mathrm{H}_1(Y),
\]
the homomorphism induced by \(f\) on the first homology groups.
\begin{lem}\label{Appendix::lem::inclusion diagram commutes} 
For $r_0\leq t_{1}<t_{2}< r$, the following diagram commutes:

\[
\begin{tikzcd}[
  column sep=7.5em,
  row sep=5.5em,
  cells={nodes={inner sep=2pt}}
]
\mathcal{B}(t_1)
  \arrow[r, "\iota_{\mathcal{B}(t_1)\to \mathcal{B}(t_2)}"]
  \arrow[d, shift left=1.0ex, "\rho|_{\mathcal{B}(t_1)}"]
&
\mathcal{B}(t_2)
  \arrow[d, shift left=1.0ex, "\rho|_{\mathcal{B}(t_2)}"]
\\
\mathcal{B}_{\partial B}(t_1)
  \arrow[u, shift left=1.0ex, "\iota_{\mathcal{B}_{\partial B}(t_1)\to \mathcal{B}(t_1)}"]
  \arrow[r, "\iota_{\mathcal{B}_{\partial B}(t_1)\to \mathcal{B}_{\partial B}(t_2)}"']
&
\mathcal{B}_{\partial B}(t_2)
  \arrow[u, shift left=1.0ex, "\iota_{\mathcal{B}_{\partial B}(t_2)\to \mathcal{B}(t_2)}"]
\end{tikzcd}
\]
\end{lem}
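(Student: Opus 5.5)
The plan is to reduce commutativity to the fact that every arrow in the diagram is the restriction of one globally defined map. The only real work is checking that the vertical maps are well defined with the stated codomains. Because $\rho=\rho_{\Xc}$ is only defined on $P_{\Xc}\setminus\{0\}$, while $\mathcal{B}(t)$ is not contained in $P_{\Xc}$, I will read the downward arrow $\rho|_{\mathcal{B}(t)}$ as the time-one map of the homotopy $F$ in \eqref{Appendix::eq:: the homotopy map F}, namely
\[
x\longmapsto F(x,1)=\rho_{\Xc}\bigl(\Pi_{P_{\Xc}}(x)\bigr).
\]
On $P_{\Xc}\cap\mathcal{B}(t)$ this agrees with $\rho_{\Xc}$. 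I will also read $\mathcal{B}_{\partial B}(t)$ as $\mathcal{B}(t)\cap B_{\Xc}$, where $B_{\Xc}=\mathrm{Im}(\rho_{\Xc})$ is the polygonal boundary described in Lemma \ref{Appendix::lem::Boundary equality}.

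\textbf{Step 1 (well-definedness).} Fix $t\in[r_0,r)$. By Lemma \ref{Appendix::lem::codomain of restricted homopoty F}, $F$ maps $\mathcal{B}(t)\times[0,1]$ into $\mathcal{B}(t)$, and in particular $\mathcal{B}(t)\cap\Pi_{P_{\Xc}}^{-1}\{0\}=\emptyset$. The time-one map also lands in $B_{\Xc}$ by definition of $\rho_{\Xc}$. Hence $\rho|_{\mathcal{B}(t)}$ maps $\mathcal{B}(t)$ into $\mathcal{B}_{\partial B}(t)$. Continuity follows from Claim \ref{Appendix::Claim::Continuity and well definedness of F}. Since $\mathcal{B}(t_1)\subseteq\mathcal{B}(t_2)$ and $\mathcal{B}_{\partial B}(t_1)\subseteq\mathcal{B}_{\partial B}(t_2)$, all four inclusion arrows are well defined.

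\textbf{Step 2 (commutativity of each path).}
\begin{itemize}
\item[(a)] For the downward square: for $x\in\mathcal{B}(t_1)$, the composite $\rho|_{\mathcal{B}(t_2)}\circ\iota_{\mathcal{B}(t_1)\to\mathcal{B}(t_2)}(x)$ equals $\rho(\Pi_{P_{\Xc}}(x))$. The composite $\iota_{\mathcal{B}_{\partial B}(t_1)\to\mathcal{B}_{\partial B}(t_2)}\circ\rho|_{\mathcal{B}(t_1)}(x)$ is the same point, because both are given by one formula independent of $t$.
\item[(b)] For the upward square: both composites are the inclusion $\mathcal{B}_{\partial B}(t_1)\hookrightarrow\mathcal{B}(t_2)$.
\item[(c)] The two vertical arrows at each level satisfy $\rho|_{\mathcal{B}(t)}\circ\iota_{\mathcal{B}_{\partial B}(t)\to\mathcal{B}(t)}=\mathrm{id}$. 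This holds because on $B_{\Xc}\subseteq P_{\Xc}$ the projection $\Pi_{P_{\Xc}}$ is the identity, and $\rho_{\Xc}$ fixes $B_{\Xc}$ by Claim \ref{Appendix::Claim characterization of the boundary}.
\item[(d)] The reverse composite $\iota\circ\rho|_{\mathcal{B}(t)}$ is homotopic to $\mathrm{id}_{\mathcal{B}(t)}$ via $F$, by Corollary \ref{Appendix::cor::homotopy equivalence}.
\end{itemize}
Consequently the diagram commutes strictly as maps for the square paths, and the vertical pairs are mutually inverse up to homotopy. On $\mathrm{H}_1$, the induced maps $\rho_*$ and $\iota_*$ are then inverse isomorphisms that intertwine the horizontal inclusions. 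This is what will be used next.

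\textbf{Main obstacle.} The delicate point is Step 1, namely that $\Pi_{P_{\Xc}}$ followed by $\rho_{\Xc}$ never leaves the union of balls and never hits $0$. Fortunately this is exactly Lemma \ref{Appendix::lem::codomain of restricted homopoty F}, which in turn rests on Claims \ref{Appendix::Claim::projection line inclusion} and \ref{Appnedix::Claim:: line segement contained in balls}. The one thing to verify is that the hypothesis $t<r$ is in force, which it is since $t_2<r$.
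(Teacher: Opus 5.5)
Your proof is correct and follows the paper's argument. Both squares commute because the two composites agree pointwise: they are $y\mapsto\rho(y)$ for the downward square and $y\mapsto y$ for the upward one. Your reading of $\rho|_{\mathcal{B}(t)}$ as the time-one map $\rho_{\Xc}\circ\Pi_{P_{\Xc}}$ of $F$, with the codomain check via Lemma~\ref{Appendix::lem::codomain of restricted homopoty F}, is a useful tightening, since the paper applies $\rho$ directly to points of $\mathcal{B}(t_1)$ that need not lie in $P_{\Xc}$. Items (c) and (d) are not needed for this lemma, but they are exactly what the paper uses in the next step.
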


\begin{proof}
We need to show that $\rho|_{\mathcal{B}(t_{2})}\circ\imath_{\mathcal{B}(t_{1})\to\mathcal{B}(t_{2})}=\imath_{\mathcal{B}_{\partial B}(t_{2})\to\mathcal{B}(t_{2})}\circ\rho|_{\mathcal{B}(t_{1})}$
and $\imath_{\mathcal{B}_{\partial B}(t_{2})\to\mathcal{B}(t_{2})}\circ\imath_{\mathcal{B}_{\partial B}(t_{1})\to\mathcal{B}_{\partial B}(t_{2})}=\imath_{\mathcal{B}(t_{1})\to\mathcal{B}(t_{2})}\circ\imath_{\mathcal{B}_{\partial B}(t_{1})\to\mathcal{B}(t_{1})}$.
First, for all $y\in\mathcal{B}(t_{1})$, it holds that 
\begin{align*}
 & \rho|_{\mathcal{B}(t_{2})}\circ\imath_{\mathcal{B}(t_{1})\to\mathcal{B}(t_{2})}(y)=\rho|_{\mathcal{B}(t_{2})}(y)=\rho(y),\\
 & \imath_{\mathcal{B}_{\partial B}(t_{2})\to\mathcal{B}(t_{2})}\circ\rho|_{\mathcal{B}(t_{1})}(y)=\imath_{\mathcal{B}_{\partial B}(t_{2})\to\mathcal{B}(t_{2})}(\rho(y))=\rho(y),
\end{align*}
and, hence, we have that 
\[
\rho|_{\mathcal{B}(t_{2})}\circ\imath_{\mathcal{B}(t_{1})\to\mathcal{B}(t_{2})}=\imath_{\mathcal{B}_{\partial B}(t_{2})\to\mathcal{B}(t_{2})}\circ\rho|_{\mathcal{B}(t_{1})}.
\]
Second, for all $y\in\mathcal{B}_{\partial B}(t_1)$, it holds that
\begin{align*}
 & \imath_{\mathcal{B}_{\partial B}(t_{2})\to\mathcal{B}(t_{2})}\circ\imath_{\mathcal{B}_{\partial B}(t_{1})\to\mathcal{B}_{\partial B}(t_{2})}(y)=\imath_{\mathcal{B}_{\partial B}(t_{2})\to\mathcal{B}(t_{2})}(y)=y,\\
 & \imath_{\mathcal{B}(t_{1})\to\mathcal{B}(t_{2})}\circ\imath_{\mathcal{B}_{\partial B}(t_{1})\to\mathcal{B}(t_{1})}(y)=\imath_{\mathcal{B}(t_{1})\to\mathcal{B}(t_{2})}(y)=y,
\end{align*}
and, hence, we have that 
\[
\imath_{\mathcal{B}_{\partial B}(t_{2})\to\mathcal{B}(t_{2})}\circ\imath_{\mathcal{B}_{\partial B}(t_{1})\to\mathcal{B}_{\partial B}(t_{2})}=\imath_{\mathcal{B}(t_{1})\to\mathcal{B}(t_{2})}\circ\imath_{\mathcal{B}_{\partial B}(t_{1})\to\mathcal{B}(t_{1})}.
\]
This completes the proof of Lemma \ref{Appendix::lem::inclusion diagram commutes}.
\end{proof}
From Lemma \ref{Appendix::lem::inclusion diagram commutes}, we can induce an isomorphism between the homology groups, as stated in the following lemma.
\begin{lem}\label{Appendix::lem::an induced isomorphism} 
For $r_{0}\leq t_{1}<t_{2}<r$, the inclusion map $\imath_{\mathcal{B}(t_{1})\to\mathcal{B}(t_{2})}:\mathcal{B}(t_{1})\to\mathcal{B}(t_{2})$
induces an isomorphism $\left(\imath_{\mathcal{B}(t_{1})\to\mathcal{B}(t_{2})}\right)_*:\mathrm{H}_{1}(\mathcal{B}(t_{1}))\to \mathrm{H}_{1}(\mathcal{B}(t_{2}))$.
\end{lem}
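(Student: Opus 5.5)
We will prove Lemma \ref{Appendix::lem::an induced isomorphism} by passing to the first homology of the commutative diagram in Lemma \ref{Appendix::lem::inclusion diagram commutes}. The key observation is that, for $r_0\le t<r$, the boundary part $\mathcal{B}_{\partial B}(t)=\bigcup_{x\in\Xc}\mathcal{B}(x,t)\cap B_{\Xc}$ is all of $B_{\Xc}$. Every point of $B_{\Xc}$ is either a point of $\Xc$ or lies on an edge $[x,y]$ with $x\sim y$. Such an edge has length at most $2r_0\le 2t$, so each of its points is within distance $t$ of $x$ or of $y$. This is exactly the equality used in the proof of Proposition \ref{Appendix::homotopy_cech_of_circle}. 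Consequently $\mathcal{B}_{\partial B}(t_1)=\mathcal{B}_{\partial B}(t_2)=B_{\Xc}$, and the bottom inclusion $\imath_{\mathcal{B}_{\partial B}(t_1)\to\mathcal{B}_{\partial B}(t_2)}$ is the identity map of $B_{\Xc}$. In particular, it induces the identity on $\mathrm{H}_1$.

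Next we use Corollary \ref{Appendix::cor::homotopy equivalence}. For $t<r$, the homotopy $F$ is a deformation retraction of $\mathcal{B}(t)$ onto $\mathcal{B}_{\partial B}(t)$. Its time-one map is $\rho\circ\Pi_{P_{\Xc}}$, which we write as $\rho$ restricted to $\mathcal{B}(t)$ as in the diagram; this is harmless because $\Pi_{P_{\Xc}}$ is homotopic to the identity inside $\mathcal{B}(t)$ by Claim \ref{Appendix::Claim::projection line inclusion}. Hence the inclusion $\imath_{\mathcal{B}_{\partial B}(t)\to\mathcal{B}(t)}$ is a homotopy equivalence whose homotopy inverse is the retraction. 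Applying the functor $\mathrm{H}_1$, the vertical maps $(\imath_{\mathcal{B}_{\partial B}(t_i)\to\mathcal{B}(t_i)})_*$ are isomorphisms, with inverse $\rho_*$, for $i=1,2$.

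By functoriality, the square of inclusions in Lemma \ref{Appendix::lem::inclusion diagram commutes} gives
\[
(\imath_{\mathcal{B}(t_1)\to\mathcal{B}(t_2)})_*\circ(\imath_{\mathcal{B}_{\partial B}(t_1)\to\mathcal{B}(t_1)})_*=(\imath_{\mathcal{B}_{\partial B}(t_2)\to\mathcal{B}(t_2)})_*\circ(\imath_{\mathcal{B}_{\partial B}(t_1)\to\mathcal{B}_{\partial B}(t_2)})_*.
\]
The right-hand side is a composition of isomorphisms. Therefore
\[
(\imath_{\mathcal{B}(t_1)\to\mathcal{B}(t_2)})_*=(\imath_{\mathcal{B}_{\partial B}(t_2)\to\mathcal{B}(t_2)})_*\circ(\imath_{\mathcal{B}_{\partial B}(t_1)\to\mathcal{B}_{\partial B}(t_2)})_*\circ(\imath_{\mathcal{B}_{\partial B}(t_1)\to\mathcal{B}(t_1)})_*^{-1}
\]
is an isomorphism, as claimed.

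The main obstacle is the identification $\mathcal{B}_{\partial B}(t)=B_{\Xc}$, which is what turns the bottom arrow into an isomorphism. This requires checking that adjacent points are at distance at most $2r_0$, which holds by the definition of $r_0$ as half the supremum over adjacent pairs. It also uses Lemma \ref{Appendix::lem::Boundary equality} to describe $B_{\Xc}$ as the union of $\Xc$ and the neighbor edges. Should the paper intend the bottom row to use $\partial B_{\Xc}$ rather than $B_{\Xc}$, the same covering argument applies to $\partial B_{\Xc}$. The remaining steps are purely formal.
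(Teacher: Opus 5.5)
Your proposal is correct and follows essentially the same route as the paper. You apply $\mathrm{H}_1$ to the commutative diagram, use the deformation retraction of Corollary~\ref{Appendix::cor::homotopy equivalence} to make the vertical inclusion maps isomorphisms, and use that the bottom row is the identity of the full boundary once $t\ge r_0$. The differences are cosmetic: you write $(\imath_{\mathcal{B}_{\partial B}(t_1)\to\mathcal{B}(t_1)})_*^{-1}$ where the paper writes $(\rho|_{\mathcal{B}(t_1)})_*$, and you justify $\mathcal{B}_{\partial B}(t)=B_{\Xc}$ via the bound $\|x-y\|\le 2r_0$ on adjacent edges, which the paper only asserts.
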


\begin{proof}
The commutative diagram in the proof of Lemma \ref{Appendix::lem::inclusion diagram commutes} induces the following commutative diagram
on homology groups:

\[
\begin{tikzcd}[
  column sep=7.5em,
  row sep=5.5em,
  cells={nodes={inner sep=2pt}}
]
\mathrm{H}_1(\mathcal{B}(t_1))
  \arrow[r, "\left(\iota_{\mathcal{B}(t_1)\to \mathcal{B}(t_2)}\right)_*"]
  \arrow[d, shift left=1.0ex, "\left(\rho|_{\Bc(t_1)}\right)_*"]
&
\mathrm{H}_1(\mathcal{B}(t_2))
  \arrow[d, shift left=1.0ex, "\left(\rho|_{\Bc(t_2)}\right)_*"]
\\
\mathrm{H}_1(\mathcal{B}_{\partial B}(t_1))
  \arrow[u, shift left=1.0ex, "\left(\iota_{\mathcal{B}_{\partial B}(t_1)\to \mathcal{B}(t_1)}\right)_*"]
  \arrow[r, "\left(\iota_{\mathcal{B}_{\partial B}(t_1)\to \mathcal{B}_{\partial B}(t_2)}\right)_*"']
&
\mathrm{H}_1(\mathcal{B}_{\partial B}(t_2))
  \arrow[u, shift left=1.0ex, "\left(\iota_{\mathcal{B}_{\partial B}(t_2)\to \mathcal{B}(t_2)}\right)_*"]
\end{tikzcd}
\]
By Lemma \ref{Appendix::cor::homotopy equivalence}, 
the map  $\imath_{\mathcal{B}_{\partial B}(t_{1})\to\mathcal{B}(t_{1})}$ is a homotopy equivalence of $\Bc_{\partial B}(t_1)$ and $\Bc(t_1)$ with the inverse of 
 $\rho|_{\mathcal{B}(t_{1})}$ via the map $F$. From this, we have that the induced homomorphisms $\left(\iota_{\mathcal{B_{\partial B}}(t_1)\to \mathcal{B}(t_2)}\right)_*$ and $\left(\rho|_{\Bc(t_1)}\right)_*$ are isomorphisms.
The same holds for $\left(\imath_{\mathcal{B}_{\partial B}(t_{2})\to\mathcal{B}(t_{2})}\right)_*$
and $\left(\rho|_{\mathcal{B}(t_{2})}\right)_*$. Now, note that when $r_{0}\leq t_{1}<t_{2}<r$,
\[
\bigcup_{x\in\mathcal{X}}\mathcal{B}(x,t_{1})\cap\partial B_{\mathcal{X}}=\bigcup_{x\in\mathcal{X}}\mathcal{B}(x,t_{2})\cap\partial B_{\mathcal{X}}=\partial B_{\mathcal{X}}.
\]
Hence, $\left(\imath_{\mathcal{B}_{\partial B}(t_{1})\to\mathcal{B}_{\partial B}(t_{2})}\right)_*:\mathrm{H}_{1}(\partial B_{\mathcal{X}})\to \mathrm{H}_{1}(\partial B_{\mathcal{X}})$
is an isomorphism as well. Therefore, the homomorphism
\[
\left(\imath_{\mathcal{B}(t_{1})\to\mathcal{B}(t_{2})}\right)_*=\left(\imath_{\mathcal{B}_{\partial B}(t_{2})\to\mathcal{B}(t_{2})}\right)_*\circ\left(\imath_{\mathcal{B}_{\partial B}(t_{1})\to\mathcal{B}_{\partial B}(t_{2})}\right)_*\circ\left(\rho|_{\mathcal{B}(t_{1})}\right)_*
\]
is an isomorphism as well.
This completes the proof of Lemma \ref{Appendix::lem::an induced isomorphism}.
\end{proof}

\begin{proof}[Proof of Proposition~\ref{prop::phofcircle}]
The results of Proposition \ref{Appendix::homotopy_cech_of_circle} imply that 
\[
\mathrm{H}_{1}\left(\textrm{\v{C}ech}(\Xc,t)\right)=\begin{cases}
\{0\}, & t< r_{0},t\geq r,\\
\mathbb{Z}, & t\in[r_{0},r),
\end{cases}
\]
when $\Xc$ is not contained in any open hemicircle lying on $S^1(r)$, and 
\[
\mathrm{H}_{1}\left(\textrm{\v{C}ech}(\Xc,t)\right)=\{0\}, \text{ for all } t>0,
\]
when $\Xc$ is contained in some open hemicircle lying on $S^1(r)$.
Combining the above homology groups and Lemma \ref{Appendix::lem::an induced isomorphism} yields the persistence diagrams presented in Proposition \ref{prop::phofcircle}. Therefore, the proof of Proposition~\ref{prop::phofcircle} is completed.
\end{proof}
\subsection{Proof of Proposition \ref{Appendix::prop::Equivalence prop}}\label{Appendix::sec::Equivalence proposition}

To proceed with the proof, we need the following definitions. 
\begin{defn}[Bottleneck distance]\label{Appendix::dfn::bottleneck distance}
    Let $\mathcal{D}_1$ and $\mathcal{D}_2$ be two persistence diagrams.
The bottleneck distance between $\mathcal{D}_1$ and $\mathcal{D}_2$ is defined by
\begin{align*}
    W_\infty(\mathcal{D}_1,\mathcal{D}_2)
    :=
    \inf_{\gamma:\mathcal{D}_1\to\mathcal{D}_2}
    \sup_{x\in\mathcal{D}_1}
    \|x-\gamma(x)\|_\infty,
\end{align*}
where the infimum is taken over all bijections $\gamma$ between the two
persistence diagrams, with the diagonal
$\Delta=\{(t,t):t\in\mathbb{R}\}$ included with infinite multiplicity.
\end{defn}
\begin{defn}[Hausdorff distance] Let $A$ and $B$ be compact subsets of $\Rb^D$. The Hausdorff distance between $A$ and $B$, denoted by $\mathbf{H}(A,B)$, is defined by 
\begin{align*}
    \mathbf{H}(A,B)= \max \left\{\max_{x\in A}\min_{y\in B}\|x-y\|_2, \max_{x\in B}\min_{y\in A}\|x-y\|_2\right\}
\end{align*}
\end{defn}

We next state a concentration inequality, which will be used in the proof. 
\begin{lem}\label{Appendix::lem::concentration ineq} Let $\Mb$ be a compact submanifold of $\Rb^D$.
Denote 
\begin{align*}
    Z:=\mathrm{PD}_k\left(\Kc(\Mb)\right) \quad \text{and}\quad \widehat{Z}:= \mathrm{PD}_k\left(\Kc\left(\Xb_{N,f_{\Mb}}\right)\right).
\end{align*}
Then, for all $t>0$,
\begin{align*}
    \Pb\left(W_{\infty}\left(\widehat{Z},Z\right)>t\right)\leq C_1(t,\Mb)\exp\left(-C_2(t,\Mb)N\right),
\end{align*}
for some positive constants $C_1(t,\Mb)$ and $C_2(t,\Mb)$.
\end{lem}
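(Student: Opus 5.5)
The plan is to reduce the bottleneck deviation of the diagrams to a Hausdorff deviation of the point cloud via the stability theorem, and then to bound the probability of a large Hausdorff deviation by a covering argument.

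\textbf{Stability.} For either the \v{C}ech or the Vietoris--Rips filtration built with the ambient Euclidean distance, the standard stability theorem (Chazal--de Silva--Oudot, via interleaving of the filtrations) gives
\[
W_\infty\bigl(\mathrm{PD}_k(\Kc(A)),\mathrm{PD}_k(\Kc(B))\bigr)\le c_{\Kc}\,\mathbf{H}(A,B)
\]
for compact $A,B\subset\Rb^D$. Here $c_{\Kc}=1$ for \v{C}ech and $c_{\Kc}=2$ for Rips in the paper's radius convention. Since $\Xb_{N,f_{\Mb}}\subseteq\Mb$, we have $\mathbf{H}(\Xb_{N,f_{\Mb}},\Mb)=\max_{x\in\Mb}\min_i\|x-X_i\|_2$. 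Consequently
\[
\Pb\bigl(W_\infty(\widehat Z,Z)>t\bigr)\le \Pb\bigl(\mathbf{H}(\Xb_{N,f_{\Mb}},\Mb)>t/c_{\Kc}\bigr).
\]

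\textbf{Covering argument.} Set $s=t/(2c_{\Kc})$. By compactness, cover $\Mb$ by finitely many balls $\Bc(y_j,s)$, $j=1,\dots,J(s,\Mb)$, with centers $y_j\in\Mb$. If every ball contains at least one sample point, then every $x\in\Mb$ lies within $2s=t/c_{\Kc}$ of the sample. Hence the event $\{\mathbf{H}>t/c_{\Kc}\}$ is contained in the event that some ball $\Bc(y_j,s)$ contains no sample point.

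\textbf{Lower bound on ball masses.} Define
\[
a(s,\Mb):=\min_j f_{\Mb}\bigl(\Bc(y_j,s)\cap\Mb\bigr).
\]
This quantity is positive, and establishing this is the only genuinely nontrivial step. Since $y_j\in\Mb$ and $\Mb$ is a compact embedded submanifold, $\Bc(y_j,s)\cap\Mb$ contains a relatively open neighbourhood of $y_j$ in $\Mb$. That neighbourhood has positive Hausdorff (volume) measure, and the uniform distribution assigns it positive mass. If one prefers to avoid choosing the covering, one can use instead that $y\mapsto f_{\Mb}(\Bc(y,s/2)\cap\Mb)$ is positive on $\Mb$ and lower semicontinuous. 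By compactness it therefore has a positive minimum, and this minimum gives a uniform lower bound.

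\textbf{Union bound.} Combining the previous steps,
\[
\Pb\bigl(W_\infty(\widehat Z,Z)>t\bigr)\le J(s,\Mb)\,(1-a)^N\le J(s,\Mb)\,e^{-aN}.
\]
This yields the claim with $C_1(t,\Mb)=J(t/(2c_{\Kc}),\Mb)$ and $C_2(t,\Mb)=a(t/(2c_{\Kc}),\Mb)$.

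The main obstacle is making sure the stability theorem applies as stated. The diagram $\mathrm{PD}_k(\Kc(\Mb))$ of the continuum manifold must be well defined, and this holds because the filtrations of compact sets are q-tame. One must also check that the constant $c_{\Kc}$ matches the paper's radius convention. Both points are standard, so I would cite stability rather than reprove it.
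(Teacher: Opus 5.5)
Your proof is correct and follows the paper's route. Stability reduces $W_\infty(\widehat Z,Z)$ to the Hausdorff distance between $\Xb_{N,f_{\Mb}}$ and $\Mb$, and a finite cover of $\Mb$ by balls centered on $\Mb$, a union bound, and $(1-a)^N\le e^{-aN}$ then give the exponential tail. The differences are cosmetic: you keep the covering number itself as $C_1$ where the paper bounds it by a packing argument to get $C_1=1/m(t/8)$, and you use the sharper constant $1$ for \v{C}ech where the paper uses $2$ for both filtrations. One small fix applies only to your optional semicontinuity remark: use open balls there, since $y\mapsto f_{\Mb}(\Bc(y,r))$ for closed balls need not be lower semicontinuous. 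The positive minimum of the open-ball masses still bounds the closed-ball masses from below.
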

\begin{proof}[Proof of Lemma \ref{Appendix::lem::concentration ineq}] This proof follows an argument similar to that used in the proof of \citet[Lemma 4]{fasy2014confidence}. We first use the well-known fact that the bottleneck distance $W_{\infty}(\widehat{Z},Z)$ is upper bounded by the Hausdorff distance between the point cloud $\Xb_{N,f_{\Mb}}$ and the set $\Mb$:
if $\Kc$ is either the  \v{C}ech or Vietoris-Rips filtration, then 
\begin{align}
    W_{\infty}\left(\widehat{Z},Z\right) \leq 2\mathbf{H}\left(\Mb,\Xb_{N,f_{\Mb}}\right),
\end{align}  \citep{fasy2014confidence,Chazal2014Persistence}. This implies, for any $t>0$, 
\begin{align}
    \Pb\left(W_{\infty}\left(\widehat{Z},Z\right)>t\right)\leq \Pb\left(\mathbf{H}\left(\Mb,\Xb_{N,f_{\Mb}}\right)>t/2\right).
\end{align}

We now compute an upper bound on $\mathbf{H}\left(\Mb,\Xb_{N,f_{\Mb}}\right)$.
Denote by $B(x,\epsilon)$ the open Euclidean ball of radius $\epsilon>0$ and centered at $x\in \Rb^D.$ Define, for $r>0$, 
\begin{align*}
    m(r):=\inf_{x\in \Mb}f_{\Mb}\left(B(x,r)\right).
\end{align*}
Since $\Mb$ is compact and  the support of $f_{\Mb}$ is $\Mb$, 
\begin{align*}
    m(r) >0, \text{ for any }r>0.
\end{align*}
Let $\{B_1,\dots, B_{M }\}$ be a set of Euclidean balls that forms a minimal $t/2$-covering set for $\Mb$ such that 
\begin{align*}
    B_j=B(c_j,t/2)\quad\text{with}\quad
c_j\in M,
\end{align*}
for each $j=1,\dots,M.$ Let $C:=\{c_1,\dots, c_M\}$. With this notation, we have 
\begin{equation}
\begin{aligned}
    \Pb\left(\mathbf{H}\left(\Mb,\Xb_{N,f_{\Mb}}\right)>t\right) &\leq \Pb\left(\mathbf{H}\left(C,\Xb_{N,f_{\Mb}}\right)>t/2\right),\quad  \left(\because \mathbf{H}\left(C,\Mb\right)\leq t/2\right)\\
    &\leq\Pb\left(B_j \cap \Xb_{N,f_{\Mb}} = \emptyset \text{ for some }j\in \{1,\dots, M\}\right)\\
    &\leq \sum_{j=1}^{M}[1-f_{\Mb}\left(B_j\right)]^{N}\\
    &\leq M[1-m(t/2)]^{N}\\
    &\leq M\exp(-Nm(t/2)).
\end{aligned}
\end{equation}
We next obtain an upper bound on the covering number $M$, employing the classical bound: 
\begin{align}
    M\leq M',
\end{align}
where $M'$ is the maximal $t/4$-packing number of $\Mb$. 
Let $\{B'_1,\ldots,B'_{M'}\}$ be a maximal $t/4$-packing of $\Mb$, where
\begin{align*}
B'_j=B(c'_j,t/4)
\quad\text{with}\quad
c'_j\in M,
\end{align*}
for each $j=1,\ldots,M'$.
Since the sets $B_{j}'\cap \Mb$ are disjoint, we hence have 
\begin{align*}
    1=f_{\Mb}(\Mb) \geq \sum_{j=1}^{M'}f_{\Mb}\left(B_j'\cap \Mb\right)=\sum_{j=1}^{M'}f_{\Mb}\left(B_j'\right)\geq M'm(t/4).
\end{align*}
Therefore, we obtain 
\begin{align}
    M\leq M' \leq 1/m(t/4),
\end{align}
which further implies 
\begin{align}
    \Pb\left(\mathbf{H}\left(\Mb,\Xb_{N,f_{\Mb}}\right)>t\right)  \leq \frac{1}{m(t/4)}\exp\left(-Nm(t/2)\right).
\end{align}
Finally, it follows that,
when $\Kc$ is either the \v{C}ech or Vietoris-Rips filtration,
\begin{align}
    \Pb\left(W_{\infty}\left(\widehat{Z},Z\right)>t\right)\leq \Pb\left(\mathbf{H}(\Mb,\Xb_{N,f_{\Mb}})>t/2\right)\leq \frac{1}{m(t/8)}\exp\left(-Nm(t/4)\right). 
\end{align}
Putting 
\begin{align*}
    C_1(t,\Mb)=\frac{1}{m(t/8)}\quad \text{and }\quad C_2(t,\Mb)= m(t/4)
\end{align*}
completes the proof of Lemma \ref{Appendix::lem::concentration ineq}.
\end{proof}

We are now ready to prove the proposition.
For notational simplicity, let 
\begin{align*}
\widehat{D}:=\text{PD}_k\left(\Kc\left(\Xb_{N,f_{\Mb_1}}\right) \right)\sim P_{k,N,\Mb_1},\quad \widehat{E}:=\text{PD}_{k}\left(\Kc\left(\Xb_{N,f_{\Mb_2}}\right)\right) \sim P_{k,N,\Mb_2}
\end{align*} and
\begin{align*}
    D:=\mathrm{PD}_k\left(\Kc(\Mb_1)\right),\quad  E:=\mathrm{PD}_k\left(\Kc(\Mb_2)\right).
\end{align*}
Since equality in distribution immediately implies equality of the corresponding expectation measures, it suffices to prove the converse.

Suppose that 
    $P_{k,N,\Mb_1}\neq P_{k,N,\Mb_2}$. We then have $\Mb_1\neq \Mb_2$, which further implies $D\neq E$ by definition of the class $\Mc_k$. Then, there exists a bounded open set $A$ such that  
\begin{align}
  |D \cap A | \neq |E \cap A|
\end{align}
and 
\begin{align*}
    \partial A \cap (D\cup E ) =\emptyset,\quad (A\cup \partial A)\cap \Delta =\emptyset,
\end{align*}
where $\Delta =\{(x,y)\in \Rb^2: x=y\}.$
For notational simplicity let
\begin{align*}
    m:=|D \cap A |,\quad n:=|E \cap A|.
\end{align*}
Define 
\begin{align*}
    \delta := \frac{1}{2}\min \{d_{\infty}(D,\partial A),d_{\infty}(E,\partial A),d_{\infty}(\Delta,A)\}>0,
\end{align*}
where 
$d_{\infty}$ is a distance between subsets of $\Rb^2$ defined by 
\begin{align*}
    d_{\infty}(A,B):= \inf_{x\in A,y\in B} \|x-y\|_{\infty}.
\end{align*}
Here, $\|x-y\|_{\infty}=\max(|x_1-y_1|,|x_2-y_2|)$, for $x=(x_1,x_2), y=(y_1,y_2)\in \Rb^2.$ 
By the definition of the bottleneck distance, we have that if 
\begin{align}
    W_{\infty}\left(D,\widehat{D}\right)\leq\delta \text{ and }W_{\infty}\left(E,\widehat{E}\right)\leq\delta,
\end{align}
then 
\begin{align}
    \left|\widehat{D}( A)\right|=m \text{ and } \left|\widehat{E}( A)\right|=n.
\end{align}
Moreover, by Lemma \ref{Appendix::lem::concentration ineq}, we have 
\begin{equation}\label{Appendix::ineq::confidence intervals}
\begin{aligned}
    &\Pb\left(W_{\infty }\left(\widehat{D},D\right)>\delta \right)\leq C_1(\delta,\Mb_1)\exp\left(-C_2(\delta, \Mb_1)N\right),\\
    &\Pb\left(W_{\infty }\left(\widehat{E},E\right)>\delta \right)\leq C_1(\delta,\Mb_2)\exp\left(-C_2(\delta, \Mb_2)N\right).
\end{aligned}
\end{equation}
Using these facts, we now verify 
\begin{align*}
    \Eb\left(\widehat{D}(A)\right)\neq \Eb\left(\widehat{E}(A)\right).
\end{align*}
The expectation $\Eb\left(\widehat{D}(A)\right)$ can be decomposed as 
\begin{align*}
    \Eb\left(\widehat{D}(A)\right) = \Eb\left(\widehat{D}(A)\Ib\left(W_{\infty }\left(\widehat{D},D\right)>\delta \right)\right) +\Eb\left(\widehat{D}(A)\Ib\left(W_{\infty }\left(\widehat{D},D\right)\leq \delta \right)\right).
\end{align*}
By \eqref{Appendix::ineq::confidence intervals}, the first term can be bounded as
\begin{align*}
    \Eb\left(\widehat{D}(A)\Ib\left(W_{\infty }\left(\widehat{D},D\right)>\delta \right)\right) \leq N^{k+1}C_1(\delta,\Mb_1)\exp(-C_2(\delta,\Mb_1)N),
\end{align*}
where we use the fact that if $X$ is a persistence diagram of $k$-dimensional topological features obtained from an $N$-point point cloud under either the \v{C}ech or Vietoris–Rips filtration, then
\begin{align*}
    |X| \leq N^{k+1}.
\end{align*}
For any $\epsilon>0$, there exists $N^{\dagger }>0$ such that if $N\geq N^{\dagger}$, then 
\begin{align}
    N^{k+1}C_1(\delta,\Mb_i)\exp(-C_2(\delta,\Mb_i)N) \leq \epsilon, 
\end{align}
for $i=1,2.$ Suppose that $N\geq N^{\dagger}$.
We hence have that
\begin{align*}
    \left|\Eb\left(\widehat{D}(A)\right)-\Eb\left(\widehat{D}(A)\Ib\left(W_{\infty }\left(\widehat{D},D\right)\leq \delta \right)\right)\right|\leq \epsilon.
\end{align*}
This implies that 
\begin{align}\label{Appendix::ineq::Dhat bound}
    \left|\Eb\left(\widehat{D}(A)\right)-m\Pb\left(W_{\infty }\left(\widehat{D},D\right)\leq \delta \right)\right|\leq \epsilon.
\end{align}
The same argument also holds for $\widehat{E}(A).$ We hence have that
\begin{align}\label{Appendix::ineq::Ehat bound}
    \left|\Eb\left(\widehat{E}(A)\right)-n\Pb\left(W_{\infty }\left(\widehat{E},E\right)\leq \delta \right)\right|\leq \epsilon.
\end{align}
By the triangle inequality with \eqref{Appendix::ineq::Dhat bound} and \eqref{Appendix::ineq::Ehat bound}, we have 
\begin{align*}
    \left|\Eb\left(\widehat{D}(A)\right)-\Eb\left(\widehat{E}(A)\right)\right| \geq \left|m\Pb\left(W_{\infty }\left(\widehat{D},D\right)\leq\delta\right)-n\Pb\left(W_{\infty }\left(\widehat{E},E\right)\leq \delta\right)\right|-2\epsilon.
\end{align*}
Since
\begin{align*}
 \Pb\left(W_{\infty }\left(\widehat{D},D\right)\leq\delta\right),\Pb\left(W_{\infty }\left(\widehat{E},E\right)\leq \delta\right) \in [1-\epsilon, 1],    
\end{align*}
we have 
\begin{align*}
    \left|m\Pb\left(W_{\infty }\left(\widehat{D},D\right)\leq\delta\right)-n\Pb\left(W_{\infty }\left(\widehat{E},E\right)\leq \delta\right)\right| \geq |m-n| -\epsilon \max(m,n).
\end{align*}
Therefore, we obtain 
\begin{align*}
    \left|\Eb\left(\widehat{D}(A)\right)-\Eb\left(\widehat{E}(A)\right)\right| \geq |m-n| -(2+ \max(m,n))\epsilon.
\end{align*}
Choose 
\begin{align*}
    \epsilon = \frac{|m-n|}{2(2+ \max(m,n))}.
\end{align*}
It follows that 
\begin{align*}
    \left|\Eb\left(\widehat{D}(A)\right)-\Eb\left(\widehat{E}(A)\right)\right| \geq \frac{|m-n|}{2}, \text{ for }N\geq N^{\dagger}.
\end{align*}
This implies that 
\begin{align*}
    \Eb\hat{D}\neq \Eb\hat{E},
\end{align*}
for any $N\geq N^{\dagger}$.
This completes the proof of Proposition \ref{Appendix::prop::Equivalence prop}.
\qed
\newpage
\section{Computational algorithms}\label{Appendix::Algorithms}
This section contains the two testing algorithms and the random Fourier feature approximation algorithm. Algorithm \ref{Appendix::Agg test algorithm} is based on \citet{schrab2026aggregation}.
\subsection{Testing algorithms}
\begin{algorithm}[H]
    \caption{\bf{ The permutation two-sample test}}
    \label{Appendix::permuation test algorithm}
    \begin{algorithmic}
        \STATE $\bf{Hypothesis}$: $\text{H}_0:P=Q$ vs $\text{H}_1:p \neq q.$
        \STATE $\bf{Input}$:
        \begin{itemize}[noitemsep, topsep=0pt]
        \item Persistence diagrams $X_1,\dots,X_{n} \overset{\iid}{\sim} P,$ $ Y_1,\dots,Y_{m}\overset{\iid}{\sim} Q$.
        \item Kernel function $k_\lambda(\bx)$ with bandwidth $\lambda$.
        \item Weight function $w(\bx)$.
        \item Significance level $\alpha \in (0,1).$ 
        \item The number $B$ of Monte Carlo simulations.
        \end{itemize}
        
        \STATE \textbf{Procedure:}
        \STATE $\bf{1}$. Uniformly generate  permutations $\sigma_1,\dots, \sigma_B$ on $\{1\dots, n+m\}$. Let $\sigma_{B+1}$ be the identity map on $\{1,\dots,n+m\}.$\\
        
        \STATE $\bf{2}$. For given $\sigma_1,\dots, \sigma_{B+1}$, compute $\widehat{T}^1,\dots, \widehat{T}^{B+1}$ as in \eqref{permuted test statistic} of the main text.\\
        
        \STATE $\bf{3}$. Compute $\hat{q}_{1-\alpha}^B(\Zb_B| \Xb_n,\Yb_m)$ using $\widehat{T}^1,\dots,\widehat{T}^{B+1}$ as in \eqref{1-alpha quantile} of the main text.
        \RETURN Reject $\text{H}_0$ if $\widehat{T}^{B+1}(\Xb_n,\Yb_m) >\hat{q}^B_{1-\alpha}(\Zb_B|\Xb_n,\Yb_m)$; otherwise, do not reject $\text{H}_0$.
    \end{algorithmic}
\end{algorithm}

\begin{algorithm}[H]
    \caption{\bf{The bandwidth aggregation  two-sample test (Aggtest)}}
    \label{Appendix::Agg test algorithm}
    \begin{algorithmic}
        \STATE $\bf{Hypothesis}$: $H_0:P=Q$ vs $H_1:p \neq q.$
        \STATE $\bf{Input}$: 
        \begin{itemize}[noitemsep, topsep=0pt]
        \item Persistence diagrams $X_1,\dots,X_{n} \overset{\iid}{\sim} P,$ $Y_1,\dots,Y_{m}\overset{\iid}{\sim} Q$.
        \item Weight function $w(\mathbf{x}).$
        \item Kernel function $k(\bx)$.
        \item Significance level $\alpha \in (0,1).$ 
        \item Finite collection of bandwidths $\Lambda$ in $(0,\infty)^2.$
        \item The number $B$ of Monte Carlo simulations.
        \end{itemize}
        \STATE $\bf{Procedure}$:\\
        \STATE $\bf{Step\ 1}$, Compute the test statistic and its transformations under permutations:\\
        \textbf{For} $b=1,\ldots ,B,$:\\ 
        \hspace*{1em} sample a permutation $\sigma(b)\sim r$ , where $r$ is the uniform distribution on $S_{n+m}$.\\
        \hspace*{1em} let $\sigma(B+1)$ be the identity permutation.\\
        \textbf{For $\lambda\in \Lambda$ and $b=1,\dots,B+1$,}\\
        \hspace*{1em} compute $\widehat{T}^b_{\lambda}$ as in \eqref{permuted test statistic} of the main text. \\
        \STATE $\bf{Step\ 2}$, Standardize each statistic via the rank-based quantity:\\
        \textbf{For} $b=1,\dots,B+1,$ and $\lambda \in \Lambda:$\\
        \hspace*{2em} compute $p^\lambda_b:= \frac{1}{B+1}\sum_{i=1}^{B+1}\Ib\left(\widehat{T}_\lambda^b\leq \widehat{T}^i_\lambda\right).$\\
        \STATE $\bf{Step\ 3}$, Aggregate the standardized quantities $p_b^\lambda$ across $\lambda$:\\
        \textbf{For} $b=1,\dots, B+1$:\\
        \hspace*{2em} compute $A_b:= \min\{p^\lambda_b:\lambda \in \Lambda\}$.
        \STATE $\bf{Final\ Step}$, get the p-value:\\
        \hspace*{2em} compute $p_{\mathrm{Agg}}:=\frac{1}{B+1}\sum_{b=1}^{B+1}\Ib\left(A_b\leq A_{B+1}\right)$.\\
\textbf{if} $p_{\mathrm{Agg}}\leq \alpha:$ \\
\hspace*{1em} \textbf{reject} $\mathrm{H}_0$ \\
\textbf{else}: \\
\hspace*{1em} \textbf{do not reject} $\mathrm{H}_0$ \\[0.5em]
    
    \end{algorithmic}
\end{algorithm}
\subsection{Random Fourier feature approximation of kernel values}\label{Appendix::rff approx}
Suppose we are given $2n$ persistence diagrams $D_1,\dots, D_n$ and $E_1,\dots, E_n$ each containing at most $m$ points. The number of points of a persistence diagram is not negligible in terms of computational cost. To compute the Gaussian kernel for $D_1,E_1$, $O(m^2)$ computations of $k(x,y)=e^{-\frac{\|x-y\|_2^2}{2}}$ are involved. Thus, there is a need for $O(n^2m^2)$ computations for obtaining the Gram matrix $\left[K(D_i,E_j)\right]_{1\leq i,j\leq n}$ where $K$ is defined in Section \ref{sec::Test Statistic} of the main text. Following \citet{kusano2017kernel}, we  adopt the Random Fourier Feature (RFF) approximation \citep{rff} to reduce the computational cost. We briefly outline the RFF approximation algorithm for the Gaussian kernel $k(x,y)=\exp(-\frac{\|x-y\|_2^2}{2\sigma^2})$.

Fix $M\in \Nb$ and randomly sample $w_1,\dots, w_M \overset{i.i.d}\sim N(0,\sigma^{-2}I_2)$, which is the two-dimensional Gaussian distribution. Additionally, randomly sample $b_1,\dots, b_M \overset{\iid}{\sim}\mathrm{Unif}[0,2\pi]$, which is the uniform distribution on $[0,2\pi].$ Define $z_i(x) := \sqrt{2}cos(w_i'x+b_i)$.
Then the RFF approximation for $k(x,y)$ is defined by 
\begin{align*}
    \frac{1}{M}\sum_{i=1}^Mz_i(x)z_i(y) = z(x)^Tz(y),
\end{align*}
where $z(x) = \frac{1}{\sqrt{M}}[z_1(x),\dots, z_M(x)]^T$. The quantity $K(D_i,E_j)=\sum_{x\in D_i}\sum_{y\in E_j}$$w(x)w(y)k(x,y)$ is approximated by 
\begin{align*}
    \frac{1}{M}\sum_{k=1}^M\left(\sum_{x\in D_i}w(x)z_k(x)\right)\cdot\left(\sum_{y\in E_j}w(y)z_k(y)\right),
\end{align*}
which has cost $O(M)$ if the summands are given. Also, the cost for computing the following set:
\begin{align*}
\left\{\sum_{x\in D_i}w(x)z_k(x): 1\leq i\leq n,1\leq k\leq M \right\}    
\end{align*}
 is  $O(nmM)$. Therefore, the total cost of obtaining the Gram matrix is $O(nmM + n^2M)=O(mn+n^2)$, which is linear in $m$. Consequently, one can reduce the cost for computing the Gram matrix from the square order of $m$ to the linear order of $m$. 

For each bandwidth, the $M$ random Fourier features are generated once and held fixed across all $B+1$ permutations. We use $M=10^4$ throughout the numerical experiments.

\subsection{Takens' embedding}\label{Appendix::Taken's embedding}
Takens' embedding provides a method for reconstructing the state space of a time series from observed data. Using this reconstruction, one can analyze the underlying structure of the time series via persistent homology. 

Periodicity refers to the presence of repeating patterns at regular intervals over time and is one of the key characteristics of time series data. Such periodic behavior can be effectively analyzed through Takens' embedding combined with persistent homology. To be specific, let us define the Takens' embedding.

For a connected and bounded subset $\Ac \subseteq \Rb$, let $C(\Ac)$ denote the set of continuous functions from $\Ac$ to $\Rb$. 
Given a dimension parameter $d \in \Nb$ and a time-delay parameter $\tau \in \Rb$, define the embedding map $T_{d,\tau}: C(\Ac) \to C(\Ac, \Rb^d)$ by
\begin{align*}
T_{d,\tau}f(t) =
\begin{bmatrix}
f(t) \\
f(t+\tau) \\
\vdots \\
f(t+(d-1)\tau)
\end{bmatrix}
\in \Rb^d,
\end{align*}
which is referred to as Takens' embedding.

This embedding induces a point cloud in $\Rb^d$ from the observed time series. 
Periodic behavior in the original time series manifests as loop structures in the embedded trajectory. 
Consequently, one-dimensional topological features in the persistence diagram constructed from this point cloud capture the periodicity of the time series, \citep{Takens1981Detecting,Takensembedding, PereaHarer2013SlidingWindows}.

\section{Additional simulation results}\label{Appendix::Additional Simulation Study}
This section contains the hyperparameter settings for each test method, additional simulation results, and data descriptions that are not included in the main text. 

\paragraph*{The hyperparameter setup}
The hyperparameters for each method are set as follows. 
For the PD method, we use $1{,}000$ permutations and the 1-Wasserstein distance between 
persistence diagrams. 
For the PL method, we also use $1{,}000$ permutations and the $L_2$ distance between 
persistence landscapes. 
For the PI method, we use $40 \times 40$ pixels with Gaussian smoothing, and apply a 
two-sample $t$-test at each pixel. 
To adjust the resulting $p$-values, we adopt the Benjamini--Hochberg procedure. 
Other hyperparameters such as the cut-off threshold, weight function, kernel bandwidth, 
and image range vary across simulations. 
Specifically, we use a cut-off value of $0.2$ for the torus and ORBIT5K experiments, 
and $0.5$ for the two-circles data. 
The weight function is chosen as the root weight $w(x,y) = \sqrt{\,y - x\,}$ for the 
torus data and as a constant weight for the two-circles data. 
For the image range, we use $[0,2]^2$ for the torus and 
two-circles data, and $[0,0.2]^2$ for the ORBIT5K data.
Lastly, for the kernel bandwidth, we use $0.15$, $0.05$, and $0.5$ for the torus, two-circles, and ORBIT5K data, respectively. 
For Aggtest, we employ the collection of bandwidths:
\begin{align*}
    \Lambda=\left\{\left(2^{-k_1},2^{-k_2}\right):k_1,k_2=1,\dots,\left\lceil\log_2\left(\frac{n+m}{\log\log(n+m)}\right)\right\rceil\right\}.
\end{align*}
For computational efficiency, we use a smaller bandwidth collection than that in Proposition \ref{Prop::aggregation test power} by omitting the factor of 2 in the upper bound for $k_1$ and $k_2.$
In addition, we use the Random Fourier Feature approximation to evaluate Gaussian kernel 
values, as described in Section~\ref{Appendix::rff approx}.
\subsection{The circle data simulation}
In the circle data simulation, point clouds are uniformly sampled from a single circle of radius 1 and from two concentric circles with radii $0.9$ and $1.1$; see Figure \ref{Appendix::Onecircle and twocircle} for their visualizations. 
The visibly different persistence diagram patterns in Figure \ref{Appendix::Onecircle and twocircle} motivate this setting as an alternative under which the persistence intensity functions are expected to differ.

We independently and uniformly generate $50$ points from each shape, and then add independent $2$-dimensional Gaussian noise vectors $\epsilon \sim N(0,\sigma^2I_2)$ to each point. For each point cloud, we compute the persistence diagram using the Vietoris-Rips filtration and retain only the one-dimensional homological features. We repeat this diagram-generating process $10$ times so that we have $10$ diagrams for each shape. We then conduct testing procedures using all 20 diagrams. 
For each noise level,
\begin{align*}
    \sigma\in \{0.05,0.10,0.15,0.20\},
\end{align*}
we conduct $100$ simulation replications. The empirical power is defined to be the rejection rate over the $100$ simulations. This experimental setting follows \citet{RobinsonTurner2017} and \citet{MoonLazar2023}. 

The simulation results of the power comparison are shown in the left panel of Figure \ref{Appendix::Result of Simulation 1}. A test can be said to have higher power when its empirical power decreases more slowly as the noise level increases. The left panel of Figure \ref{Appendix::Result of Simulation 1} shows that Agg and PI tests with the constant weight function achieve the greatest power,
and the others are comparable. 

We also study the effect of the weight function using this dataset. We conduct the Aggtest using three weight functions: constant, linear, and arctangent. The simulation results illustrating the effect of the weight function are shown in the right panel of Figure \ref{Appendix::Result of Simulation 1}. As we can see in the third column in Figure \ref{Appendix::Onecircle and twocircle}, the  differences between persistence diagrams (for one-dimensional feature) of the point clouds, which are generated on the single-circle and the two concentric circles, respectively, are the position of the point far from the line $y=x$ and the existence of the points near the line $y=x$. In this case, the points near the line $y=x$ are the main signature distinguishing the two diagrams.  In contrast, difference of position of the point far from the line $y=x$ can be a minor signature  distinguishing them.
Intuitively, if we give a lower weight to the points near the line $y=x$ then the power of the test using that weight diminishes. This intuition can be observed by the highest power of constant weight in the right panel of Figure \ref{Appendix::Result of Simulation 1}.   
\begin{figure}[H]
    \centering
    \includegraphics[width=\textwidth]{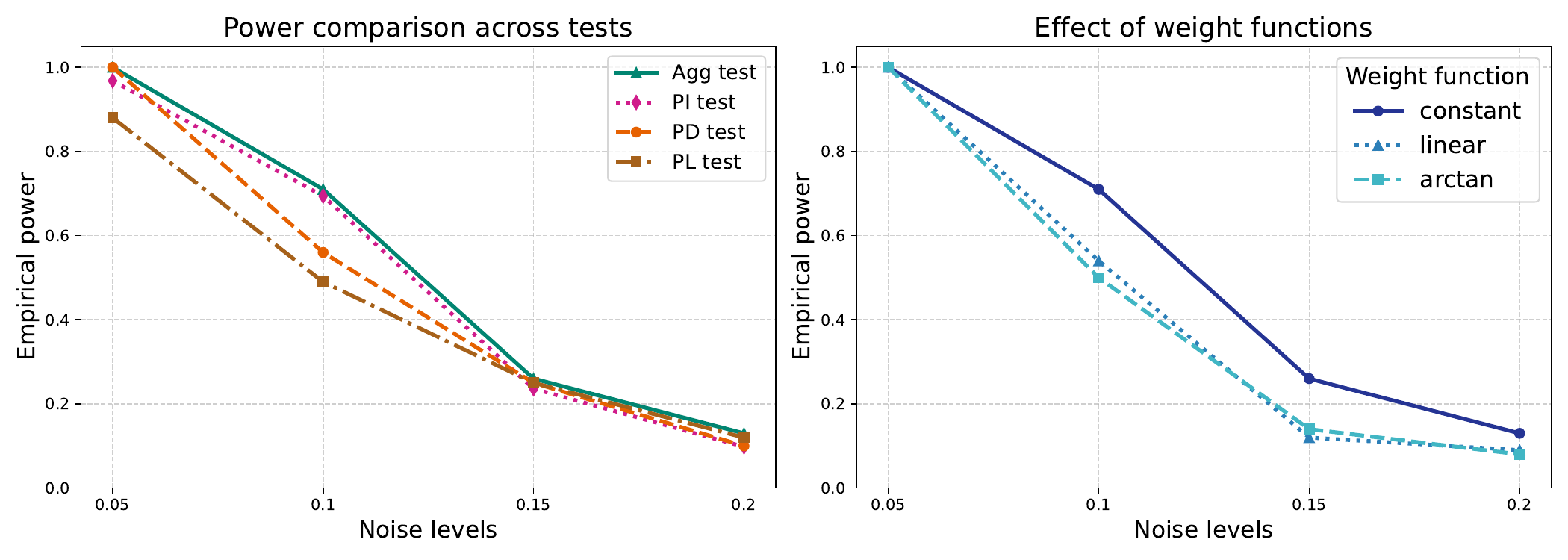}
    \caption{Results of the circle data simulation: (left) empirical powers of tests over different noise levels; (right) empirical powers of Aggtest using different weight functions over noise levels.}
    \label{Appendix::Result of Simulation 1}
\end{figure}
\begin{figure}[H]
    \centering    
    \includegraphics[height=0.8\textheight]{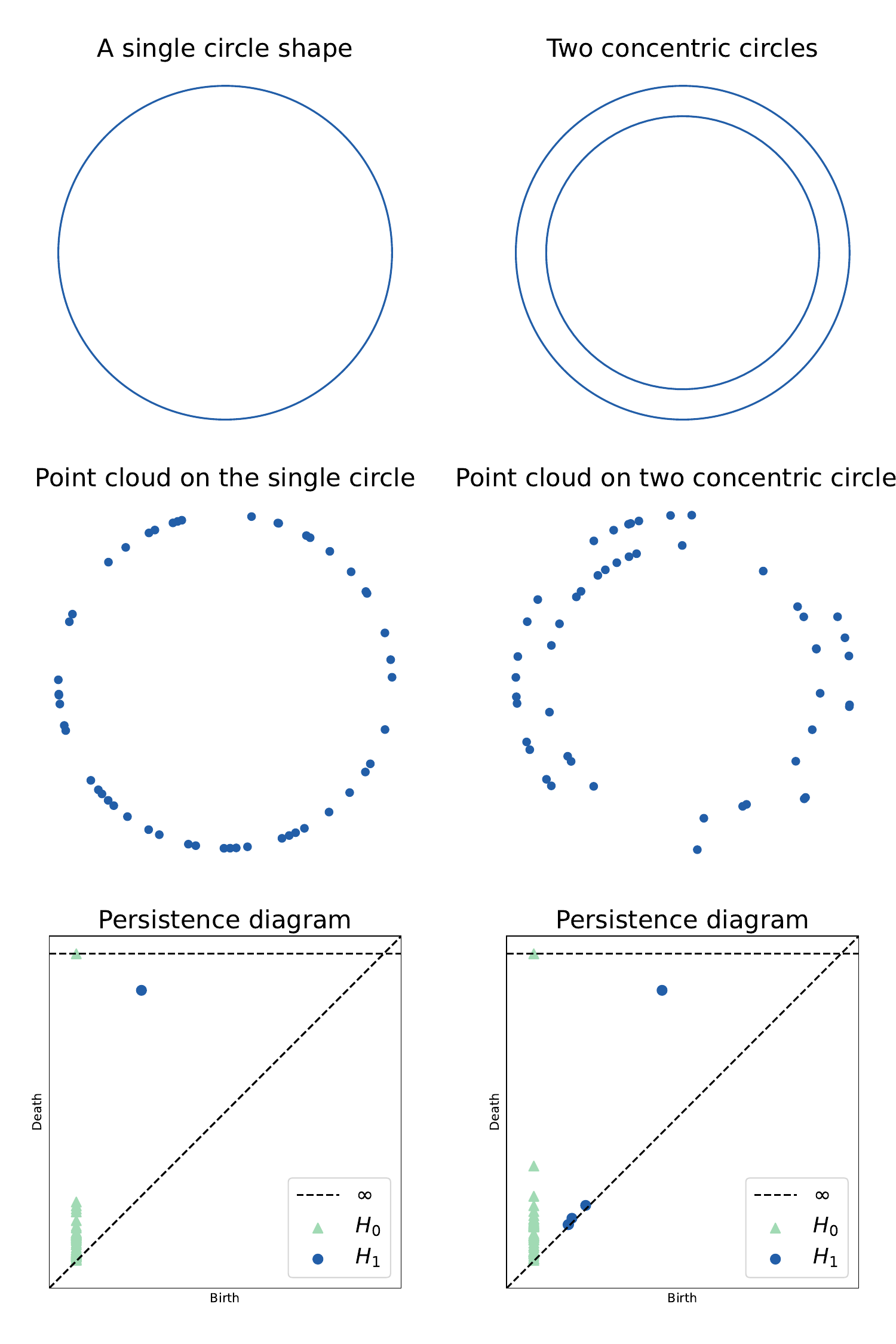}
    \caption{First row: base shapes, second row: 50 points from each shape, third row: Persistence diagrams obtained from each point cloud using the Vietoris–Rips complex.}
    \label{Appendix::Onecircle and twocircle}
\end{figure}

\begin{table}[H]
\centering
\caption{Empirical powers of the tests at different noise levels in the circle data simulation.}
\begin{tabular}{lcccc}
\toprule
Method $\setminus$ $\sigma$ & $0.05$ & $0.10$ & $0.15$ & $0.20$ \\
\midrule
Agg test & 1.000 & 0.710 & 0.260 & 0.130 \\
PI test & 0.968 & 0.694 & 0.236 & 0.098 \\
PD test & 1.000 & 0.560 & 0.250 & 0.100 \\
PL test & 0.880 & 0.490 & 0.250 & 0.120 \\
\bottomrule
\end{tabular}
\end{table}

\begin{table}[H]
\centering
\caption{Empirical powers of Aggtest with different weights at different noise levels in the circle data simulation.}
\begin{tabular}{lcccc}
\toprule
Weight $\setminus$ $\sigma$ & $0.05$ & $0.10$ & $0.15$ & $0.20$ \\
\midrule
constant & 1.000 & 0.710 & 0.260 & 0.130 \\
linear & 1.000 & 0.540 & 0.120 & 0.090 \\
arctan & 1.000 & 0.500 & 0.140 & 0.080 \\
\bottomrule
\end{tabular}
\end{table}

\subsection{The ORBIT5K data simulation}\label{Appendix::ORBI5K}
In this section, we describe the ORBIT5K data set. The dataset arises from a linked twist map, a discrete
dynamical system modeling fluid flow. The linked twist map was used in \citet{Hertzsch2007DNA} to model flows in DNA microarrays with a particular interest in understanding turbulent mixing. This data was first used in TDA by \citet{persistenceimage}. The linked twist map is called a Poincar\'e section. This Poincar\'e section is given by 
\begin{align*}
    x_{n+1} = x_n +ry_{n}(1-y_n) \text{ mod 1}\\
    y_{n+1} = y_n +rx_{n}(1-x_n) \text{ mod 1},
\end{align*}
where $r$ is a positive parameter. The orbits $\{(x_n,y_n): n=0,\dots \infty\}$ are dense in the domain $[0,1]^2$ for some values of $r$. However, for other values of $r$, circular structures emerge. According to the value of $r$, the truncated orbits $\{(x_n,y_n):n=0,\dots, N\}$, where $N\in \Nb$, exhibit different complex structures.

We choose a set of parameter values, $r=2.5,4.0$ and $4.3$ which produce  different orbit circles. For each parameter value $r$, we randomly sample an initial value $(x_1,y_1)$ and use $1,000$ iterations of the linked twist map to generate point clouds in $\Rb^2$. In generating point clouds, we use the Python code from \citet{KimKimZaheer2020PLLay}. To generate a persistence diagram from each point cloud, we use the Vietoris-Rips filtration. We use only the $1$-dimensional topological features. The figures of the point clouds and corresponding persistence diagrams are contained in Figure \ref{Appendix::Orbit}.

We conduct the PD, PL, PI, and Aggtest procedures for each pair of parameter groups. There are $6$ scenarios. Sc.1 : $r=2.5$ vs $r=2.5$, Sc.2 : $r=4.0$ vs $r=4.0$, Sc.3 : $r=4.3$ vs $r=4.3$, Sc.4 : $r=2.5$ vs $r=4.0$, Sc.5 : $r=2.5$ vs $r=4.3$, Sc.6 : $r=4.0$ vs $r=4.3$. For each scenario, we select $50$ persistence diagrams from each group. Therefore, we use a total of $100=50+50$ persistence diagrams for each scenario. The p-values of PD test and PL test and the minimal p-value of PI test and Aggtest are contained in Table \ref{Table::orbit5k} of the main text. 

\begin{figure}[H]
    \centering
    \includegraphics[height=0.75\textheight]{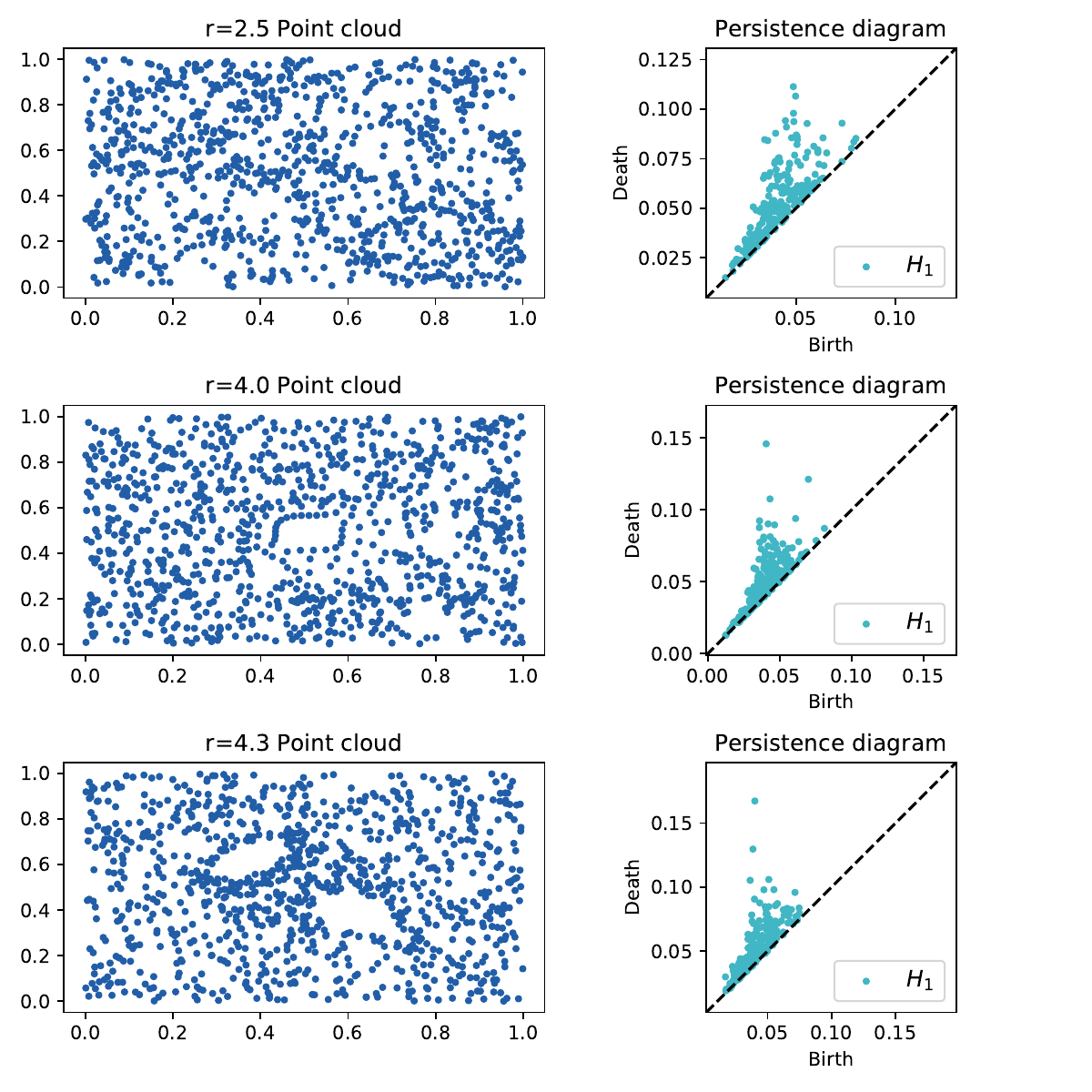}
    \caption{Examples of the truncated orbits $\{(x_n,y_n):n=1,\dots, 1,000\}$ of the linked twist map and their corresponding persistence diagrams for $r=2.5,4.0$, and $4.3$.}
    \label{Appendix::Orbit}
\end{figure}
Figure \ref{Appendix::Orbit} shows visible changes in both the topological structures represented by the point clouds and the resulting persistence diagrams across the parameter $r$, motivating Sc.4–Sc.6 as alternatives expected to exhibit different persistence intensity functions. 

\subsection{The torus data validity simulation}
To assess the validity of the testing procedures, we use the $5,000$ unperturbed persistence diagrams generated from the homogeneous Poisson point process on the torus described in Section \ref{Sec::Simulation Study} of the main text. These diagrams are partitioned into $200$ non-overlapping groups of size $25$. We then randomly permute the $200$ groups and form $100$ disjoint pairs, so that each group is used exactly once within a given pairing round. This random pairing procedure is repeated $10$ times, independently re-permuting the groups in each round, resulting in a total of $1,000$ two-sample comparisons. For each pair, the two groups are treated as samples from the same underlying distribution, and the corresponding two-sample test is performed. The empirical Type I error rate is computed as the proportion of the $1,000$ comparisons that reject the null hypothesis at the significance level $\alpha=0.05$. The same collection of group pairs is used for all testing procedures. The test results are visualized in Figure \ref{Appendix::fig:type1error simulation}.

\begin{figure}[H]
    \centering
    \includegraphics{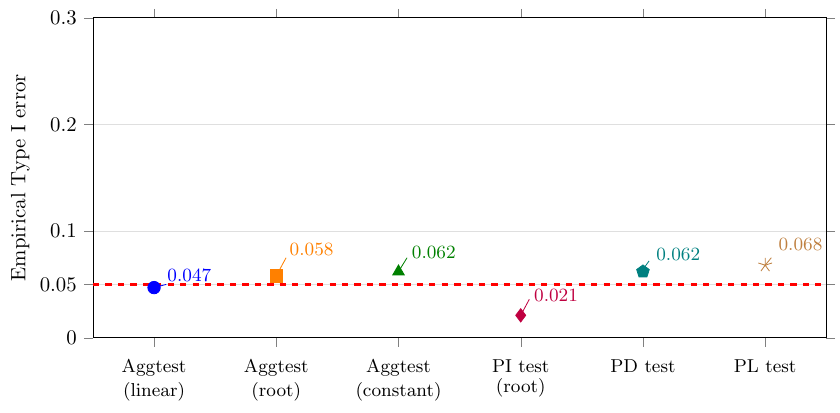}
    \caption{Empirical Type I error of the competing tests in the torus simulation.
The red dashed line indicates the nominal significance level
$\alpha=0.05$. The (linear), (root), and (constant) imply linear, root, constant weights, respectively.}
    \label{Appendix::fig:type1error simulation}
\end{figure}
The repeated disjoint-pairing scheme has two practical advantages. First, within each round, no group is reused across comparisons, while across the $10$ rounds each group is used the same number of times. This yields a balanced use of the available persistence diagrams. Second, repeating the pairing procedure reduces the sensitivity of the empirical Type I error estimate to a particular random pairing of the $200$ groups, while avoiding the need to generate additional persistence diagrams. 
\subsection{Tables of the torus data simulation}\label{Appendix::Tables of torus data simulation}
\begin{table}[H]
\centering
\caption{Empirical powers of the tests at different noise levels in the torus data simulation.}
\begin{tabular}{lccccc}
\toprule
Method $\setminus$ $\sigma$ & $0.01$ & $0.02$ & $0.03$ & $0.04$ \\
\midrule
Agg test & 0.570 & 1.000 & 1.000 & 1.000 \\
PI test & 0.030 & 0.710 & 1.000 & 1.000 \\
PD test & 0.080 & 0.170 & 0.510 & 1.000 \\
PL test & 0.070 & 0.310 & 1.000 & 1.000 \\
\bottomrule
\end{tabular}
\end{table}

\begin{table}[H]
\centering
\caption{Empirical powers of Aggtest with different weights at different noise levels in the torus data simulation.}
\begin{tabular}{lccccc}
\toprule
Weight $\setminus$ $\sigma$ & $0.01$ & $0.02$ & $0.03$ & $0.04$ \\
\midrule
$y-x$ & 0.970 & 1.000 & 1.000 & 1.000 \\
$(y-x)^{\frac{3}{4}}$ & 0.890 & 1.000 & 1.000 & 1.000 \\
$(y-x)^{\frac{1}{2}}$ & 0.570 & 1.000 & 1.000 & 1.000 \\
$(y-x)^{\frac{1}{4}}$ & 0.160 & 0.920 & 0.990 & 0.960 \\
constant & 0.080 & 0.160 & 0.200 & 0.250 \\
\bottomrule
\end{tabular}
\end{table}

\begin{table}[H]
\centering
\caption{Empirical powers of the tests at different sample sizes in the torus data simulation.}
\begin{tabular}{lcccc}
\toprule
Method $\setminus$ $n+m$ & $40$ & $80$ & $140$ & $200$ \\
\midrule
Agg test & 0.100 & 0.960 & 1.000 & 1.000 \\
PI test & 0.160 & 0.580 & 0.900 & 1.000 \\
PD test & 0.080 & 0.120 & 0.180 & 0.340 \\
PL test & 0.060 & 0.220 & 0.520 & 0.900 \\
\bottomrule
\end{tabular}
\end{table}

\begin{table}[H]
\centering
\caption{Empirical powers of Aggtest with the three weights for different sample sizes in the torus data simulation.}
\begin{tabular}{lcccc}
\toprule
Weight $\setminus$ $n+m$ & $40$ & $80$ & $140$ & $200$ \\
\midrule
$y-x$ & 0.640 & 1.000 & 1.000 & 1.000 \\
$(y-x)^{\frac{3}{4}}$ & 0.240 & 1.000 & 1.000 & 1.000 \\
$(y-x)^{\frac{1}{2}}$ & 0.100 & 0.960 & 1.000 & 1.000 \\
$(y-x)^{\frac{1}{4}}$ & 0.060 & 0.420 & 1.000 & 1.000 \\
constant & 0.040 & 0.100 & 0.160 & 0.400 \\
\bottomrule
\end{tabular}
\end{table}
\subsection{Equal-intensity, different-distribution simulation}
\label{Appendix::sec:equalintensity}

We consider an additional simulation to illustrate that two distinct
persistence diagram distributions may have identical persistence
intensity functions. The purpose of this experiment is to clarify the
scope of an intensity-based comparison. In particular, persistence
intensity functions describe the first-order distribution of
persistence points, but do not in general characterize the dependence
structure among multiple persistence points within the same diagram.

Let
\[
    U
    =
    [0.15,0.25]\times[0.45,0.55]
    \subset \{(b,d)\in\mathbb R^2:b<d\},
\]
and let $F$ denote the uniform distribution on $U$. For each
$(b,d)\in U$, we construct a point cloud $\mathcal X(b,d)$ on a
circle of radius $d$ as in Section \ref{Appendix::Proof of Minimax lower bound}. Specifically, the points are arranged in cyclic
order so that the Euclidean distance between adjacent points is $2b$,
except possibly for one adjacent pair whose distance is at most $2b$.
Under the \v{C}ech filtration, the resulting point cloud produces an
$H_1$ persistence feature with birth and death parameters $(b,d)$ by Proposition \ref{prop::phofcircle}.

For each observation, we place two such circles sufficiently far apart
and take the union of the corresponding point clouds. We consider the
following two data-generating mechanisms. Under $P$, we first draw
\[
    Z=(B,D)\sim F
\]
and use the same realization $Z$ for both circles. Thus, the resulting
$H_1$ persistence diagram is
\[
    \mathcal D_P
    =
    \delta_Z+\delta_Z.
\]
Under $Q$, we independently draw
\[
    Z_1=(B_1,D_1),
    Z_2=(B_2,D_2)
    \stackrel{\mathrm{i.i.d.}}{\sim} F,
\]
and use $Z_1$ and $Z_2$ for the two circles, respectively. The
corresponding persistence diagram is therefore
\[
    \mathcal D_Q
    =
    \delta_{Z_1}+\delta_{Z_2}.
\]

The two persistence diagram distributions are different. Under $P$,
the two persistence points within each diagram are perfectly dependent
and coincide almost surely, whereas under $Q$ they are independent.
Since $F$ is continuous, the two persistence points under $Q$ are
distinct almost surely. Nevertheless, the persistence intensity
measures are identical. Indeed, for every measurable set
$A\subseteq U$,
\begin{align*}
    \mathbb E_P[\mathcal D_P(A)]
    &=
    2\Pr(Z\in A)
    =
    2F(A),\\
    \mathbb E_Q[\mathcal D_Q(A)]
    &=
    \Pr(Z_1\in A)+\Pr(Z_2\in A)
    =
    2F(A).
\end{align*}
Consequently,
\[
    p=q=2f,
\]
where $f$ denotes the density of $F$. Hence, this construction belongs
to the indifference region:
\[
    P\neq Q,
    \qquad
    p=q
\]
of the testing problem considered in this paper.

This example also provides a concrete interpretation of the
information retained by the persistence intensity function. Although
the two populations have different within-diagram dependence
structures, the marginal distribution of each persistence feature is
the same. As a result, the expected number of persistence points in
every measurable region of the birth--death domain is identical under
the two populations. Such dependence information is not encoded by the
persistence intensity functions.

\paragraph*{Simulation setup}
We independently generated $5{,}000$ persistence diagrams from each of
the distributions $P$ and $Q$ described above. The diagrams from each
population were partitioned into $100$ blocks of size $50$. We then
formed all pairs of distinct block indices and randomly selected $100$
of these pairs without replacement using a fixed random seed. For each
selected pair $(i_r,j_r)$, the $i_r$th block from the $P$-sample and
the $j_r$th block from the $Q$-sample were compared, yielding sample
sizes $n=m=50$ in each simulation replication. All tests were conducted at the significance level $\alpha=0.05$.

We applied four two-sample procedures: the proposed Aggtest with the
linear weight, the persistence-image (PI) test with the
linear weight, the persistence diagram (PD) test, and the
persistence-landscape (PL) test. The same $100$ pairs of samples were
used for all four procedures to facilitate a direct comparison among
the methods. For each method, we report the empirical rejection rate,
defined as
\[
    \widehat{R}
    =
    \frac{1}{100}
    \sum_{r=1}^{100}
    \Ib\{\text{the null hypothesis is rejected in replication }r\}.
\]
Since $P\neq Q$ while $p=q$, the data-generating distributions in this
experiment belong neither to the distributional null $P=Q$ nor to the
intensity-separated alternative $p\neq q$. Accordingly, we use the empirical rejection rate rather than empirical Type I errors or empirical powers. The test result is contained in Table \ref{tab:equal_intensity}. Note that the PI test compares mean persistence-image vectors across predetermined pixels. The result is therefore consistent with our expectations, since Aggtest directly targets persistence intensity functions, while the mean persistence-image vector is determined by the persistence intensity function. In contrast, the PD and PL tests are not restricted to intensity-based information.
\begin{table}[H]
    \centering
    \caption{
        Empirical rejection rates in the equal-intensity simulation.
    }
    \label{tab:equal_intensity}
    \begin{tabular}{lcc}
        \toprule
        Method &  Rejection rate \\
        \midrule
        Aggtest (linear weight) &  0.03 \\
        PI test (linear weight) &  0.02 \\
        PD test                 &  1.00 \\
        PL test                 &  1.00 \\
        \bottomrule
    \end{tabular}
\end{table}

\paragraph*{Conclusion} This simulation illustrates the boundary of the inferential target
considered in this work. When the inferential interest lies in the presence of topological features and the expected scales at which they appear and persist, persistence intensity functions provide a natural
representation for the target of interest, and our theoretical results establish minimax
optimality for detecting such intensity-separated alternatives.
By contrast, when the dependence structure among persistence features
within each random diagram is itself the primary object of interest, an
intensity-based test is not appropriate, since such dependence is
not captured by the persistence intensity function. We emphasize that the distribution of a random persistence diagram contains considerably more information than its persistence intensity function, and therefore a
comparison of the entire persistence diagram distributions may not always be the most
informative inferential target. In particular, it is important to
distinguish whether the primary scientific interest in a TDA problem
lies in how frequently topological features occur and at what expected scales
they appear and persist, or in the dependence structure among
persistence features within each random diagram. The present work
focuses on the former through persistence intensity functions.
Developing statistical procedures specifically targeting the latter
constitutes an interesting direction for future work. 
\subsection{Practical guideline for choosing the weight function}

The choice of the weight function determines which persistence
features are emphasized by Aggtest and should therefore reflect the
features that are considered informative in a given application.
A useful practical guideline is to inspect the overall structure of
the observed persistence diagrams and consider which range of
persistence is relevant to the scientific question. For example, if points close to the diagonal are expected to carry
meaningful information for distinguishing the two persistence intensity functions, as in
the two-circle simulation, a constant weight may be appropriate.  In contrast, when the relevant
topological features are expected to occur at non-negligible
persistence scales, as in the torus simulation, a weight
such as the linear persistence weight can be useful. Such a weight
places greater emphasis on features farther from the diagonal while
reducing the contribution of short-lived features.

Importantly, the weight function should not be selected by running the
test with several candidate weights and subsequently reporting the
weight that yields the most favorable testing result. Such a procedure
implicitly performs a data-dependent selection among multiple tests,
and the resulting test does not in general preserve the nominal Type I
error level unless the selection step is explicitly incorporated into
the calibration procedure. The empirical comparisons among different
weights in our simulation study are intended to illustrate their
different behaviors rather than to recommend such post hoc selection
in practice. Developing an aggregation procedure over multiple weight functions while maintaining valid
Type I error control would therefore be an interesting direction for
future work.
\section{Real data analysis}\label{Appendix::Sec::Real Data Analysis}
In this section, we perform a hypothesis test on sampled sound data from two wind instruments—the flute and the clarinet. This experiment was originally conducted using the two-stage persistence image test~\citep{MoonLazar2023}. In our study, we apply Aggtest under the same experimental conditions
as those in~\citet{MoonLazar2023}.

Following the experimental setup of~\citet{MoonLazar2023}, we treat the 20 persistence diagrams in each group as independent observations. The original data repository: \url{https://github.com/Matt-OR/TDA-TimeSeriesAnalysis} is no longer accessible; the original sound data used in our analysis are archived in our repository for reproducibility.

For both the flute and the clarinet, the note A4 is recorded at a sampling rate of 44{,}100~Hz. 
The clarinet sound is recorded for 4.75 seconds and the flute sound for 1.9 seconds, with a time resolution of approximately $50\,\mu s$. A plot of the sampled sound data is shown in the upper panel of Figure~\ref{Appendix::fig:threeimages}. 

The sampled sound data are transformed into point clouds via Takens' embedding~\citep{Takens1981Detecting,Takensembedding} using the parameters $d=2$ and $\tau=3$; see Section~\ref{Appendix::Taken's embedding}, for the definition of Takens' embedding. The resulting point clouds are displayed in the lower panel of Figure~\ref{Appendix::fig:threeimages}. 
To obtain persistence diagrams from these point clouds, we use the Vietoris–Rips complex. For our testing procedure, we focus only on one-dimensional topological features. 
Figure \ref{Appendix::fig:instrument persistence diagrams} contains an example pair of these persistence diagrams.
The visibly different persistence diagram patterns in Figure \ref{Appendix::fig:instrument persistence diagrams} motivate this setting as an alternative expected to exhibit different persistence intensity functions.

We examine three scenarios: Scenario~1 (clarinet vs.~clarinet), Scenario~2 (flute vs.~flute), and Scenario~3 (flute vs.~clarinet). For each scenario, we apply our Aggtest with a constant weight, using 20 persistence diagrams from each group. The resulting Aggtest $p$-values are $0.860$ and $0.494$ in Scenarios~1 and~2, respectively, and smaller than $0.001$ in Scenario~3. Therefore, Aggtest fails to reject the null hypothesis when the two samples come from the same instrument and rejects it when the samples come from different instruments.
\begin{figure}[H]
    \centering
    \includegraphics[width=0.8\textwidth]{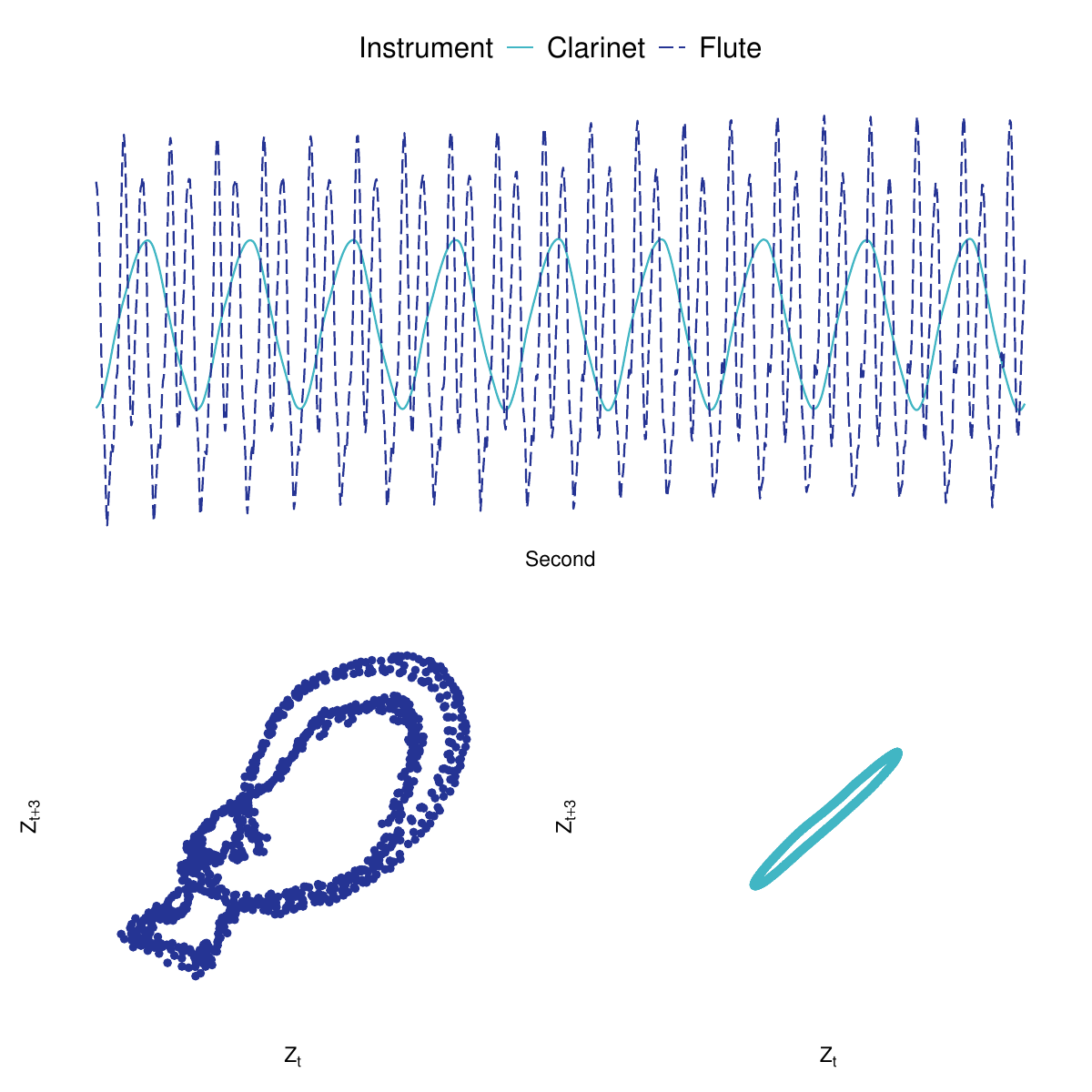}
    \caption{(Upper) Raw time series of two instruments. (Lower left) Point cloud of \textbf{flute sound}. (Lower right) Point cloud of \textbf{clarinet sound}.}
     \label{Appendix::fig:threeimages}
\end{figure}

\begin{figure}[H]
    \centering

    \includegraphics[width=1\textwidth]{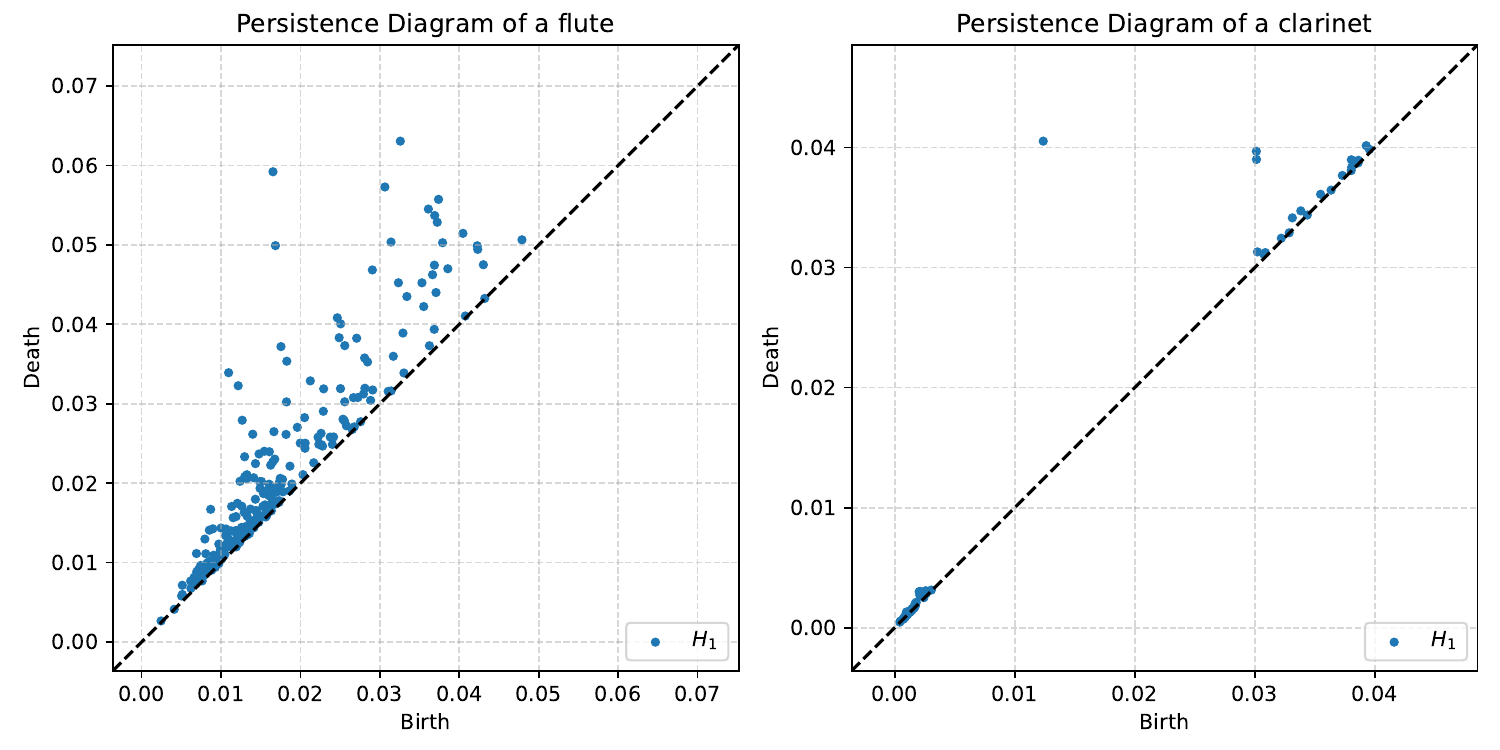}
    \caption{Persistence diagrams obtained using the Vietoris--Rips filtration from point clouds constructed from (left) the flute sound and (right) the clarinet sound.}
     \label{Appendix::fig:instrument persistence diagrams}
\end{figure}

\bibliographystyle{agsm}
\bibliography{reference}

@article{Chazal2019, title={The density of expected persistence diagrams and its kernel based estimation}, volume={10},  journal={Journal of Computational Geometry}, author={Divol, Vincent and Chazal, Frédéric}, year={2020}, pages={127–153} }

@article{JMLR:v13:gretton12a,
  author = {Gretton, A. and Borgwardt, K. M. and Rasch, M. J. and Sch{\"o}lkopf, B. and Smola, A.},
  title   = {A Kernel Two-Sample Test},
  journal = {Journal of Machine Learning Research},
  year    = {2012},
  volume  = {13},
  number  = {25},
  pages   = {723--773}
}

@article{kusano2018expectation,
  title={On the expectation of a persistence diagram by the persistence weighted kernel},
  author={Genki Kusano},
  journal={Proceedings of the 35th International Conference on Machine Learning},
  volume={80},
  pages={2805--2814},
  year={2018},
  publisher={PMLR}
}

@book{lee1990ustatistics,
  title     = {{`U-Statistics: Theory and Practice'}},
  author    = {Lee, A. J.},
  series    = {Statistics: A Series of Textbooks and Monographs},
  volume    = {110},
  publisher = {Marcel Dekker},
  year      = {1990},
  address   = {New York},
  isbn      = {9780824782535}
}

@article{schrab2023mmdagg,
  title   = {{MMD} Aggregated Two-Sample Test},
  author = {Schrab, A. and Kim, I. and Albert, M. and Laurent, B. and Guedj, B. and Gretton, A.},
  journal = {Journal of Machine Learning Research},
  volume  = {24},
  number  = {194},
  pages   = {1--81},
  year    = {2023}
}

@article{RomanoWolf2005a,
  author    = {Romano, J. P. and Wolf, M.},
  title     = {Exact and Approximate Stepdown Methods for Multiple Hypothesis Testing},
  journal   = {Journal of the American Statistical Association},
  year      = {2005},
  volume    = {100},
  number    = {469},
  pages     = {94--108},
  doi       = {10.1198/016214504000000539},
  publisher = {Taylor \& Francis},
}

@article{MoonLazar2023,
  author    = {Chul Moon and Nicole A. Lazar},
  title     = {Hypothesis Testing for Shapes using Vectorized Persistence Diagrams},
  journal   = {Journal of the Royal Statistical Society: Series C },
  volume    = {72},
  number    = {3},
  pages     = {628--648},
  year      = {2023},
  doi       = {10.1093/jrsssc/qlad024}
}

@article{kim2022minimax,
  title={Minimax optimality of permutation tests},
  author={Kim, I. and Balakrishnan, S. and Wasserman, L.},
  journal={The Annals of Statistics},
  volume={50},
  number={1},
  pages={225--251},
  year={2022},
  publisher={Institute of Mathematical Statistics},
  doi={10.1214/21-AOS2120}
}

@article{li2019optimality,
  author    = {Tong Li and Ming Yuan},
  title     = {On the Optimality of Gaussian Kernel Based Nonparametric Tests against Smooth Alternatives},
  journal   = {Journal of Machine Learning Research},
  volume    = {25},
  number    = {334},
  pages     = {1--62},
  year      = {2024}
}

@article{kusano2017kernel,
  title = {Kernel Method for Persistence Diagrams via Kernel Embedding and Weight Factor},
  author = {Genki Kusano and Kenji Fukumizu and Yasuaki Hiraoka},
  journal = {Journal of Machine Learning Research},
  year = {2018},
  volume = {18},
  number = {189},
  pages = {1--41},
}

@article{kusano2016persistence,
  author  = {Kusano, Genki and Hiraoka, Yasuaki and Fukumizu, Kenji},
  title   = {Persistence Weighted Gaussian Kernel for Topological Data Analysis},
  journal = {Proceedings of the 33rd International Conference on Machine Learning},
  year    = {2016},
  volume  = {48},
  pages   = {2004--2013}
}

@article{rff,
  author  = {Rahimi, Ali and Recht, Benjamin},
  title   = {Random Features for Large-Scale Kernel Machines},
  journal = {Advances in Neural Information Processing Systems},
  year    = {2007},
  volume  = {20}, 
  pages   = {1177--1184}

}

@article{wu2024estimation,
  author  = {Weichen Wu and Jisu Kim and Alessandro Rinaldo},
  title   = {On the Estimation of Persistence Intensity Functions and Linear Representations of Persistence Diagrams},
  journal = {Proceedings of the 27th International Conference on Artificial Intelligence and Statistics},
  year    = {2024},
  volume  = {238},
  pages   = {3610--3618}
}

@book{vandervaart1998asymptotic,
  title        = {Asymptotic Statistics},
  author       = {van der Vaart, A. W.},
  year         = {1998},
  publisher    = {Cambridge University Press},
  series       = {Cambridge Series in Statistical and Probabilistic Mathematics},
  volume       = {3},
  doi          = {10.1017/CBO9780511802256}
}

@article{RobinsonTurner2017,
  author    = {Andrew Robinson and Katharine Turner},
  title     = {Hypothesis Testing for Topological Data Analysis},
  journal   = {Journal of Applied and Computational Topology},
  year      = {2017},
  volume    = {1},
  number    = {2},
  pages     = {241--261},
  doi       = {10.1007/s41468-017-0008-z}
}

@article{Bubenik2015,
  author  = {Peter Bubenik},
  title   = {Statistical Topological Data Analysis Using Persistence Landscapes},
  journal = {Journal of Machine Learning Research},
  year    = {2015},
  volume  = {16},
  pages   = {77--102}
}

@article{Hertzsch2007DNA,
author = {Hertzsch, Jan-Martin and Sturman, Rob and Wiggins, Stephen},
title = {{DNA} Microarrays: Design Principles for Maximizing Ergodic, Chaotic Mixing},
journal = {Small},
volume = {3},
number = {2},
year = {2007}
}

@article{KimKimZaheer2020PLLay,
  author  = {Kwangho Kim and Jisu Kim and Manzil Zaheer and Joon Sik Kim and Frederic Chazal and Larry Wasserman},
  title   = {PLLay: Efficient Topological Layer Based on Persistence Landscapes},
  journal = {Advances in Neural Information Processing Systems},
  year    = {2020},
  volume  = {33},
  pages   = {},  
}

@article{NiyogiSmaleWeinberger2008,
  author = {Partha Niyogi and Stephen Smale and Shmuel Weinberger},
  title     = {Finding the Homology of Submanifolds with High Confidence from Random Samples},
  journal   = {Discrete \& Computational Geometry},
  volume    = {39},
  number    = {1/3},
  pages     = {419--441},
  year      = {2008},
  doi       = {10.1007/s00454-008-9053-2}
}

@article{ChazalCohenSteinerLieutier2009,
  author = {Chazal, Frédéric and David Cohen-Steiner and André Lieutier},
  title     = {A Sampling Theory for Compact Sets in Euclidean Space},
  journal   = {Discrete \& Computational Geometry},
  volume    = {41},
  number    = {3},
  pages     = {461--479},
  year      = {2009},
  doi       = {10.1007/s00454-009-9144-8}
}

@article{fasy2014confidence,
  title={Confidence sets for persistence diagrams},
  author = {Fasy, Brittany Terese and Lecci, Fabrizio and Rinaldo, Alessandro and Wasserman, Larry and Balakrishnan, Sivaraman and Singh, Aarti},
  journal={The Annals of Statistics},
  volume={42},
  number={6},
  pages={2301--2339},
  year={2014},
  publisher={Institute of Mathematical Statistics},
  doi={10.1214/14-AOS1252},
}

@misc{conf/icml/DivolL21,
  author    = {Divol, Vincent and Leclerc, Th{\'e}o},
  title     = {Estimation and Quantization of Expected Persistence Diagrams},
  year      = {2021},
  note      = {In Proceedings of the 38th International Conference on Machine Learning (ICML), PMLR, pp. 2760--2770.}
}

@article{chazal2017introduction,
  title        = {An Introduction to Topological Data Analysis: Fundamental and Practical Aspects for Data Scientists},
  author       = {Frédéric Chazal and Bertrand Michel},
  journal      = {Frontiers in Artificial Intelligence},
  volume       = {4},
  year         = {2021},
  doi          = {10.3389/frai.2021.667963},
  eprint       = {arXiv:1710.04019}
}

@article{Takensembedding,
author = {Stark, J. and Broomhead, D. S. and Davies, M. E. and Huke, J.},
title = {Takens embedding theorems for forced and stochastic systems},
year = {1997},
issue_date = {Dec., 1997},
publisher = {Elsevier Science Ltd.},
address = {GBR},
volume = {30},
number = {9},
issn = {0362-546X},
doi = {10.1016/S0362-546X(96)00149-6},
journal = {Nonlinear Anal.},
month = dec,
pages = {5303–5314},
numpages = {12}
}

@book{Takens1981Detecting,
  author    = {Takens, Floris},
  title     = {Detecting strange attractors in turbulence},
  booktitle = {Dynamical Systems and Turbulence, Warwick 1980},
  series    = {Lecture Notes in Mathematics},
  volume    = {898},
  pages     = {366--381},
  year      = {1981},
  publisher = {Springer-Verlag},
  doi       = {10.1007/BFb0091924},
}

@article{PereaHarer2013SlidingWindows,
  author  = {Perea, Jose and Harer, John},
  title   = {Sliding windows and persistence: An application of topological methods to signal analysis},
  journal = {Foundations of Computational Mathematics},
  volume  = {15},
  year    = {2015},
  doi     = {10.1007/s10208-014-9206-z}
}

@article{
amorphoussolids,
author = {Yasuaki Hiraoka  and Takenobu Nakamura  and Akihiko Hirata  and Emerson G. Escolar  and Kaname Matsue  and Yasumasa Nishiura },
title = {Hierarchical structures of amorphous solids characterized by persistent homology},
journal = {Proceedings of the National Academy of Sciences},
volume = {113},
number = {26},
pages = {7035-7040},
year = {2016},
doi = {10.1073/pnas.1520877113}
}

@article{Nakamura_2015,
doi = {10.1088/0957-4484/26/30/304001},
year = {2015},
month = {jul},
publisher = {IOP Publishing},
volume = {26},
number = {30},
pages = {304001},
author = {Nakamura, Takenobu and Hiraoka, Yasuaki and Hirata, Akihiko and Escolar, Emerson G and Nishiura, Yasumasa},
title = {Persistent homology and many-body atomic structure for medium-range order in the glass},
journal = {Nanotechnology}}

@article{Kimura2018Nonempirical,
  author       = {Masao Kimura and Ippei Obayashi and Yasuo Takeichi and Reiko Murao and Yasuaki Hiraoka},
  title        = {Non-empirical identification of trigger sites in heterogeneous processes using persistent homology},
  journal      = {Scientific Reports},
  volume       = {8},
  number       = {1},
  pages        = {3553},
  year         = {2018},
  doi          = {10.1038/s41598-018-21867-z}
}

@article{Herring2019TopologicalPersistence,
author = {Herring, A. L. and Robins, V. and Sheppard, A. P.},
title = {Topological Persistence for Relating Microstructure and Capillary Fluid Trapping in Sandstones},
journal = {Water Resources Research},
volume = {55},
number = {1},
pages = {555-573},
year = {2018},
doi = {https://doi.org/10.1029/2018WR022780}}

@article{Jiang2018TopologicalPersistence,
author = {Jiang, Fei and Tsuji, Takeshi and Shirai, Tomoyuki},
title = {Pore Geometry Characterization by Persistent Homology Theory},
journal = {Water Resources Research},
volume = {54},
number = {6},
year = {2018},
pages = {4150-4163},
doi = {https://doi.org/10.1029/2017WR021864}}

@article{Bendich2016,
author = {Paul Bendich and J. S. Marron and Ezra Miller and Alex Pieloch and Sean Skwerer},
title = {Persistent homology analysis of brain artery trees},
volume = {10},
journal = {The Annals of Applied Statistics},
number = {1},
publisher = {Institute of Mathematical Statistics},
pages = {198 -- 218},
year = {2016},
doi = {10.1214/15-AOAS886}
}

@article{Lawson2019PersistentHomology,
  author  = {Peter Lawson and Andrew B. Sholl and J. Quincy Brown and Brittany Terese Fasy and Carola Wenk},
  title   = {Persistent Homology for the Quantitative Evaluation of Architectural Features in Prostate Cancer Histology},
  journal = {Scientific Reports},
  year    = {2019},
  volume  = {9},
  pages   = {1139},
  doi     = {10.1038/s41598-018-36798-y}
}

@inproceedings{NIPS2006_e9fb2eda,
 author = {Gretton, Arthur and Borgwardt, Karsten and Rasch, Malte and Sch\"{o}lkopf, Bernhard and Smola, Alex},
 booktitle = {Advances in Neural Information Processing Systems},
 editor = {B. Sch\"{o}lkopf and J. Platt and T. Hoffman},
 pages = {},
 publisher = {MIT Press},
 title = {A Kernel Method for the Two-Sample-Problem},
 volume = {19},
 year = {2006}
}

@inproceedings{SchrabKSD,
author = {Schrab, Antonin and Guedj, Benjamin and Gretton, Arthur},
title = {{KSD} aggregated goodness-of-fit test},
year = {2022},
isbn = {9781713871088},
publisher = {Curran Associates Inc.},
address = {Red Hook, NY, USA},
booktitle = {Proceedings of the 36th International Conference on Neural Information Processing Systems},
articleno = {2364},
numpages = {15},
location = {New Orleans, LA, USA},
series = {NIPS '22}
}

@article{Cheng2024,
author = {Xiuyuan Cheng and Yao Xie},
title = {{Kernel two-sample tests for manifold data}},
volume = {30},
journal = {Bernoulli},
number = {4},
publisher = {Bernoulli Society for Mathematical Statistics and Probability},
pages = {2572 -- 2597},
year = {2024},
doi = {10.3150/23-BEJ1685}
}

@article{WynneDuncan2022,
  title        = {A Kernel Two-Sample Test for Functional Data},
  author       = {George Wynne and Andrew B. Duncan},
  journal      = {Journal of Machine Learning Research},
  volume       = {23},
  number       = {73},
  pages        = {1--51},
  year         = {2022},
  eprint       = {https://arxiv.org/abs/2008.11095},
}

@book{IngsterSuslina2003,
  author    = {Ingster, Yuri I. and Suslina, Irina A.},
  title     = {Nonparametric Goodness-of-Fit Testing Under Gaussian Models},
  series    = {Lecture Notes in Statistics},
  volume    = {169},
  publisher = {Springer},
  address   = {New York},
  year      = {2003},
  doi       = {10.1007/978-1-4612-0533-1}
}

@article{GLUZBERG2023107216,
title = {Topological data analysis of noise: Uniform unimodal distributions},
journal = {Communications in Nonlinear Science and Numerical Simulation},
volume = {121},
pages = {107216},
year = {2023},
issn = {1007-5704},
doi = {https://doi.org/10.1016/j.cnsns.2023.107216},
author = {Victor E. Gluzberg and Yuri A. Katz}}

@article{AlbertLaurentMarrelMeynaoui2022,
  author    = {Mélisande Albert and Béatrice Laurent and Amandine Marrel and Anouar Meynaoui},
  title     = {Adaptive test of independence based on {HSIC} measures},
  journal   = {Annals of Statistics},
  volume    = {50},
  number    = {2},
  pages     = {858--879},
  year      = {2022},
  doi       = {10.1214/21-aos2129}
}

@article{persistenceimage,
  author  = {Henry Adams and Tegan Emerson and Michael Kirby and Rachel Neville and Chris Peterson and Patrick Shipman and Sofya Chepushtanova and Eric Hanson and Francis Motta and Lori Ziegelmeier},
  title   = {Persistence Images: A Stable Vector Representation of Persistent Homology},
  journal = {Journal of Machine Learning Research},
  year    = {2017},
  volume  = {18},
  number  = {8},
  pages   = {1--35}
}

@misc{you2022comparingmultiplelatentspace,
  title         = {Comparing Multiple Latent Space Embeddings Using Topological Analysis},
  author        = {You, Kisung and Kim, Ilmun and Jin, Ick Hoon and Jeon, Minjeong and Shung, Dennis},
  year          = {2022},
  eprint        = {2208.12435},
  archivePrefix = {arXiv},
  primaryClass  = {stat.ME},
  note   = {arXiv:2208.12435}
}

@book{LehmannRomano2005,
  author    = {Erich L. Lehmann and Joseph P. Romano},
  title     = {Testing Statistical Hypotheses},
  edition   = {3},
  publisher = {Springer},
  year      = {2005}
}

@book{casella2002statistical,
  author = {Casella, George and Berger, Roger L},
  publisher = {Duxbury Pacific Grove, CA},
  title = {Statistical inference},
  volume = 2,
  year = 2002
}

@book{PaulsenRaghupathi2016, place={Cambridge}, series={Cambridge Studies in Advanced Mathematics}, title={An Introduction to the Theory of Reproducing Kernel Hilbert Spaces}, publisher={Cambridge University Press}, author={Paulsen, Vern I. and Raghupathi, Mrinal}, year={2016}, collection={Cambridge Studies in Advanced Mathematics}}

@article{ingster1993asymptotically,
  title={Asymptotically minimax hypothesis testing for nonparametric alternatives. I, II, III},
  author={Ingster, Yuri I},
  journal={Math. Methods Statist},
  volume={2},
  number={2},
  pages={85--114},
  year={1993}
}

@book{Hatcher,
  address = {Cambridge},
  author = {Hatcher, Allen},
  isbn = {0-521-79160-X; 0-521-79540-0},
  mrclass = {55-01 (55-00)},
  mrnumber = {1867354 (2002k:55001)},
  mrreviewer = {Donald W. Kahn},
  pages = {xii+544},
  publisher = {Cambridge University Press},
  title = {Algebraic topology},
  username = {mwpb479},
  year = 2002
}

@misc{schrab2026aggregation,
  title         = {Aggregation of Statistical Evidence under Exchangeability},
  author        = {Schrab, Antonin and Shah, Rajen and Gretton, Arthur and Kim, Ilmun},
  year          = {2026},
  eprint        = {2607.15823},
  archivePrefix = {arXiv},
  primaryClass  = {stat.ME},
  note   = {arXiv:2607.15823}
}

@book{EdelsbrunnerHarer2010,
  author    = {Edelsbrunner, Herbert and Harer, John},
  title     = {Computational Topology: An Introduction},
  publisher = {American Mathematical Society},
  year      = {2010},
  isbn      = {978-0-8218-4925-5}
}

@article{hiraoka2018limit,
  title={Limit theorems for persistence diagrams},
  author={Duy, Trinh Khanh and Hiraoka, Yasuaki and Shirai, Tomoyuki},
  journal={The Annals of Applied Probability},
  volume={28},
  number={5},
  pages={2745--2780},
  year={2018},
  publisher={Institute of Mathematical Statistics}
}

@misc{chen2015statisticalanalysispersistenceintensity,
      title={Statistical Analysis of Persistence Intensity Functions}, 
      author={Yen-Chi Chen and Daren Wang and Alessandro Rinaldo and Larry Wasserman},
      year={2015},
      eprint={1510.02502},
      archivePrefix={arXiv}
}

@article{Topaz2015,
  author  = {Topaz, Chad M. and Ziegelmeier, Lori and Halverson, Tom},
  title   = {Topological Data Analysis of Biological Aggregation Models},
  journal = {PLOS ONE},
  volume  = {10},
  number  = {5},
  year    = {2015},
  doi     = {10.1371/journal.pone.0126383}
}

@article{Wilding2021,
  author  = {Wilding, Georg and Nevenzeel, Keimpe and van de Weygaert, Rien
             and Vegter, Gert and Pranav, Pratyush and Jones, Bernard J. T.
             and Efstathiou, Konstantinos and Feldbrugge, Job},
  title   = {Persistent homology of the cosmic web -- I. Hierarchical topology
             in {$\Lambda$CDM} cosmologies},
  journal = {Monthly Notices of the Royal Astronomical Society},
  volume  = {507},
  number  = {2},
  pages   = {2968--2990},
  year    = {2021},
  doi     = {10.1093/mnras/stab2326}
}

@article{bobrowski2023universal,
  title   = {A Universal Null-Distribution for Topological Data Analysis},
  author  = {Bobrowski, Omer and Skraba, Primoz},
  journal = {Scientific Reports},
  volume  = {13},
  number  = {1},
  pages   = {12274},
  year    = {2023},
  doi     = {10.1038/s41598-023-37842-2}
}

@article{Chazal2014Persistence,
  author  = {Chazal, Fr{\'e}d{\'e}ric and de Silva, Vin and Oudot, Steve},
  title   = {Persistence Stability for Geometric Complexes},
  journal = {Geometriae Dedicata},
  year    = {2014},
  volume  = {173},
  number  = {1},
  pages   = {193--214},
  doi     = {10.1007/s10711-013-9937-z}
}

@article{indifferencezone,
author = {Lawrence D. Brown and Harold Sackrowitz},
title = {{An Alternative to Student's $t$-Test for Problems with Indifference Zones}},
volume = {12},
journal = {The Annals of Statistics},
number = {2},
publisher = {Institute of Mathematical Statistics},
pages = {451 -- 469},
year = {1984},
doi = {10.1214/aos/1176346499}

}

\end{document}